\newtheorem{theorem}{Theorem}[section]
\newtheorem{conjecture}[theorem]{Conjecture}
\newtheorem{definition}[theorem]{Definition}
\newtheorem{proposition}[theorem]{Proposition}
\newtheorem{lemma}[theorem]{Lemma}
\newtheorem{corollary}[theorem]{Corollary}
\newtheorem{remark}[theorem]{Remark}
\newtheorem{example}[theorem]{Example}
\newtheorem{hypothesis}[theorem]{Hypothesis}
\numberwithin{equation}{section}
\newcommand{\N}{\mathbb{N}}
\newcommand{\Z}{\mathbb{Z}}
\newcommand{\R}{\mathbb{R}}
\newcommand{\C}{\mathbb{C}}
\newcommand{\Ebb}{\mathbb{E}}
\newcommand{\Lbb}{\mathbb{L}}
\newcommand{\Pbb}{\mathbb{P}}
\newcommand{\Sbb}{\mathbb{S}}
\newcommand{\Tbb}{\mathbb{T}}
\newcommand{\Ubb}{\mathbb{U}}
\newcommand{\Acal}{\mathcal{A}}
\newcommand{\Bcal}{\mathcal{B}}
\newcommand{\Fcal}{\mathcal{F}}
\newcommand{\Kcal}{\mathcal{K}}
\newcommand{\Lcal}{\mathcal{L}}
\newcommand{\Mcal}{\mathcal{M}}
\newcommand{\Ncal}{\mathcal{N}}
\newcommand{\Pcal}{\mathcal{P}}
\newcommand{\norm}[2]{\left\| #1 \right\|_{#2}}
\newcommand{\dd}{\;{\rm d}}
\newcommand{\transposee}[1]{{\vphantom{#1}}^{\mathit t}{#1}}
\newcommand{\acts}{\curvearrowright}
\DeclareMathOperator{\ad}{ad}
\DeclareMathOperator{\Cov}{Cov}
\DeclareMathOperator{\ess}{ess}
\DeclareMathOperator{\Id}{Id}
\DeclareMathOperator{\Jac}{Jac}
\DeclareMathOperator{\Ker}{Ker}
\DeclareMathOperator{\Leb}{Leb}
\DeclareMathOperator{\Lip}{Lip}
\DeclareMathOperator{\Supp}{Supp}
\DeclareMathOperator{\sgn}{sgn}
\DeclareMathOperator{\Card}{Card}
\DeclareMathOperator{\Sp}{Sp}
\DeclareMathOperator{\Var}{Var}
\DeclareMathOperator{\Vect}{Vect}
\DeclareFontFamily{U}{mathx}{}
\DeclareFontShape{U}{mathx}{m}{n}{ <-> mathx10 }{}
\DeclareSymbolFont{mathx}{U}{mathx}{m}{n}
\DeclareMathAccent{\widecheck}{0}{mathx}{"71}
\title{Potential theory and $\Z^d$-extensions}
\author{Damien THOMINE}
\date{}
\begin{document}

\maketitle

\begin{abstract}

We study hitting probabilities for $\Z^d$-extensions of Gibbs-Markov maps. The goal is to estimate, given a finite $\Sigma \subset \Z^d$ and $p$, $q \in \Sigma$, the probability $P_{pq}$ that the process starting from $p$ returns to $\Sigma$ at site $q$.

Our study generalizes the methods available for random walks. We are able to give in many settings (square integrable jumps, jumps in the basin of a L\'evy or Cauchy random variable) asymptotics for the transition matrix $(P_{pq})_{p, q \in \Sigma}$ when the elements of $\Sigma$ are far apart.

We use three main tools: a variant of the balayage identity using a transfer operator as a Markov transition kernel, a study inspired from fast-slow systems and the hitting time of small sets in hyperbolic systems to relate transfer operators and the transition matrices we seek to compute, and finally Fourier transform and perturbations of transfer operators \textit{\`a la} Nagaev-Guivarc'h to effectively compute these transition matrices in an asymptotic regime.
\end{abstract}

\setcounter{tocdepth}{2}
\tableofcontents

\section*{Introduction}
\addcontentsline{toc}{section}{\protect\numberline{}Introduction}

Large classes of uniformly hyperbolic dynamical systems, when endowed with suitable probability measures, behave similarly to Markov chains. Many theorems pertaining to sequences of i.i.d.\ random variables have found generalizations for sequences of observables of such systems: the law of large numbers (Birkhoff's ergodic theorem), the central limit theorem~\cite{Nagaev:1957, Sinai:1960, Nagaev:1961, GuivarchHardy:1988}, the local central limit theorem and asymptotics of occupation times~\cite{DarlingKac:1957, PollicottSharp:1994a, Aaronson:1997, SzaszVarju:2004, DolgopyatNandori:2020} and large deviation principles~\cite{OreyPelikan:1988, OreyPelikan:1989, Young:1990}, to cite only the most widely known.

\smallskip

This article is part of a project to understand how an important part of the general theory of Markov processes, namely the probabilistic theory of potentials, can be adapted to dynamical systems. The specific problem we are interested in is inspired by the computation of \emph{hitting probabilities for random walks}. Given a recurrent random walk $(S_n)_{n \geq 0}$ on $\Z^d$ and a finite set $\Sigma \subset \Z^d$, one can compute the transition probabilities between points of $\Sigma$ -- in other words, the probability, given any $p$, $q \in \Sigma$, that a random walk starting from point $p$ returns to $\Sigma$ at point $q$.

\begin{figure}[!h]
\centering
\scalebox{0.8}{
\begin{tikzpicture}
\foreach \x in {-2,...,2}{
    \draw [dotted] (\x,-2.5) -- (\x,2.5) ;
    }
\foreach \y in {-2,...,2}{
    \draw [dotted] (-2.5,\y) -- (2.5,\y) ;
    }
\foreach \x in {-2,...,2}{
    \foreach \y in {-2,...,2}{
        \node at (\x,\y) [circle,fill,inner sep=1.5pt]{} ;
        }
    }
\draw node[below left] (A) at (0,0) {$A$};
\draw node[below left] (B) at (-1,0) {$B$};
\draw node[above right] (C) at (1,1) {$C$};
\draw [-{Latex[length=2mm,width=2mm]}, blue] (0,0) -- (0,-1) -- (1,-1) -- (1,0) -- (1,1) ;
\end{tikzpicture}
}
\caption{Three sites $A$, $B$, $C$ in $\Z^2$, and a transition from $A$ to $C$.}
\label{fig:MarcheAleatoireZ2}
\end{figure}
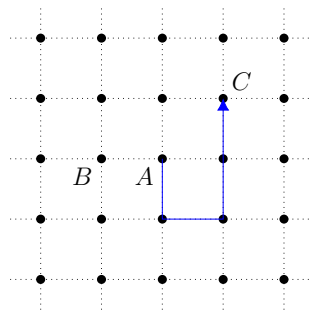

This problem has an interpretation in terms of resistances in electrical networks, and may be solved using potential theory, more precisely the so-called \emph{balayage identity} (see e.g.~\cite[Corollary~1.11]{Revuz:1975}, or 
Theorem~\ref{thm:IdentiteBalayage} below). With this method, for the simple random walk on $\mathbb{Z}^2$ and the subset $\Sigma = \{A, B, C\}$ of Figure~\ref{fig:MarcheAleatoireZ2}, one finds that the matrix of transition probabilities is

\begin{equation}
\label{eq:MatriceTransitionMASimple}
 P_{\{A, B, C\}} 
 = \frac{1}{-\pi^2 +8\pi-4} \left(
    \begin{array}{ccc}
        -\frac{1}{2}\pi^2+4\pi-4 & -\frac{1}{2}\pi^2+3\pi & \pi \\   
        -\frac{1}{2}\pi^2+3\pi & -\pi^2+6\pi-4 & \frac{1}{2}\pi^2-\pi \\
        \pi & \frac{1}{2}\pi^2-\pi & -\frac{3}{2}\pi^2+8\pi-4 \\
    \end{array}
\right).
\end{equation}

A natural generalization of random walks is the class of $\Z^d$-extensions of dynamical systems. Starting from a probability-preserving ergodic dynamical system $(A, \mu, T)$ and a jump function $F : A \to \Z^d$, its $\Z^d$-extensions is the system $([\Z^d], \widetilde{\mu}, \widetilde{T})$ defined by:
\begin{itemize}
 \item State space: $[\Z^d] := A \times \Z^d$;
 \item Measure: $\widetilde{\mu} := \mu \otimes \sum_{p \in \Z^d} \delta_p$;
 \item Transformation: $\widetilde{T} (x,p) := (T(x), p+F(x)) \in [\Z^d]$ for all $(x,p) \in [\Z^d]$.
\end{itemize}
This class of systems include $\Z^d$-valued random walks, $\Z^d$-extensions of Markov chains~\cite{KramliSzasz:1983, KramliSzasz:1984, KramliSimanyiSzasz:1986} (sometimes used as toy models for more realistic processes), $\Z^d$-extensions of subshifts of finite type~\cite{PollicottSharp:1994a} as well as the collision map for Lorentz gases~\cite{SzaszVarju:2004, SzaszVarju:2007}. Using well-chosen sections, the study of the Lorentz gas flow or of the geodesic flow on Abelian covers of negatively curved compact manifolds may also be reduced to the study of such extensions~\cite{PollicottSharp:1994b, DolgopyatNandori:2020}.

\smallskip

In the setting of $\Z^d$-extensions, the study of hitting probabilities can be transposed as follows. Given a finite subset $\Sigma \subset \Z^d$ and $p$, $q \in \Sigma$, what is the probability (under $\mu$) that the trajectory $(\widetilde{T}^n (x, p))_{n \geq 0}$ returns to $A \times \Sigma$ at the site $A \times \{q\}$?

\smallskip

While an explicit computation of the matrix of transition probabilities, as in Equation~\eqref{eq:MatriceTransitionMASimple}, was possible for the simple random walk, such an explicit formula seems to be out of reach for more general dynamical systems. As for the central limit theorem, the local central limit theorem or the principle of large deviations, elementary formulas are available in an asymptotic regime. For hitting probabilities, we propose to study the asymptotics of the transition probabilities for families $(\Sigma_t)_{t>0}$ of subsets of $\Z^d$ whose points get farther and farther apart when the parameter $t$ goes to infinity:

\begin{figure}[!h]
\centering

\scalebox{0.18}{

\begin{tikzpicture}
   \foreach \m in {-10,-8,-6,-4,-2,0,2,4,6,8,10} 
      \foreach \n in {-11,-9,-7,-5,-3,-1,1,3,5,7,9,11}
         \draw [fill = black, opacity=0.2] (\n,\m) circle (0.8) ;
   \foreach \m in {-11,-9,-7,-5,-3,-1,1,3,5,7,9,11} 
      \foreach \n in {-10,-8,-6,-4,-2,0,2,4,6,8,10}
         \draw [fill = black, opacity=0.2] (\n,\m) circle (0.5) ;
    \fill [blue!100!] (1,0) circle (0.8) ;
    \fill [blue!100!] (2,1) circle (0.5) ;
    \fill [blue!80!] (-1,0) circle (0.8) ;
    \fill [blue!80!] (0,1) circle (0.5) ;
    \fill [blue!60!] (3,2) circle (0.8) ;
    \fill [blue!60!] (4,3) circle (0.5) ;
    \draw (1,0) -- (-1,0) -- (3,2) -- (1,0) ;
\end{tikzpicture}

\hspace{0.5cm}

\begin{tikzpicture}
   \foreach \m in {-10,-8,-6,-4,-2,0,2,4,6,8,10} 
      \foreach \n in {-11,-9,-7,-5,-3,-1,1,3,5,7,9,11}
         \draw [fill = black, opacity=0.2] (\n,\m) circle (0.8) ;
   \foreach \m in {-11,-9,-7,-5,-3,-1,1,3,5,7,9,11} 
      \foreach \n in {-10,-8,-6,-4,-2,0,2,4,6,8,10}
         \draw [fill = black, opacity=0.2] (\n,\m) circle (0.5) ;
    \fill [blue!100!] (1,0) circle (0.8) ;
    \fill [blue!100!] (2,1) circle (0.5) ;
    \fill [blue!80!] (-3,0) circle (0.8) ;
    \fill [blue!80!] (-2,1) circle (0.5) ;
    \fill [blue!60!] (5,4) circle (0.8) ;
    \fill [blue!60!] (6,5) circle (0.5) ;
    \draw (1,0) -- (-3,0) -- (5,4) -- (1,0) ;
\end{tikzpicture}

\hspace{0.5cm}

\begin{tikzpicture}
   \foreach \m in {-10,-8,-6,-4,-2,0,2,4,6,8,10} 
      \foreach \n in {-11,-9,-7,-5,-3,-1,1,3,5,7,9,11}
         \draw [fill = black, opacity=0.2] (\n,\m) circle (0.8) ;
   \foreach \m in {-11,-9,-7,-5,-3,-1,1,3,5,7,9,11} 
      \foreach \n in {-10,-8,-6,-4,-2,0,2,4,6,8,10}
         \draw [fill = black, opacity=0.2] (\n,\m) circle (0.5) ;
    \fill [blue!100!] (1,0) circle (0.8) ;
    \fill [blue!100!] (2,1) circle (0.5) ;
    \fill [blue!80!] (-5,0) circle (0.8) ;
    \fill [blue!80!] (-4,1) circle (0.5) ;
    \fill [blue!60!] (7,6) circle (0.8) ;
    \fill [blue!60!] (8,7) circle (0.5) ;
    \draw (1,0) -- (-5,0) -- (7,6) -- (1,0) ;
\end{tikzpicture}

\hspace{0.5cm}

\begin{tikzpicture}
   \foreach \m in {-10,-8,-6,-4,-2,0,2,4,6,8,10} 
      \foreach \n in {-11,-9,-7,-5,-3,-1,1,3,5,7,9,11}
         \draw [fill = black, opacity=0.2] (\n,\m) circle (0.8) ;
   \foreach \m in {-11,-9,-7,-5,-3,-1,1,3,5,7,9,11} 
      \foreach \n in {-10,-8,-6,-4,-2,0,2,4,6,8,10}
         \draw [fill = black, opacity=0.2] (\n,\m) circle (0.5) ;
    \fill [blue!100!] (1,0) circle (0.8) ;
    \fill [blue!100!] (2,1) circle (0.5) ;
    \fill [blue!80!] (-7,0) circle (0.8) ;
    \fill [blue!80!] (-6,1) circle (0.5) ;
    \fill [blue!60!] (9,8) circle (0.8) ;
    \fill [blue!60!] (10,9) circle (0.5) ;
    \draw (1,0) -- (-7,0) -- (9,8) -- (1,0) ;
\end{tikzpicture}
}
\caption{Some elements of a family of subset $(A \times \Sigma_t)_{t > 0}$ growing farther apart while keeping the same shape.}
\label{fig:FamilleFormes}
\end{figure}
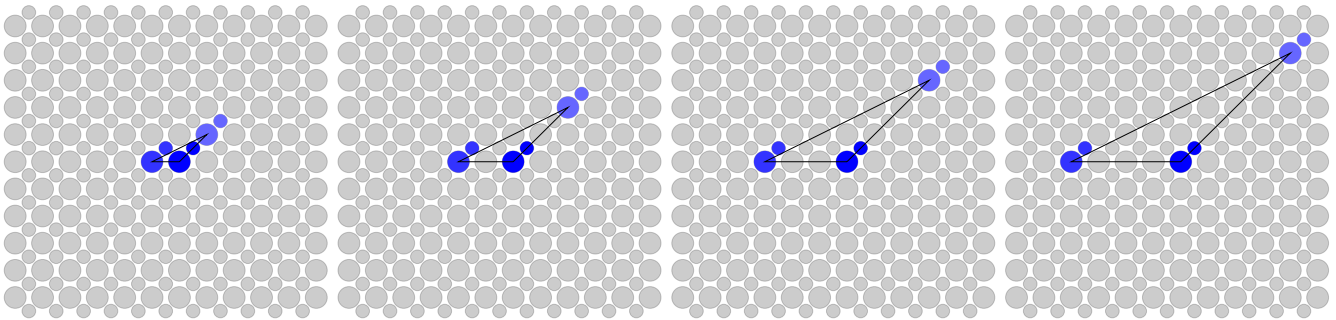

One particularly interesting case, pictured in Figure~\ref{fig:FamilleFormes}, occurs when those points keep the same shape, that is
\begin{equation*}
 \Sigma_t 
 = \left\{ t\sigma(i) + o(t) : \ i \in I\right\}
\end{equation*}
for some finite set $I$ and some injective function $\sigma : I \to \R^d$. This kind of limit theorems is in line with a few former studies~\cite{KramliSzasz:1984, PeneThomine:2019}. 

\smallskip

In this setting, hitting probabilities could be obtained in~\cite{PeneThomine:2019} for families of subsets of size $2$. However, these results were byproducts of distributional limit theorems for additive functionals of the process $(\widetilde{T}^n)_{n \geq 0}$. As a consequence, the methods involved were extremely heavy, limited in scope as they could not produce the matrix of transition probabilities but only its symmetrization, and obscured the underlying potential theory. A second article~\cite{PeneThomine:2020} laid down the foundation for the use of probabilistic potential theory for hyperbolic dynamical systems; in particular, we need to use the transfer operator as a Markov operator instead of the Koopman operator. 

\smallskip

The current contribution has two main parts. First, parametrizing by $\varepsilon = t^{-1}$ instead of $t$, we define \emph{potential operators} $(Q_\varepsilon)_{\varepsilon > 0}$ acting on Lipschitz functions on $A \times \Sigma_\varepsilon$. We are able to relate these potential operators to the matrices of transition probabilities when $\varepsilon$ vanishes:

\begin{theorem}\quad
\label{thm:MainTheorem}
 
 Let $([\Z^d], \widetilde{\mu}, \widetilde{T})$ be an ergodic and recurrent Markov $\Z^d$-extension of a Gibbs-Markov map. Let $I$ be finite. For $\varepsilon >0$, let $\sigma_\varepsilon : I \hookrightarrow \Z^d$. Let $P_\varepsilon$ be the matrix of transition probabilities on $\Sigma_\varepsilon$.
 
 \smallskip
 
 The following properties are equivalent:
 \begin{itemize}
  \item there exists an irreducible (in the sense of Definition~\ref{def:LMatrice}) bi-L-matrix $R$ such that $P_\varepsilon = \Id-\varepsilon R+o(\varepsilon)$;
  \item there exists an irreducible (in the sense of Definition~\ref{def:QMatriceIrreductible}) matrix $S$ such that $Q_\varepsilon = \varepsilon^{-1} S + o(\varepsilon^{-1})$.
 \end{itemize}
 If any of these properties holds, then, in addition, $S = R_0^{-1}$.
\end{theorem}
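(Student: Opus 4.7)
The plan is to use the balayage identity (Theorem~\ref{thm:IdentiteBalayage}) as the algebraic bridge between $Q_\varepsilon$ and $P_\varepsilon$, and to turn the statement into bookkeeping of asymptotic expansions on the finite-dimensional space indexed by $I$. Schematically, the balayage identity restricted to $\Sigma_\varepsilon$ should produce a relation of the form
\[
Q_\varepsilon (\Id - P_\varepsilon) = \Pi,
\]
where $\Pi$ is a fixed rank-$(|I|-1)$ projector, independent of $\varepsilon$, encoding the codimension-one obstruction coming from the recurrence of the extension (so that neither $\Id - P_\varepsilon$ nor $Q_\varepsilon$ can be expected to be invertible in the naive sense).

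From this identity the first implication proceeds by substitution. Assuming $P_\varepsilon = \Id - \varepsilon R + o(\varepsilon)$, a rearrangement yields $Q_\varepsilon R = \varepsilon^{-1} \Pi + o(\varepsilon^{-1})$, provided one has the a priori estimate $Q_\varepsilon = O(\varepsilon^{-1})$ needed to absorb the remainder $o(\varepsilon) Q_\varepsilon$; this bound is itself forced by the balayage identity together with $\Pi$ being of order $1$. The bi-L-matrix structure of $R$ means its kernel and cokernel are both one-dimensional and spanned by $\mathbf{1}$, and that $R$ restricts to an isomorphism $R_0$ on the mean-zero subspace, which is precisely the range of $\Pi$. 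Inverting on that subspace gives $Q_\varepsilon = \varepsilon^{-1} R_0^{-1} + o(\varepsilon^{-1})$, so that necessarily $S = R_0^{-1}$. The converse is the mirror argument: from $Q_\varepsilon = \varepsilon^{-1} S + o(\varepsilon^{-1})$ with $S$ irreducible, the identity forces $\Id - P_\varepsilon$ to be of order $\varepsilon$, and reading off its leading term on the range of $\Pi$ produces a candidate matrix $R$; the bi-L-matrix properties of $R$ are then verified from the stochasticity of $P_\varepsilon$ (row sums equal to one, and the dual relation coming from the transfer-operator viewpoint developed in the companion paper), which supplies both the vanishing of row/column sums and the sign pattern.

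The main obstacle, in my view, is not the linearization itself but the reconciliation of the two irreducibility conditions: Definition~\ref{def:LMatrice} constrains $R$ via sign patterns and a doubly-Laplacian structure, whereas Definition~\ref{def:QMatriceIrreductible} constrains $S$ through a non-degeneracy condition suited to the potential side. One has to check that these two conditions correspond exactly under the pseudoinversion $S = R_0^{-1}$, and that neither leading coefficient can degenerate without the other doing so; this should come down to a purely linear-algebraic verification on finite matrices indexed by $I$. A secondary technical point is the uniform control of the $o(\cdot)$ remainders when inverting restricted matrices. Since $|I|$ is fixed, this reduces to continuity of the map $A \mapsto A_0^{-1}$ on the open set of irreducible bi-L-matrices, but it has to be invoked carefully so that the remainders in the two expansions match.
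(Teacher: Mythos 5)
Your proposal hinges on an exact algebraic identity of the form $Q_\varepsilon (\Id - P_\varepsilon) = \Pi$ coming from the balayage identity, and this is precisely where the argument breaks down. The balayage identity relates the \emph{infinite-dimensional} operators $(\Id-\widetilde{\Lcal})^{-1}$ and $(\Id-\Lcal_{[\Sigma_\varepsilon]})^{-1}$ acting on functions on $[\Z^d]$ and $[\Sigma_\varepsilon]$; the finite matrices $P_\varepsilon$ and $Q_\varepsilon$ are only the compressions $\transposee{P}_\varepsilon = \Pi_* \Lcal_\varepsilon \Pi^*$ and (up to duality) $Q_\varepsilon = \Pi_* (\Id-\Lcal_\varepsilon)^{-1}\Pi^*$, and compression does not commute with taking resolvents, because $\Lcal_\varepsilon$ has nontrivial order-one dynamics inside each site $[i]$: the error $(\Id - \Pi^*\Pi_*)\Lcal_\varepsilon$ is $\Theta(1)$ in operator norm. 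The exact relation $Q_\varepsilon = (\Id-P_\varepsilon)^{-1}$ holds only for genuine random walks (the paper points this out explicitly), and the asymptotic equivalence of $\Pi_*(\Id-\Lcal_\varepsilon)^{-1}\Pi^*$ with $(\Id-\Pi_*\Lcal_\varepsilon\Pi^*)^{-1}$ is not an input one can feed into bookkeeping of expansions --- it \emph{is} the theorem. Establishing it is what forces the dynamical machinery of the paper: invariant cones $C_K(\varepsilon)$ of functions $\varepsilon$-close to piecewise constant and their contraction under $\Lcal_\varepsilon$ (so that the finite-rank approximation becomes good \emph{on the cone}), control of products of $\Theta(\varepsilon^{-1})$ nearly-identity matrices via an integral Baker--Campbell--Hausdorff bound, a coupling argument giving the exponential tightness $\norm{\Lcal_\varepsilon^{\lfloor \varepsilon^{-1}t\rfloor} f}{K,\varepsilon} \leq C e^{-\rho t}\norm{f}{K,\varepsilon}$, and then integration in $t$ to identify $\varepsilon Q_\varepsilon$ with $R_0^{-1}$; plus a separate structural analysis of periodic components to remove the mixing assumption.

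The converse direction in your sketch is also too optimistic. In the paper it does not follow by ``mirroring'' the substitution: one first needs the a priori bound $\Id - P_\varepsilon = O(\varepsilon)$, which is obtained not from linear algebra but from the $|I|=2$ case of the direct implication applied to pairs of sites (the off-diagonal entries of $P_\varepsilon$ are dominated by two-site hitting probabilities whose asymptotics are $\Theta(\varepsilon)$ by irreducibility of $S$); one then extracts an adherence point $R$ of $\varepsilon^{-1}(\Id-P_\varepsilon)$ and must prove its irreducibility, which in the paper is a probabilistic argument using Kac's formula (a reducible $R$ would force certain inter-class transition probabilities to be $o(\varepsilon)$, contradicting the two-site lower bound), not a ``purely linear-algebraic verification'' that the two irreducibility notions correspond under $S = R_0^{-1}$. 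Only then does uniqueness of the adherence point, via the direct implication, yield the expansion of $P_\varepsilon$. So the missing ideas are the time-scale separation/cone-contraction mechanism replacing your nonexistent exact identity, and the bootstrapping through the two-site case in the converse.
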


One of the main features of this theorem is that its hypotheses are quite general: recurrence is necessary for $P_\varepsilon$ to be a stochastic matrix, while ergodicity ensures (among other things) that $P_\varepsilon$ is irreducible. No assumption is made on the shape of the subsets $(\Sigma_\varepsilon)_{\varepsilon >0}$ nor on the jumps $F$. Note that, for random walks, we have $Q_\varepsilon = (\Id-P_\varepsilon)^{-1}$, so that the conclusion of Theorem~\ref{thm:MainTheorem} then holds even in the non-asymptotic regime.

\smallskip

The proof of Theorem~\ref{thm:MainTheorem} relies on the theory of \emph{hitting time of small targets}~\cite{Pitskel:1991, Hirata:1993, ChernovDolgopyat:2009, BalintGilbertNandoriSzaszToth:2017}: when $\varepsilon$ vanishes, the transitions between different sites of $A \times \Sigma_\varepsilon$ become rare events\footnote{This point of view explains the use of the parameter $\varepsilon$, which is the size of the targets, instead of $t$, for most of this article.}, so the process induced by $(\widetilde{T}^n)_{n \geq 0}$ looks like a Markov process on $\Sigma_\varepsilon$ with transitions at times of order $\varepsilon^{-1}$. We give a probabilistic proof of Theorem~\ref{thm:MainTheorem}, inspired by previous works on fast-slow  systems~\cite{ChernovDolgopyat:2009, DeSimoiLiverani:2015} and using a coupling argument to control these transitions. Note however, that a more analytic approach could be possible, along the line adopted by G.~Keller and C.~Liverani~\cite{KellerLiverani:2009a}, or by D.~Dolgopyat and P.~Wright~\cite{DolgopyatWright:2012} (the later work being closer to our setting, as the operators involved have a degeneracy of higher order than those studied in~\cite{KellerLiverani:2009a}).

\smallskip

The potential operators $(Q_\varepsilon)_{\varepsilon>0}$ can be estimated, in the asymptotic regime $\varepsilon \to 0$, from the perturbations of the main eigenvalue of the transfer operator associated with $(A, \mu, T)$. When the jump function $F$ is well-behaved, we get explicit expressions for the operator $S$ in Theorem~\ref{thm:MainTheorem}, and from there, a first order expansion of the transition matrices $(P_\varepsilon)_{\varepsilon >0}$. We investigate four cases, when $d=1$ and $F$ is in the basin of attraction of a L\'evy stable distribution (distinguishing the $\Lbb^2$, general L\'evy and Cauchy cases), and when $d=2$ and $F$ is square integrable. For instance, in the later case, we get

\begin{proposition}\quad
\label{prop:ApplicationD2CarreIntegrable}
 
 Let $([\Z^2], \widetilde{\mu}, \widetilde{T})$ be an ergodic and recurrent Markov $\Z^2$-extension of a Gibbs-Markov map $(A, \mu, T)$ with jump function $F : A \to \Z^2$. Assume furthermore that $\norm{F}{} \in \Lbb^2 (A, \mu)$. Let $\Cov > 0$ be that asymptotic covariance matrix of $F$ defined by Equation~\eqref{eq:D2GreenKubo}.

\smallskip

Let $I$ be non-empty and finite and $\sigma : I \hookrightarrow \R^2$ be injective. For $t > 0$, let $\sigma_t : I \hookrightarrow \Z^2$ be such that $\sigma_t (i) = t \sigma(i) + o(t)$ for all $i \in I$. Let $P_t$ be the transition matrix of the Markov chain induced on $\Sigma_t := \sigma_t (I)$. Then
\begin{equation*}
 P_t 
 =_{t \to + \infty} \Id - \frac{\pi \sqrt{\det(\Cov)}}{\ln(t)}  R + o (\ln(t)^{-1}),
\end{equation*}
where $R$ is the irreducible, symmetric bi-L-matrix defined by Equation~\eqref{eq:DefinitionRD1Cauchy}:
\begin{equation*}
\left\{
 \begin{array}{lcll}
 R_{ij} & = & -\frac{1}{|I|} & \quad \text{whenever } i \neq j \in I, \\
 R_{ii} & = & \frac{|I|-1}{|I|} & \quad \forall i \in I.
 \end{array}
 \right.
\end{equation*}
\end{proposition}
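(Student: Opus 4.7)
By Theorem~\ref{thm:MainTheorem}, the task reduces to determining the leading-order asymptotic of the potential operator $Q_t$ on $A \times \Sigma_t$: once we establish $Q_t = (\ln t)(\pi\sqrt{\det\Cov})^{-1} J + o(\ln t)$, where $J$ denotes the $|I|\times|I|$ all-ones matrix, Theorem~\ref{thm:MainTheorem} applied with the rescaled small parameter $\varepsilon_t := \pi\sqrt{\det\Cov}/\ln t$ yields $P_t = \Id - \varepsilon_t R + o(\varepsilon_t)$, and a direct algebraic check identifies $R$ as the unique irreducible bi-L-matrix whose reduced inverse produces the rank-one leading matrix $J$. The second step is the analytic heart of the proof; the third is essentially linear algebra.

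I would compute $Q_t(i,j)$ via Fourier inversion on $\Z^2$, expressing it as an integral over $\Tbb^2$ involving the resolvent of the perturbed transfer operator $L_\xi f := L(e^{\ii \xi \cdot F} f)$, schematically
\begin{equation*}
    Q_t(i,j) \simeq \frac{1}{(2\pi)^2} \int_{\Tbb^2} e^{\ii \xi \cdot (\sigma_t(j) - \sigma_t(i))} \, \langle (I - L_\xi)^{-1} \mathbf{1}_A, \mathbf{1}_A \rangle \, d\xi,
\end{equation*}
up to corrections arising from the section-based construction underlying Theorem~\ref{thm:MainTheorem}. Since $\|F\| \in \Lbb^2$, Nagaev--Guivarc'h perturbation theory applies and gives a simple dominant eigenvalue
\begin{equation*}
    \lambda(\xi) = 1 - \tfrac{1}{2} \langle \xi, \Cov \xi \rangle + o(|\xi|^2)
\end{equation*}
near $\xi = 0$, together with a uniform spectral gap away from the origin. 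The Gaussian part governs the divergence of the integral: after diagonalising $\Cov$ by a linear change of variables and rescaling, the local contribution reduces to the classical two-dimensional Newtonian integral $\int_{\R^2} e^{\ii \eta \cdot v}|\eta|^{-2}\, d\eta = -2\pi \ln |v| + O(1)$, giving
\begin{equation*}
    Q_t(i,j) = \frac{\ln |\sigma_t(j) - \sigma_t(i)|}{\pi \sqrt{\det \Cov}} + O(1) = \frac{\ln t}{\pi\sqrt{\det\Cov}} + O(1)
\end{equation*}
for $i \neq j$; the diagonal entries $Q_t(i,i)$ share the same leading constant, the formally divergent pointwise contribution of the Newtonian kernel being absorbed by the renormalisation built into the definition of $Q_t$.

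The final step is algebraic: the leading matrix $(\pi\sqrt{\det\Cov})^{-1} J$ is rank one, so the irreducible bi-L-matrix $R$ for which $R_0^{-1}$ reproduces this rank-one matrix is forced to be the orthogonal projection $\Id - J/|I|$ onto the mean-zero subspace, which is precisely the formula of Equation~\eqref{eq:DefinitionRD1Cauchy}. The main obstacle is the uniform asymptotic analysis above: one must cleanly separate the Gaussian contribution of $\lambda(\xi)$ near the origin (responsible for the logarithm) from the bounded corrections, exploit the uniform spectral gap of $L_\xi$ on the complement of a neighborhood of $0$, and — the most delicate point — handle the diagonal entries coherently so that the leading matrix is the rank-one $J$ rather than an $\Id$-plus-rank-one object, which would not be invertible in the sense required by Theorem~\ref{thm:MainTheorem}.
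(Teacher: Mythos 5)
Your overall route is the paper's: reduce to the asymptotics of the potential operator via Theorem~\ref{thm:MainTheorem}, compute it by Fourier inversion with the Nagaev--Guivarc'h expansion $\lambda_w = 1-\tfrac12\langle w,\Cov(w)\rangle+o(\norm{w}{}^2)$ plus a uniform spectral bound away from $0$, and extract the $\ln t$ from the two-dimensional Newtonian singularity. The genuine problem is your identification of the leading matrix. The operator $Q_t$ acts on $\C_0^I$, and the all-ones matrix $J$ acts as \emph{zero} on $\C_0^I$; so the claimed asymptotics $Q_t=(\ln t)(\pi\sqrt{\det\Cov})^{-1}J+o(\ln t)$ would give $S=0$ in Hypothesis~\ref{hyp:AsymptoticsQ}, which is not irreducible in the sense of Definition~\ref{def:QMatriceIrreductible}, and Theorem~\ref{thm:MainTheorem} could not be applied. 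Likewise, ``the bi-L-matrix $R$ for which $R_0^{-1}$ reproduces the rank-one matrix $J$'' does not exist: $R_0^{-1}$ is an invertible operator on the $(|I|-1)$-dimensional space $\C_0^I$, never rank one (and never zero). The statement that is both true and needed is $Q_t\sim \frac{\ln t}{\pi\sqrt{\det\Cov}}\,\Id_{\C_0^I}$, i.e.\ $\langle f,Q_t(g)\rangle_{\ell^2(I)}\sim\frac{\ln t}{\pi\sqrt{\det\Cov}}\langle f,g\rangle_{\ell^2(I)}$ for $f,g\in\C_0^I$; then $S$ is a positive multiple of $\Id_{\C_0^I}$, and $R$ is the bi-L-matrix with $R_0=S^{-1}$ up to the scalar, which is $\Id-J/|I|$. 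Your closing remark has this exactly backwards: the ``$\Id$-type'' leading object is the invertible one; the rank-one $J$ is the one that fails.

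The error originates in the entrywise step. Since only the bilinear form on mean-zero vectors is defined, the Green entries make sense only modulo a common renormalisation, and with such a renormalisation they are $-a(\sigma_t(j)-\sigma_t(i))$, where $a$ is the potential kernel, $a(0)=0$ and $a(v)\sim\frac{\ln\norm{v}{}}{\pi\sqrt{\det\Cov}}$; your own cited formula $\int_{\R^2}e^{\ii\eta\cdot v}\norm{\eta}{}^{-2}\,\de\eta=-2\pi\ln\norm{v}{}+O(1)$ carries the minus sign that you then drop. Hence the off-diagonal entries are $-\frac{\ln t}{\pi\sqrt{\det\Cov}}+O(1)$ while the diagonal ones are $O(1)$: modulo constants the kernel is $\frac{\ln t}{\pi\sqrt{\det\Cov}}(\Id-J)$, whose restriction to $\C_0^I$ is the required multiple of the identity -- the diagonal and off-diagonal entries must \emph{differ} by $c\ln t$, not share the same leading constant. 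In the paper's computation this appears as follows: after splitting $\widecheck{\Fcal}_\varepsilon(f)\Fcal_\varepsilon(g)$ into frequencies, the zero-frequency ($i=j$) terms produce $2\pi|\ln\varepsilon|$, while each non-zero frequency is shown to be $o(|\ln\varepsilon|)$ by an integration by parts in polar coordinates (a Riemann--Lebesgue argument, with a separate treatment of directions nearly orthogonal to $\sqrt{\Cov}^{-1}(\sigma(j)-\sigma(i))$); your sketch replaces this step by the incorrect equal-leading-constant assertion. Once corrected, the algebra does give $R=\Id-J/|I|$, so your final formula is right, but via $R_0=\Id_{\C_0^I}$ rather than any rank-one identification; also note that exchanging the $\rho\to1^-$ regularisation with the $t\to\infty$ asymptotics requires the care taken in Propositions~\ref{prop:LimitePoissonTordue} and~\ref{prop:LimitePoissonTF}.
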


This text also includes a few results which may be of independent interest, such as a structural result on the periodic components of a $\Z^d$-extension of a Gibbs-Markov map, which has the curious particularity of being different for $\Z$ and $\Z^2$-extensions (see Proposition~\ref{prop:StructureColorationInduits}).

\smallskip

The structure of this article is as follows. Section~\ref{sec:ContexteEtResultats} is an introduction. We present a general method to compute transition matrices for random walks, ours setting ($\Z^d$-extensions, induced systems, Gibbs-Markov maps and transfer operators), and give our strategy to get ours results. Sections~\ref{sec:MasterLemma}, \ref{sec:ErgodicStructure} and~\ref{sec:Inversion} give a proof of Theorem~\ref{thm:MainTheorem}; this decomposition in three steps shall be explained in Sub-subsection~\ref{subsubsec:MainTheorem}. Afterwards, we shall obtain Proposition~\ref{prop:ApplicationD2CarreIntegrable} and similar propositions in Sections~\ref{sec:FourierTransform} and~\ref{sec:Calculs}; the former will give a general formula for the potential operator $Q_\varepsilon$ using Fourier transform, while the later will apply this formula to various settings. Finally, an appendix compiles some technical results on Gibbs-Markov maps.

\section*{Acknowledgements}
\label{sec:Remerciements}

The authors thanks Fran\c{c}oise P\`ene for the useful discussions while working on this project and her comments on the current version of this article.

\section{Setting and main results}
\label{sec:ContexteEtResultats}

\subsection{Hitting probabilities for recurrent random walks}
\label{subsec:RandomWalks}

\subsubsection{General theory}
\label{subsubsec:GeneralTheory}

Consider a sequence of independent and identically distributed $\Z^d$-valued random variables $(X_n)_{n \geq 0}$, and set $S_n := \sum_{k = 0}^{n-1} X_k$. The sequence of random variables $(S_n)_{n \geq 0}$ is a random walk in $\Z^d$ starting from $0$. Let $P$ be its transition kernel:
\begin{equation*}
 P_{pq} = \Pbb (X_0 = q-p).
\end{equation*}
This transition kernel acts on functions by $P (f) (p) = \sum_{q \in \Z^d} P_{pq} f(q)$.

\smallskip

In what follows, we shall assume that $(S_n)_{n \geq 0}$ is recurrent and ergodic, which implies in particular that $d \in \{1, 2\}$. Given $\Sigma \subset \Z^d$ and $p \in \Z^d \setminus \Sigma$, we define the hitting time of $\Sigma$ as the random variable
\begin{equation*}
 \overline{\varphi}_\Sigma (p) 
 := \inf \{n \geq 0: \ p+S_n \in \Sigma\},
\end{equation*}
and the \emph{hitting point} as $p+S_{\overline{\varphi}_\Sigma (p)} \in \Sigma$. Then, by recurrence, the hitting point is a $\Sigma$-valued random variable. Its distribution can be computed in the following way: given any bounded function $f : \Sigma \to \R$, extend it to a bounded function $g : \Z^d \to \R$ such that
\begin{equation}
\label{eq:EquationPoissonAvecFrontiere}
 \left\{
 \begin{array}{lll}
  g(r) & = f(r) & \quad \forall r \in \Sigma, \\
  (\Id-P) (g) (r) & = 0 & \quad \forall r \in \Z^d \setminus \Sigma.
 \end{array}
 \right.
\end{equation}
Then, by a martingale argument,
\begin{equation*}
 g(p) 
 = \mathbb{E} (f (p+S_{\overline{\varphi}_\Sigma (p)}));
\end{equation*}
specializing e.g. to $f = \mathbf{1}_q$ for $q \in \Sigma$ gives in principle the distribution of the hitting point, as long as one is able to solve Equation~\eqref{eq:EquationPoissonAvecFrontiere}.

\smallskip

In this article, we are interested in a slightly different problem: $\Sigma$ is finite, and we want to understand transition probabilities ; that is, the starting point $p$ belongs to $\Sigma$, and we consider the \emph{hitting time}
\begin{equation*}
 \varphi_\Sigma (p) 
 := \inf \{n \geq 1: \ p+S_n \in \Sigma\}.
\end{equation*}
The problem of computing transition probabilities can be reframed in the following way. Taking a trajectory $(S_n)_{n \geq 0}$ of the random walk and keeping only the times at which the random walk belongs to $\Sigma$ yields, by the strong Markov property, a $\Sigma$-valued \emph{induced Markov chain}.

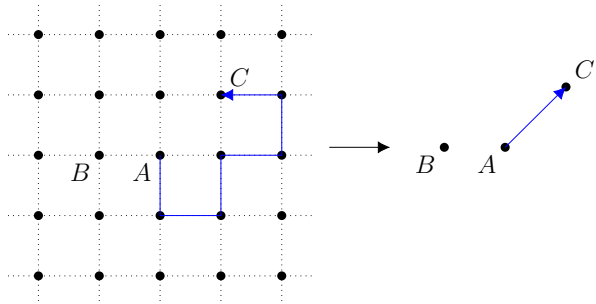
\begin{figure}[!h]
\centering

\scalebox{0.8}{
\begin{tikzpicture}
\foreach \x in {-2,...,2}{
    \draw [dotted] (\x,-2.5) -- (\x,2.5) ;
    }
\foreach \y in {-2,...,2}{
    \draw [dotted] (-2.5,\y) -- (2.5,\y) ;
    }
\foreach \x in {-2,...,2}{
    \foreach \y in {-2,...,2}{
        \node at (\x,\y) [circle,fill,inner sep=1.5pt]{} ;
        }
    }
\draw node[below left] (A) at (0,0) {$A$};
\draw node[below left] (B) at (-1,0) {$B$};
\draw node[above right] (C) at (1,1) {$C$};
\draw [-{Latex[length=2mm,width=2mm]},blue] (0,0) -- (0,-1) -- (1,-1) -- (1,0) -- (2,0) -- (2,1) -- (1,1) ;
\end{tikzpicture}
\begin{tikzpicture}
\draw [white] node at (0,-2.5) {} ;
\draw [white] node at (-0.5,0) {} ;
\draw [-{Latex[length=2mm,width=2mm]}] (-0.5,0) -- (0.5,0) ;
\draw [white] node at (0.5,0) {} ;
\draw [white] node at (0,2.5) {} ;
\end{tikzpicture}
\begin{tikzpicture}
\draw [white] node at (0,-2.5) {} ;
\foreach \P in {(-1,0), (0,0), (1,1)}{
    \node at \P [circle,fill,inner sep=1.5pt]{} ;
    }
\draw node[below left] (A) at (0,0) {$A$};
\draw node[below left] (B) at (-1,0) {$B$};
\draw node[above right] (C) at (1,1) {$C$};
\draw [white] node at (0,2.5) {} ;

\draw [-{Latex[length=2mm,width=2mm]}, blue]  (0,0) -- (1,1) ;
\end{tikzpicture}
}
\caption{A transition on $\{A, B, C\}$ induced by a path in $\Z^2$.}
\label{fig:MarcheAleatoireZ2Induit}
\end{figure}

This induced Markov chain has transition matrix
\begin{equation*}
 P_{\Sigma,pq}
 = \Pbb (p+S_{\varphi_\Sigma (p)} = q),
\end{equation*}
and we want to compute the operator $P_\Sigma$. This can be done thanks to a variation of Equation~\eqref{eq:EquationPoissonAvecFrontiere} adapted to starting points belonging to $\Sigma$, the so-called \emph{balayage identity}. The version we quote is close to~\cite[Corollary~1.11]{Revuz:1975}, and can be found in~\cite{PeneThomine:2020}:

\begin{theorem}[Balayage identity]\quad
\label{thm:IdentiteBalayage}
 
 Let $\Omega$ be a Polish space, $P$ a Markov kernel on $\Omega$, and $\mu$ a measure which is stationary, recurrent and ergodic for $P$.
 
 \smallskip
 
 Let $\Sigma \subset \Omega$ be measurable, with $\mu (\Sigma) > 0$. Let $g \in \Lbb^\infty (\Omega, \mu)$ and $f \in \Lbb^\infty (\Sigma, \mu_{|\Sigma})$. Let $f \mathbf{1}_\Sigma : \Z^d \to \C$ be the function which coincides with $f$ on $\Sigma$ and is null everywhere else.
 
 \smallskip
 
 If
 \begin{equation*}
  (\Id-P) g = f\mathbf{1}_\Sigma,
 \end{equation*}
 then:
 \begin{equation*}
  (\Id-P_\Sigma) g_{|\Sigma} = f.
 \end{equation*} 
\end{theorem}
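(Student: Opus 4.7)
The plan is to prove the balayage identity by an optional-stopping argument. I would work on the canonical path space of the Markov kernel $P$, writing $(X_n)_{n \geq 0}$ for the coordinate process and $\Pbb_x$ for the law of the chain started at $x$. Setting
\[
 \varphi_\Sigma := \inf\{n \geq 1 : X_n \in \Sigma\},
\]
the induced kernel reads $P_\Sigma g(x) = \Ebb_x[g(X_{\varphi_\Sigma})]$ for $x \in \Sigma$, and the recurrence and ergodicity of $\mu$, together with $\mu(\Sigma)>0$, guarantee that $\varphi_\Sigma < +\infty$ $\Pbb_x$-a.s.\ for $\mu_{|\Sigma}$-a.e.\ $x$.

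The key is to read $(\Id - P) g = f \mathbf{1}_\Sigma$ as saying that $g$ is ``harmonic with source $f \mathbf{1}_\Sigma$''. I therefore introduce the Doob-type process
\[
 M_n := g(X_n) - g(X_0) + \sum_{k=0}^{n-1} f(X_k) \mathbf{1}_\Sigma(X_k).
\]
The Markov property together with the hypothesis gives $\Ebb_x[M_{n+1} - M_n \mid \Fcal_n] = P g(X_n) - g(X_n) + f(X_n) \mathbf{1}_\Sigma(X_n) = 0$, so $(M_n)$ is a $\Pbb_x$-martingale for every $x$. The essential feature of $M$ is that its increments are driven only by visits to $\Sigma$.

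Now I would fix $x \in \Sigma$ and apply optional stopping at the bounded time $\varphi_\Sigma \wedge n$. Since $X_0 = x \in \Sigma$ while $X_1, \ldots, X_{\varphi_\Sigma - 1} \notin \Sigma$ by definition, the sum appearing in $M_{\varphi_\Sigma \wedge n}$ collapses, for $n \geq 1$, to the single term $f(x)$. This yields
\[
 \Ebb_x\bigl[g(X_{\varphi_\Sigma \wedge n})\bigr] = g(x) - f(x).
\]
Letting $n \to +\infty$, almost-sure finiteness of $\varphi_\Sigma$ gives $g(X_{\varphi_\Sigma \wedge n}) \to g(X_{\varphi_\Sigma})$ $\Pbb_x$-a.s., and the $\Lbb^\infty$ bound on $g$ provides the necessary domination. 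Dominated convergence then produces $P_\Sigma g(x) = g(x) - f(x)$ for $\mu_{|\Sigma}$-a.e.\ $x$, which is precisely $(\Id - P_\Sigma) g_{|\Sigma} = f$.

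The main subtlety is justifying the passage to the limit at the (generally unbounded) stopping time $\varphi_\Sigma$; this is handled cleanly by the truncation-and-dominated-convergence step above, so the boundedness hypothesis on $g$ is exactly what makes the argument work. The recurrence of $\mu$ is used precisely to guarantee $\varphi_\Sigma < + \infty$ almost surely, while ergodicity promotes the conclusion from holding on a $P$-invariant recurrent subset to holding $\mu_{|\Sigma}$-a.e.\ on $\Sigma$.
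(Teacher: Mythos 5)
Your proof is correct, and it is worth noting that the paper itself does not prove Theorem~\ref{thm:IdentiteBalayage}: the statement is imported from~\cite{PeneThomine:2020} (a version close to~\cite[Corollary~1.11]{Revuz:1975}), where the identity is established at the level of the kernel, essentially by decomposing $P_\Sigma$ over the first return time and telescoping, a formulation chosen so that it applies verbatim when the Markov kernel is the transfer operator $\widetilde{\Lcal}$ used later in the paper. Your pathwise optional-stopping argument is the probabilistic counterpart of that same computation, and it matches the ``martingale argument'' the paper invokes informally for random walks in Sub-subsection~\ref{subsubsec:GeneralTheory}; since $P$ is a genuine Markov kernel on a Polish space, passing to the canonical chain is legitimate in the stated generality, so nothing is lost by arguing pathwise. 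The verification is sound: $M_n$ is a bounded-increment martingale because $Pg-g=-f\mathbf{1}_\Sigma$, the sum in $M_{\varphi_\Sigma\wedge n}$ reduces to $f(x)$ since $X_1,\ldots,X_{\varphi_\Sigma-1}\notin\Sigma$, and recurrence plus dominated convergence give $\Ebb_x\bigl[g(X_{\varphi_\Sigma})\bigr]=g(x)-f(x)$.

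Two points deserve more care than you give them. First, $(\Id-P)g=f\mathbf{1}_\Sigma$ holds only $\mu$-almost everywhere, so the displayed conditional-expectation computation is valid only off a $\mu$-null set $N$; fix bounded versions of $f$ and $g$ and use stationarity of $\mu$ (so that $\Pbb_\mu(X_n\in N)=0$ for every $n$) to conclude that for $\mu$-a.e.\ $x$ the chain $\Pbb_x$-a.s.\ never meets $N$, making $(M_n)$ an honest $\Pbb_x$-martingale; the conclusion then holds $\mu_{|\Sigma}$-a.e., which is the intended meaning of the equality in $\Lbb^\infty$. Second, ergodicity plays no role in your argument (nor is it needed for this identity): with the paper's notion of recurrence, $\varphi_\Sigma<+\infty$ $\Pbb_x$-a.s.\ for $\mu$-a.e.\ $x\in\Sigma$ already follows, so your closing sentence about ergodicity ``promoting'' the conclusion is superfluous and should be dropped rather than left as if it were doing work.
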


Theorem~\ref{thm:IdentiteBalayage} relates the solution of the Poisson 
equation for the kernel $P$ on $\Z^d$ to the solutions of the Poisson equation for the kernel 
$P_\Sigma$ on $\Sigma$. In the framework of electrodynamics, $f$ is an intensity, while $g$ is an associated potential.

\smallskip

A first remark is that $(S_n)_{n \geq 0}$, seen as a Markov chain on $\Z^d$, preserves the counting measure, is ergodic and recurrent. Moreover, for general recurrent Markov chains with stationary measure $\mu$, the restricted measure $\mu_{|\Sigma}$ is stationary for the Markov chain induced on $\Sigma$. Consequently, in the context of random walks, $P_\Sigma$ preserves the uniform measure $\mu_\Sigma$ on $\Sigma$.

\smallskip

When acting on functions, $P_\Sigma$ preserves constant functions. Since $(S_n)_{n \geq 0}$ is ergodic, so is $P_\Sigma$, so $\Ker (\Id-P_\Sigma) = \Vect (\mathbf{1})$ has dimension $1$. Finally, since $P_\Sigma$ preserves $\mu_\Sigma$, the eigenprojection onto $\Vect (\mathbf{1})$ is given by $f \mapsto \mu_\Sigma (f) \mathbf{1}$, whence we have a $P_\Sigma$-invariant splitting $\C^\Sigma = \Vect (\mathbf{1}) \oplus \C_0^\Sigma$, with $\C_0^\Sigma := \Ker(\mu_\Sigma)$. Hence, all we need is to understand the action of $P_\Sigma$ on $\C_0^\Sigma$.

\smallskip

Now, take any $f \in \C_0^\Sigma$. Extend it to $f \mathbf{1}_\Sigma : \Z^d \to \C$. Find a function $g : \Z^d \to \C$ such that $(\Id-P) g = f \mathbf{1}_\Sigma$ and $\mu_{|\Sigma} (g) = 0$ ; such a function always exists under our hypotheses. Then $g_{|\Sigma} \in \C_0^\Sigma$ and $(\Id-P_\Sigma) g_{|\Sigma} = f$ by the balayage identity. Doing this operation for all $f$ in a basis of $\C_0^\Sigma$ allows us, in theory, to compute the operator $(\Id-P_\Sigma)^{-1}$ on $\C_0^\Sigma$. Then, invert $(\Id-P_\Sigma)^{-1}$, and from there get the action of $P_\Sigma$ on $\C_0^\Sigma$.

\smallskip

All is left is to effectively compute $g$ given $f$. At this point the algebraic structure of the random walk plays a role, as we can use the Fourier transform. The main idea is that the operator $\widehat{P}$, which corresponds to the action of $P$ in the frequency domain, is merely the multiplication operator by the characteristic function $\Psi (\xi) := \Ebb (e^{i \langle \xi,  X \rangle})$ of $X$. Up to an additive constant,
\begin{equation*}
 \hat{g} (\xi) 
 = \sum_{n = 0}^{+ \infty} \widehat{P}^n (\xi) \hat{f} (\xi) 
 = \sum_{n = 0}^{+ \infty} \Psi (\xi)^n \hat{f} (\xi) 
 = \frac{\hat{f} (\xi)}{1-\Psi(\xi)},
\end{equation*}
whence, writing $\Tbb^d = \R^d_{/ \Z^d}$,
\begin{equation}
 \label{eq:MAFourierASpatial}
 g(p) 
 = \frac{1}{(2 \pi)^d} \int_{\Tbb^d} \frac{\hat{f} (\xi)}{1-\Psi(\xi)} e^{i \langle \xi, p \rangle} \dd \xi,
 \end{equation}
still up to an additive constant. If the kernel $P$ is nice enough, as is the case for the simple random walk on $\Z^d$, such integrals can even be explicitly computed.

\smallskip

To sum up, this strategy to compute the Markov kernel $P_\Sigma$ has three main ingredients:
\begin{itemize}
 \item the balayage identity (Theorem~\ref{thm:IdentiteBalayage}) which relates the Markov kernels $P$ and $P_\Sigma$ through the solutions to their respective Poisson equations;
 \item the ability to solve the Poisson equation on $\Z^d$, using for instance the Fourier transform;
 \item the ability to invert $(\Id-P_\Sigma)^{-1}$.
\end{itemize}
In the case of random walks, $\C_0^\Sigma$ is finite dimensional, so the last step may seem comparatively insignificant. However, in the remainder of this article, we shall work with operators acting on infinite-dimensional Banach spaces, for which getting explicit information on their inverse is much tougher.

\subsubsection{An example with the simple random walk}
\label{subsubsec:MASimpleExample}

Now, let us delve deeper into the method described in Sub-subsection~\ref{subsubsec:GeneralTheory} by applying it to the example of Figure~\ref{fig:MarcheAleatoireZ2}. We take for $(S_n)_{n \geq 0}$ the simple random walk on $\Z^2$, and $\Sigma = \{A, B, C\}$ with $A = (0;0)$, $B=(-1;0)$ and $C=(1;1)$.

\smallskip

The simple random walk has a transition kernel $P_p = \frac{1}{4} \sum_{q \leftrightarrow p} \delta_q$, where $q \leftrightarrow p$ if and only if $p$ an $q$ are neighbours. Its action on functions $f : \Z^2 \to \C$ is thus
\begin{equation*}
 P(f) (p) 
 = \frac{1}{4} \sum_{q \leftrightarrow p} f(q),
\end{equation*}
and the simple random walk is indeed ergodic, recurrent, and preserves the counting measure on $\Z^2$.

\smallskip

Let $P_\Sigma$ be the transition matrix for the Markov chain induced on $\Sigma$. Then $P_\Sigma$ preserves the uniform measure on $\Sigma$ denoted by $\mu_\Sigma$. Thus we have a $P_\Sigma$-invariant splitting $\C^\Sigma = \Ker (\mu_\Sigma) \oplus \Vect(\mathbf{1})$.

\smallskip

Now, we need to understand the action of $P_\Sigma$ on $\Ker (\mu_\Sigma) = \C_0^\Sigma$. It is enough to understand its action on a basis, say, $e_1:=\mathbf{1}_A-\mathbf{1}_B$ and $e_2:=\mathbf{1}_A-\mathbf{1}_C$. Note however that Equation~\eqref{eq:MAFourierASpatial} only let us compute $(\Id-P_\Sigma)^{-1} e_j$ up to an additive constant. To sidestep this issue, we compute the quantities
\begin{equation*}
 \langle e_i, (\Id-P_\Sigma)^{-1} e_j \rangle_{\ell^2 (\Sigma)}
 := \sum_{p \in \Sigma} e_i (p) \cdot \left[ (\Id-P_\Sigma)^{-1} e_j \right] (p)
\end{equation*}
for $i$, $j \in \{1,2\}$, which is enough to reconstruct the action of $(\Id-P_\Sigma)^{-1}$ on $\C_0^\Sigma$. Since all the $e_i$ are in $\C_0^\Sigma$, the indetermination of $(\Id-P_\Sigma)^{-1}$ up to constants does not matters. We illustrate this computation with $i = j = 1$. By Theorem~\ref{thm:IdentiteBalayage}, 
\begin{align*}
 \langle e_1, (\Id-P_\Sigma)^{-1} e_1 \rangle_{\ell^2 (\Sigma)} 
 & = \sum_{p \in \Sigma} e_1 (p) \cdot \left[ (\Id-P_\Sigma)^{-1} e_1 \right] (p) \\
 & = \sum_{p \in \Z^2} (e_1 \mathbf{1}_\Sigma) (p) \cdot \left[ (\Id-P)^{-1} (e_1 \mathbf{1}_\Sigma) \right] (p) \\
 & = \frac{1}{(2 \pi)^2} \int_{\Tbb^2} \widehat{(\mathbf{1}_A - \mathbf{1}_B)} (-\xi) \cdot \frac{\widehat{(\mathbf{1}_A - \mathbf{1}_B)} (\xi)}{1-\Psi (\xi)} \dd \xi \\
 & = \frac{1}{(2 \pi)^2} \int_{\Tbb^2} (1-e^{-i \xi_1}) \cdot \frac{(1-e^{i \xi_1})}{1-\frac{e^{i \xi_1} + e^{-i \xi_1} + e^{i \xi_2} + e^{-i \xi_2}}{4}} \dd \xi \\
 & = \frac{1}{\pi^2} \int_{\Tbb^2} \frac{1-\cos (\xi_1)}{2-\cos(\xi_1)-\cos(\xi_2)} \dd \xi \\
 & = 2.
\end{align*}
The other three computations are similar. We recover from there the action of $(\Id-P_\Sigma)^{-1}$ on $\C_0^\Sigma$, then the action on $P_\Sigma$ on $\C_0^\Sigma$ after inversion. Finally, adding the rank one map $\mathbf{1} \otimes \mu_\Sigma$ yields the matrix of Equation~\eqref{eq:MatriceTransitionMASimple}.

\subsection{Dynamical systems: extension and induction}
\label{subsec:ExtensionEtInduction}

\subsubsection{$\Z^d$-extensions of dynamical systems}

A common generalization of random walks on $\Z^d$, in the context of ergodic theory, is that of \emph{$\Z^d$-extension}. Let $(A, \mu, T)$ be a probability-preserving dynamical system, and $F : A \to \Z^d$ a measurable function (which we call the \textit{jump function}). Let $([\Z^d], \widetilde{\mu}, \widetilde{T})$ be the corresponding $\Z^d$-extension as defined at the beginning of this text.

\smallskip

Let us write $S_n F (x) := \sum_{k=0}^{n-1} F (T^k (x))$ the Birkhoff sums of $F$. Then 
\begin{equation*}
 \widetilde{T}^n (x, p) 
 = (T^n (x), p+S_n F (x)).
\end{equation*}
This setting includes random walks: take $A = (\Z^d)^\N$, then $T((x_n)_{n \geq 0}) = (x_{n+1})_{n \geq 0}$ and $\mu = \mu_0^{\otimes \N}$ a product measure on $A$. Choose $F((x_n)_{n \geq 0}) = x_0$; then, under $\mu \otimes \nu \in \Pcal([\Z^d])$, the second coordinate is a random walk whose starting point has distribution $\nu$ and whose jumps have distribution $\mu_0$. If one replaces the full shift by a subshift, or take a Markov measure instead of the product measure, we get a \textit{random walk with internal states}, which is a special case of hidden Markov models~\cite{KramliSzasz:1983, KramliSzasz:1984, KramliSimanyiSzasz:1986}. If $(A, \mu, T)$ is the collision map of a Sinai billiard, then $([\Z^d], \widetilde{\mu}, \widetilde{T})$ is the collision map of a Lorentz gas, a popular diffusion model.

\subsubsection{Inductions of dynamical systems}
\label{subsubsec:Induction}

Let $(B, \nu, S)$ be a measure-preserving dynamical system. For any measurable $C \subset B$ 
with positive measure, we define the \textit{first hitting time} of $C$ as:
\begin{equation*}
 \varphi_C (x) 
 := \inf \{ n \geq 1 : \widetilde{T} (x) \in C \ \}. 
\end{equation*}
The system $(B, \nu, S)$ is said to be ergodic if it has no non-trivial strictly invariant subset, and \textit{recurrent} if $\varphi_C (x) < + \infty$ for almost every $x \in C$, for any measurable $C \subset [\Z^d]$.

\smallskip

If $(B, \nu, S)$ is recurrent and $C \subset B$ has positive measure, the \textit{induced map}
\begin{equation*}
 T_C : \left \{ 
 \begin{array}{rcl}
 C & \to & C \\
 x & \mapsto & \widetilde{T}^{\varphi_C (x)} (x)                
 \end{array}
 \right.
\end{equation*}
is well-defined almost everywhere, recurrent, and preserves the restricted measure $\nu_C := \nu_{|C}$~\cite[Corollary~1.1]{PeneThomine:2020}. If in addition $(B, \nu, S)$ is ergodic, then so is $(C, \nu_C, T_C)$. In what follows, we shall often make this assumption:

\begin{hypothesis}[Recurrence]\quad
\label{hyp:Recurrence}
 
 The $\Z^d$-extension $([\Z^d], \widetilde{\mu}, \widetilde{T})$ is ergodic and recurrent.
\end{hypothesis}

Let $([\Z^d], \widetilde{\mu}, \widetilde{T})$ be a recurrent $\Z^d$-extension. For any $\Sigma \subset \Z^d$, we shall write $[\Sigma] := A \times \Sigma \subset [\Z^d]$. If $\Sigma = \{\sigma\}$ is a singleton, we shall dispense with the brackets, and write $[p] = A \times \{p\}$. In this article, we will work with three types of systems:
\begin{itemize}
 \item the initial probability-preserving dynamical system $(A, \mu, T)$;
 \item a $\Z^d$-extension $([\Z^d], \widetilde{\mu}, \widetilde{T})$ with jump function $F$;
 \item induced maps $([\Sigma], \mu_{[\Sigma]}, T_{[\Sigma]})$ for $\Sigma \subset \Z^d$.
\end{itemize}
We try as much as possible to keep our notations coherent: objects with a tilde refer to the $\Z^d$-extension, and objects with a set as an index refer to induced maps.

\smallskip

In this context, we define transition probabilities as follows:

\begin{definition}[Transition probabilities]\quad

 Let $([\Z^d], \widetilde{\mu}, \widetilde{T})$ be a recurrent $\Z^d$-extension of a dynamical system $(A, \mu, T)$. Let $\Sigma \subset \Z^d$ be finite.
 
 \smallskip
 
 The \textit{matrix of transition probabilities} $P_\Sigma$ is defined by:
 \begin{equation*}
  P_{\Sigma,pq} 
  := \mu \left(x \in A : T_{[\Sigma]} (x,p) \in [q] \right).
 \end{equation*}
\end{definition}

As long as the extension is recurrent, the matrix $P_\Sigma$ is \textit{bi-stochastic}: the sum of each of its rows and of each of its columns is $1$.

\begin{lemma}\quad
 \label{lem:MatriceBiStochastique}
 
 Let $([\Z^d], \widetilde{\mu}, \widetilde{T})$ be a recurrent $\Z^d$-extension of a probability preserving 
 dynamical system. Let $\Sigma \subset \Z^d$ be non-empty. Then $P_\Sigma$ is bi-stochastic.
\end{lemma}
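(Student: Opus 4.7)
The plan is to establish the two halves of bi-stochasticity separately, using only the recurrence of the extension and the invariance property of induced maps recalled in Subsubsection~\ref{subsubsec:Induction}.

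For the row sums, I would unwind the definitions. Since $([\Z^d], \widetilde{\mu}, \widetilde{T})$ is recurrent and $[\Sigma] \subset [\Z^d]$ has positive measure (as $\Sigma \neq \emptyset$), the first hitting time $\varphi_{[\Sigma]}$ is finite $\widetilde{\mu}$-almost everywhere on $[\Sigma]$, so the induced map $T_{[\Sigma]}$ is defined $\mu_{[\Sigma]}$-a.e.\ and takes values in $[\Sigma]$. Fixing $p \in \Sigma$, the sets $(\{x \in A : T_{[\Sigma]}(x,p) \in [q]\})_{q \in \Sigma}$ therefore partition a full-measure subset of $A$, so
\[
\sum_{q \in \Sigma} P_{\Sigma,pq} = \sum_{q \in \Sigma} \mu\bigl(\{x \in A : T_{[\Sigma]}(x,p) \in [q]\}\bigr) = \mu(A) = 1.
\]

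For the column sums, the crucial input is that $T_{[\Sigma]}$ preserves the restricted measure $\mu_{[\Sigma]} = \widetilde{\mu}_{|[\Sigma]}$, as recalled in the excerpt. Since $\widetilde{\mu} = \mu \otimes \sum_{p \in \Z^d} \delta_p$, we have $\mu_{[\Sigma]}([q]) = \mu(A) = 1$ for every $q \in \Sigma$. Using invariance and partitioning $[\Sigma]$ into the fibers $[p]$, $p \in \Sigma$, gives
\[
1 = \mu_{[\Sigma]}([q]) = \mu_{[\Sigma]}\bigl(T_{[\Sigma]}^{-1}[q]\bigr) = \sum_{p \in \Sigma} \mu_{[\Sigma]}\bigl(T_{[\Sigma]}^{-1}[q] \cap [p]\bigr) = \sum_{p \in \Sigma} P_{\Sigma,pq},
\]
where the last equality uses that $\mu_{[\Sigma]}$ restricted to $[p]$ is just $\mu$ on $A$ under the identification $[p] \cong A$.

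There is no real obstacle here: the content of the lemma is essentially bookkeeping once one has (i) the existence and a.e.\ finiteness of the induced map (from recurrence) and (ii) the preservation of $\mu_{[\Sigma]}$ by $T_{[\Sigma]}$ (cited from~\cite[Corollary~1.1]{PeneThomine:2020}). The only point requiring minimal care is to observe that each fiber $[p]$ has $\widetilde{\mu}$-mass exactly $1$, which is what makes bi-stochasticity (rather than mere sub-stochasticity or some weighted statement) come out cleanly.
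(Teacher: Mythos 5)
Your proof is correct and follows essentially the same route as the paper: the row sums equal $1$ because recurrence makes $\varphi_{[\Sigma]}$ finite almost everywhere on $[p]$ (so the target sets $\{x : T_{[\Sigma]}(x,p)\in[q]\}$ exhaust a full-measure set), and the column sums equal $1$ because $T_{[\Sigma]}$ preserves $\mu_{[\Sigma]}$ and each fiber $[q]$ has mass $1$. No gaps; the bookkeeping with the identification $[p]\cong A$ is exactly what the paper does implicitly.
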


\begin{proof}
 
 The fact that $P_\Sigma$ is stochastic is a translation of the fact that $\widetilde{T}$ is recurrent. 
 For any $p \in \Sigma$,
 \begin{align*}
  \sum_{q \in \Sigma} P_{\Sigma, pq} 
  & = \sum_{q \in \Sigma} \mu \left(x \in A : T_{[\Sigma]} (x,p) \in [q] \right) \\
  & = \mu \left(x \in A : T_{[\Sigma]} (x,p) \in [\Sigma] \right) \\
  & = \widetilde{\mu} \left([p] \cap \{ \varphi_{[\Sigma]} < +\infty \} \right) \\
  & = 1.
 \end{align*}

 The bi-stochasticity of $P_\Sigma$ comes from the fact that $T_{[\Sigma]}$ preserves $\mu_{[\Sigma]}$. 
 For any $q \in \Sigma$,
 \begin{align*}
  \sum_{p \in \Sigma} P_{\Sigma, pq} 
  & = \sum_{p \in \Sigma} \mu \left(x \in A : T_{[\Sigma]} (x,p) \in [q] \right) \\
  & = \mu_{[\Sigma]} \left( T_{[\Sigma]}^{-1} ([q]) \right) \\
  & = \mu_{[\Sigma]} \left( [q] \right) \\
  & = 1. \qedhere
 \end{align*}
\end{proof}

\subsection{Gibbs-Markov maps}

If no assumption is made on $(A, \mu, T)$ nor on $F$, the behaviour of the trajectories of the system $([\Z^d], \widetilde{\mu}, \widetilde{T})$ can be extremely diverse: for instance, the rate of diffusion in Ehrenfest's wind-tree model depends on the geometry of the scatterers~\cite{DelecroixHubertLelievre:2014, Fougeron:2020}. However, if $(A, \mu, T)$ is in some sense chaotic and $F$ regular, the trajectories of the system $([\Z^d], \widetilde{\mu}, \widetilde{T})$ will be very similar to random walks on $\Z^d$.

\smallskip

In this article, we choose to work with Gibbs-Markov maps. This family of transformations is Markovian, which brings the benefits of avatars of the strong Markov property, for instance Corollary~\ref{cor:BorneUniformeTempsArret}. These transformations are still flexible enough to give rise to a wide variety of examples and, using Young's construction~\cite{Young:1998}, they appear in many geometric examples such as the geodesic flow on negatively curved manifolds, finite horizon Sinai billiards, and Collet-Eckmann unimodal transformations.

\smallskip

The family of $\Z^d$-extensions we get is much more general than random walks and random walks with internal states as in~\cite{KramliSzasz:1984, KramliSimanyiSzasz:1986}, while still more restrictive than Sinai billiards or unimodal maps, since we do not work with Young towers.

\subsubsection{Definition}

In what follows, we shall assume that $(A, \mu, T)$ is a \textit{Gibbs-Markov map}:

\begin{definition}[Measure-preserving Gibbs-Markov maps]\quad
\label{def:ApplicationGibbsMarkov}

Let $(A, d')$ be a metric bounded Polish space, $\Acal$ its Borel $\sigma$-algebra and $\mu$ a probability measure on $(A, \Acal)$. Let $\alpha$ be a partition of $A$ in subsets of positive measure (up to a null set). Let $T : \ A \to A$ be a $\mu$-preserving map, Markov with respect to the partition $\alpha$, and such that $\alpha$ is generating. Such a map is said to be \emph{Gibbs-Markov} if it also satisfies:
\begin{itemize}
\item Big image property: $\inf_{a \in \alpha} \mu (Ta) > 0$;
\item Expansivity: there exists $\Lambda > 1$ such that $d' (Tx, Ty) \geq \Lambda d(x, y)$ 
for all $a \in \alpha$ and $(x,y) \in a \times a$;
\item Bounded distortion: there exists $C > 0$ such that, for all $a \in \alpha$, for almost every 
$(x,y) \in a \times a$: 
\end{itemize}
\begin{equation}
\left| \frac{\dd \mu}{\dd \mu \circ T} (x) - \frac{\dd \mu}{\dd \mu \circ T} (y) \right| 
\leq C d'(Tx, Ty) \frac{\dd \mu}{\dd \mu \circ T} (x).
\end{equation}
\end{definition}

Given a Gibbs-Markov map $(A, \alpha, d', \mu, T)$, a function $F : A \to \Z^d$ is said to be \emph{Markov} if it is constant on elements of the partition $\alpha$. In this case, we also say that the corresponding $\Z^d$-extension $([\Z^d], \widetilde{\mu}, \widetilde{T})$ is Markov.

\smallskip

By encoding, Gibbs-Markov maps can model sequences of independent and identically distributed $\Z^d$-valued random variables, as well as finite state Markov chains. As such, Markov extensions of Gibbs-Markov maps include all random walks, with or without internal states, on $\Z^d$. They are however more general, including for instance the Gauss maps
\begin{equation*}
 T : \left\{ 
 \begin{array}{rcl}
   (0,1] & \to & [0,1] \\
   x & \mapsto & 1/x-\lfloor 1/x \rfloor
 \end{array}
\right. ,
\end{equation*}
and appear in various encoding schemes used to study geodesic flows in negative sectional curvature as well as Sinai billiards.

\smallskip

A comprehensive presentation of the properties of Gibbs-Markov maps we use in this article can be found in Appendix~\ref{sec:GibbsMarkovAppendice}, in particular their properties related to the extension and induction operations described in Subsection~\ref{subsec:ExtensionEtInduction}.

\subsubsection{Transfer operators}
\label{subsubsec:TransferOperators}

One of our main tools is the \textit{transfer operator} $\Lcal$, defined as the dual of the composition (or Koopman) operator $\Kcal : g \mapsto g \circ T$:
\begin{equation}
\label{eq:DefinitionTransferOperator}
 \int_A \Lcal (f) \cdot g \dd \mu 
 := \int_A f \cdot g \circ T \dd \mu,
\end{equation}
for all $p \in [1, \infty]$, all $f \in \Lbb^p (A, \mu; \C)$ and all $g \in \Lbb^q (A, \mu; \C)$, with $p^{-1}+q^{-1} = 1$. Since the Koopman operator is an isometry on $\Lbb^q$, the transfer operator is a weak contraction on $\Lbb^p$.

\smallskip

As specified in Sub-subsection~\ref{subsubsec:Induction}, the operator $\Lcal$ acts on $\Lbb^p (A, \mu; \C)$, 
the operator $\widetilde{\Lcal}$ on $\Lbb^p ([\Z^d], \widetilde{\mu}; \C)$ and the operator $\Lcal_{[\Sigma]}$ 
on $\Lbb^p ([\Sigma], \mu_{[\Sigma]}; \C)$.

\smallskip

One of the main feature of Gibbs-Markov maps is that the transfer operator acts quasi-compactly on a Banach space of 
Lipschitz functions. We present here the most important properties of the action; again, details and references can be found in Appendix~\ref{sec:GibbsMarkovAppendice}.

\begin{definition}[Lipschitz functions]\quad
\label{def:Lipschitz}

  Let $\alpha^* := \bigvee \{T(a) : \ a \in \alpha\}$ be the image partition\footnote{In many applications, $\alpha^*$ is trivial -- 
  consider for instance the Gauss map, where any interval $((n+1)^{-1}, n^{-1}]$ is sent onto $[0,1)$.} of $\alpha$ by $T$. The 
  \textit{Lipschitz seminorm} of a function $f : A \to \C$ is
  \begin{equation*}
    |f|_\Bcal 
    := \sup_{a \in \alpha^*} |f|_{\Lip (a, d')},
  \end{equation*}
  where $|f|_{\Lip (a, d')}$ is the Lipschitz constant of $f$ on $a$. The \textit{Lipschitz norm} is then given by
  \begin{equation*}
    \norm{f}{\Bcal}
    := \norm{f}{\infty} + |f|_\Bcal,
  \end{equation*}
  and the space of Lipschitz functions $\Bcal$ by $\{f : A \to \C, \ \norm{f}{\Bcal} < +\infty\}$.
  
  \smallskip
  
  Finally, we write $\Bcal_0$ for the kernel of $\mu$ in $\Bcal$.
\end{definition}

A typical result (see~\cite[Proposition~1.1.17 and Corollary~1.1.21]{Gouezel:2008e}) is then:

\begin{theorem}\quad
 
 Let $(A, \alpha, d', \mu, T)$ be a Gibbs-Markov map with expansivity constant $\Lambda >1$. The transfer operator $\Lcal$ acts on $\Bcal$, with essential spectral radius $\rho_{\ess} (\Lcal \acts \Bcal) \leq \Lambda^{-1}$.
 
 \smallskip
 
 If $(A, \mu, T)$ is also mixing, then the spectral radius of $\Lcal \acts \Bcal_0$ is strictly less than $1$: there exist constants $C>0$ and $\rho \in (0,1)$ such that $\norm{\Lcal^n}{\Bcal_0 \to \Bcal_0} \leq C \rho^n$. 
\end{theorem}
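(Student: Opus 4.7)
My plan would be to follow the standard Lasota-Yorke / Ionescu-Tulcea-Marinescu strategy, adapted to the Gibbs-Markov setting. The backbone is a two-term inequality of Doeblin-Fortet type on the iterates $\Lcal^n$, followed by an application of Hennion's theorem to extract quasi-compactness, and finally a peripheral spectrum argument for the mixing statement.

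First, I would establish the key inequality
\begin{equation*}
 \norm{\Lcal^n f}{\Bcal}
 \leq C_1 \Lambda^{-n} |f|_\Bcal + C_2 \norm{f}{1}
\end{equation*}
for all $n \geq 0$ and $f \in \Bcal$. The starting point is the pointwise formula
\begin{equation*}
 \Lcal^n f (y)
 = \sum_{T^n x = y} \frac{\de \mu}{\de \mu \circ T^n}(x) \, f(x),
\end{equation*}
where the sum runs over the preimages indexed by $n$-cylinders $a \in \alpha^{(n)}$. On each such cylinder $T^n$ is a diffeomorphism onto some element of $\alpha^*$. The expansivity $d'(T^n x, T^n y) \geq \Lambda^n d'(x,y)$ on $a$ controls differences of $f$, giving a factor $\Lambda^{-n}$ on the Lipschitz piece, while the bounded distortion bound controls differences of the Jacobian term uniformly in $a \in \alpha^{(n)}$. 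The big image property (which survives iteration, see standard arguments) ensures that the sum of the Jacobians $\sum \de \mu / \de \mu \circ T^n$ is bounded on each image cylinder. Combining these estimates on each piece of $\alpha^*$ and summing yields the Lasota-Yorke inequality, with the low term controllable by $\norm{f}{1}$ thanks to bounded distortion rewriting $\int_a |f| \dd \mu \asymp \norm{f_{|a}}{\infty} \mu(a)$ up to adding a Lipschitz oscillation on $a$.

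Second, I would invoke Hennion's theorem. For this I need the unit ball of $\Bcal$ to be relatively compact in $\Lbb^1(A,\mu)$. This follows from the fact that Lipschitz functions on each $a \in \alpha^*$ are equicontinuous and uniformly bounded by the $\Bcal$-norm, combined with a diagonal/tightness argument using $\mu(A) = 1$ to neglect the tail of the partition $\alpha^*$ in $\Lbb^1$ norm. Then Hennion's theorem applied to the Lasota-Yorke inequality immediately gives $\rho_{\ess}(\Lcal \acts \Bcal) \leq \Lambda^{-1}$, since the factor multiplying the seminorm is $\Lambda^{-n}$ and we can take $n$ large.

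Third, for the mixing statement, quasi-compactness implies that outside the disc of radius $\Lambda^{-1}$ the spectrum consists of finitely many eigenvalues of finite multiplicity. The eigenvalue $1$ is present with eigenfunction $\mathbf{1}_A$ and is simple by ergodicity. Mixing of $(A,\mu,T)$ rules out any other eigenvalue of modulus $1$: an eigenfunction $g \in \Bcal$ of $\Lcal$ with eigenvalue $e^{i\theta}$ ($\theta \neq 0$) would yield $\int g \cdot h \circ T^n \dd \mu = e^{in\theta} \int g \cdot h \dd \mu$ for all $h \in \Lbb^\infty$, contradicting mixing. The spectral radius of $\Lcal$ on $\Bcal_0$ is therefore strictly less than $1$, call it $\rho$, and by Gelfand's formula combined with quasi-compactness we obtain $\norm{\Lcal^n}{\Bcal_0 \to \Bcal_0} \leq C \rho'^n$ for any $\rho' \in (\rho, 1)$.

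The main obstacle is the Lasota-Yorke inequality itself; the delicate point is to carry the distortion control from a single partition element $a \in \alpha$ to all $n$-cylinders $a \in \alpha^{(n)}$ with uniform constants, and to handle the potentially infinite partition $\alpha^*$ when converting from $\norm{\cdot}{\infty}$ to $\norm{\cdot}{1}$ in the low-order term. Once that inequality is in place with the correct dependence in $n$, the rest is essentially Hennion plus the classical peripheral spectrum argument, neither of which is specific to the Gibbs-Markov setting.
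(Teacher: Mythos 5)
Your proposal is correct and follows essentially the route behind this statement: the paper itself does not prove it (it cites Gou\"ezel's thesis), but the same Lasota-Yorke-plus-Hennion machinery is exactly what the appendix develops (Proposition~A.5 / Corollary~A.6 there, with $\varphi\equiv 1$), and your peripheral-spectrum argument under mixing is the classical one used in the cited source.

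One step to repair: your justification of the compact embedding of the unit ball of $\Bcal$ into $\Lbb^1(A,\mu)$ via equicontinuity on each $a\in\alpha^*$ does not work as written, since $A$ is only assumed to be a bounded Polish space, so the pieces of $\alpha^*$ need not be compact and Arzel\`a--Ascoli does not apply. The standard fix is to approximate $f$ by its conditional expectation on the partition $\alpha^{(n)}$ into $n$-cylinders, whose diameters are $O(\Lambda^{-n})$ by expansivity, so that $\norm{f-\Ebb(f\mid\alpha^{(n)})}{\infty}\leq C\Lambda^{-n}|f|_\Bcal$, and then truncate to finitely many cylinders carrying mass $1-\epsilon$; this exhibits the unit ball of $\Bcal$ as totally bounded in $\Lbb^1$, which is what Hennion's theorem needs. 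With that substitution the rest of your argument goes through as stated.
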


Another important feature of Gibbs-Markov maps is that they behave very well under induction. Let $([\Z^d], \widetilde{\mu}, \widetilde{T})$ be a recurrent and ergodic Markov $\Z^d$-extension of a Gibbs-Markov map, and $\Sigma \subset \Z^d$ be finite. Following the strategy of e.g.~\cite[Lemma~1.6]{Thomine:2015}, one may endow the system $([\Sigma], \mu_{[\Sigma]}, T_{[\Sigma]})$ with a Gibbs-Markov structure, with expansivity and distortion constants independent from $\Sigma$. From there one could get a family of Banach spaces depending on $\Sigma$. However, we shall not use directly this construction, but choose a closely related strategy: we make all the transfer operators $\Lcal_{[\Sigma]}$ act on the same Banach space, with relevant bounds (such as the essential spectral radius) uniform in $\Sigma$. This let us compare more easily all the operators $\Lcal_{[\Sigma]}$.

\smallskip

We associate to $([\Sigma], \mu_{[\Sigma]}, T_{[\Sigma]})$ a Banach space of Lipschitz functions 
\begin{equation}
\label{eq:DefinitionBSigma}
 \Bcal_{[\Sigma]} 
 := \left\{f : [\Sigma] \to \C, \ f (\cdot, i) \in \Bcal \ \forall i \in \Sigma \right\},
\end{equation}
endowed with the norm $\norm{f}{\Bcal_{[\Sigma]}} = \max_{i\in I} \norm{f (\cdot, i)}{\Bcal}$. We denote by $\Bcal_{[\Sigma], 0}$ the kernel of $\mu_{[\Sigma]}$.

\smallskip

Not that the spaces $\Bcal_{[\Sigma]}$ depend only on $\Sigma$ through its cardinality: if two subsets $\Sigma_1$, $\Sigma_2$ have the same cardinality, then $\Bcal_{[\Sigma_1]}$ and $\Bcal_{[\Sigma_2]}$ are naturally isomorphic.
By Proposition~\ref{prop:ActionInduiteQuasiCompacte}, the operator $\Lcal_{[\Sigma]}$ acts quasi-compactly on $\Bcal_{[\Sigma]}$, with essential spectral radius at most $\Lambda^{-1}$. In particular, by ergodicity, $1$ is a simple eigenvalue of $\Lcal_{[\Sigma]}$ corresponding to constant functions, so that $(\Id-\Lcal_{[\Sigma]}) \acts \Bcal_{[\Sigma],0}$ is invertible.

\subsection{Shapes and matrices}

\subsubsection{Limit shapes}
\label{subsec:LimitShape}

As we argued in the introduction, it seems extremely hard to state anything non-trivial about the matrix of transition probabilities $P_{[\Sigma]}$ for any fixed $\Sigma$. Instead, we shall be interested in subsets $\Sigma$ whose sites are very far apart. We formalize this by fixing a finite set $I$, considering a family of injections $\sigma_\varepsilon : I \hookrightarrow \Z^d$ whose values get apart when $\varepsilon$ vanishes, and looking at the matrix of transition probabilities of $[\Sigma_\varepsilon] := [\sigma_\varepsilon (I)]$ . 

\smallskip

A simple way of generating such functions $\sigma_\varepsilon$ is to fix some injection $\sigma : I \hookrightarrow \R^d$, and to define $\sigma_\varepsilon (i)$ as one of the points of $\Z^d$ closest to $\varepsilon^{-1} \sigma (i)$ -- or, more generally, any point at distance $o(\varepsilon^{-1})$ of $\varepsilon^{-1} \sigma (i)$. As $\varepsilon$ vanishes, the subsets $\Sigma_\varepsilon = \sigma_\varepsilon (I)$ become growing realizations of a fixed shape (see Figure~\ref{fig:FamilleFormes}).

\smallskip

For any $\varepsilon > 0$, there is a canonical bijection $\sigma_{\varepsilon *}: (x,i) \mapsto (x, \sigma_\varepsilon (i))$ between $[I] := A \times I$ and $[\Sigma_\varepsilon] = A \times \Sigma_\varepsilon$. We use this bijection to transport objects from $[\Sigma_\varepsilon]$ to $[I]$:
\begin{itemize}
 \item $\mu_I := |I|^{-1} \sum_{i \in I} \mu \otimes \delta_i$ is conjugated with $\mu_{[\Sigma_\varepsilon]}$ and does not depend on $\varepsilon$;
 \item $T_\varepsilon : [I] \to [I]$ is conjugated with $T_{[\Sigma_\varepsilon]}$;
 \item $\Lcal_\varepsilon \acts \Lbb^p (|I], \mu_I)$ is conjugated with $\Lcal_{[\Sigma_\varepsilon]} \acts \Lbb^p (|\Sigma], \mu_{[\Sigma]})$;
 \item $\Bcal_I$ is conjugated with $\Bcal_{[\Sigma_\varepsilon]}$, and by definition does not depend on $\varepsilon$;
 \item $P_\varepsilon$ is conjugated with $P_{[\Sigma_\varepsilon]}$, etc.
\end{itemize}
The general rule is that objects defined on $[I]$ have $I$ as a subscript if they do not depend on $\varepsilon$, 
and $\varepsilon$ as a subscript if they do. This point of view allows us to keep some objects fixed (the space, the measure, the Banach space $\Bcal_I$, and later on functions) while $\varepsilon$, and thus $\Sigma_\varepsilon$, changes.

\subsubsection{Transfer operator and transition probabilities}

The matrices $(P_\varepsilon)_{\varepsilon > 0}$ can be recovered from the Koopman operators 
$(\Kcal_\varepsilon)_{\varepsilon >0}$. Given $\varepsilon > 0$ and $i$, $j \in I$, 
\begin{align}
 P_{\varepsilon, ij} 
 & = \mu \left([i] \cap T_\varepsilon^{-1} ([j]) \right) \nonumber \\
 & = |I| \int_{[I]} \mathbf{1}_{[i]} \cdot \Kcal_\varepsilon(\mathbf{1}_{[j]}) \dd \mu_I. \label{eq:TransitionToKoopman}
\end{align}
Using transfer operators, this relationship can be rewritten
\begin{equation}
\label{eq:TransitionToOperator}
 P_{\varepsilon, ij} 
 = |I| \int_{[I]} \Lcal_\varepsilon (\mathbf{1}_{[i]}) \cdot \mathbf{1}_{[j]} \dd \mu_I,
\end{equation}
and made more systematic by introducing an averaging operator. Let:
\begin{equation*}
 \Pi_* : \left\{ 
 \begin{array}{rcl}
  \Lbb^1 ([I], \mu_I) & \to & \C^I \\
  f & \mapsto & \left( \int_A f(x,i) \dd \mu (x) \right)_{i \in I}
 \end{array}
 \right. ,
\end{equation*}
which to a function $f$ associate its averages over each $[i]$, and define a right inverse $\Pi^*$ 
of $\Pi_*$:
\begin{equation*}
 \Pi^* : \left\{ 
 \begin{array}{rcl}
  \C^I & \to & \Lbb^1 ([I], \mu_I) \\
  f & \mapsto & \sum_{i \in I} f_i \mathbf{1}_{[i]}
 \end{array}
 \right. .
\end{equation*}
The operators $\Pi_*$ and $\Pi^*$ satisfy the relation $\Pi_* \Pi^* = \Id_{\C^I}$, while 
$\Pi^* \Pi_*$ is a rank $|I|$ averaging operator on $\Lbb^1 ([I], \mu_I)$:
\begin{equation*}
 (\Pi^* \Pi_* f) (x,i) 
 = \int_A f(y,i) \dd \mu (y).
\end{equation*}
Equations~\eqref{eq:TransitionToKoopman} and~\eqref{eq:TransitionToOperator} can then be recast as
\begin{equation}
    \label{eq:TransferOperatorAveraging}
   P_\varepsilon = \Pi_* \Kcal_\varepsilon \Pi^* \text{ and } \transposee{P}_\varepsilon = \Pi_* \Lcal_\varepsilon \Pi^*.
\end{equation}

\subsubsection{Asymptotics of the transition matrices and irreductibility}
\label{subsubsec:AsymptotiqueP}

By~\cite[Lemma~3.17]{PeneThomine:2019}, if the minimal distance between distinct sites in $\Sigma_\varepsilon$ increases to $+\infty$ when $\varepsilon$ vanishes, then the transitions from one site to another becomes increasingly unlikely, so that $\lim_{\varepsilon \to 0} P_\varepsilon = \Id$. 
In this article, we are interested in the next order asymptotics of the family $(P_\varepsilon)_{\varepsilon>0}$ when $\varepsilon$ vanishes. By Lemma~\ref{lem:MatriceBiStochastique}, the matrices $P_\varepsilon$ are bi-stochastic, so that the differences $(\Id - P_\varepsilon)$ are what we shall call \textit{bi-L-matrices}. In other words, up to a sign, bi-L-matrices make up the tangent cone at $\Id$ to the space of bi-stochastic matrices.

\begin{definition}[L-matrix]\quad
\label{def:LMatrice}
 
 A \emph{L-matrix} is a square matrix such that\footnote{The usual definition of L-matrix does not require that the sum of each line be zero.}:
 \begin{itemize}
  \item $R_{ii} \geq 0$ for all $i \in I$;
  \item $R_{ij} \leq 0$ for $i \neq j \in I$;
  \item $\sum_{j \in I} R_{ij} = 0$ for all $i \in I$.
 \end{itemize}
 
 We shall call a L-matrix a \emph{bi-L-matrix} if, in addition, $\sum_{i \in I} R_{ij} = 0$ for all $j \in I$.
 
 \smallskip
 
 A L-matrix is \emph{irreducible} if, for all $i$, $j \in I$, there exists a finite sequence $i = i_0$, $i_1$, $\ldots$, $i_n = j$ such that $R_{i_k i_{k+1}} < 0$ for all $0\leq k < n$.
\end{definition}

Using Perron-Frobenius theory, eigenvalues of a L-matrix always have non-negative real part. For irreducible L-matrices, we can be more accurate.

\begin{lemma}\quad
 \label{lem:SpectreLMatrice}
 
 Let $R$ be a $n \times n$ irreducible L-matrix. Then:
 \begin{itemize}
  \item $0$ is a simple eigenvalue of $R$ with eigenfunction $\mathbf{1}$.
  \item there exists a unique probability vector $\nu$ such that $\nu R = 0$.
  \item the matrix $R$ acts on $\Ker(\nu)$, and $\Sp (R \acts \Ker(\nu)) \subset \{z \in \C : \ \Re (z) > 0\}$.
 \end{itemize}
\end{lemma}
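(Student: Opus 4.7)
The plan is to deduce all three bullets from the Perron--Frobenius theorem applied to a stochastic matrix built from $R$; morally, $-R$ is the generator of a continuous-time Markov chain on $I$, and one recovers a discrete-time chain (hence Perron--Frobenius) by a shift-and-rescale. Concretely, I would set $c := \max_{i \in I} R_{ii}$; if $|I| = 1$ then $R = (0)$ and the statement is trivial, so I assume $|I| \geq 2$, in which case irreducibility forces $c > 0$. Define
\[
 P := \Id - c^{-1} R.
\]
The sign conditions defining an L-matrix give $P_{ij} \geq 0$ for all $i$, $j$, while $R \mathbf{1} = 0$ gives $P \mathbf{1} = \mathbf{1}$, so $P$ is row-stochastic. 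Moreover, for $i \neq j$ one has $P_{ij} > 0$ iff $R_{ij} < 0$, so irreducibility of $R$ as an L-matrix is equivalent to irreducibility of $P$ as a non-negative matrix.

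Applying the Perron--Frobenius theorem to $P$ yields that $1$ is a simple eigenvalue with right eigenvector $\mathbf{1}$, that there is a unique probability row vector $\nu$ with $\nu P = \nu$, and that every eigenvalue $\lambda \neq 1$ of $P$ satisfies $|\lambda| \leq 1$, hence even $\Re(\lambda) < 1$ (indeed $\Re(\lambda) \geq 1$ together with $\Re(\lambda)^2 + \Im(\lambda)^2 \leq 1$ would force $\lambda = 1$). Translating through $R = c(\Id - P)$, the spectrum of $R$ is $\{c(1-\lambda) : \lambda \in \Sp(P)\}$: the eigenvalue $0$ of $R$ is simple with right eigenvector $\mathbf{1}$, and $\nu R = c(\nu - \nu P) = 0$ produces the desired probability left null vector, uniqueness being inherited from Perron--Frobenius.

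For the last bullet I would check that $\Ker(\nu)$ is $R$-invariant -- immediate from $\nu (R v) = (\nu R) v = 0$ -- and that the probability condition $\nu \mathbf{1} = 1 \neq 0$ yields the direct sum decomposition $\C^I = \Vect(\mathbf{1}) \oplus \Ker(\nu)$. Since $0$ is a \emph{simple} eigenvalue of $R$, this decomposition coincides with the spectral one splitting off the eigenvalue $0$, so $\Sp(R \acts \Ker(\nu)) = \Sp(R) \setminus \{0\}$, which is contained in $\{z \in \C : \Re(z) > 0\}$ by the previous paragraph. I do not expect any genuine obstacle in this argument; the only mildly delicate point is identifying $\Ker(\nu)$ as the spectral complement of $\Vect(\mathbf{1})$ rather than merely an invariant hyperplane, which relies on the combination of $\nu \mathbf{1} \neq 0$ and the simplicity of the eigenvalue $0$.
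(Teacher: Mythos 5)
Your proof is correct and follows essentially the same route as the paper: the paper's own argument sets $t := \max_i R_{ii}$, observes that $\Id - t^{-1}R$ is an irreducible nonnegative (stochastic) matrix, and invokes Perron--Frobenius, exactly your shift-and-rescale. You simply supply the details (the case $|I|=1$, the spectral translation, and the identification of $\Ker(\nu)$ as the spectral complement) that the paper leaves implicit, and these are all correct.
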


\begin{proof}
 
 Let $R$ be an irreducible L-matrix. Set $t := \max_i R_{ii}$. Then $\Id-t^{-1} R$ is an 
 irreducible nonnegative matrix. The lemma follows from Perron-Frobenius theory applied to $\Id-t^{-1} R$.
\end{proof}

Let us write $R_0$ for the restriction of $R$ to $\Ker(\nu)$. A L-matrix $R$ is never invertible. However, if $R$ is also irreducible, then $R_0$ is invertible. Given an irreducible L-matrix $R$, we shall write
\begin{equation*}
 \rho_R 
 := \min \{ \Re (\lambda) : \ \lambda \in \Sp (R \acts \Ker(\nu))\} 
 = \min \{ \Re (\lambda) : \ \lambda \in \Sp (R_0)\}.
\end{equation*}

We shall often assume that the family $(P_\varepsilon)_{\varepsilon>0}$ admits a first order asymptotics at $\varepsilon = 0$.

\begin{hypothesis}[Asymptotics of the transition matrices]\quad
\label{hyp:AsymptoticsP}
 
 There exists an irreducible bi-L-matrix $R$ such that
 \begin{equation*}
  P_\varepsilon 
  = \Id-\varepsilon R + o(\varepsilon).
 \end{equation*}
\end{hypothesis}

\subsubsection{Asymptotics of the potential and irreductibility}

One issue with Hypothesis~\ref{hyp:AsymptoticsP} is that we do not know \textit{a priori} the matrices $(P_\varepsilon)_{\varepsilon >0}$; indeed, our goal is to compute them! As described in Subsection~\ref{subsec:RandomWalks}, what can be computed are potentials. We shall now define matrices related to these potential, as well as a related irreducibility condition.

\smallskip

Let $([\Z^d], \widetilde{\mu}, \widetilde{T})$ be a Markov $\Z^d$-extension satisfying Hypothesis~\ref{hyp:Recurrence}. By the discussion at the very end of Sub-subsection~\ref{subsubsec:TransferOperators}, for any function $f \in \Bcal_{I, 0}$ and any $\varepsilon > 0$, there exists a unique function $g \in \Bcal_{I, 0}$, its associated potential, such that $(\Id-\Lcal_\varepsilon) g = f$.

\smallskip

Let $\langle \cdot , \cdot \rangle_{\ell^2 (I)}$ be the canonical scalar -- and not Hermitian -- product on $\C^I$. In order to get a finite-dimensional version of $(\Id-\Lcal_\varepsilon)^{-1}$, we define an operator acting on $\C^I_0 = \left\{f : I \to \C, \ \sum_{i \in I} f_i = 0 \right\} = \Ker (\mu_I)$ by:
\begin{equation*}
 \langle f, Q_\varepsilon (g) \rangle_{\ell^2 (I)}
 := \langle \Pi_* (\Id-\Lcal_\varepsilon)^{-1} \Pi^* (f), g \rangle_{\ell^2 (I)} 
\end{equation*}
where the inverse $(\Id-\Lcal_\varepsilon)^{-1}$ is taken in $\Bcal_{I, 0}$. This operator is well-defined 
since $\Pi^*(\C^I_0) \subset \Bcal_{I, 0}$.

\smallskip

The operator $Q_\varepsilon$ is characterized by the integrals:
\begin{equation}
 \label{eq:ExpressionIntegraleQ}
 \langle f, Q_\varepsilon (g) \rangle_{\ell^2 (I)} 
 = |I| \int_{[I]} (\Id-\Lcal_\varepsilon)^{-1} \left(\Pi^* (f)\right) \cdot \Pi^* (g) \dd \mu_I
 = |I| \langle (\Id-\Lcal_\varepsilon)^{-1} \Pi^* (f), \Pi^* (g) \rangle_{\Lbb^2 ([I], \mu_I)}.
\end{equation}

\begin{remark}\quad
 
 Let us justify our choice of definition for $Q_\varepsilon$. We want to adapt the construction we have for Markov maps to dynamical systems, using the Koopman operator $\Kcal_\varepsilon$ instead of the Markov operator. Since the Koopman operator does not acts well on function spaces, we introduce its dual, defined by Equation~\eqref{eq:DefinitionTransferOperator}. The operator $\Pi_* (\Id-\Lcal_\varepsilon)^{-1} \Pi^*$ is thus a discretization of the potential operator associated with the dual of $\Kcal_\varepsilon$. We make it act of the left to cancel those dualities, so that the resulting operator $Q_\varepsilon$ acts as a discretization of the potential operator associated with $\Kcal_\varepsilon$.
\end{remark}

As we formulated asymptotics and irreducibility conditions for the family $(P_\varepsilon)_{\varepsilon >0}$, 
let us do so for the family $(Q_\varepsilon)_{\varepsilon > 0}$.

\begin{hypothesis}[Asymptotics of the potential matrices]\quad
\label{hyp:AsymptoticsQ}
 
 There exists a matrix $S$ acting on $\C_0^I$ such that 
 \begin{equation*}
  Q_\varepsilon 
  = \varepsilon^{-1} S + o(\varepsilon^{-1}).
 \end{equation*}
\end{hypothesis}

\begin{definition}[Irreducible potentials]\quad
\label{def:QMatriceIrreductible}
 
 A matrix $S$ acting on $\C_0^I$ is said to be \textit{irreducible} if, for all 
 $i \neq j \in I$,
 \begin{equation*}
   \langle (\mathbf{1}_i-\mathbf{1}_j), S (\mathbf{1}_i-\mathbf{1}_j) \rangle_{\ell^2 (I)} > 0.
 \end{equation*}
\end{definition}

The notion of irreducibility of Definition~\ref{def:QMatriceIrreductible} is very easy to check in the computations of Section~\ref{sec:Calculs}, and quite convenient in the proof of Theorem~\ref{thm:MainTheorem}. In the later theorem, we shall conclude that $S = R_0^{-1}$ for an irreducible bi-L-matrix $R$; it is thus expected that this notion of irreducibility is related to the notion of irreducibility of bi-L-matrices of Sub-subsection~\ref{subsubsec:AsymptotiqueP}.

\begin{lemma}\quad
 \label{lem:R2QIrred}
 
 Let $R$ be an irreducible bi-L-matrix in the sense of Definition~\ref{def:LMatrice}. Then $R_0^{-1}$ is irreducible in the sense of Definition~\ref{def:QMatriceIrreductible}.
\end{lemma}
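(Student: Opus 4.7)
The plan is to reduce the required inequality to a positivity property of the symmetric part of $R_0$, and then transfer that positivity to $R_0^{-1}$ via a short algebraic identity. The underlying observation is that for any real vector $e$ one has $\langle e, R_0^{-1} e \rangle_{\ell^2(I)} = \tfrac{1}{2}\langle e, (R_0^{-1} + R_0^{-T}) e \rangle_{\ell^2(I)}$, so it is enough to show that the symmetric part of $R_0^{-1}$ is positive definite on $\C_0^I$. Because $R$ is not assumed symmetric, I cannot work with $R_0^{-1}$ spectrally, which is why the argument has to go through $R_0$ first.

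First I will show that the symmetrization $R^s := (R + R^{\mathit T})/2$ is a symmetric bi-L-matrix that is still irreducible in the sense of Definition~\ref{def:LMatrice} (the edge set $\{(k,l): R^s_{kl} < 0\}$ contains $\{(k,l): R_{kl} < 0\}$), and that its restriction to $\C_0^I$ is positive definite. Since both $R$ and $R^{\mathit T}$ annihilate $\mathbf{1}$, they preserve $\C_0^I$; restriction therefore commutes with transposition, so $(R^s)_0 = (R_0 + R_0^{\mathit T})/2$. Using the row-sum-zero property of $R^s$, a direct computation gives the Dirichlet form identity
\begin{equation*}
 \langle f, R^s f \rangle_{\ell^2(I)}
 \;=\; -\frac{1}{2} \sum_{k \neq l} R^s_{kl} \bigl(f(k) - f(l)\bigr)^2,
\end{equation*}
which is non-negative because $R^s_{kl} \leq 0$ off the diagonal. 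Equality forces $f(k) = f(l)$ for every edge with $R^s_{kl} < 0$; by irreducibility of $R^s$ this propagates to all pairs, so $f$ is constant, hence zero in $\C_0^I$. Thus $R_0^s := (R_0 + R_0^{\mathit T})/2$ is positive definite.

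Second I will invoke the algebraic identity
\begin{equation*}
 R_0^{-1} + R_0^{-\mathit T} \;=\; R_0^{-1}\,(R_0 + R_0^{\mathit T})\, R_0^{-\mathit T},
\end{equation*}
which follows from a one-line manipulation. Writing the right-hand side as $(R_0^{-\mathit T})^{\mathit T} (R_0 + R_0^{\mathit T}) R_0^{-\mathit T}$ exhibits it as a congruence of the positive definite matrix $R_0 + R_0^{\mathit T}$ by the invertible matrix $R_0^{-\mathit T}$ (both viewed as operators on $\C_0^I$), hence it is itself positive definite on $\C_0^I$. Evaluating at $e = \mathbf{1}_i - \mathbf{1}_j \in \C_0^I \setminus \{0\}$ yields the desired strict inequality $\langle e, R_0^{-1} e \rangle_{\ell^2(I)} > 0$.

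The argument is essentially algebraic and I do not expect any serious obstacle; the only conceptual subtlety, since $R$ need not be symmetric, is that the positivity of the symmetric part has to be produced at the level of $R_0$ and then transported to $R_0^{-1}$ via the congruence identity above, rather than read off from a spectral property of $R_0^{-1}$ directly.
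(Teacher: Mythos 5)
Your proof is correct, and it takes a genuinely different route from the paper. The paper argues probabilistically: it fixes $\varepsilon$ small enough that $P = \Id - \varepsilon R$ is a bi-stochastic, irreducible transition matrix, induces the associated Markov chain on the two-point set $\{i,j\}$, and applies the balayage identity of~\cite[Corollary~1.1]{PeneThomine:2020} to identify $\langle \mathbf{1}_i-\mathbf{1}_j, R_0^{-1}(\mathbf{1}_i-\mathbf{1}_j)\rangle_{\ell^2(I)}$ with $\varepsilon/t$, where $t>0$ is the transition parameter of the induced two-state chain. You instead stay entirely within linear algebra: the Dirichlet-form identity shows that the symmetrization $R^s$ is positive definite on the real part of $\C_0^I$ (irreducibility forcing the equality case to be constant, hence zero), and the congruence identity $R_0^{-1}+R_0^{-\mathit{T}} = R_0^{-1}(R_0+R_0^{\mathit{T}})R_0^{-\mathit{T}}$ transports this positivity to the symmetric part of $R_0^{-1}$; the only points needing care — that $R$ and $R^{\mathit{T}}$ both preserve $\C_0^I$ so that restriction commutes with taking adjoints, and that the scalar product is evaluated only on real vectors — you handle correctly. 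The paper's proof is shorter given the potential-theoretic machinery already in place and is thematically consistent with the rest of the article (it even gives a probabilistic interpretation of the quantity as an inverse escape probability), whereas your argument is self-contained, avoids invoking the external balayage result, and in fact proves the stronger statement that $\langle f, R_0^{-1} f\rangle_{\ell^2(I)} > 0$ for every nonzero real $f \in \C_0^I$, not only for differences $\mathbf{1}_i - \mathbf{1}_j$.
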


\begin{proof}
 
 Our proof is probabilistic in nature, using potential theory for finite state Markov chains. Let $I$ be the index set of the matrix $R$. Fix $\varepsilon >0$ such that $P := \Id-\varepsilon R$ is a stochastic matrix. Since $R$ is irreducible, so is $P$. Since $R$ is a bi-L-matrix, $P$ preserves the counting measure 
 on $I$.
 
 \smallskip
 
 Let $i \neq j \in I$. Let $P_{\{i,j\}}$ be the transition matrix for the induced Markov chain on $\{i,j\}$. Then, by~\cite[Corollary~1.1]{PeneThomine:2020}, and using the fact that all solutions of the Poisson equation on $\{i,j\}$ differ by a constant since $P$ (and thus $P_{\{i,j\}}$) is irreducible,
 \begin{equation}
 \label{eq:PotentielMarkov2Sites}
  \sum_{k \in \{i,j\}} (\mathbf{1}_i-\mathbf{1}_j) (k) \cdot (\Id-P)^{-1} (\mathbf{1}_i-\mathbf{1}_j) (k)
  = \sum_{k \in \{i,j\}} (\mathbf{1}_i-\mathbf{1}_j) (k) \cdot (\Id-P_{\{i,j\}})^{-1} (\mathbf{1}_i-\mathbf{1}_j) (k).
 \end{equation}
 On the one hand, $(\Id-P)^{-1} = \varepsilon^{-1} R_0^{-1}$. On the other hand, 
 since $P_{\{i,j\}}$ is stochastic, irreducible and preserves the counting measure, there exist $t>0$ 
 such that
 \begin{equation*}
  P_{\{i,j\}} 
  = \left( \begin{array}{cc} 1-t & t \\ t & 1-t \end{array}\right).
 \end{equation*}
 In particular, $P_{\{i,j\}} f = (1-2t) f$ for all $f \in \C_0^{\{i,j\}}$. 
 Equation~\eqref{eq:PotentielMarkov2Sites} becomes:
 \begin{equation*}
  \langle (\mathbf{1}_i-\mathbf{1}_j), R_0^{-1} (\mathbf{1}_i-\mathbf{1}_j) \rangle_{\ell^2 (I)} 
  = \varepsilon \langle (\mathbf{1}_i-\mathbf{1}_j), (2t)^{-1} (\mathbf{1}_i-\mathbf{1}_j) \rangle_{\ell^2 (I)} 
  = \frac{\varepsilon}{t} 
  > 0. \qedhere
 \end{equation*}
\end{proof}

\subsection{Strategy and main results}

Now that all the necessary objects have been introduced, let us present the strategy we follow 
and the main results that come along. We recall that, as sketched at the end of Sub-subsection~\ref{subsubsec:GeneralTheory}, the strategy to compute the Markov kernel $P_\Sigma$ on $[\Sigma]$ from a Markov kernel $\widetilde{P}$ on $[\Z^d]$ has three main steps:
\begin{itemize}
 \item the balayage identity, which relates $(\Id-\widetilde{P})^{-1}$ and $(\Id-P_\Sigma)^{-1}$;
 \item the ability to solve the Poisson equation on $[\Z^d]$, using the Fourier transform;
 \item the ability to invert $(\Id-P_\Sigma)^{-1}$, or at least extract some information from it.
\end{itemize}

\subsubsection{Koopman operator and Poisson equation}

First, one must choose the right Markov kernel. A first choice is to use
\begin{equation*}
 \widetilde{P}_x 
 := \delta_{\widetilde{T}(x)},
\end{equation*}
whose action on functions is the Koopman operator $\widetilde{\Kcal}$.

\smallskip

However, we then want to apply Theorem~\ref{thm:IdentiteBalayage}, which requires us to find solutions $g \in \Lbb^\infty ([\Z^d], \widetilde{\mu})$ to the equation
\begin{equation}
\label{eq:Cobord}
 f \mathbf{1}_{[\Sigma]} 
 = (\Id-\widetilde{P}) (g) 
 = (\Id-\widetilde{\Kcal}) (g)
 = g - g \circ \widetilde{T} \quad \widetilde{\mu}-\text{almost everywhere},
\end{equation}
where $f\mathbf{1}_{[\Sigma]}$ is supported on $[\Sigma]$. The existence of solutions to this equation means that $f\mathbf{1}_{[\Sigma]}$ is a coboundary for $\widetilde{T}$. Let us assume for now that the system $(A, \mu, T)$ is one of the simplest examples of Gibbs-Markov map: a subshift of finite type endowed with a Gibbs measure. Then the regularity of $g$ can be improved and the coboundary equation holds everywhere using a classical argument~\cite[Chapter~1.2]{Gouezel:2008e}.

\smallskip

On the other hand, we want to take $f \in \Bcal_{[\Sigma],0}$, hence Lipschitz. By Theorem~\ref{thm:IdentiteBalayage}, we then have $\mu_{[\Sigma]}$-almost everywhere
\begin{equation*}
 f 
 = g_{|[\Sigma]} - g_{|[\Sigma]} \circ T_{[\Sigma]},
\end{equation*}
so that
\begin{equation*}
 \Lcal_{[\Sigma]} (f) 
 = (\Lcal_{[\Sigma]}- \Id) (g_{|[\Sigma]}).
\end{equation*}
Then, since $(\Id - \Lcal_{[\Sigma]})$ in invertible on $\Bcal_{[\Sigma],0}$, the later equation admits a solution
\begin{equation*}
 g_0 
 := -(\Id - \Lcal_{[\Sigma]})^{-1} \Lcal_{[\Sigma]} (f) 
 \in \Bcal_{[\Sigma]}.
\end{equation*}
As the kernel of the action of $(\Id - \Lcal_{[\Sigma]})$ on $\Lbb^\infty ([\Sigma], \mu_{[\Sigma]})$ is the space of constant functions by ergodicity, $g_{|[\Sigma]}-g_0$ is constant almost everywhere, so that $g_{|[\Sigma]}$ admits a Lipschitz version.

\smallskip

The same reasoning goes through with any finite $\Sigma \subset \Sigma'$ and yields that $g_{|[\Sigma']}$ admits a Lipschitz version on $[\Sigma']$, whence $g$ admits a locally Lipschitz version on $[\Z^d]$. In particular, taking for $g$ this continuous version, the equality
\begin{equation*}
 f \mathbf{1}_{[\Sigma]} 
 = g - g \circ \widetilde{T}
\end{equation*}
holds everywhere, and not only $\widetilde{\mu}$-almost everywhere. As a consequence, for any point $(x,p) \in [\Z^d]$ of period $n \geq 1$,
\begin{equation}
\label{eq:ContrainteCobord}
 \sum_{k=0}^{n-1} (f \mathbf{1}_{[\Sigma]}) \circ \widetilde{T}^k (x, p) 
 = 0.
\end{equation}

This gives countably many obstructions to the existence of solutions to Equation~\eqref{eq:Cobord}. In addition, if the sites of $\Sigma$ are far apart and $p \in \Sigma$, then there exist periodic orbits which intersect $[\Sigma]$ only in $[p]$. Then, if $f$ is constant on $[p]$, the sum of Equation~\eqref{eq:ContrainteCobord} is zero if and only if $f(\cdot, p) \equiv 0$. As this holds for all $p \in \Sigma$, we get $f \equiv 0$: the only function $f$ which is constant on all sites and satisfy the coboundary equation is the null function.

\smallskip

A solution of this quandary is to use not the Koopman operator $\widetilde{\Kcal}$, but its dual $\widetilde{\Lcal}$.

\subsubsection{Transfer operator and Poisson equation}

The transformation $\widetilde{T}$ is assumed to be ergodic and recurrent, which implies that the Markov kernel $\widetilde{\Kcal}$ is also ergodic and recurrent. The operator $\widetilde{\Lcal}$ is dual to $\widetilde{\Kcal}$ with respect to $\widetilde{\mu}$, and thus is also ergodic and recurrent by~\cite{PeneThomine:2020}. The relevant version of the balayage identity is~\cite[Proposition~0.1]{PeneThomine:2020}:

\begin{proposition}[Balayage identity for the transfer operator]\quad

 Let $(\widetilde{A}, \widetilde{\mu}, \widetilde{T})$ be measure-preserving, with $\widetilde{\mu}$ a recurrent $\sigma$-finite measure. Let $B \subset \widetilde{A}$ be such that $0 < \widetilde{\mu}(B) \leq +\infty$. Let $f$, $g \in \Lbb^\infty(\widetilde{A}, \widetilde{\mu})$ be such that $f \equiv 0$ on $B^c$ and:
 \begin{equation*}
  (\Id - \widetilde{\Lcal}) (g) = f.
 \end{equation*}
 Then:
 \begin{equation*}
  (\Id - \Lcal_B) (g_{|B}) = f_{|B}.
 \end{equation*}
\end{proposition}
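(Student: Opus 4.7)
Plan: My approach is to derive an explicit series expansion of the induced transfer operator $\Lcal_B$ in terms of $\widetilde{\Lcal}$ and excursions through $B^c$, then use the hypothesis $f \equiv 0$ on $B^c$ to collapse this series into the ambient Poisson equation.

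First, I would compute $\Lcal_B$ explicitly. For $h \in \Lbb^\infty(\widetilde{A}, \widetilde{\mu})$ and for any bounded $k$ on $B$ with $\widetilde{\mu}(\Supp k) < +\infty$, partition $B$ by the first return time $\varphi_B$ and then push the $n$ iterations of $\widetilde{T}$ onto $\widetilde{\Lcal}$ one step at a time, each application of the duality $\int \widetilde{\Lcal}\varphi \cdot \psi \dd \widetilde{\mu} = \int \varphi \cdot \psi \circ \widetilde{T} \dd \widetilde{\mu}$ absorbing one factor $\mathbf{1}_{B^c} \circ \widetilde{T}^i$. This yields
\begin{equation*}
 \Lcal_B(h|_B) = \mathbf{1}_B \, \widetilde{\Lcal} \sum_{n \geq 0} (\mathbf{1}_{B^c}\widetilde{\Lcal})^n(\mathbf{1}_B h),
\end{equation*}
where $\mathbf{1}_{B^c}\widetilde{\Lcal}$ denotes the operator $\varphi \mapsto \mathbf{1}_{B^c}\widetilde{\Lcal}\varphi$. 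This identity is understood in weak duality against test functions of finite-measure support.

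Second, I would use $f \equiv 0$ on $B^c$. The Poisson equation multiplied by $\mathbf{1}_{B^c}$ gives $\mathbf{1}_{B^c} g = \mathbf{1}_{B^c}\widetilde{\Lcal}(\mathbf{1}_B g) + \mathbf{1}_{B^c}\widetilde{\Lcal}(\mathbf{1}_{B^c} g)$. Iterating this fixed-point relation $N$ times produces
\begin{equation*}
 \mathbf{1}_{B^c} g = \sum_{k=1}^N (\mathbf{1}_{B^c}\widetilde{\Lcal})^k(\mathbf{1}_B g) + (\mathbf{1}_{B^c}\widetilde{\Lcal})^N(\mathbf{1}_{B^c} g).
\end{equation*}
Letting $N \to \infty$ and invoking recurrence of $\widetilde{\mu}$ to kill the tail, I obtain $\mathbf{1}_{B^c} g = \sum_{k \geq 1}(\mathbf{1}_{B^c}\widetilde{\Lcal})^k(\mathbf{1}_B g)$, so that $\sum_{n \geq 0}(\mathbf{1}_{B^c}\widetilde{\Lcal})^n(\mathbf{1}_B g) = \mathbf{1}_B g + \mathbf{1}_{B^c} g = g$. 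Plugging back into the first step yields the clean identity $\Lcal_B(g|_B) = \mathbf{1}_B \widetilde{\Lcal} g$, and combining with the restriction $\mathbf{1}_B f = \mathbf{1}_B g - \mathbf{1}_B \widetilde{\Lcal} g$ of the Poisson equation gives $(\Id - \Lcal_B)(g|_B) = f|_B$.

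The main obstacle is the tail estimate $(\mathbf{1}_{B^c}\widetilde{\Lcal})^N(\mathbf{1}_{B^c} g) \to 0$ in a sense strong enough to justify the conclusion, given that $\widetilde{\mu}$ is only $\sigma$-finite. Testing against a bounded $\ell$ on $\widetilde{A}$ with $\widetilde{\mu}(\Supp \ell) < +\infty$ and unwinding the $N$ applications of $\widetilde{\Lcal}$ by duality, this reduces to showing that $\widetilde{\mu}(\{x : \widetilde{T}^i(x) \in B^c \text{ for } 1 \leq i \leq N,\ \widetilde{T}^N(x) \in \Supp \ell\})$ vanishes as $N \to \infty$, which is the content of the recurrence of $\widetilde{T}$: almost every orbit starting from a finite-measure set eventually visits $B$. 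The $\sigma$-finiteness of $\widetilde{\mu}$ is accommodated by restricting throughout to test functions of finite-measure support, which is still enough to determine $(\Id - \Lcal_B)(g|_B) \in \Lbb^\infty(B, \widetilde{\mu}|_B)$ $\widetilde{\mu}$-almost everywhere.
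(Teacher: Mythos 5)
Your first two steps are sound: the first-return expansion $\Lcal_B(h_{|B}) = \mathbf{1}_B\,\widetilde{\Lcal}\sum_{n\geq 0}(\mathbf{1}_{B^c}\widetilde{\Lcal})^n(\mathbf{1}_B h)$, understood in weak duality against test functions of finite-measure support, is the standard decomposition (recurrence is used only through $\varphi_B<+\infty$ a.e.\ on $B$), and the iterated fixed-point relation for $\mathbf{1}_{B^c}g$ is correct. (For the record, the paper does not prove this proposition; it quotes it from P\`ene--Thomine, so there is no in-text proof to compare with.) The genuine gap is the tail estimate. The proposition assumes only that $\widetilde{\mu}$ is recurrent (conservative), not ergodic, and recurrence does \emph{not} imply that almost every orbit issued from a finite-measure set eventually visits $B$: it only gives returns of each positive-measure set to itself. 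In fact your intermediate identity $\mathbf{1}_{B^c}g=\sum_{k\geq 1}(\mathbf{1}_{B^c}\widetilde{\Lcal})^k(\mathbf{1}_B g)$ is false in general: take $\widetilde{A}=X_1\sqcup X_2$ a disjoint union of two conservative probability-preserving systems, $B\subset X_1$, $g=\mathbf{1}_{X_2}$, $f=0$; then $\widetilde{\Lcal}g=g$, so all hypotheses hold, while $\mathbf{1}_Bg=0$ and the tail $(\mathbf{1}_{B^c}\widetilde{\Lcal})^N(\mathbf{1}_{B^c}g)=\mathbf{1}_{X_2}$ for every $N$, which does not vanish weakly. Moreover, even if ergodicity were granted, your reduction is too quick: the sets $\{x:\ \widetilde{T}^i(x)\in B^c,\ 1\leq i\leq N,\ \widetilde{T}^N(x)\in\Supp\ell\}$ are not nested (the constraint $\widetilde{T}^N(x)\in\Supp\ell$ sits at the moving time $N$), so ``a.e.\ orbit eventually visits $B$'' does not by itself force their measures to tend to $0$; some extra argument is required.

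The approach is salvageable, because you do not need the full identity on $B^c$: you only need $\mathbf{1}_B\widetilde{\Lcal}\bigl[(\mathbf{1}_{B^c}\widetilde{\Lcal})^N(\mathbf{1}_{B^c}g)\bigr]\to 0$ weakly against bounded $\ell$ supported in $B$ with $\widetilde{\mu}(\Supp\ell)<+\infty$, which already yields $\Lcal_B(g_{|B})=\mathbf{1}_B\widetilde{\Lcal}g$ and kills the spurious invariant piece in the counterexample above. Unwinding the duality, the quantity to control is $\widetilde{\mu}(D_N)$ with $D_N=\{x\in B^c:\ \widetilde{T}^i(x)\in B^c\ \forall\,1\leq i\leq N,\ \widetilde{T}^{N+1}(x)\in B\cap\Supp\ell\}$, i.e.\ the points whose first entrance into $B$ occurs exactly at time $N+1$ and lands in a fixed finite-measure part of $B$; these sets are pairwise disjoint, and proving $\widetilde{\mu}(D_N)\to 0$ is precisely where conservativity must genuinely enter. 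One workable route: the functions $\psi_N:=(\mathbf{1}_{B^c}\widetilde{\Lcal})^N(\mathbf{1})\leq\mathbf{1}$ decrease pointwise, dominate your tail up to the factor $\norm{g}{\infty}$, and their limit $\psi_\infty$ satisfies $\psi_\infty=\mathbf{1}_{B^c}\widetilde{\Lcal}\psi_\infty$; one must then show $\int_{B\cap\Supp\ell}\widetilde{\Lcal}\psi_\infty\dd\widetilde{\mu}=0$ by a Hopf-type argument (or by passing to the natural extension and using backward recurrence of $B$). As written, the one-line appeal to recurrence does not establish this step, so the proof has a genuine gap at its key analytic point.
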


Note that, in Section~\ref{sec:FourierTransform}, we shall actually use the more general~\cite[Lemma~1.7]{PeneThomine:2020}, which applies to equations such as $(\Id - z \widetilde{\Lcal}) (g) = f$ with $|z| < 1$. This generalization is only used to deal with convergence issues.

\smallskip

By the discussion at the end of Sub-subsection~\ref{subsubsec:TransferOperators}, for any $f \in \Bcal_{[\Sigma], 0}$, one can find a locally Lipschitz solution $g$ to the equation $(\Id - \widetilde{\Lcal}) (g) = f$, which makes the balayage identity potentially useful. We still have two hurdles to cross:
\begin{itemize}
 \item If we can construct (approximations of) solutions to the equation $(\Id - \Lcal_\Sigma) (g) = f_{|[\Sigma]}$, how do we use them to extract data on $\Lcal_\Sigma$, and in particular an approximation of the transition matrix $P_\Sigma$?
 \item How do we construct (approximations of) solutions to the equation $(\Id - \widetilde{\Lcal}) (g \mathbf{1}_{[\Sigma]}) = f$?
\end{itemize}
Both of these problems will be addressed in an asymptotic regime, when the sites of $\Sigma$ are far away one from the others. We delve deeper into these questions in the next two sub-subsections.

\subsubsection{Towards the main theorem}
\label{subsubsec:MainTheorem}

Our solution to the first of these two problem is Theorem~\ref{thm:MainTheorem}. Let us recall that it states that, for an ergodic and recurrent Markov $\Z^d$-extension of a Gibbs-Markov map and a family $\sigma_\varepsilon : I \hookrightarrow \Z^d$, there is equivalence between:
 \begin{itemize}
  \item there exists an irreducible (in the sense of Definition~\ref{def:LMatrice}) bi-L-matrix $R$ such that $P_\varepsilon = \Id-\varepsilon R+o(\varepsilon)$;
  \item there exists an irreducible (in the sense of Definition~\ref{def:QMatriceIrreductible}) matrix $S$ such that $Q_\varepsilon = \varepsilon^{-1} S + o(\varepsilon^{-1})$.
 \end{itemize}
 If in addition any of these properties holds, then $S = R_0^{-1}$.

 \begin{remark}[Theorem~\ref{thm:MainTheorem} and averaging operator]\quad
  
Using the operators $\Pi_*$ and $\Pi^*$ as well as transfer operators, Theorem~\ref{thm:MainTheorem} 
states that, under an irreducibility condition, the finite-dimensional operators 
$(\Id-\Pi_*\Lcal_\varepsilon \Pi^*)^{-1}$ and $\Pi_* (\Id-\Lcal_\varepsilon)^{-1} \Pi^*$ are equivalent.
 \end{remark}

\begin{remark}[Intrinsic version of Theorem~\ref{thm:MainTheorem}]\quad
 
 The statement of Theorem~\ref{thm:MainTheorem} depends both on the parametrization of the subsets $\Sigma_\varepsilon$ and the precise speed of convergence of $P_\varepsilon$ to $\Id$. While easy to state and convenient for computations, a more intrisic version can be crafted. First, notice that the irreductibility conditions~\ref{hyp:AsymptoticsP} and~\ref{hyp:AsymptoticsQ} are satisfied on open cones in the relevant subspaces of $M_n (\R)$, or in other words, on projectively open sets. We can then restate Theorem~\ref{thm:MainTheorem} as follows.
 
 \smallskip
 
 Consider an ergodic and recurrent Markov $\Z^d$-extension of a Gibbs-Markov map and a family of finite subsets $(\Sigma_\varepsilon)_{\varepsilon > 0}$ all of the same size and such that
 \begin{equation*}
  \lim_{\varepsilon \to 0} \min_{p \neq q \in \Sigma_\varepsilon} |p-q| = + \infty.
 \end{equation*}
 Then there is equivalence between:
 \begin{itemize}
  \item $(\Id-P_\varepsilon)_{\varepsilon > 0}$ is projectively relatively compact for Hypothesis~\ref{hyp:AsymptoticsP}.
  \item $(Q_\varepsilon)_{\varepsilon > 0}$ is projectively relatively compact for Hypothesis~\ref{hyp:AsymptoticsQ}.
 \end{itemize}
 If either condition is satisfied, then $(I-P_\varepsilon)^{-1}_{| \C_0^I} \sim_{\varepsilon \to 0} Q_\varepsilon$.
\end{remark}

 The matrix $Q_\varepsilon$ can be computed when one evaluates the quantities
 \begin{equation*}
  \langle \Pi_* (\Id-\Lcal_\varepsilon)^{-1} \Pi^* (f), g \rangle_{\ell^2 (I)},
 \end{equation*}
 for $f$, $g \in \C_0^I$. Theorem~\ref{thm:MainTheorem} gives the desired asymptotics for $R$. The initial infinite-dimensional problem of inverting $(\Id-\Lcal_\varepsilon)^{-1}$ is reduced to the finite-dimensional problem of inverting $Q_\varepsilon$.

 \smallskip
 
The proof of Theorem~\ref{thm:MainTheorem} spans three sections. First, it will be very convenient to assume that the induced system $(A, \mu, T_{[0]})$ is mixing. This shall be denoted as a separate hypothesis:

\begin{hypothesis}[Mixing]\quad
\label{hyp:Mixing}
 
 The system $(A, \mu, T_{[0]})$ is mixing.
\end{hypothesis}

Section~\ref{sec:MasterLemma} is devoted to the proof of the direct implication in Theorem~\ref{thm:MainTheorem} under Hypothesis~\ref{hyp:Mixing}:

\begin{lemma}\quad
\label{lem:MasterLemma}
 
 Let $([\Z^d], \widetilde{\mu}, \widetilde{T})$ be an ergodic and recurrent Markov $\Z^d$-extension of a Gibbs-Markov map (Hypothesis~\ref{hyp:Recurrence}). Let $I$ be finite and, and let $\sigma_\varepsilon : I \hookrightarrow \Z^d$ for all $\varepsilon > 0$.
 
 \smallskip
 
 Assume furthermore that $([0], \mu_{[0]}, T_{[0]})$ is mixing (Hypothesis~\ref{hyp:Mixing}).
 
 \smallskip
 
 If there exists an irreducible bi-L-matrix $R$ such that $P_\varepsilon = \Id-\varepsilon R+o(\varepsilon)$ (Hypothesis~\ref{hyp:AsymptoticsP}), then $Q_\varepsilon = \varepsilon^{-1} R_0^{-1} + o(\varepsilon^{-1})$. In addition, $R_0^{-1}$ is irreducible in the sense of Definition~\ref{def:QMatriceIrreductible}.
\end{lemma}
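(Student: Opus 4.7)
The plan is to show that $Q_\varepsilon$ is, to leading order, the Green's function of the slow Markov chain on $I$ driven by $P_\varepsilon$; the first-order expansion of $P_\varepsilon$ near the identity then translates into a first-order expansion of $Q_\varepsilon$ near infinity. Starting from \eqref{eq:ExpressionIntegraleQ} and the Neumann series $(\Id - \Lcal_\varepsilon)^{-1} = \sum_{n \geq 0} \Lcal_\varepsilon^n$ on $\Bcal_{I, 0}$, a duality computation with the Koopman operator yields, for $f, g \in \C_0^I$,
\begin{equation*}
|I| \langle \Lcal_\varepsilon^n \Pi^* f, \Pi^* g \rangle_{L^2([I], \mu_I)} = \langle f, C^{(n)} g \rangle_{\ell^2(I)}, \qquad C^{(n)}_{ij} := \mu\bigl\{ x \in A : T_\varepsilon^n(x, i) \in [j] \bigr\}.
\end{equation*}
Hence $Q_\varepsilon = \sum_{n \geq 0} C^{(n)}$ as operators on $\C_0^I$, once convergence of the sum is secured. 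One has $C^{(1)} = P_\varepsilon$, but \emph{a priori} $C^{(n)} \neq P_\varepsilon^n$ for $n \geq 2$: the coordinate process on $I$ is not Markov with respect to the coarse partition $\{[i] : i \in I\}$, only with respect to the finer Gibbs-Markov partition of $T_\varepsilon$.

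Convergence of the Neumann series requires $\Lcal_\varepsilon$ to have spectral radius strictly less than $1$ on $\Bcal_{I, 0}$ for small $\varepsilon$. By quasi-compactness the essential spectrum lies in $\{|z| \leq \Lambda^{-1}\}$, so only finitely many peripheral eigenvalues matter. Hypothesis \ref{hyp:Mixing} excludes peripheral eigenvalues coming from the intrinsic dynamics of a single site (since $T_{[0]}$ is mixing, $\Lcal_{[0]}$ has a spectral gap on $\Bcal_{[0], 0}$), and by a perturbation argument the remaining "slow" eigenvalues of $\Lcal_\varepsilon$ are close to those of $\transposee{P}_\varepsilon$ on $\C_0^I$, which under Hypothesis \ref{hyp:AsymptoticsP} lie in the right half-disk near $1$ and bounded away from the rest of the unit circle.

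The heart of the argument is the coupling estimate
\begin{equation*}
\bigl\| C^{(n)} - P_\varepsilon^n \bigr\|_{\ell^\infty(I) \to \ell^\infty(I)} \leq K \rho^n,
\end{equation*}
with $K < \infty$ and $\rho < 1$ \emph{independent of} $\varepsilon$. In block form with respect to the decomposition $\Bcal_I = \Pi^*(\C^I) \oplus \Ker(\Pi_*)$, the operator $\Lcal_\varepsilon$ has a "slow" diagonal block conjugate to $\transposee{P}_\varepsilon$ and a "fast" block $D_\varepsilon$ (acting on functions that are mean-zero on each site). By Hypothesis \ref{hyp:Mixing}, $D_\varepsilon$ is a small perturbation of $\bigoplus_{i \in I} \Lcal_{[0]}$ restricted to mean-zero functions, and therefore has spectral radius bounded by some $\rho < 1$ uniformly in $\varepsilon$. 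Probabilistically, this expresses the fact that, after a burn-in time governed by the mixing of the return map of $T_\varepsilon$ to $[i]$ (conjugate to $T_{[0]}$), the orbit's distribution within $[i]$ has relaxed to $\mu_{[i]}$, and then the next inter-site transition is governed \emph{exactly} by the row $P_{\varepsilon, i \cdot}$, independently of the past. Summing the geometric bound yields $\sum_{n \geq 0} (C^{(n)} - P_\varepsilon^n) = O(1)$ on $\C_0^I$.

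Finally, under Hypothesis \ref{hyp:AsymptoticsP}, $(\Id - P_\varepsilon)|_{\C_0^I} = \varepsilon R_0 + o(\varepsilon)$ with $R_0$ invertible by Lemma \ref{lem:SpectreLMatrice}, so $(\Id - P_\varepsilon)^{-1}|_{\C_0^I} = \varepsilon^{-1} R_0^{-1} + o(\varepsilon^{-1})$; combining with the previous step, $Q_\varepsilon = \varepsilon^{-1} R_0^{-1} + o(\varepsilon^{-1})$, as announced. The irreducibility of $R_0^{-1}$ in the sense of Definition \ref{def:QMatriceIrreductible} is already covered by Lemma \ref{lem:R2QIrred}. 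The main obstacle is maintaining the constants $K$ and $\rho$ uniform in $\varepsilon$ in the coupling estimate: this forces one to compare transfer operators $\Lcal_\varepsilon$ associated to genuinely different induced systems, which is precisely why the paper constructs the common Banach space $\Bcal_I$ via \eqref{eq:DefinitionBSigma} and invests effort in uniform quasi-compactness estimates for the induced Gibbs-Markov structures.
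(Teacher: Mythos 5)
Your reduction of the problem to comparing $Q_\varepsilon=\sum_{n\geq 0}C^{(n)}$ (restricted to $\C_0^I$) with $\sum_{n\geq 0}P_\varepsilon^n=(\Id-P_\varepsilon)^{-1}$ is sound, and it is in fact the same reduction the paper carries out (there, $C^{(n)}$ appears as $\Pi_*\Lcal_\varepsilon^n\Pi^*$ and the comparison is made with the matrix products $\prod_k(\Id-\varepsilon\transposee{R}+R_{\varepsilon,k})$). The genuine gap is your central estimate $\norm{C^{(n)}-P_\varepsilon^n}{}\leq K\rho^n$ with $K,\rho$ independent of $\varepsilon$: this is both unproved and false. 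The non-Markov correction does not decay exponentially in $n$ at a rate uniform in $\varepsilon$; it accumulates per-step errors of size up to $o(\varepsilon)$ (bunching of inter-site transitions: right after a jump the fibre distribution is the hitting distribution, not $\mu$, and the conditional probability of an immediate further jump can exceed the stationary one by much more than $O(\varepsilon^2)$). Concretely, take a full shift with a jump decided by two consecutive symbols, each transition having probability $\varepsilon$ but being followed by a second transition with probability $\sqrt{\varepsilon}$: then at $n\asymp\varepsilon^{-1}$ one finds $\norm{C^{(n)}-P_\varepsilon^n}{}\asymp\sqrt{\varepsilon}$, which is polynomially small in $\varepsilon$ but enormously larger than $K\rho^{1/\varepsilon}$. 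What is true, and what the proof actually needs, is much weaker: convergence of $C^{(\lfloor\varepsilon^{-1}t\rfloor)}$ to $e^{-t\transposee{R}}$ uniformly on compacts together with a tightness bound of the form $Ce^{-c\varepsilon n}$ on $\C_0^I$ (note the rate $c\varepsilon$, not a fixed $\rho<1$), after which summation over $n$ gives $\sum_n(C^{(n)}-P_\varepsilon^n)=o(\varepsilon^{-1})$, which suffices. Establishing exactly these two facts is the content of the paper's Subsections~\ref{subsec:ConesTime1}--\ref{subsec:ConesTimeEpsilon} (invariant cones $C_K(\varepsilon)$, the Baker--Campbell--Hausdorff product lemma, and the coupling Proposition~\ref{prop:Couplage}); your proposal replaces this work by an assertion.

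The justification you offer for the geometric bound is also invalid as stated: $\Lcal_\varepsilon$ has no block form with respect to $\Pi^*(\C^I)\oplus\Ker(\Pi_*)$, because neither subspace is $\Lcal_\varepsilon$-invariant (the paper points this out explicitly; the whole difficulty is that the off-diagonal couplings, though $O(\varepsilon)$, act over $\Theta(\varepsilon^{-1})$ iterations). One can try to produce genuinely invariant ``slow'' and ``fast'' subspaces by perturbation theory \emph{\`a la} Keller--Liverani/Dolgopyat--Wright — the paper's remark before Proposition~\ref{prop:ApproximationParMatrices} sketches this alternative — but even then the slow block is only $o(\varepsilon)$-conjugate to $\transposee{P}_\varepsilon$ per step, so the resulting comparison again degrades linearly in $n$ rather than geometrically, and the whole uniformity analysis still has to be done. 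Your last paragraph (inverting $\Id-P_\varepsilon$ on $\C_0^I$ and invoking Lemma~\ref{lem:R2QIrred} for irreducibility) is fine once the comparison estimate is available; also note that justifying the Neumann series $Q_\varepsilon=\sum_n C^{(n)}$ itself requires either Abel summation or the spectral gap of $\Lcal_\varepsilon$ on $\Bcal_{I,0}$, which in the paper is a \emph{consequence} (Corollary~\ref{cor:TrouSpectral}) of the coupling estimates, not an input.
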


 Lemma~\ref{lem:MasterLemma} is the heart of the proof of Theorem~\ref{thm:MainTheorem}. Its proof is inspired by works on the hitting time of small target for hyperbolic systems~\cite{Pitskel:1991, Hirata:1993} and invariant cones for fast-slow systems~\cite{DeSimoiLiverani:2015}; we refer the reader to the introduction of Section~\ref{sec:MasterLemma} for more details.

\smallskip

Once this is done, in Section~\ref{sec:ErgodicStructure} we shall remove this mixing assumption. 
 If $([0], \mu_{[0]}, T_{[0]})$ is not mixing, then it has a decomposition into periodic components. This step requires us to understand what effects the existence of such periodic components have on the whole extension $([\Z^d], \widetilde{\mu}, \widetilde{T})$; to our knowledge, these structural results (in particular Propositions~\ref{prop:StructureColoration} and~\ref{prop:StructureColorationInduits}) are new and of independent interest.

\smallskip
 
This analysis is independent from Section~\ref{sec:MasterLemma}. That said, we think it is clearer to give the proof of Lemma~\ref{lem:MasterLemma} with the additional mixing assumption, and then point out the differences non-mixing implies, than to write directly the proof of Lemma~\ref{lem:MasterLemma} in a non-mixing context. At the end of Section~\ref{sec:ErgodicStructure}, we shall have proved:

\begin{lemma}\quad
\label{lem:MasterLemmaErgodic}
 
 Let $([\Z^d], \widetilde{\mu}, \widetilde{T})$ be an ergodic and recurrent Markov 
 $\Z^d$-extension of a Gibbs-Markov map (Hypothesis~\ref{hyp:Recurrence}). 
 Let $I$ be finite and, and choose $\sigma_\varepsilon : I \hookrightarrow \Z^d$ 
 for all $\varepsilon > 0$.
 
 \smallskip
 
 If there exists an irreducible bi-L-matrix $R$ such that $P_\varepsilon = \Id-\varepsilon R+o(\varepsilon)$ 
 (Hypothesis~\ref{hyp:AsymptoticsP}), then $Q_\varepsilon = \varepsilon^{-1} R_0^{-1} + o(\varepsilon^{-1})$. 
 In addition, $R_0^{-1}$ is irreducible in the sense of Definition~\ref{def:QMatriceIrreductible}.
\end{lemma}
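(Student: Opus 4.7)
The plan is to deduce Lemma~\ref{lem:MasterLemmaErgodic} from the mixing version, Lemma~\ref{lem:MasterLemma}, by replacing the extension by an auxiliary extension for which Hypothesis~\ref{hyp:Mixing} holds, and then pulling the conclusion back. Throughout, I rely on the structural propositions of Section~\ref{sec:ErgodicStructure} (in particular Propositions~\ref{prop:StructureColoration} and~\ref{prop:StructureColorationInduits}), which give, for a non-mixing but ergodic Gibbs--Markov $\Z^d$-extension, a finite coloring $[\Z^d] = \bigsqcup_{j=0}^{k-1} C_j$ of period $k \geq 1$ such that $\widetilde{T}(C_j) = C_{j+1 \bmod k}$ and such that the induced map $T_{[0] \cap C_0}$ on $[0] \cap C_0$ is mixing.

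First, I would introduce the auxiliary dynamics. The map $\widetilde{T}^k$ preserves each $C_j$, and its restriction to $C_0$ is itself (after relabeling) an ergodic, recurrent, Markov $\Z^d$-extension $([\Z^d]^{\sharp}, \widetilde{\mu}^{\sharp}, \widetilde{T}^{\sharp})$ of a Gibbs--Markov map whose induced map on site $0$ is mixing by Proposition~\ref{prop:StructureColorationInduits}. Write $P_\varepsilon^{\sharp}$ and $Q_\varepsilon^{\sharp}$ for the analogues of $P_\varepsilon$ and $Q_\varepsilon$ attached to this auxiliary system and to the injections $\sigma_\varepsilon$. The key bookkeeping step is to relate these two pairs of matrices. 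On the probabilistic side, any trajectory of $\widetilde{T}$ starting from $[i] \cap C_0$ hits $[\Sigma_\varepsilon]$ exactly at the times it is in $C_0$ and falls into $[\Sigma_\varepsilon] \cap C_0$; this gives $P_\varepsilon^{\sharp} = (P_\varepsilon)^k$ on $\C^I$, hence
\begin{equation*}
  \Id - P_\varepsilon^{\sharp} = k\varepsilon R + o(\varepsilon),
\end{equation*}
so Hypothesis~\ref{hyp:AsymptoticsP} with matrix $kR$ holds for the auxiliary system (and $kR$ is of course still an irreducible bi-L-matrix).

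Next, I would express $Q_\varepsilon$ in terms of $Q_\varepsilon^{\sharp}$ by using the color decomposition of the transfer operator. Because $\widetilde{\Lcal}$ permutes the subspaces $\Lbb^p(C_j)$ cyclically, the Neumann series for $(\Id - \widetilde{\Lcal})^{-1}$ reorganizes as
\begin{equation*}
 (\Id - \widetilde{\Lcal})^{-1}
 = \left( \sum_{j=0}^{k-1} \widetilde{\Lcal}^j \right) (\Id - \widetilde{\Lcal}^k)^{-1},
\end{equation*}
and $(\Id - \widetilde{\Lcal}^k)^{-1}$ acts diagonally with respect to the decomposition by $C_j$. Sandwiching between $\Pi^*$ and $\Pi_*$ and using the definition~\eqref{eq:ExpressionIntegraleQ} yields an exact finite-dimensional identity of the form $Q_\varepsilon = k^{-1} Q_\varepsilon^{\sharp} + E_\varepsilon$, where the error term $E_\varepsilon$ comes from the off-diagonal colors and is bounded uniformly in $\varepsilon$ (the only unbounded contribution is produced by the simple eigenvalue $1$ of $\widetilde{\Lcal}^k$ on each $C_j$, which is already captured by $Q_\varepsilon^\sharp$). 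This is the place where the structural results of Section~\ref{sec:ErgodicStructure} are really needed: they are what guarantees that the ``cross-color'' parts of $(\Id - \widetilde{\Lcal})^{-1}$ stay bounded on $\Bcal_{I,0}$ as $\varepsilon \to 0$.

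Once both relations are in hand, Lemma~\ref{lem:MasterLemma} applied to the auxiliary system gives
\begin{equation*}
 Q_\varepsilon^{\sharp} = \varepsilon^{-1} (kR)_0^{-1} + o(\varepsilon^{-1}) = (k\varepsilon)^{-1} R_0^{-1} + o(\varepsilon^{-1}),
\end{equation*}
with $R_0^{-1}$ irreducible in the sense of Definition~\ref{def:QMatriceIrreductible}. Combining with $Q_\varepsilon = k^{-1} Q_\varepsilon^{\sharp} + O(1)$ produces $Q_\varepsilon = \varepsilon^{-1} k^{-2} R_0^{-1} + o(\varepsilon^{-1})$, and I would expect the bookkeeping of step two (in particular a factor of $k$ between the two notions of ``first return'' once one accounts for the rate at which $\widetilde{T}^k$ iterates compared to $\widetilde{T}$) to correct these factors to give exactly $Q_\varepsilon = \varepsilon^{-1} R_0^{-1} + o(\varepsilon^{-1})$. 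The irreducibility of $R_0^{-1}$ is inherited from the auxiliary system, since the two potentials differ by a bounded correction. The main obstacle I foresee is precisely this last accounting: one has to check that the combinatorial relationship between $P_\varepsilon^{\sharp}$ and $P_\varepsilon$, and the corresponding one between $Q_\varepsilon^{\sharp}$ and $Q_\varepsilon$, are compatible in the limit, so that the constants $k$ cancel out and the identity $S = R_0^{-1}$ survives without spurious factors. This is a careful but ultimately elementary computation relying on the symmetry of the coloring provided by Proposition~\ref{prop:StructureColoration}.
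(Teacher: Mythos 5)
Your overall instinct (reduce the ergodic case to the mixing one through the periodic decomposition and the $M$-fold iterate) is the same as the paper's, but the specific reduction you propose rests on a structure that the paper does not provide and that fails in general. Propositions~\ref{prop:StructureColoration} and~\ref{prop:StructureColorationInduits} do \emph{not} give a partition $[\Z^d] = \bigsqcup_j C_j$ with $\widetilde{T}(C_j) = C_{j+1}$: the colour defined in the paper satisfies $C \circ \widetilde{T} = C$ off $[0]$, so it is invariant along excursions rather than cyclically permuted, and the periodicity lives only at the level of the \emph{induced} maps ($T_{[0]}$ on $A$, and $T_{[\Sigma_\varepsilon]}$ on $[\Sigma_\varepsilon]$ once the sites are far apart). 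A global cyclic decomposition of $\widetilde{T}$ of period $M$ would force arithmetic constraints on the return times $\varphi_{[0]}$ that have no reason to hold (the examples in Figures~\ref{fig:DecompositionPeriodiqueStandard} and~\ref{fig:DecompositionPeriodiqueExceptionnelle} are special in this respect). Consequently the auxiliary object ``$\widetilde{T}^k$ restricted to $C_0$'' is not available, and even if some cyclic class existed, its restriction would not visibly be a Markov $\Z^d$-extension of a Gibbs--Markov map with the same index set $I$, the same $\sigma_\varepsilon$, and a mixing induced map on $[0]$ — all of which you need in order to invoke Lemma~\ref{lem:MasterLemma} as a black box. This is why the paper does not apply Lemma~\ref{lem:MasterLemma} to an auxiliary extension but instead reruns its proof for $\Lcal_\varepsilon^M$ on the subspaces $\Bcal_{k,I}$ of functions supported on $A_k \times I$, using the quasi-compactness of $\Lcal_{[0]}^M$ on each component and recomputing the first-order transition rates for the $M$-step map; a further point you would also have to supply is the summability constraint (zero average on \emph{each} $A_k \times I$, not merely on $[I]$) before the Laplace-transform step, which the paper checks holds for $\Pi^* f$, $f \in \C_0^I$.

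Two more concrete problems. First, your scalar bookkeeping is inconsistent and you acknowledge it: with your normalisations the argument outputs $\varepsilon^{-1}k^{-2}R_0^{-1}$, and the relation you would actually need is $Q_\varepsilon \approx k\, Q_\varepsilon^{\sharp}$ (heuristically $(\Id-P^k)^{-1} \approx k^{-1}(\Id-P)^{-1}$ when $P \approx \Id$), not $Q_\varepsilon \approx k^{-1}Q_\varepsilon^{\sharp}$; this cancellation of the factors of $M$ is exactly the delicate point, which the paper controls by keeping the time change $n = M\lfloor \varepsilon^{-1}M^{-1}t\rfloor + r$ explicit rather than by an exact resolvent identity. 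Second, your proposal ignores the bichromatic case ($d=1$, bounded jumps), where by Proposition~\ref{prop:StructureColorationInduits} the periodic components of $T_{[\Sigma_\varepsilon]}$ are the sets $\bigsqcup_n A_{k+n(\ell_+-\ell_-)} \times \{\sigma_n\}$, mixing phases and sites; there the relation between the one-step and $M$-step transition rates is no longer immediate (upward and downward jumps occur at different phases, and one must show that double jumps contribute only $O(\varepsilon^2)$), so any reduction that treats the periodic classes as site-respecting breaks down in that case.
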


The final step is to reverse the implication, so as to get Theorem~\ref{thm:MainTheorem}. This is the goal of Section~\ref{sec:Inversion}. This uses the $|I| = 2$ case of the theorem (which is deduced directly from Lemma~\ref{lem:MasterLemmaErgodic}) and a compactness argument.

\subsubsection{Applications}
\label{subsubsec:Examples}

We are given an ergodic an recurrent Markov $\Z^d$-extension of a Gibbs-Markov map and a family $(\sigma_\varepsilon)_{\varepsilon >0}$ of injection from $I$ into $\Z^d$. All is left is, given $f$, $g \in \C_0^\Sigma$, to estimate the integrals
\begin{equation*}
 \langle f, Q_\varepsilon (g) \rangle_{\ell^2 (I)}
 = |I| \langle (\Id-\Lcal_\varepsilon)^{-1} \Pi^* (f), \Pi^* (g) \rangle_{\Lbb^2 ([I], \mu_I)} 
\end{equation*}
when $\varepsilon$ vanishes. Thanks to the balayage identity, this reduces formally to the computation of the quantities (identifying $I$ with $\Sigma_\varepsilon$)
\begin{equation*}
 \langle \Pi_* (\Id-\widetilde{\Lcal})^{-1}  \left( \Pi^* (f)\mathbf{1}_{[\Sigma_\varepsilon]} \right), \Pi^* (g) \mathbf{1}_{[\Sigma_\varepsilon]} \rangle_{\Lbb^2 ([\Z^d], \widetilde{\mu})}.
\end{equation*}
As with random walks, this can be done using the Fourier transform. The main difference is that this operation make so-called \textit{twisted transfer operators} appear.

\begin{definition}[Twisted transfer operators]\quad
 
 For $w \in \Tbb^d := \R^d / 2 \pi \Z^d$, define the twisted transfer operator:
 \begin{equation}
  \label{eq:OperateurTransfertTordu}
  \Lcal_w (\cdot)
  := \Lcal (e^{i \langle w, F \rangle} \cdot),
 \end{equation}
 which acts on $\Lbb^p (A, \mu)$ for all $p \in [1, \infty]$, as well as on $\Bcal$. 
\end{definition}

Our applications are done in two steps. In Section~\ref{sec:FourierTransform}, we shall use these twisted transfer operators to give a general expression for the quantities $\langle f, Q_\varepsilon (g) \rangle_{\ell^2 (I)}$. Proposition~\ref{prop:LimitePoissonTF} states that, in our setting, for all $\varepsilon >0$,
\begin{equation*}
 \langle f, Q_\varepsilon (g) \rangle_{\ell^2 (I)} 
 = \lim_{\rho \to 1^-} \frac{1}{(2\pi)^d} \int_{\Tbb^d} \widecheck{\Fcal}_{\sigma_\varepsilon} (f) \Fcal_{\sigma_\varepsilon} (g) \left( \int_A (\Id-\rho \Lcal_w )^{-1} (\mathbf{1})\dd \mu \right) \dd w,
\end{equation*}
where $\Fcal_{\sigma_\varepsilon} (g)$ is the Fourier transform of $(g \circ \sigma_\varepsilon^{-1}) \mathbf{1}_{\Sigma_\varepsilon}$.

\smallskip

In the same spirit as the Nagaev-Guivarc'h proof of the central limit theorem for dynamical systems~\cite{Nagaev:1957, Nagaev:1961, GuivarchHardy:1988}, when the sites are far apart, the main contribution to these integrals will be given by the main eigenvalue $\lambda_w$ of $\Lcal_w$, for $w$ close to $0$. We shall prove that, for any small enough neighborhood $U$ of $0$,
\begin{equation*}
  \langle f, Q_\varepsilon (g) \rangle_{\ell^2 (I)} 
  = \lim_{\rho \to 1^-} \frac{1}{(2\pi)^d} \int_U \widecheck{\Fcal}_{\sigma_\varepsilon} (f) \Fcal_{\sigma_\varepsilon} (g) \frac{1+\delta_w}{1-\rho \lambda_w} \dd w + O_U (1) \norm{f}{} \cdot \norm{g}{},
\end{equation*}
with $\delta_w$ a small error term.

\smallskip

In Section~\ref{sec:Calculs}, we apply the later equation to different classes of jump function $F$ when $\sigma_t (i) = t \sigma (i) + o(1)$ for all $i \in I$. For $d=2$ and $\norm{F}{} \in \Lbb^2 (A, \mu)$, this yields Proposition~\ref{prop:ApplicationD2CarreIntegrable}. For $d=1$ and $F$ in the basin of attraction of a Cauchy random variable\footnote{In addition to some technical constraints}, we get a similar asymptotics, stated in Proposition~\ref{prop:ApplicationD1Cauchy}. We are also able to give asymptotics for $P_\varepsilon$ when $d=1$ and $F$ is square-integrable, in Proposition~\ref{prop:ApplicationD1CarreIntegrable}. Finally, we give some partial results when $d=1$ and $F$ is in the basin of attraction of a L\'evy random variable (i.e. when $F$ has regularly varying tails of index in $(0,1)$). We are able to give an explicit expression for $S$, stated in Proposition~\ref{prop:ApplicationD1Levy}, but we could not find an explicit expression for its inverse. We are still able to compute the asymptotics for a fixed extension and fixed $(\sigma_t)_{t > 0}$, as done in Examples~\ref{ex:LevyAsymetrique} and~\ref{ex:LevySymetrique}.

\section{From transition probabilities to potential}
\label{sec:MasterLemma}

The goal of this section is to prove Lemma~\ref{lem:MasterLemma}: if $([0], \mu_{[0]}, T_{[0]})$ is mixing and $P_\varepsilon = \Id-\varepsilon R + o(\varepsilon)$ for some irreducible bi-L-matrix $R$, then $Q_\varepsilon = \varepsilon^{-1} R_0^{-1} + o(\varepsilon^{-1})$. 

\smallskip

The transition matrix $P_\varepsilon$ can be computed from the transfer operator $\Lcal_\varepsilon$ using the relation $\transposee{P}_\varepsilon = \Pi_* \Lcal_\varepsilon \Pi^*$ (Equation~\eqref{eq:TransferOperatorAveraging}). From another point of view, $\Pi^* \transposee{P}_\varepsilon \Pi_* = (\Pi^* \Pi_*) \Lcal_\varepsilon$ could be seen as a finite-rank approximation of $\Lcal_\varepsilon$. Such an approximation is reminiscent of works on metastable states by Keller and Liverani~\cite{KellerLiverani:2009a} (for two communicating systems), as well as by Dolgopyat and Wright~\cite{DolgopyatWright:2012} (for interval maps).

\smallskip

At first sight, this approximation is not very good: in general, $\norm{(\Id-\Pi^* \Pi_*) \Lcal_\varepsilon}{\Bcal \to \Bcal}$ is of order $1$, because of the non-trivial behaviour of $\Lcal_\varepsilon$ on each site $[i]$. A solution would be to focus on functions which are constant on each site; however, this subspace is not stable under $\Lcal_\varepsilon$.

\smallskip

On each site, the effect of $\Lcal_\varepsilon$ is roughly that of $\Lcal_{[0]}$, which mixes exponentially quickly. The transitions between sites introduce an error of order $\varepsilon$. The interaction of these two dynamical effects imply the existence of cones of functions $C_K (\varepsilon)$ (functions which are $\varepsilon$-close to being constant on each site) which are invariant under $\Lcal_\varepsilon$. By restricting ourselves to such cones, we shall be able improve the bounds on $(\Id-\Pi^* \Pi_*) \Lcal_\varepsilon$. The goal of Subsection~\ref{subsec:ConesTime1} is thus to prove that such cones are stable under the action of $\Lcal_\varepsilon$, and moreover to prove a cone contraction property 
which shall be instrumental in the next steps.

\begin{remark}[Fast-slow systems]\quad
 
 Such families of cones of functions have some precedent in ergodic theory. One feature of the sequence of operators $\Lcal_\varepsilon$ is that the evolution of $(\Lcal_\varepsilon)_{n \geq 0}$ has two characteristic time scales. At a time scale of $\Theta(1)$, the mixing property of $([0], \mu_{[0]}, T_{[0]})$ homogeneizes each site $[i]$, but there is little communication between distinct sites. The transitions between sites occur at a time scale of $\Theta(\varepsilon^{-1})$. When $\varepsilon$ vanishes, these two time scales become decoupled, so that the transitions become independent. 
 
 \smallskip
 
 The existence of two distinct time scales is also a feature of fast-slow systems. For instance, 
 consider a family of maps $F_\varepsilon$ acting on $\Omega \times \Sbb_1$ such that
 \begin{equation*}
  F_\varepsilon (x, \theta) 
  = (f_\varepsilon (x, \theta), \theta + \varepsilon g (x, \theta))
 \end{equation*}
 and $x \mapsto f_\varepsilon (x, \theta)$ is a family of uniformly hyperbolic transformations on $\Omega$. Then the first variable, $x$, evolves on a time scale of $\Theta(1)$, and the second variable, $\theta$, on a time scale of $\Theta(\varepsilon^{-1})$. When $\varepsilon$ vanishes, these two time scales become decoupled, so that the evolution of $\theta$ is averaged over the values of $x$. 
 
 \smallskip
 
 In the context of fast-slow systems, such parametric families of cones -- or, similarly, standard families -- have been introduced. We refer the interested reader to~\cite{DeSimoiLiverani:2015} for an exposition of these techniques, and~\cite{Dolgopyat:2005, ChernovDolgopyat:2009, DeSimoiLiverani:2018} for applications to partially hyperbolic systems and Sinai billiard maps. In particular, we stress the similarity between~\cite[Proposition~3.1]{DeSimoiLiverani:2015} and Corollary~\ref{cor:ConeStability}.
\end{remark}

Once we have proved the stability of these cones, the error term between $\Lcal_\varepsilon$ and $(\Pi^* \Pi_*) \Lcal_\varepsilon$ will be improved. In Subsection~\ref{subsec:ProduitMatrices}, together with a version of the Baker-Campbell-Hausdorff formula, this shall let us control the error between their products on a time scale of $\Theta(\varepsilon^{-1})$. In particular, if $f$ is constant on each site, then, for all positive $t$, the operator $\Lcal_\varepsilon^{\lfloor \varepsilon^{-1} t \rfloor} (f)$ converges to $\Pi^* e^{- t \transposee{R}} \Pi_* f$. This exponential decay is typical of previous work on the hitting time of small targets~\cite{Pitskel:1991, Hirata:1993}. Let us also point out~\cite{KellerLiverani:2009b, BalintGilbertNandoriSzaszToth:2017} for settings where rare events correspond to strong changes in the system, and where the behaviour of the system after those changes has to be controlled.

\smallskip

Finally, in Subsection~\ref{subsec:ConesTimeEpsilon}, the exponential contraction at time scale $\Theta(1)$ together with a coupling argument gets us exponential bounds on the decay of correlations which are uniform in $\varepsilon^{-1} t$. Note that this coupling argument is classical for Markov chains~\cite[Chapter~5]{LevinPeresWilmer:2009} and has a long tradition in ergodic theory (present for instance in most applications of standard pairs; see e.g.~\cite{ChernovMarkarian:2006} for applications to Sinai billiards maps). Once this is done, we are able to relate $(\Id-\Lcal_\varepsilon)^{-1}$ with the inverse of $R^{-1}$ through a Laplace transform, finishing the proof of Lemma~\ref{lem:MasterLemma}.

\subsection{Cone contraction at time scale $\Theta(1)$}
\label{subsec:ConesTime1}

 Let $([\Z^d], \widetilde{\mu}, \widetilde{T})$ be an ergodic and recurrent Markov $\Z^d$-extension of a Gibbs-Markov map (Hypothesis~\ref{hyp:Recurrence}). Let $I$ be finite and, and choose $\sigma_\varepsilon : I \hookrightarrow \Z^d$ 
 for all $\varepsilon > 0$.
 
 \smallskip

 As announced earlier, we introduce cones of functions which are $\varepsilon$-close to being constant on each site, and prove that they are stable under some iterate of $\Lcal_\varepsilon$, as well as a cone contraction property.

\begin{definition}[Cones of functions]\quad
 
 For all $K \geq 0$ and $\varepsilon >0$, let:
 \begin{equation*}
  C_K (\varepsilon) 
  := \left\{ f: \ [I] \to \R : \ f \geq 0 \text{ and } \norm{(\Id - \Pi^* \Pi_*) f}{\Bcal_I} \leq K \varepsilon \norm{f}{\Lbb^1 ([I], \mu_I)} \right\}.
 \end{equation*}
\end{definition}

These subsets of $\Bcal_I$ are indeed convex cones, as we can quickly check:

\begin{lemma}\quad
 \label{lem:ConesConvexes}
 
 For all $K$, $\varepsilon \geq 0$, the set $C_K (\varepsilon)$ is a convex cone.
\end{lemma}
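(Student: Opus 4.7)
The plan is to verify the two defining conditions separately, which is essentially a bookkeeping exercise given how $C_K(\varepsilon)$ is defined.

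First I would check closure under non-negative scalar multiplication. For any $\lambda \geq 0$ and $f \in C_K(\varepsilon)$, the function $\lambda f$ is non-negative, and both sides of the inequality $\norm{(\Id - \Pi^* \Pi_*) f}{\Bcal_I} \leq K \varepsilon \norm{f}{\Lbb^1 ([I], \mu_I)}$ are positively homogeneous of degree one in $f$, so the inequality is preserved. This gives the cone property.

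Next I would verify convexity. Take $f$, $g \in C_K(\varepsilon)$ and $t \in [0,1]$. Then $tf + (1-t)g \geq 0$ since $f$, $g \geq 0$. By linearity of $(\Id - \Pi^* \Pi_*)$ and the triangle inequality for $\norm{\cdot}{\Bcal_I}$,
\begin{equation*}
\norm{(\Id - \Pi^* \Pi_*)(tf + (1-t)g)}{\Bcal_I} \leq t \norm{(\Id - \Pi^* \Pi_*) f}{\Bcal_I} + (1-t) \norm{(\Id - \Pi^* \Pi_*) g}{\Bcal_I}.
\end{equation*}
Applying the defining bound on $f$ and $g$, this is at most $K\varepsilon (t \norm{f}{\Lbb^1} + (1-t) \norm{g}{\Lbb^1})$. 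The crucial (and only non-formal) observation is that, because $f$ and $g$ are non-negative, the $\Lbb^1$ norm is additive on their convex combination: $\norm{tf + (1-t)g}{\Lbb^1} = t \norm{f}{\Lbb^1} + (1-t) \norm{g}{\Lbb^1}$. Thus the right-hand side equals $K \varepsilon \norm{tf+(1-t)g}{\Lbb^1}$, and $tf + (1-t)g \in C_K(\varepsilon)$.

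There is no real obstacle here; the lemma is a direct consequence of positivity (to linearize the $\Lbb^1$ norm on convex combinations) combined with the linearity of $\Id - \Pi^* \Pi_*$ and the triangle inequality for $\norm{\cdot}{\Bcal_I}$. The statement is presumably recorded only so that later arguments may invoke convex-combination and scaling manipulations inside $C_K(\varepsilon)$ without comment.
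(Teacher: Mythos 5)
Your proof is correct and follows essentially the same route as the paper: positive homogeneity of both sides of the defining inequality for the scaling property, and the triangle inequality for $\norm{\cdot}{\Bcal_I}$ combined with the additivity of $\norm{\cdot}{\Lbb^1 ([I], \mu_I)}$ on nonnegative functions for convexity. Nothing to add.
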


\begin{proof}
 
 Let $K$, $\varepsilon \geq 0$. Given $f \in C_K (\varepsilon)$ and $\lambda \geq 0$,
\begin{equation*}
 \norm{(\Id - \Pi^* \Pi_*) (\lambda f)}{\Bcal_I} 
 = \lambda \norm{(\Id - \Pi^* \Pi_*) f}{\Bcal_I} 
 \leq \lambda K \varepsilon \norm{f}{\Lbb^1 ([I], \mu_I)} 
 = K \varepsilon \norm{\lambda f}{\Lbb^1 ([I], \mu_I)},
\end{equation*}
so that $\lambda f \in \Bcal_I$. In addition, for all $f$, $g \in C_K (\varepsilon)$ and $t \in [0,1]$:
\begin{align*}
 \norm{(\Id - \Pi^* \Pi_*) ((1-t)f+tg)}{\Bcal_I} 
 & \leq (1-t) \norm{(\Id - \Pi^* \Pi_*) f}{\Bcal_I} + t \norm{(\Id - \Pi^* \Pi_*) g}{\Bcal_I} \\
 & \leq K \varepsilon \left( (1-t) \norm{f}{\Lbb^1 ([I], \mu_I)} + t \norm{g}{\Lbb^1 ([I], \mu_I)} \right) \\
 & = K \varepsilon \norm{(1-t)f+tg}{\Lbb^1 ([I], \mu_I)},
\end{align*}
where we used the fact that $\norm{\cdot}{\Lbb^1 ([I], \mu_I)}$ is linear on nonnegative functions. Hence $(1-t)f+tg$ also belongs to $C_K (\varepsilon)$.
\end{proof}

Another fundamental property of these cones is that, since any function in $C_K (\varepsilon)$ is close for the $\Bcal_I$ norm to the finite dimensional space of functions constant on each $[i]$, many norms are equivalent on $C_K (\varepsilon)$. In particular, 
we can control the $\Bcal_I$ norm (the strongest norm we shall use) with the $\Lbb^1 ([I], \mu_I)$ norm (the weakest norm we shall use).

\begin{lemma}\quad
 \label{lem:ControleNorme}
 
 Let $K$, $\varepsilon \geq 0$. For all $f \in C_K (\varepsilon)$,
 \begin{equation}
  \label{eq:BorneBIL1}
  \norm{f}{\Bcal_I} 
  \leq (|I|+K \varepsilon) \norm{f}{\Lbb^1 ([I], \mu_I)}.
 \end{equation}
\end{lemma}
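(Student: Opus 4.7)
The plan is to decompose $f = \Pi^* \Pi_* f + (\Id - \Pi^* \Pi_*) f$ and bound each piece in $\Bcal_I$, then combine by the triangle inequality. The second piece is controlled directly by the defining condition of the cone: since $f \in C_K(\varepsilon)$, one has $\norm{(\Id - \Pi^* \Pi_*) f}{\Bcal_I} \leq K \varepsilon \norm{f}{\Lbb^1([I], \mu_I)}$, which is already of the required form.

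For the first piece, I would use the explicit formula $(\Pi^* \Pi_* f)(x, i) = \int_A f(y, i) \dd \mu(y)$, which shows that $\Pi^* \Pi_* f$ is constant on each fiber $[i]$. Consequently its Lipschitz seminorm on each $a \in \alpha^*$ vanishes, and $\norm{\Pi^* \Pi_* f}{\Bcal_I} = \max_{i \in I} \left| \int_A f(y, i) \dd \mu(y) \right|$.

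The final step is to relate these fiber averages to $\norm{f}{\Lbb^1([I], \mu_I)}$, and this is where the nonnegativity hypothesis $f \geq 0$ built into the cone comes into play. Since $\mu_I = |I|^{-1} \sum_{i \in I} \mu \otimes \delta_i$, one has $\norm{f}{\Lbb^1([I], \mu_I)} = |I|^{-1} \sum_{i \in I} \int_A f(y, i) \dd \mu(y)$, and therefore $\max_{i \in I} \int_A f(y, i) \dd \mu(y) \leq \sum_{i \in I} \int_A f(y, i) \dd \mu(y) = |I| \cdot \norm{f}{\Lbb^1([I], \mu_I)}$. Combining the two bounds via the triangle inequality yields $\norm{f}{\Bcal_I} \leq (|I| + K \varepsilon) \norm{f}{\Lbb^1([I], \mu_I)}$, as required. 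There is no real obstacle here; the argument is a one-line unpacking of the definitions, and the nonnegativity condition in the cone is essential only to turn the sum of absolute values into the sum of integrals.
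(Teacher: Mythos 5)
Your proof is correct and follows essentially the same route as the paper: decompose $f = \Pi^*\Pi_* f + (\Id-\Pi^*\Pi_*)f$, bound the second piece by the cone condition, and bound the piecewise-constant piece by $|I|\,\norm{f}{\Lbb^1([I],\mu_I)}$. (A minor remark: nonnegativity is not actually needed for the last step, since $\max_{i}\bigl|\int_A f(\cdot,i)\dd\mu\bigr| \leq \sum_{i}\int_A |f(\cdot,i)|\dd\mu = |I|\,\norm{f}{\Lbb^1([I],\mu_I)}$ already holds for arbitrary $f$.)
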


\begin{proof}
 
Let $K$, $\varepsilon \geq 0$. For all $f \in C_K (\varepsilon)$,
\begin{align*}
 \norm{f}{\Bcal_I} 
 & \leq \norm{(\Id-\Pi^* \Pi_*) f}{\Bcal_I} + \norm{\Pi^* \Pi_* f}{\Bcal_I} \\
 & \leq K \varepsilon \norm{f}{\Lbb^1 ([I], \mu_I)} + |I| \norm{\Pi^* \Pi_* f}{\Lbb^1 ([I], \mu_I)} \\
 & = (|I|+K \varepsilon) \norm{f}{\Lbb^1 ([I], \mu_I)}, 
\end{align*}
where we used the fact that $\norm{\cdot}{\Bcal_I} = \norm{\cdot}{\infty} \leq |I| \norm{\cdot}{\Lbb^1 ([I], \mu_I)}$ on functions which are constant on each $[i]$.
\end{proof}

Given these preliminary results, we shall now prove that, under the assumptions of Lemma~\ref{lem:MasterLemma}, the cones $C_K (\varepsilon)$ are contracted under some iterate of $\Lcal_\varepsilon$ whenever $K$ is large enough. Taking $\sigma = 1$ in the following proposition also yield their stability.

\begin{proposition}\quad
\label{prop:FastAsymptotics}
 
 Assume Hypotheses~\ref{hyp:Recurrence}, \ref{hyp:Mixing} and \ref{hyp:AsymptoticsP}.
 
 \smallskip
 
 For all $\sigma >0$, there exist $K_\sigma$, $n_\sigma \geq 0$ such that, for all $K \geq K_\sigma$, for all $\varepsilon>0$ and $n \geq n_\sigma$,
 \begin{equation*}
  \Lcal_\varepsilon^n (C_K (\varepsilon)) 
  \subset C_{\sigma K} (\varepsilon).
 \end{equation*}
\end{proposition}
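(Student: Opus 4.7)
The plan is to exploit the splitting $\Bcal_I = V \oplus W$, where $V = \mathrm{range}(\Pi^*\Pi_*)$ is the $|I|$-dimensional subspace of functions constant on each site $[i]$ and $W = \ker\Pi_*$ consists of functions with zero mean on each site, and to compare $\Lcal_\varepsilon$ with the block-diagonal operator $\Lcal^{\mathrm{diag}}$ that acts as $\Lcal_{[0]}$ on each site. By Hypothesis~\ref{hyp:Mixing} the operator $\Lcal_{[0]}$ has a spectral gap on $\Bcal_0$, so (up to passing to a suitable equivalent norm, or iterating by a fixed power, if needed) $\Lcal^{\mathrm{diag}}$ fixes $V$ pointwise and contracts $W$ at some rate $\rho < 1$. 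The difference $\Lcal_\varepsilon - \Lcal^{\mathrm{diag}}$ encodes excursions of $\widetilde T$ to foreign sites of $\Sigma_\varepsilon$, whose total $\widetilde\mu$-probability is $\sum_{j \neq i} P_{\varepsilon,ij} = O(\varepsilon)$ by Hypothesis~\ref{hyp:AsymptoticsP}. The main obstacle is to promote this $L^1$-smallness to the operator-norm bound $\|\Lcal_\varepsilon - \Lcal^{\mathrm{diag}}\|_{\Bcal_I \to \Bcal_I} = O(\varepsilon)$; this relies on the bounded distortion and big-image properties of the induced Gibbs-Markov return maps collected in the appendix.

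Granting this comparison, the four blocks of $\Lcal_\varepsilon$ on $V \oplus W$ satisfy, for $\varepsilon$ small enough:
\begin{equation*}
\|\Pi^*\Pi_*\Lcal_\varepsilon|_V\|_{V \to V} \leq 1, \qquad \|(\Id-\Pi^*\Pi_*)\Lcal_\varepsilon|_V\|_{V \to \Bcal_I} \leq C\varepsilon,
\end{equation*}
\begin{equation*}
\|\Pi^*\Pi_*\Lcal_\varepsilon|_W\|_{W \to V} \leq C\varepsilon, \qquad \|(\Id-\Pi^*\Pi_*)\Lcal_\varepsilon|_W\|_{W \to \Bcal_I} \leq \rho + C\varepsilon.
\end{equation*}
The $V$-to-$V$ bound is just bi-stochasticity of $\transposee{P}_\varepsilon = \Pi_*\Lcal_\varepsilon\Pi^*$. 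The bound on $(\Id-\Pi^*\Pi_*)\Lcal_\varepsilon|_V$ follows because $(\Id-\Pi^*\Pi_*)\Lcal^{\mathrm{diag}}|_V = 0$, so the $W$-component of $\Lcal_\varepsilon|_V$ is entirely due to the $O(\varepsilon)$ perturbation. The bound on $\Pi^*\Pi_*\Lcal_\varepsilon|_W$ is a direct computation: for $h \in W$, writing $(\Pi_*\Lcal_\varepsilon h)_j = |I|\sum_i \int_{[i]} h\cdot (\mathbf{1}_{T_\varepsilon^{-1}[j]} - P_{\varepsilon,ij})\,d\mu_I$ (subtracting $\int_{[i]} h = 0$), the $L^1$-norm of each correction indicator is $2P_{\varepsilon,ij}(1-P_{\varepsilon,ij})/|I|$, whose sum over $i$ is $O(\varepsilon)$ by bi-stochasticity. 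Finally, the $W$-to-$W$ block combines the contraction of $\Lcal^{\mathrm{diag}}$ on $W$ with the $O(\varepsilon)$ comparison.

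With these block estimates, take $f \in C_K(\varepsilon)$ and decompose $f = c + h$ with $c := \Pi^*\Pi_* f$ and $h := f - c$, so that $\|c\|_\infty \leq |I|\|f\|_{L^1}$ and $\|h\|_{\Bcal_I} \leq K\varepsilon\|f\|_{L^1}$. Set $c_n := \Pi^*\Pi_*\Lcal_\varepsilon^n f \in V$ and $h_n := (\Id-\Pi^*\Pi_*)\Lcal_\varepsilon^n f \in W$. Positivity of $\Lcal_\varepsilon$ and mass-preservation yield $\Lcal_\varepsilon^n f \geq 0$ and $\|\Lcal_\varepsilon^n f\|_{L^1} = \|f\|_{L^1}$; since $c_n \in V$ is nonnegative with $\|c_n\|_{L^1} \leq \|f\|_{L^1}$, one obtains $\|c_n\|_\infty \leq |I|\|f\|_{L^1}$ uniformly in $n$. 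From $h_{n+1} = (\Id-\Pi^*\Pi_*)\Lcal_\varepsilon c_n + (\Id-\Pi^*\Pi_*)\Lcal_\varepsilon h_n$ and the block estimates, the recursion $\|h_{n+1}\|_{\Bcal_I} \leq (\rho+C\varepsilon)\|h_n\|_{\Bcal_I} + C\varepsilon\|c_n\|_\infty$ integrates to
\begin{equation*}
\|h_n\|_{\Bcal_I} \leq (\rho + C\varepsilon)^n K\varepsilon\|f\|_{L^1} + C''\varepsilon\|f\|_{L^1}.
\end{equation*}
Choosing $K_\sigma := 2C''/\sigma$ and $n_\sigma$ large enough that $(\rho+C\varepsilon)^{n_\sigma} \leq \sigma/2$ for $\varepsilon$ in the relevant bounded range, the right-hand side is at most $\sigma K\varepsilon\|f\|_{L^1}$, which is precisely $\Lcal_\varepsilon^n f \in C_{\sigma K}(\varepsilon)$.
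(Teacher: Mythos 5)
Your route is genuinely different from the paper's. The paper does not perturb step by step: it compares $\Lcal_\varepsilon^n$ directly with $\Lcal_{[0]}^n$ by splitting $f$ along the set $B_n$ of points that never change site before time $n$, whose complement has measure $O(n\varepsilon)$; this gives the one-shot bound $\norm{\Lcal_\varepsilon^n f(\cdot,i)-\int_A f(\cdot,i)\dd\mu}{\Bcal}\leq C(K\rho^n+n)\varepsilon\norm{f}{\Lbb^1}$, valid for every $\varepsilon>0$ and every $n$, and then bootstraps from the window $n_\sigma\leq n\leq 2n_\sigma$ to all $n\geq n_\sigma$ using the stability of the cone. You instead work with the block decomposition $\Bcal_I=V\oplus W$ and a geometric recursion around the diagonal operator $\Lcal_{[0]}^{\oplus|I|}$. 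The step you flag as the main obstacle, $\norm{\Lcal_\varepsilon-\Lcal_{[0]}^{\oplus|I|}}{\Bcal_I\to\Bcal_I}=O(\varepsilon)$, is indeed available: the transition sets $\{x:(x,j)\in T_\varepsilon^{-1}([i])\}$ and the set of points of $[i]$ that leave $[i]$ are measurable with respect to the relevant stopping-time partitions, so Corollary~\ref{cor:BorneUniformeTempsArret} gives exactly $C\mu(\cdot)\norm{f}{\Bcal_I}=O(\varepsilon)\norm{f}{\Bcal_I}$ (this is the same appendix tool the paper applies to $B_n^c$), and your adapted-norm or fixed-power fix for the one-step contraction on $W$ is routine. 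Your block computation for $\Pi_*\Lcal_\varepsilon|_W$ and the positivity/mass-conservation control of $c_n$ are correct.

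The genuine gap is the range of $\varepsilon$. The statement quantifies over all $\varepsilon>0$, but your recursion $\norm{h_{n+1}}{\Bcal_I}\leq(\rho+C\varepsilon)\norm{h_n}{\Bcal_I}+C\varepsilon\norm{c_n}{\infty}$ only closes when $\rho+C\varepsilon<1$, and your final choice of $n_\sigma$ explicitly restricts to ``$\varepsilon$ in the relevant bounded range''. As written the argument therefore proves the inclusion only for $\varepsilon\leq\varepsilon_0:=(1-\rho)/(2C)$; for larger $\varepsilon$ the factor $(\rho+C\varepsilon)^n$ blows up and the conclusion does not follow. The repair is easy but must be said: for $\varepsilon\geq\varepsilon_0$ the target bound $\sigma K\varepsilon\norm{f}{\Lbb^1}$ is bounded below, and the uniform Lasota--Yorke inequality (Corollary~\ref{cor:LasotaYorkeTempsArret}, or Proposition~\ref{prop:ActionInduiteQuasiCompacte}) gives $\norm{(\Id-\Pi^*\Pi_*)\Lcal_\varepsilon^n f}{\Bcal_I}\leq C\bigl(\Lambda^{-n}(|I|+K\varepsilon)+1\bigr)\norm{f}{\Lbb^1}$, which is $\leq\sigma K\varepsilon\norm{f}{\Lbb^1}$ once $n_\sigma$ and $K_\sigma$ are enlarged (using $K\varepsilon\geq K_\sigma\varepsilon_0$). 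With that supplement, and with the $O(\varepsilon)$ comparison actually carried out via Corollary~\ref{cor:BorneUniformeTempsArret}, your proof is complete; the trade-off versus the paper is that you avoid its two-scale iteration over blocks of length $n_\sigma$, at the price of having to treat large $\varepsilon$ separately.
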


\begin{proof}
 
 Let $K\geq 0$ and $\varepsilon$, $n$ be positive. Define:
 \begin{equation*}
  B_n 
  := \{(x,i) \in [I] : \ T_\varepsilon^k (x,i) \in [i] \ \forall \ 0 \leq k \leq n\} 
  = \bigsqcup_{i \in I} \bigcap_{k=0}^n T_\varepsilon^{-k} ([i]),
 \end{equation*}
 the set of points which stay in the same site until time $n$. By Hypothesis~\ref{hyp:AsymptoticsP}, there exists a constant $C$ such that, for all $i \in I$ and all $\varepsilon > 0$,
 \begin{equation*}
  \mu (x \in A : \ T_\varepsilon (x,i) \notin [i]) 
  = \sum_{j \neq i} (\delta_{ij}-\varepsilon R_{ij} + o(\varepsilon)) 
  = - \varepsilon \sum_{j \neq i}  R_{ij} + o(\varepsilon) 
  \leq C \varepsilon.
 \end{equation*}
 Hence, 
 \begin{align*}
  \mu_I \left( \left( \bigcap_{k=0}^n T_\varepsilon^{-k} ([i]) \right)^c \right) 
  & = \sum_{k=1}^n \mu_I \left( x : \  T_\varepsilon^0 (x, i), \ldots, T_\varepsilon^{k-1} (x, i) \in [i], \ T_\varepsilon^k (x, i) \notin [i] \right) \\ 
  & \leq \sum_{k=1}^n \mu_I \left( x : T_\varepsilon^{k-1} (x, i) \in [i], \ T_\varepsilon^k (x, i) \notin [i] \right) \\ 
  & = n \mu_I \left( x : T_\varepsilon (x, i) \notin [i] \right) \\ 
  & \leq C |I|^{-1} n \varepsilon.
 \end{align*}
 Summing over $i \in I$ yields $\mu_I (B_n^c) \leq C n \varepsilon$.
 
 \smallskip
 
 Let $i \in I$ and $f \in C_K (\varepsilon)$. As the transfer operator preserves nonnegative functions, $\Lcal_\varepsilon^n (f) \geq 0$. Let us focus on the regularity of $\Lcal_\varepsilon^n (f)$.
 
 \smallskip
 
 First, by definition of $B_n$, given any point $(x,i) \in B_n$, the trajectories of the points $(x,\sigma_\varepsilon(i))$, $T_{[\Sigma_\varepsilon]} (x, \sigma_\varepsilon(i))$, $\ldots$, $T_{[\Sigma_\varepsilon]}^{n-1} (x,\sigma_\varepsilon(i))$ under $\widetilde{T}$ all return to $[\sigma_\varepsilon(i)]$ before hitting any $[\sigma_\varepsilon(j)]$ with $j \neq i$. Hence, by recursion, for all $0 \leq k \leq n$ and all $x \in B_n$, 
 \begin{equation*}
  T_{[\Sigma_\varepsilon]}^k (x,\sigma_\varepsilon(i)) = (T_{[0]}^k (x),\sigma_\varepsilon(i)),
 \end{equation*}
 so that:
 \begin{equation}
 \label{eq:InduitEpsilonZero}
  \Lcal_\varepsilon^n (f \mathbf{1}_{B_n}) (\cdot, i)
  = \Lcal_{[0]}^n ((f \mathbf{1}_{B_n}) (\cdot, i)).
 \end{equation}
 From Equation~\eqref{eq:InduitEpsilonZero} we get: 
 \begin{align*}
  \Lcal_\varepsilon^n (f) (\cdot, i) 
  & = \Lcal_\varepsilon^n (f \mathbf{1}_{B_n}) (\cdot, i) + \Lcal_\varepsilon^n (f\mathbf{1}_{B_n^c}) (\cdot, i) \\
  & = \Lcal_{[0]}^n ((f \mathbf{1}_{B_n}) (\cdot, i))  + \Lcal_\varepsilon^n (f\mathbf{1}_{B_n^c}) (\cdot, i) \\
  & = \Lcal_{[0]}^n (f (\cdot, i)) - \Lcal_{[0]}^n ((f \mathbf{1}_{B_n^c}) (\cdot, i)) + \Lcal_\varepsilon^n (f\mathbf{1}_{B_n^c}) (\cdot, i).
 \end{align*}
 We shall now use Corollaries~\ref{cor:BorneUniformeTempsArret}  and~\ref{cor:DecroissanceCorrelationsInduit0} to control each term. Stopping times are defined in terms of the canonical filtration on $(A, \mu, T)$, while $\Lcal_{[0]}$ and $\Lcal_\varepsilon$ are defined \textit{via} the extension $([\Z^d], \widetilde{\mu}, \widetilde{T})$. We thus need to express some characteristics of the trajectories $(T_\varepsilon^n (x,i))_{n \geq 0}$ in terms of cylinders for $(A, \mu, T)$, as is done in the proof of Proposition~\ref{prop:ActionInduiteQuasiCompacte}.
 
 \smallskip
 
 Let $\varphi_{[0]} (x) := \inf \{n \geq 1 : \ \sum_{k=0}^{n-1} S_k F (x) = 0\}$. Then $\varphi_{[0]}$ is a stopping time in the sense of Definition~\ref{def:TempsDArret}. Since $([\Z^d], \widetilde{\mu}, \widetilde{T})$ is measure-preserving and recurrent, the map $T_{\varphi_{[0]}} = T_{[0]}$ preserves $\mu$ and is ergodic. Finally, by Hypothesis~\ref{hyp:Mixing}, the map $T_{[0]}$ is also mixing for $\mu$. Hence we can apply Corollary~\ref{cor:DecroissanceCorrelationsInduit0}, which asserts that there exist constants $C >0$ and $\rho \in (0,1)$ such that, for all $i \in I$,
 \begin{equation}
 \label{eq:ConeContraction1}
  \norm{\Lcal_{[0]}^n (f (\cdot, i)) - \int_A f (\cdot, i) \dd \mu}{\Bcal} 
  \leq C \rho^n \norm{f(\cdot, i) - \int_A f (\cdot, i) \dd \mu}{\Bcal} 
  \leq C \rho^n \norm{(\Id-\Pi^* \Pi_*) f}{\Bcal_I}.
 \end{equation}
 Let $i \in I$. The set $B_n^c \cap [I]$ is measurable with respect to the $\sigma$-algebra 
 $\alpha (\varphi_{[0]}^{(n)})$, since, using the identification $[i] \simeq A$,
 \begin{equation*}
  B_n \cap [I] 
  = \bigcup_{k = 0}^{n-1} T_{[0]}^{-k} \left( \left\{ \exists 0 \leq \ell < \varphi_{[0]} (x), 
  \ \exists j \in I \setminus \{i\}, \ S_\ell F (x) = \sigma_\varepsilon(j)-\sigma_\varepsilon(i) \right\} \right).
 \end{equation*}
 As $\varphi_{[0]}^{(n)}$ is a stopping time by Lemma~\ref{lem:SommeTempsArret}, we can apply Corollary~\ref{cor:BorneUniformeTempsArret} to get, for some constant $C$ and all $i \in I$,
\begin{equation}
 \label{eq:ConeContraction2}
  \norm{\Lcal_{[0]}^n ((f \mathbf{1}_{B_n^c}) (\cdot, i))}{\Bcal} 
  \leq C \mu(B_n^c \cap [i]) \norm{f(\cdot, i)}{\Bcal}.
 \end{equation}
 In the same way, fix $i$, $j \in I$ and let $\varphi_{j, n, \varepsilon} (x) := \inf \{\ell \geq 1 : \ \Card \{ S_\ell F (x) \in \{j'-j : \ j' \in I\} = n\}$. The set $B_n^c \cap [j]$ is measurable with respect to the $\sigma$-algebra $\alpha (\varphi_{j, n, \varepsilon})$. By Corollary~\ref{cor:BorneUniformeTempsArret}, for some constant $C$,
 \begin{align*}
  \norm{\Lcal_\varepsilon^n (f \mathbf{1}_{B_n^c \cap [j]}) (\cdot, i)}{\Bcal} 
  & = \norm{\Lcal_{\varphi_{j, n, \varepsilon}} (f \mathbf{1}_{B_n^c \cap [j] \cap \{S_{\varphi_{j, n, \varepsilon}} F = i-j \}})}{\Bcal} \\
  & \leq C \mu(B_n^c \cap [j]) \norm{f(\cdot, j)}{\Bcal} \\
  & = C |I| \mu_I (B_n^c \cap [j]) \norm{f(\cdot, j)}{\Bcal}. 
 \end{align*}
 Summing over $j \in I$ finally yields
 \begin{equation}
 \label{eq:ConeContraction3}
  \norm{\Lcal_\varepsilon^n (f \mathbf{1}_{B_n^c}) (\cdot, i)}{\Bcal} 
  \leq C |I| \mu_I (B_n^c) \norm{f}{\Bcal_I}.
 \end{equation}
 Equations~\eqref{eq:BorneBIL1},~\eqref{eq:ConeContraction1},~\eqref{eq:ConeContraction2} and~\eqref{eq:ConeContraction3} together yield, for some positive constant $C$ and all $n \geq 1$:
 \begin{align}
  \norm{\Lcal_\varepsilon^n (f) (\cdot, i) - \int_A f (\cdot, i) \dd \mu}{\Bcal} 
  & \leq C \rho^n \norm{(\Id-\Pi^* \Pi_*) f}{\Bcal_I} + C \mu_I (B_n^c) \norm{f}{\Bcal_I} \nonumber \\
  & \leq C (K \rho^n + n) \varepsilon \norm{f}{\Lbb^1 ([I], \mu_I)}. \label{eq:BorneSuperieureKRhon+n}
 \end{align}
 As $T_\varepsilon$ preserves $\mu_I$ and $f$ is nonnegative, $\norm{\Lcal_\varepsilon^n (f)}{\Lbb^1 ([I], \mu_I)} = \norm{f}{\Lbb^1 ([I], \mu_I)}$. By Equation~\eqref{eq:BorneSuperieureKRhon+n},
 \begin{equation}
  \label{eq:EquationConePrincipale}
  \Lcal_\varepsilon^n (C_K (\varepsilon)) 
  \subset C_{C (K \rho^n + n)} (\varepsilon).
 \end{equation}
 
 \smallskip
  
 Now, we finish the proof of Proposition~\ref{prop:FastAsymptotics}. Let $\sigma >0$. Without loss of generality, we assume that $\sigma \leq 1$. Let $n_\sigma \geq 1$ be such that $C \rho^{n_\sigma} \leq \sigma/2$. Let $K_\sigma := 4 C \sigma^{-1} n_\sigma$. Then, for all $K \geq K_\sigma$ and $\varepsilon >0$, for all $n_\sigma \leq n \leq 2n_\sigma$, by Equation~\eqref{eq:EquationConePrincipale},
 \begin{equation*}
  C ( K \rho^n + n) 
  \leq C ( K \rho^{n_\sigma} + 2 n_\sigma ) 
  \leq \frac{\sigma K}{2} + \frac{\sigma K_\sigma}{2} 
  \leq \sigma K,
 \end{equation*}
 so that $\Lcal_\varepsilon^n (C_K (\varepsilon)) \subset C_{\sigma K} (\varepsilon)$.
 
 \smallskip
 
 By recursion, let us prove that $\Lcal_\varepsilon^{m n_\sigma} (C_K (\varepsilon)) \subset C_K (\varepsilon)$ for all $m \geq 0$. This is true for $m=0$, and if it holds for some $m \geq 0$, then 
 \begin{equation*}
  \Lcal_\varepsilon^{(m+1) n_\sigma} (C_K (\varepsilon)) 
  = \Lcal_\varepsilon^{n_\sigma} \Lcal_\varepsilon^{m n_\sigma} (C_K (\varepsilon)) 
  \subset \Lcal_\varepsilon^{n_\sigma} (C_K (\varepsilon))
  \subset C_{\sigma K} (\varepsilon) 
  \subset C_K (\varepsilon),
 \end{equation*}
 which finishes this recursion.
 
 \smallskip
 
 Finally, for all $n\geq n_\sigma$, since $n_\sigma \leq n-n_\sigma \lfloor n/n_\sigma \rfloor+n_\sigma \leq 2 n_\sigma$,
 \begin{equation*}
  \Lcal_\varepsilon^n (C_K (\varepsilon)) 
  = \Lcal_\varepsilon^{n-n_\sigma \lfloor n/n_\sigma \rfloor+n_\sigma} \Lcal_\varepsilon^{n_\sigma (\lfloor n/n_\sigma \rfloor-1)} (C_K (\varepsilon))  
  \subset \Lcal_\varepsilon^{n-n_\sigma \lfloor n/n_\sigma \rfloor+n_\sigma} (C_K (\varepsilon))
  \subset C_{\sigma K} (\varepsilon). \qedhere
 \end{equation*}
\end{proof}

As a consequence, if $K$ is large enough and up to the loss of a constant factor, $C_K (\varepsilon)$ is stable under $\Lcal_\varepsilon^n$ for all $n \geq 0$. This corollary shall give us a good control on $\norm{\Lcal_\varepsilon^n f}{\Bcal_I}$ provided we know that $f \in C_K (\varepsilon)$.

\begin{corollary}\quad
 \label{cor:ConeStability}
 
 Assume Hypotheses~\ref{hyp:Recurrence}, \ref{hyp:Mixing} and \ref{hyp:AsymptoticsP}.
 
 \smallskip
 
 There exists $K_1$ and $\lambda_1 \geq 1$ such that, for all $K \geq K_1$, for all $n \geq 0$ and $\varepsilon >0$,
 \begin{equation*}
  \Lcal_\varepsilon^n (C_K (\varepsilon)) 
  \subset C_{\lambda_1 K} (\varepsilon).
 \end{equation*}
\end{corollary}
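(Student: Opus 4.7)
The plan is to deduce the corollary from Proposition~\ref{prop:FastAsymptotics} by combining the large-time stability it provides with a crude bound on a bounded number of initial iterates.

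\textbf{Step 1.} Apply Proposition~\ref{prop:FastAsymptotics} with $\sigma = 1$. This produces constants $K_1 \geq 0$ and $n_1 \geq 0$ such that, for every $K \geq K_1$, every $\varepsilon > 0$, and every $n \geq n_1$,
\begin{equation*}
 \Lcal_\varepsilon^n (C_K(\varepsilon)) \subset C_K(\varepsilon).
\end{equation*}
In particular, such long-time iterates already satisfy the conclusion with $\lambda_1 = 1$.

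\textbf{Step 2.} Handle short times $0 \leq n \leq n_1 - 1$ with the non-asymptotic cone inclusion~\eqref{eq:EquationConePrincipale} established inside the proof of Proposition~\ref{prop:FastAsymptotics}: there exists a constant $C > 0$ such that, for all $n \geq 1$, all $\varepsilon > 0$ and all $K \geq 0$,
\begin{equation*}
 \Lcal_\varepsilon^n (C_K(\varepsilon)) \subset C_{C(K \rho^n + n)}(\varepsilon).
\end{equation*}
For $1 \leq n \leq n_1 - 1$ this gives the uniform inclusion $\Lcal_\varepsilon^n (C_K(\varepsilon)) \subset C_{C(K + n_1)}(\varepsilon)$, while the case $n = 0$ is trivial since $\Lcal_\varepsilon^0 = \Id$.

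\textbf{Step 3.} Combine the two regimes. For $K \geq K_1$,
\begin{equation*}
 C(K + n_1) \leq C\left(1 + \tfrac{n_1}{K_1}\right) K,
\end{equation*}
so setting $\lambda_1 := \max\bigl(1,\; C(1 + n_1/K_1)\bigr)$ yields the desired inclusion $\Lcal_\varepsilon^n(C_K(\varepsilon)) \subset C_{\lambda_1 K}(\varepsilon)$ for all $0 \leq n \leq n_1 - 1$, and the same inclusion for $n \geq n_1$ follows from Step 1 together with $\lambda_1 \geq 1$ and the monotonicity $C_K(\varepsilon) \subset C_{\lambda_1 K}(\varepsilon)$.

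There is no real obstacle here: all the hard work has been done in Proposition~\ref{prop:FastAsymptotics}. The corollary is essentially a bookkeeping statement that extends the large-time inclusion to every iterate, paying only a fixed multiplicative price on the cone aperture coming from the (finitely many) short iterates.
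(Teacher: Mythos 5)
Your proof is correct and follows essentially the same route as the paper: invoke Proposition~\ref{prop:FastAsymptotics} with $\sigma = 1$ for the iterates $n \geq n_1$, use the intermediate inclusion~\eqref{eq:EquationConePrincipale} for the finitely many short iterates, and set $\lambda_1$ comparable to $C(1+n_1/K_1)$. Your explicit $\max(1,\cdot)$ and the separate treatment of $n=0$ are minor tidy-ups of the same argument.
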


\begin{proof}
 
 Let $n_1$, $K_1$ be given by Proposition~\ref{prop:FastAsymptotics}. By Proposition~\ref{prop:FastAsymptotics}, for all $K \geq K_1$, for all $n \geq n_1$ and all $\varepsilon >0$,
 \begin{equation*}
  \Lcal_\varepsilon^n (C_K (\varepsilon)) 
  \subset C_K (\varepsilon).
 \end{equation*}
 In addition, by Equation~\eqref{eq:EquationConePrincipale}, for all $n \leq n_1$ and $\varepsilon >0$,
 \begin{equation*}
  \Lcal_\varepsilon^n (C_K (\varepsilon)) 
  \subset C_{C (K + n_1)} (\varepsilon).
 \end{equation*}
 We choose $\lambda_1 := C (1 + n_1/K_1)$.
\end{proof}

\subsection{Product of matrices}
\label{subsec:ProduitMatrices}

Given $f \in C_K (\varepsilon)$, Corollary~\ref{cor:ConeStability} asserts that $\Lcal_\varepsilon^n f$ is close to being constant on each site for all $n \geq 0$; the operator $\Lcal_\varepsilon$ is then well approximated by the finite rank operator $\Pi^* \transposee{P}_\varepsilon \Pi_*$. However, to understand what happens at time scale $\varepsilon^{-1}$, we need to control a composition of a large number (of order $\varepsilon^{-1}$) of such operators. To keep reasonably sharp error bounds, we use a 
version of the Baker-Campbell-Hausdorff formula.

\begin{lemma}\quad
\label{lem:BorneCommutateurs}

Let $N \geq 1$. There exists a neighbourhood $U$ of $0$ in $M_N (\C)$ and a constant $C > 0$ such that, for all matrices $A$ and $B$ in $U$,
\begin{equation}
\label{eq:BorneCommutateurs}
 \norm{\ln(e^A e^B) - A - B}{} 
 \leq C \norm{[A,B]}{}.
\end{equation} 
\end{lemma}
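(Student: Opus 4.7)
The plan is to combine the Baker--Campbell--Hausdorff (BCH) expansion with a simple algebraic observation about the free Lie algebra on two generators. Set $\Phi(A,B) := \ln(e^A e^B) - A - B$. Since $\ln$ is analytic on a neighbourhood of the identity, $\Phi$ is a convergent power series in $(A,B)$ on some open neighbourhood $U$ of $(0,0)$ in $M_N(\C)^2$. The identities $\Phi(A,0) = \Phi(0,B) = 0$ ensure that the constant and degree-one homogeneous components vanish, so we may write $\Phi(A,B) = \sum_{n \geq 2} \Phi_n(A,B)$ with $\Phi_n$ homogeneous of degree $n$ in $(A,B)$.

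By BCH each $\Phi_n$ is a Lie polynomial, namely a linear combination of iterated Lie brackets of $A$ and $B$ of total degree $n$. I would lift this to the free Lie algebra $L = L(X,Y)$ on two generators and use the following fact. Let $\mathcal{I}$ be the Lie ideal of $L$ generated by $[X,Y]$. The quotient $L/\mathcal{I}$ is generated by the commuting images of $X$ and $Y$, so it is the $2$-dimensional abelian Lie algebra $\C\bar X \oplus \C\bar Y$; consequently every homogeneous element of $L$ of degree $\geq 2$ lies in $\mathcal{I}$. Working with right-normed brackets $[W_1,[W_2,\ldots,[W_{n-1},W_n]\ldots]]$ with $W_i \in \{X,Y\}$, which span $L_n$ (Dynkin--Specht--Wever), any such bracket is either zero (when $W_{n-1}=W_n$) or of the form $\pm\,\ad(W_1)\cdots\ad(W_{n-2})([X,Y])$.

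Transferring this representation to matrices yields, for each $n \geq 2$, a bound
\[
\|\Phi_n(A,B)\| \leq K_n (\|A\|+\|B\|)^{n-2} \|[A,B]\|,
\]
with $K_n$ inherited from the absolute values of the BCH coefficients and the (at most polynomial in $n$) combinatorics of the right-normed rewriting. Dynkin's explicit formula ensures $\sum_n K_n r^n < +\infty$ for $r$ small. Summing the series on a possibly smaller neighbourhood $U' \subset U$ of $(0,0)$ gives $\|\Phi(A,B)\| \leq C \|[A,B]\|$.

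The main obstacle is of a bookkeeping nature: the trivial estimate yields only $\|\Phi_n\| = O((\|A\|+\|B\|)^n)$, and one must verify that extracting the factor $\|[A,B]\|$ via the iterated-$\ad$ representation does not destroy convergence. This is precisely where one needs Dynkin's formula to control the $K_n$; apart from that, the argument is a purely algebraic reduction to the structure of the free Lie algebra on two generators.
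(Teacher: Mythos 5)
Your argument is correct, but it takes a genuinely different route from the paper. You work with the power-series form of Baker--Campbell--Hausdorff: each homogeneous component $\Phi_n$ of $\ln(e^Ae^B)-A-B$ ($n\geq 2$) is a Lie polynomial, and once it is expressed in right-normed brackets every nonvanishing monomial is $\pm\,\ad(W_1)\cdots\ad(W_{n-2})([A,B])$ with $W_i\in\{A,B\}$, which lets you pull out the factor $\|[A,B]\|$ at the cost of $(2\max\{\|A\|,\|B\|\})^{n-2}$; the only delicate point is the summability of the resulting constants $K_n$, and there your appeal to Dynkin's explicit formula is the right move -- indeed the cleanest implementation is to note that Dynkin's formula is \emph{already} written in right-normed brackets, so no rewriting step (and hence no coefficient inflation) is needed, and the standard absolute-convergence estimate for that series on a small ball gives $\sum_n K_n r^n<+\infty$ directly; the preliminary remark about the ideal generated by $[X,Y]$ in the free Lie algebra is then only motivational. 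The paper instead avoids all coefficient bookkeeping by using an integral form of Baker--Campbell--Hausdorff, $\ln(e^Ae^B)=A+\int_0^1 u\bigl(e^{\ad_A}e^{t\ad_B}\bigr)\,\mathrm{d}t\,(B)$ with $u(z)=z\ln(z)/(z-1)$, and rewriting the difference as $\int_0^1 v\bigl(e^{\ad_A}e^{t\ad_B}\bigr)\,\mathrm{d}t\;w(\ad_A)\bigl([A,B]\bigr)$ for two auxiliary functions $v,w$ analytic near the identity; uniform boundedness of $v$ and $w$ on a neighbourhood then yields the constant $C$ immediately. Your approach is more combinatorial and requires the convergence estimates for the Dynkin series (standard but not free), while the paper's is shorter once the integral formula is quoted; both give the same statement, so the proposal stands, with the one vague phrase (``polynomial combinatorics of the rewriting'') best replaced by the direct use of Dynkin's right-normed expansion.
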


The ability to bound $\ln(e^A e^B) - A - B$ using only commutators is natural if one thinks about the Baker-Campbell-Hausdorff formula:
\begin{equation*}
 e^A e^B = e^{A + B + \frac{1}{2} [A,B] + \frac{1}{12} [A, [A,B]] + \frac{1}{12} [B, [B,A]] + \ldots}.
\end{equation*}
We prove it using an integral version of this formula.

\begin{proof}
 
 Whenever $A$ and $B$ are small enough, $\ln(e^A e^B)$ can be written with an integral Baker-Campbell-Hausdorff formula~\cite[Theorem~3.3]{Hall:2015}:
 \begin{equation*}
  \ln (e^A e^B) 
  = A + \int_0^1 u \left( e^{\ad_A} e^{t\ad_B} \right) \dd t \ (B),
 \end{equation*}
 where $u(z) = \frac{z \ln(z)}{z-1}$. Set:
 \begin{equation*}
  v(z) := \frac{z \ln (z) - z + 1}{(z-1)^2} = \frac{u(z)-1}{z-1}, \quad w(z) := \frac{e^z-1}{z}.
 \end{equation*}
 The functions $v$ and $w$ are analytic on a neighbourhood of the identity, and for all small enough $A$ and $B$:
 \begin{align*}
  \ln (e^A e^B) -A - B 
  & = \int_0^1 u \left( e^{\ad_A} e^{t\ad_B} \right) \dd t \ (B) - B \\
  & = \int_0^1 (u-1) \left( e^{\ad_A} e^{t\ad_B} \right) \dd t \ (B) \\
  & = \int_0^1 v \left( e^{\ad_A} e^{t\ad_B} \right) \ (e^{\ad_A} e^{t\ad_B}-1) \ (B) \dd t   \\
  & = \int_0^1 v \left( e^{\ad_A} e^{t\ad_B} \right) \ (e^{\ad_A}-1) \ (B) \dd t \\
  & = \int_0^1 v \left( e^{\ad_A} e^{t\ad_B} \right) \dd t \ w(\ad_A) \ ([A,B]).
 \end{align*}
 Since both $v \left( e^{\ad_A} e^{t\ad_B} \right)$ and $w(\ad_A)$ are uniformly bounded for 
 $A$ and $B$ on a neighbourhood of the identity, we get the claim. 
\end{proof}

Using Lemma~\ref{lem:BorneCommutateurs} recursively, we can control the product of a large number (of order $\Theta(\varepsilon^{-1})$) of matrices which are $o(\varepsilon)$-close from a reference matrix $R$.

\begin{lemma}\quad
\label{lem:ConvergenceProduitMatrices}
 
 Let $N \geq 1$ and $R \in M_N (\C)$. Let $\omega$ be a nonnegative function such that $\omega (\varepsilon) =_{\varepsilon \to 0} o(\varepsilon)$. Let $(R_{\varepsilon, n})_{\varepsilon >0, n \geq 0}$ be a family of matrices in $M_N (\C)$ such that
 \begin{equation*}
  \sup_{n \geq 0} \norm{R_{\varepsilon, n}}{} 
  \leq \omega(\varepsilon).
 \end{equation*}
 For all $\varepsilon >0$ and $t \geq 0$, let:
 \begin{equation*}
  M_\varepsilon (t) 
  := \prod_{n=0}^{\lfloor \varepsilon^{-1} t \rfloor-1} (\Id-\varepsilon R + R_{\varepsilon, n}), 
 \end{equation*}
 where the matrices are multiplied in any order. Then the family of matrix-valued functions $(M_\varepsilon)_{\varepsilon >0}$ converges to $t \mapsto e^{-tR}$ when $\varepsilon$ vanishes, uniformly on all compacts of $\R_+$, and for fixed $\omega$ uniformly in $(R_{\varepsilon, n})_{\varepsilon >0, n \geq 0}$.
\end{lemma}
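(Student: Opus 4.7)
The plan is to reduce the claim to two elementary facts: that $M_\varepsilon(t)$ is close to the ``clean'' product $B^{N_\varepsilon} := (\Id - \varepsilon R)^{N_\varepsilon}$, where $N_\varepsilon := \lfloor \varepsilon^{-1} t \rfloor$, and that $B^{N_\varepsilon}$ itself converges to $e^{-tR}$ uniformly on compacts.

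For the first reduction I fix any ordering of the matrices $A_{\varepsilon, n} := \Id - \varepsilon R + R_{\varepsilon, n}$ so that $M_\varepsilon(t)$ is written as an explicit product. Since $\norm{A_{\varepsilon, n}}{} \leq 1+ \varepsilon\norm{R}{} + \omega(\varepsilon) \leq \exp(\varepsilon\norm{R}{} + \omega(\varepsilon))$, any product of at most $N_\varepsilon$ factors drawn from $\{A_{\varepsilon, n}, B\}$ has norm at most $C_T := \exp(T\norm{R}{}+o(1))$, uniformly for $t \in [0,T]$ and $\varepsilon$ small, because the hypothesis $\omega(\varepsilon) = o(\varepsilon)$ ensures $\varepsilon^{-1}\omega(\varepsilon) \to 0$. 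I now substitute $B$ for $A_{\varepsilon, n}$ one slot at a time: each substitution changes the corresponding factor by $-R_{\varepsilon, n}$ (of norm $\leq \omega(\varepsilon)$), with prefix and suffix products of norm $\leq C_T$. Telescoping the $N_\varepsilon$ substitutions yields
\begin{equation*}
  \norm{M_\varepsilon(t) - B^{N_\varepsilon}}{}
  \leq N_\varepsilon \cdot C_T^2 \cdot \omega(\varepsilon)
  \leq T C_T^2 \cdot \varepsilon^{-1} \omega(\varepsilon)
  \xrightarrow[\varepsilon \to 0]{} 0,
\end{equation*}
uniformly in $t \in [0, T]$ and uniformly in the family $(R_{\varepsilon, n})$ subject to the hypothesis $\sup_n \norm{R_{\varepsilon, n}}{} \leq \omega(\varepsilon)$.

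For the second reduction I take logarithms: $\ln(\Id - \varepsilon R) = -\varepsilon R + O(\varepsilon^2)$ for small $\varepsilon$, so $N_\varepsilon \ln(\Id - \varepsilon R) = -tR + O(\varepsilon)$ uniformly on $[0,T]$, and exponentiating yields $B^{N_\varepsilon} \to e^{-tR}$ uniformly on $[0, T]$. Combined with the previous estimate, this gives $M_\varepsilon(t) \to e^{-tR}$ uniformly on compacts and uniformly in the admissible families $(R_{\varepsilon, n})$.

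There is no genuine obstacle here: the bound $\omega(\varepsilon)$ per substitution already beats the number $N_\varepsilon \sim \varepsilon^{-1}$ of substitutions, so the elementary telescoping argument closes the proof. The role of Lemma~\ref{lem:BorneCommutateurs} would appear in a more refined BCH-based approach that keeps track of commutators between the logarithms $X_{\varepsilon, n} := \ln A_{\varepsilon, n}$; since the dominant part $-\varepsilon R$ of each $X_{\varepsilon, n}$ commutes with itself, those commutators are of order $\varepsilon \omega(\varepsilon) = o(\varepsilon^2)$, which could yield sharper error terms in generalized settings but is not needed for the statement as written.
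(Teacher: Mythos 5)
Your proof is correct, but it follows a genuinely different and more elementary route than the paper's. You compare the product directly with $(\Id-\varepsilon R)^{\lfloor \varepsilon^{-1}t\rfloor}$ by a telescoping substitution: since every factor has norm at most $1+\varepsilon\norm{R}{}+\omega(\varepsilon)$, all prefix/suffix products of length at most $\lfloor \varepsilon^{-1}T\rfloor$ are bounded by a constant $C_T$ depending only on $T$, $\norm{R}{}$ and $\omega$, and the $\lfloor\varepsilon^{-1}t\rfloor$ substitutions each cost $O(C_T^2\,\omega(\varepsilon))$, giving a total error $O(T C_T^2\,\varepsilon^{-1}\omega(\varepsilon))\to 0$, uniformly in $t\in[0,T]$ and in the family $(R_{\varepsilon,n})$; the remaining step $(\Id-\varepsilon R)^{\lfloor\varepsilon^{-1}t\rfloor}\to e^{-tR}$ is elementary via $\ln(\Id-\varepsilon R)=-\varepsilon R+O(\varepsilon^2)$ and the local Lipschitz continuity of $\exp$. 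The paper instead takes logarithms of the individual (non-commuting) factors and runs a recursive estimate on $\ln\bigl(\prod_{k<n}(\Id-\varepsilon R+R_{\varepsilon,k})\bigr)+n\varepsilon R$, using the integral Baker--Campbell--Hausdorff bound of Lemma~\ref{lem:BorneCommutateurs} to absorb the commutator terms; because that recursion only makes sense while the logarithm stays in a fixed neighbourhood of $0$, the paper must first work on a short interval $[0,t_0]$ and then concatenate estimates over subintervals to cover $[0,T]$. Your telescoping argument avoids the BCH lemma, the neighbourhood restriction and the concatenation step entirely, and it delivers the same uniformity claims; what the paper's logarithmic bookkeeping buys is a finer multiplicative control of the error (of the kind your closing remark alludes to), which is not needed for the statement as written. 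Both arguments exploit the same key numerical fact, namely that the per-factor defect $\omega(\varepsilon)=o(\varepsilon)$ beats the $\Theta(\varepsilon^{-1})$ number of factors.
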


\begin{proof}
 
 Let $R$ and $(R_{\varepsilon, n})_{\varepsilon >0, n \geq 0}$ be as in the hypotheses of the lemma. 
 Define:
 \begin{equation*}
  R'_{\varepsilon, n} 
  := \ln(\Id-\varepsilon R + R_{\varepsilon, n}) + \varepsilon R.
 \end{equation*}
 Note that $\omega' (\varepsilon) := \sup_{n \geq 0} \norm{R'_{\varepsilon, n}}{} = O(\omega(\varepsilon))$.

 \smallskip
 
 Let $U$ be a neighbourhood of $0$ in $M_N (\C)$ and $C > 0$ as given by Lemma~\ref{lem:BorneCommutateurs}. 
 We put $K := 2C \max \{\norm{R}{}, 1\}$. Let $t_0$ be small enough that $U$ contains $\{-tR : \ 0 \leq t \leq t_0\}$. 
 
 \smallskip
 
 We claim that, for all small enough $\varepsilon$, for all $n \leq \lfloor \varepsilon^{-1} t_0 \rfloor$, 
 the matrix $\prod_{k=0}^{n-1} (\Id-\varepsilon R + R_{\varepsilon, k})$ belongs to $U$, and:
 \begin{equation}
  \label{eq:BorneSke}
  \norm{\ln \left( \prod_{k=0}^{n-1} (\Id-\varepsilon R + R_{\varepsilon, k}) \right) + n \varepsilon R}{} 
  \leq (1+Kn \varepsilon) \omega'(\varepsilon) \frac{(1+2K\varepsilon)^n-1}{2K \varepsilon}.
 \end{equation}
 
 The claim is true for $n = 0$, as the left-hand side vanishes. Assume that the claim holds for 
 some $n \leq \lfloor \varepsilon^{-1} t_0 \rfloor-1$, and put:
  \begin{equation*}
  S_{\varepsilon, n} 
  := \ln \left( \prod_{k=0}^{n-1} (\Id-\varepsilon R + R_{\varepsilon, k}) \right) + n \varepsilon R.
 \end{equation*}
 Define in the same way $S_{\varepsilon, n+1}$. We assume without loss of generality that $\varepsilon$ is small enough that $\omega' (\varepsilon) \leq \varepsilon$. By Lemma~\ref{lem:BorneCommutateurs}, 
 \begin{align*}
  \norm{S_{\varepsilon, n+1}}{} 
  & = \norm{\ln \left( \prod_{k=0}^n (\Id-\varepsilon R + R_{\varepsilon, k}) \right) + (n+1) \varepsilon R}{} \\
  & = \norm{\ln \left( e^{- \varepsilon R + R'_{\varepsilon, n}} e^{- n \varepsilon R + S_{\varepsilon, n}} \right) + (n+1) \varepsilon R}{} \\
  & \leq \norm{- \varepsilon R + R'_{\varepsilon, n} - n \varepsilon R + S_{\varepsilon, n} + (n+1) \varepsilon R}{} + C \norm{[- \varepsilon R + R'_{\varepsilon, n},- n \varepsilon R + S_{\varepsilon, n}]}{} \\
  & \leq \norm{R'_{\varepsilon, n}}{} + \norm{S_{\varepsilon, n}}{} + 2C \left( \varepsilon \norm{R}{} \norm{S_{\varepsilon, n}}{} + \norm{R'_{\varepsilon, n}}{} \norm{S_{\varepsilon, n}}{} + n \varepsilon \norm{R'_{\varepsilon, n}}{} \norm{R}{} \right) \\
  & \leq \omega'(\varepsilon) + \norm{S_{\varepsilon, n}}{} + K \varepsilon \norm{S_{\varepsilon, n}}{} + K n \varepsilon \omega'(\varepsilon) + K \omega'(\varepsilon) \norm{S_{\varepsilon, n}}{} \\
  & \leq (1+2K\varepsilon) \norm{S_{\varepsilon, n}}{} + (1+Kn \varepsilon) \omega'(\varepsilon).
 \end{align*}
 Using the recursion hypothesis, we get:
 \begin{align*}
  \norm{S_{\varepsilon, n+1}}{} 
  & \leq (1+2K\varepsilon) (1+Kn \varepsilon) \omega'(\varepsilon) \frac{(1+2K\varepsilon)^n-1}{2K \varepsilon} + (1+Kn \varepsilon) \omega'(\varepsilon) \\
  & \leq (1+K(n+1) \varepsilon) \omega'(\varepsilon) \frac{(1+2K\varepsilon)^{n+1}-1}{2K \varepsilon}.
 \end{align*}
 Hence, we get the bound we want on $S_{\varepsilon, n+1}$. In addition, $n+1 \leq \lfloor \varepsilon^{-1} t_0 \rfloor$, so that:
 \begin{equation*}
  \norm{S_{\varepsilon, n+1}}{} 
  \leq (1+Kt_0) e^{2 K t_0}\frac{\omega'(\varepsilon)}{2K \varepsilon},
 \end{equation*}
 which converges to $0$ uniformly for $1 \leq n+1 \leq \lfloor \varepsilon^{-1} t_0 \rfloor$. In particular, if $\varepsilon$ is small enough (depending on $t_0$ but not on $n$), $S_{\varepsilon, n+1}$ belongs to $U$. This finishes the proof of our intermediate claim.
 
 \smallskip

 As a consequence, for all small enough $\varepsilon$ and all $t \in [0,t_0]$, the matrix $M_\varepsilon (t)$ belongs to $U$ and satisfies the inequality:
 \begin{equation*}
  \norm{M_\varepsilon (t) - e^{-tR}}{} 
  \leq (1+Kt_0) e^{2 K t_0}\frac{\omega'(\varepsilon)}{2K \varepsilon} + \norm{e^{-tR}}{} \max_{s\in [0, \varepsilon]} \norm{e^{sR}-\Id}{},
 \end{equation*}
 where the right-hand side converges to $0$ when $\varepsilon$ vanishes. The family of functions $(F_\varepsilon)_{\varepsilon >0}$ thus converges to $t \mapsto e^{-tR}$ uniformly on $[0,t_0]$. In addition, since the error term only depends on $(R_{\varepsilon, n})_{\varepsilon >0, n \geq 0}$ through $\omega$, so the convergence is uniform in $(R_{\varepsilon, n})_{\varepsilon >0, n \geq 0}$ at fixed $\omega$.
  
 \smallskip
 
 Finally, let $T >0$. Divide the interval $[0,T]$ into a finite number of subintervals of length less that $t_0$, 
 taking an increasing sequence $0 = s_0 < s_1 < \ldots < s_\ell = T$ with $s_{i+1}-s_i < t_0$. On each such subinterval, the intermediate claim yields:
 \begin{equation*}
  \lim_{\varepsilon \to 0} \sup_{s \in [s_i, s_{i+1}]} \norm{M_\varepsilon (s) M_\varepsilon (s_i)^{-1} - e^{-(s-s_i) R} }{} 
  = 0.
 \end{equation*}
 The concatenation of these bounds for $0 \leq i \leq \ell$ yields the uniform convergence of $M_\varepsilon$ on $[0,T]$, 
 and ends the proof of Lemma~\ref{lem:ConvergenceProduitMatrices}. 
\end{proof}

\begin{remark}\quad
 
 If the family $(R_{\varepsilon, n})_{\varepsilon >0, n \geq 0}$ does not depend on $n$, which in our future application is the case for random walks, then the conclusion of Lemma~\ref{lem:ConvergenceProduitMatrices} follows directly from the identity
 \begin{equation*}
  M_\varepsilon (t) 
  = e^{\lfloor \varepsilon^{-1} t \rfloor \ln \left( \Id-\varepsilon R + R_{\varepsilon, 0} \right)},
 \end{equation*}
 and the fact that $\ln \left( \Id-\varepsilon R + R_{\varepsilon, 0} \right) = \Id-\varepsilon R + o(\varepsilon)$. 
\end{remark}

\subsection{Cone contraction at time scale $\Theta(\varepsilon^{-1})$}
\label{subsec:ConesTimeEpsilon}

In this subsection we prove Lemma~\ref{lem:MasterLemma}. Let us recall briefly the strategy: thanks to the stability estimates of Subsection~\ref{subsec:ConesTime1}, in particular Corollary~\ref{cor:ConeStability}, and to Lemma~\ref{lem:ConvergenceProduitMatrices}, we shall prove that, for all piecewise constant $f$,
\begin{equation*}
 \lim_{\varepsilon \to 0} \Lcal_\varepsilon^{\lfloor \varepsilon^{-1} t \rfloor} f
 = e^{-t(\transposee{R})}f.
\end{equation*}
Then, a coupling argument referenced at the start of the section (for similar arguments, see e.g.~\cite[Chapter~5]{LevinPeresWilmer:2009} in the context of Markov chains, and~\cite{ChernovMarkarian:2006} in the context of hyperbolic dynamics and standard pairs) yields tension: for some constants $C$, $c>0$, if in addition $\int_{[I]}f \dd \mu_I = 0$, then
\begin{equation*}
 \norm{ \Lcal_\varepsilon^{\lfloor \varepsilon^{-1} t \rfloor} f }{\infty}
 \leq C e^{-c t} \norm{f}{\infty}.
\end{equation*}
Integrating over $t \geq 0$ then relates $(\Id-\Lcal_\varepsilon)^{-1}$, and thus $Q_\varepsilon$, with $R_0^{-1}$.

\begin{remark}[Perturbative argument and the spectrum of $\Lcal_\varepsilon$]\quad
 
 As already mentioned, another direction of attack for this problem is to try and adapt the perturbative methods of G.~Keller and C.~Liverani~\cite{KellerLiverani:2009a}, in particular in the form used by D.~Dolgopyat and P.~Wright~\cite{DolgopyatWright:2012}. In this setting, one views $\Lcal_\varepsilon$ as a perturbation of
 \begin{equation*}
  \Lcal_{[0]}^{\oplus |I|} : \left\{
  \begin{array}{rcl}
  \Bcal_I & \to & \Bcal_I \\
  (f (\cdot, i))_{i \in I} & \mapsto & (\Lcal_{[0]} (f (\cdot, i)))_{i \in I}             
 \end{array}
  \right. ,
 \end{equation*}
 whose spectrum $\Sp (\Lcal_{[0]}^{\oplus |I|} \acts \Bcal_I)$ is a $|I|$-fold copy of $\Sp (\Lcal_{[0]} \acts \Bcal)$.
 
 \smallskip
 
 This approach is likely to give a more accurate description of $\Lcal_\varepsilon$, although with no incidence on Theorem~\ref{thm:MainTheorem}. For instance, under Hypothesis~\ref{hyp:Mixing}, we can hope to get an eigendecomposition
 \begin{equation*}
  \Lcal_\varepsilon 
  = \Mcal_\varepsilon + \Ncal_\varepsilon,
 \end{equation*}
 with $\norm{\Mcal_\varepsilon^n}{\Bcal_I} \leq C e^{-cn}$ for some $C$, $c>0$ and all $\varepsilon>0$, and $\Ncal_\varepsilon$ of finite rank, and almost conjugated with $(\Id - \varepsilon \transposee{R})$. In particular, the spectrum of $\Ncal_\varepsilon$ should be $o(\varepsilon)$-close to that of $(\Id - \varepsilon \transposee{R})$, with eigenfunctions close to those of $\Pi^* \transposee{R} \Pi_*$. If this holds, the bound of Equation~\ref{eq:BorneNormeInitiale} below may be improved to 
 \begin{equation*}
  \norm{\Lcal_\varepsilon^n (f)}{\Bcal_{I, 0}} 
  \leq C e^{-\rho \varepsilon n} \norm{f}{\Bcal_{I, 0}},
 \end{equation*}
 with $\rho < \rho_R$ and $C$ depends only on $\rho$.
 
 \begin{figure}[!h]
\centering

\scalebox{0.9}{
 \begin{tikzpicture}
 \begin{scope}[yshift = +3cm]
   \draw[->] (-2.5,0) -- (2.5,0) ;
   \draw[->] (0,-2.5) -- (0,2.5) ;
   \foreach \P in {(2,0),(-1.6,0),(0.5,1.2),(0.5,-1.2), (-1.2,0)}
      \node at \P [circle,fill,inner sep=1.5pt]{} ;
   \draw node[below right] at (2,0) {$1$} ;
   \draw node[below right] at (0.6,-0.6) {$\Lambda^{-1}$} ;
   \draw node[below right] at (0,0) {$0$} ;
   \draw [dashed] (0,0) circle (2) ;
   \draw [dashed] (0,0) circle (1) ;
   \fill [black, opacity=0.5] (0,0) circle (1) ;
   \draw node at (-2,-2.5) {$\Sp(\Lcal_{[0]} \acts \Bcal)$} ;
 \end{scope}

 \begin{scope}[yshift = -3cm]
   \draw[->] (-1.5,0) -- (3.5,0) ;
   \draw[->] (0,-2.5) -- (0,2.5) ;
   \foreach \P in {(0,0),(1,0),(2,1.2),(2,-1.2),(2.8,0)}
      \node at \P [circle,fill,inner sep=1.5pt]{} ;
   \draw node[below right] at (0,0) {$0$} ;
   \draw [dashed] (1,-2.5) -- (1,2.5) ;
   \draw[<->] (0,-2) -- (1,-2) ;
   \draw node[below] at (0.5,-2) {$\rho_R$} ;
   \draw[blue] (-0.2, 1.4) -- (3, 1.4) -- (3, -1.4) -- (-0.2, -1.4) -- (-0.2, 1.4) ;
   \draw node at (-2,-2.5) {$\Sp(R \acts \C^I)$} ;
 \end{scope}
 
 \begin{scope}[xshift = +4cm]
   \draw (0,3) -- (1,3) -- (1,-3) -- (0,-3) ;
   \draw[->] (1,0) -- (2,0) ;
   \draw[->, blue] (-1, -3.5) -- (6.8, -3.5) -- (6.8, -0.28) ;
 \end{scope}
 
 \begin{scope}[xshift = +9cm]
   \draw[->] (-2.5,0) -- (2.5,0) ;
   \draw[->] (0,-2.5) -- (0,2.5) ;
   \foreach \P in {(2,0), (1.8, 0), (1.6,0.24), (1.6,-0.24), (1.44,0)}
      \node at \P [circle,fill,inner sep=1.5pt]{} ;
   \draw[blue] (2.04, 0.28) -- (1.4, 0.28) -- (1.4, -0.28) -- (2.04, -0.28) -- (2.04, 0.28) ;
   \foreach \P in {(-1.6, 0), (-1.7, 0), (-1.5, 0), (-1.5, 0.1), (-1.5, -0.1)}
      \node at \P [circle,fill,inner sep=1.5pt]{} ;
   \foreach \P in {(0.4, 1.2), (0.4, 1.1), (0.5, 1.1), (0.6, 1.2), (0.5, 1.3)}
      \node at \P [circle,fill,inner sep=1.5pt]{} ;
   \foreach \P in {(0.4, -1.2), (0.4, -1.1), (0.5, -1.1), (0.6, -1.2), (0.5, -1.3)}
      \node at \P [circle,fill,inner sep=1.5pt]{} ;
   \foreach \P in {(-1.2, 0), (-1.1, 0.1), (-1.1, -0.1)}
      \node at \P [circle,fill,inner sep=1.5pt]{} ;
   \draw node[below right] at (2,0) {$1$} ;
   \draw node[below right] at (0.6,-0.6) {$\Lambda^{-1}$} ;
   \draw node[below right] at (0,0) {$0$} ;
   \draw [dashed] (0,0) circle (2) ;
   \draw [dashed] (0,0) circle (1) ;
   \fill [black, opacity=0.5] (0,0) circle (1) ;
   \draw node at (-2,-2.5) {$\Sp(\Lcal_\varepsilon \acts \Bcal_I)$} ;
 \end{scope}
 \end{tikzpicture}
}
\caption{Conjectural picture for the relationship between the spectra of $\Lcal_{[0]}$, $R$ and $\Lcal_\varepsilon$ with $|I| = 5$. The spectrum of $\Lcal_\varepsilon$ is a perturbation of magnitude $\Theta(\varepsilon)$ of a five-fold copy of $\Sp(\Lcal_{[0]} \acts \Bcal)$, and the spectrum of $R$ can be recovered by zooming in a window of size $\varepsilon$ around $1 \in \Sp(\Lcal_\varepsilon \acts \Bcal_I)$, then applying a central symmetry.}
\end{figure}

Let us finally mention the recent work of H.~Tanaka~\cite{Tanaka:2020}, who recently investigated such multidimensional spectral degenerescences in the context of equilibrium measures of subshifts of finite type.
\end{remark}

Our first step is to show that we can indeed apply Lemma~\ref{lem:ConvergenceProduitMatrices} in our context.

\begin{proposition}\quad
\label{prop:ApproximationParMatrices}
 
 Assume Hypotheses~\ref{hyp:Recurrence}, \ref{hyp:Mixing} and \ref{hyp:AsymptoticsP}.
 
 \smallskip
 
 For all $K \geq 0$, there exists a nonnegative function $\omega$ such that $\omega (\varepsilon) =_{\varepsilon \to 0} o(\varepsilon)$, and with the following property.
 
 \smallskip
 
 Let $(f_\varepsilon)_{\varepsilon >0}$ be a family of functions from $[I]$ to $\R_+$ such that $f_\varepsilon \in C_K (\varepsilon)$ for all small enough $\varepsilon$. Then there exists a sequence of square matrices $(R_{\varepsilon, n})_{\varepsilon >0, n \geq 0}$ such that
 \begin{equation*}
  \sup_{n \geq 0} \norm{R_{\varepsilon, n}}{} 
  \leq \omega(\varepsilon)
\end{equation*}
 and, for all $n \geq 0$,
 \begin{equation*}
  \Pi_* \Lcal_\varepsilon^{n+1} f_\varepsilon 
  = (\Id-\varepsilon (\transposee{R}) + R_{\varepsilon, n}) (\Pi_* \Lcal_\varepsilon^n f_\varepsilon).
 \end{equation*}
\end{proposition}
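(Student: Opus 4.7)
The plan is to write $v_{n+1} := \Pi_* \Lcal_\varepsilon^{n+1} f_\varepsilon$ as $(\Id - \varepsilon \transposee{R}) v_n$ plus an error that can be absorbed into a matrix $R_{\varepsilon, n}$ of operator norm $o(\varepsilon)$, uniformly in $n$. Setting $g_n := \Lcal_\varepsilon^n f_\varepsilon$, I would begin by replacing $K$ with $\max(K, K_1)$ if needed, so that Corollary~\ref{cor:ConeStability} gives $g_n \in C_{\lambda_1 K}(\varepsilon)$ for every $n \geq 0$. Since $\Lcal_\varepsilon$ preserves non-negative functions and their $\Lbb^1$ norm, $g_n \geq 0$ and $\norm{g_n}{\Lbb^1([I], \mu_I)} = |I|^{-1} \norm{v_n}{\ell^1}$.

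Decomposing $g_n = \Pi^* v_n + h_n$ with $h_n := (\Id - \Pi^* \Pi_*) g_n$ yields
\begin{equation*}
 v_{n+1} = \Pi_* \Lcal_\varepsilon \Pi^* v_n + \Pi_* \Lcal_\varepsilon h_n = (\Id - \varepsilon \transposee{R}) v_n + \delta_\varepsilon v_n + \Pi_* \Lcal_\varepsilon h_n,
\end{equation*}
where Equation~\eqref{eq:TransferOperatorAveraging} and Hypothesis~\ref{hyp:AsymptoticsP} identify $\delta_\varepsilon := \transposee{P}_\varepsilon - (\Id - \varepsilon \transposee{R})$ as a fixed operator of norm $o(\varepsilon)$. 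The heart of the argument is an $O(\varepsilon^2)$ bound on $\Pi_* \Lcal_\varepsilon h_n$. By the duality between $\Lcal_\varepsilon$ and $\Kcal_\varepsilon$,
\begin{equation*}
 (\Pi_* \Lcal_\varepsilon h_n)(i) = \sum_{j \in I} \int_A \mathbf{1}_{\{T_\varepsilon(x,j) \in [i]\}} h_n(x, j) \dd \mu(x).
\end{equation*}
Since $\int_A h_n(\cdot, j) \dd \mu = 0$ for every $j$ by construction, the diagonal term may be rewritten as an integral over $\{T_\varepsilon(x, i) \notin [i]\}$, a set of $\mu$-measure $O(\varepsilon)$ by Hypothesis~\ref{hyp:AsymptoticsP}; the off-diagonal terms are integrals over sets of the same $O(\varepsilon)$-measure. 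Combining this with the cone bound $\norm{h_n}{\Bcal_I} \leq \lambda_1 K \varepsilon \norm{g_n}{\Lbb^1([I], \mu_I)}$ and the identity $\norm{g_n}{\Lbb^1([I], \mu_I)} = |I|^{-1} \norm{v_n}{\ell^1}$ produces $\norm{\Pi_* \Lcal_\varepsilon h_n}{\ell^\infty} \leq C \varepsilon^2 \norm{v_n}{\ell^\infty}$, with $C$ depending only on $K$ and on the system.

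To package the error into a matrix, pick for each $n$ an index $i^*$ where $v_n$ attains its $\ell^\infty$-norm; when $v_n \neq 0$, define
\begin{equation*}
 R_{\varepsilon, n} w := \delta_\varepsilon w + \frac{w(i^*)}{v_n(i^*)} \Pi_* \Lcal_\varepsilon h_n.
\end{equation*}
By construction $R_{\varepsilon, n} v_n$ equals the required error, and $\norm{R_{\varepsilon, n}}{} \leq \norm{\delta_\varepsilon}{} + C \varepsilon^2 = o(\varepsilon)$, with a bound depending only on $K$. The degenerate case $v_n = 0$ forces $g_n \equiv 0$ by nonnegativity, hence $v_{n+1} = 0$ as well, so any matrix works, e.g. $R_{\varepsilon, n} = \delta_\varepsilon$. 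The main obstacle in the argument is the $O(\varepsilon^2)$ estimate above: a naive bound would only yield $O(\varepsilon)$, and the extra factor of $\varepsilon$ is recovered only by combining the site-wise cancellation $\int h_n(\cdot, j) \dd \mu = 0$ with the smallness of intersite transitions.
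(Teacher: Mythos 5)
Your proof is correct and takes essentially the same route as the paper's: the same decomposition of $\Lcal_\varepsilon^n f_\varepsilon$ into $\Pi^* \Pi_* \Lcal_\varepsilon^n f_\varepsilon$ plus $h_n$, the cone-stability bound $\norm{h_n}{\Bcal_I} \leq \lambda_1 K \varepsilon \norm{\Lcal_\varepsilon^n f_\varepsilon}{\Lbb^1 ([I], \mu_I)}$ from Corollary~\ref{cor:ConeStability}, and the key $O(\varepsilon^2)$ estimate obtained by combining the site-wise cancellation $\Pi_* h_n = 0$ with the $O(\varepsilon)$ measure of the transition sets given by Hypothesis~\ref{hyp:AsymptoticsP}. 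The only difference is cosmetic: you make explicit, via the rank-one correction at a maximizing index $i^*$ (and the harmless degenerate case $v_n = 0$), how the error is packaged into a matrix $R_{\varepsilon,n}$ of norm $o(\varepsilon)$ depending only on $K$, a step the paper leaves implicit.
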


\begin{proof}
 
 Let $K \geq 0$ and $(f_\varepsilon)_{\varepsilon >0}$ be as in the hypotheses of Proposition~\ref{prop:ApproximationParMatrices}. Without loss of generality, we assume that $K \geq K_1$, where $K_1$ is given by Corollary~\ref{cor:ConeStability}. By Corollary~\ref{cor:ConeStability}, there exists $\lambda_1 \geq 1$ such that $\Lcal_\varepsilon^n f_\varepsilon \in C_{\lambda_1 K} (\varepsilon)$ for all small enough $\varepsilon$. Since $\Lcal_\varepsilon^n f_\varepsilon \in C_{\lambda_1 K} (\varepsilon)$ and $\norm{\Lcal_\varepsilon^n f_\varepsilon}{\Lbb^1([I], \mu_I)} = \norm{f_\varepsilon}{\Lbb^1([I], \mu_I)}$, 
 \begin{equation}
  \label{eq:ApproximationParMatricesIntermediaire}
  \norm{(\Id-\Pi^* \Pi_*) \Lcal_\varepsilon^n f_\varepsilon}{\infty}
  \leq \lambda_1 K \varepsilon \norm{f_\varepsilon}{\Lbb^1([I], \mu_I)}.
 \end{equation}
 
 Set $g_{\varepsilon,n} := (\Id-\Pi^* \Pi_*) \Lcal_\varepsilon^n f_\varepsilon$. By construction, $\Pi_* g_{\varepsilon,n} = 0$. For all $i$, $j \in I$,
 \begin{equation*}
  \left| \int_{[j]} g_{\varepsilon,n} \cdot \mathbf{1}_{[j]} \circ T_\varepsilon \dd \mu \right|
  \leq \norm{g_{\varepsilon,n}}{\infty} \mu (x \in A : \ T_\varepsilon (x,i) \in [j]) 
  = \norm{g_{\varepsilon,n}}{\infty} P_{\varepsilon, ij}.
 \end{equation*}
 Since $\Lcal_\varepsilon$ preserves $\mu_I$, for all $i \in I$,
 \begin{align*}
  \int_{[i]} \Lcal_\varepsilon (g_{\varepsilon,n}) \dd \mu 
  & = |I| \int_{[I]} \mathbf{1}_{[i]} \Lcal_\varepsilon (g_{\varepsilon,n}) \dd \mu_I \\
  & = |I| \int_{[I]} g_{\varepsilon,n} \cdot \mathbf{1}_{[i]} \circ T_\varepsilon \dd \mu_I \\
  & = \sum_{j \in I} \int_{[j]} g_{\varepsilon,n} \cdot \mathbf{1}_{[i]} \circ T_\varepsilon \dd \mu
    - \sum_{j \in I} \int_{[i]} g_{\varepsilon,n} \cdot \mathbf{1}_{[j]} \circ T_\varepsilon \dd \mu \\
  & = \int_{[i]} g_{\varepsilon,n} \dd \mu + \sum_{j \neq i} \left( \int_{[j]} g_{\varepsilon,n} \cdot \mathbf{1}_{[i]} \circ T_\varepsilon \dd \mu - \int_{[i]} g_{\varepsilon,n} \cdot \mathbf{1}_{[j]} \circ T_\varepsilon \dd \mu \right),
 \end{align*}
 so that, by Equation~\eqref{eq:ApproximationParMatricesIntermediaire},
 \begin{align*}
  \left| \int_{[i]} \Lcal_\varepsilon (g_{\varepsilon,n}) \dd \mu - \int_{[i]} g_{\varepsilon,n} \dd \mu \right| 
  & \leq \norm{g_{\varepsilon,n}}{\infty} \sum_{j \neq i} \left( P_{\varepsilon, ij} + P_{\varepsilon, ji} \right) \\
  & \leq \lambda_1 K \varepsilon \norm{\Lcal_\varepsilon^n f_\varepsilon}{\Lbb^1([I], \mu_I)} O(\varepsilon).
 \end{align*}
 Since $\Pi_* g_{\varepsilon,n} = 0$, this later inequality can be read as 
 $\norm{\Pi_* \Lcal_\varepsilon g_{\varepsilon,n}}{\infty} = K O(\varepsilon^2) \norm{\Lcal_\varepsilon^n f_\varepsilon}{\Lbb^1([I], \mu_I)}$.
 
 \smallskip
 
 Recall that $\transposee{P}_\varepsilon = \Pi_* \Lcal_\varepsilon \Pi^*$. We apply 
 $\Pi_* \Lcal_\varepsilon$ to $g_{\varepsilon,n}$:
 \begin{equation*}
  \Pi_* \Lcal_\varepsilon^{n+1} f_\varepsilon 
  = \transposee{P}_\varepsilon \Pi_* \Lcal_\varepsilon^n f_\varepsilon + \Pi_* \Lcal_\varepsilon g_{\varepsilon,n}
  = (\Id-\varepsilon (\transposee{R}) + o(\varepsilon)) \Pi_* \Lcal_\varepsilon^n f_\varepsilon + K O(\varepsilon^2) \norm{\Lcal_\varepsilon^n f_\varepsilon}{\Lbb^1([I], \mu_I)}.
 \end{equation*}
 As $f_\varepsilon$ is nonnegative, $\norm{\Lcal_\varepsilon^n f_\varepsilon}{\Lbb^1([I], \mu_I)} = \norm{\Pi_* \Lcal_\varepsilon^n f_\varepsilon}{\ell^1 (I)}$, 
 so that 
 \begin{equation*}
  \Pi_* \Lcal_\varepsilon^{n+1} f_\varepsilon 
  = (\Id-\varepsilon (\transposee{R}) + o(\varepsilon)+ K O(\varepsilon^2)) \Pi_* \Lcal_\varepsilon^n f_\varepsilon,
 \end{equation*}
 and the error term $o(\varepsilon)+ K O(\varepsilon^2)$ only depends on $K$.
\end{proof}

 Our proximate goal is to understand the sequence $(\Lcal_\varepsilon^n f_\varepsilon)_{n \geq 0}$ when $f_\varepsilon$ has average $0$. In order to apply Proposition~\ref{prop:ApproximationParMatrices}, we shall write a function with average $0$ as the difference of two functions in $C_K (\varepsilon)$. This can be efficiently formalized by introducing an auxiliary family of norms on $\Bcal_I$.
 
 \smallskip
 
 Let $K$, $\varepsilon > 0$. Given a real function $f \in \Bcal_I$, we define its $(K, \varepsilon)$-norm as
 \begin{equation*}
  \norm{f}{K,\varepsilon} 
  := \inf_{\substack{f_+,f_- \in C_K (\varepsilon) \\ f = f_+ - f_-}} \max\{\norm{f_+}{\Lbb^1 ([I], \mu_I)},\norm{f_-}{\Lbb^1 ([I], \mu_I)}\}.
 \end{equation*}
 For complex-valued $f \in \Bcal_I$, we put $\norm{f}{K,\varepsilon} := \max\{ \norm{\Re f}{K, \varepsilon}, \norm{\Im f}{K, \varepsilon} \}$. For instance, if $f \in \Bcal_{I,0}$ is constant on each $[i]$, then $\norm{f}{K,\varepsilon} = \max\{ \norm{\Re f}{\Lbb^1([I], \mu_I)}, \norm{\Im f}{\Lbb^1([I], \mu_I)} \}/2$.

 \begin{lemma}\quad
  \label{lem:EquivalenceNormes}
  
  For any $K$, $\varepsilon >0$, the $(K, \varepsilon)$-norm is equivalent to the $\Bcal_I$-norm.
 \end{lemma}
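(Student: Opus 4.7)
The plan is to show both inequalities with explicit constants, relying heavily on the simple observation that any constant function lies in every cone $C_K(\varepsilon)$, since $(\Id-\Pi^*\Pi_*)$ annihilates functions constant on each site $[i]$, and in particular global constants.

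For the easy direction, I would use Lemma~\ref{lem:ControleNorme}. If $f$ is real and $f = f_+ - f_-$ is any decomposition with $f_\pm \in C_K(\varepsilon)$, then
\begin{equation*}
\norm{f}{\Bcal_I} \leq \norm{f_+}{\Bcal_I} + \norm{f_-}{\Bcal_I} \leq 2(|I|+K\varepsilon)\max\{\norm{f_+}{\Lbb^1([I],\mu_I)},\norm{f_-}{\Lbb^1([I],\mu_I)}\}.
\end{equation*}
Taking infimum over decompositions yields $\norm{f}{\Bcal_I} \leq 2(|I|+K\varepsilon)\norm{f}{K,\varepsilon}$ for real $f$, and for complex $f = f_1 + \ii f_2$ the bound $\norm{f}{\Bcal_I} \leq \norm{f_1}{\Bcal_I}+\norm{f_2}{\Bcal_I} \leq 4(|I|+K\varepsilon)\norm{f}{K,\varepsilon}$.

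For the reverse direction, given a real $f \in \Bcal_I$, I would construct an explicit decomposition. Set $C := \norm{f}{\Bcal_I}(1 + 2/(K\varepsilon))$ and write $f = (f+C) - C$. The constant function $C$ satisfies $(\Id-\Pi^*\Pi_*)C = 0$, so trivially $C \in C_K(\varepsilon)$ with $\norm{C}{\Lbb^1([I],\mu_I)} = C$. For $f_+ := f+C$, since $C \geq \norm{f}{\Bcal_I} \geq \norm{f}{\infty}$ we have $f_+ \geq 0$, and $(\Id-\Pi^*\Pi_*)f_+ = (\Id-\Pi^*\Pi_*)f$, whose norm is at most $2\norm{f}{\Bcal_I}$. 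Meanwhile $\norm{f_+}{\Lbb^1([I],\mu_I)} = C + \int_{[I]} f\,\de\mu_I \geq C - \norm{f}{\Bcal_I}$, and with our choice of $C$ this gives $K\varepsilon\norm{f_+}{\Lbb^1([I],\mu_I)} \geq 2\norm{f}{\Bcal_I} \geq \norm{(\Id-\Pi^*\Pi_*)f_+}{\Bcal_I}$, so $f_+ \in C_K(\varepsilon)$. Bounding both $\norm{f_+}{\Lbb^1}$ and $\norm{f_-}{\Lbb^1}$ by $C + \norm{f}{\Bcal_I} \leq (2 + 2/(K\varepsilon))\norm{f}{\Bcal_I}$ gives $\norm{f}{K,\varepsilon} \leq (2+2/(K\varepsilon))\norm{f}{\Bcal_I}$ for real $f$, and the complex case follows by applying the real estimate to $\Re f$ and $\Im f$ separately.

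There is essentially no obstacle here: the entire proof is structural, exploiting that $C_K(\varepsilon)$ is large enough to contain arbitrary positive constants and so any bounded function can be shifted into the positive cone. The only minor point requiring care is tracking that $C$ must be chosen so that the cone condition $\norm{(\Id-\Pi^*\Pi_*)f_+}{\Bcal_I} \leq K\varepsilon\norm{f_+}{\Lbb^1}$ holds after shifting, which forces $C$ to depend on $(K\varepsilon)^{-1}$ and is the reason the equivalence constants blow up as $K\varepsilon \to 0$. This dependence is not an issue for the statement of equivalence at each fixed $(K,\varepsilon)$.
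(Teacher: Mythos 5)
Your proof is correct and follows essentially the same route as the paper's: the easy direction uses Lemma~\ref{lem:ControleNorme} on (near-)optimal decompositions, and the reverse direction uses the same shift $f = (f+C) - C$ with $C = (1+2/(K\varepsilon))\norm{f}{\Bcal_I}$, yielding the same constants $4(|I|+K\varepsilon)$ and $2(1+K\varepsilon)/(K\varepsilon)$.
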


 \begin{proof}
  
  Let $K$, $\varepsilon >0$. 
  
  \smallskip
  
  Let $f \in \Bcal_I$ with $\norm{f}{K,\varepsilon} <+ \infty$. Assume that $f$ is real-valued. Let $\delta>0$, and write $f = f_+ - f_-$ with $\norm{f_+}{\Lbb^1}$, $\norm{f_-}{\Lbb^1} \leq \norm{f}{K,\varepsilon}+\delta$, and $f_+$, $f_- \in C_K (\varepsilon)$. Then $\norm{(\Id-\Pi^* \Pi_*)f_+}{\Bcal_I} \leq K \varepsilon \norm{f_+}{\Lbb^1}$, so that, by Lemma~\ref{lem:ControleNorme},
  \begin{equation*}
   \norm{f_+}{\Bcal_I} 
   \leq (|I|+K \varepsilon) \norm{f_+}{\Lbb^1 ([I], \mu_I)} 
   \leq (|I|+K \varepsilon)(\norm{f}{K,\varepsilon}+\delta)
  \end{equation*}
  and similarly for $f_-$. Hence, $\norm{f}{\Bcal_I} \leq 2 (|I|+K \varepsilon) (\norm{f}{K,\varepsilon}+\delta)$. Since this is true for all $\delta>0$, we get $\norm{f}{\Bcal_I} \leq 2 (|I|+K \varepsilon) \norm{f}{K,\varepsilon}$. If $f$ is complex-valued, by taking real and imaginary parts, we get 
  \begin{equation}
   \label{eq:BorneBcalI<Keps}
   \norm{f}{\Bcal_I} 
   \leq 4 (|I|+K \varepsilon) \norm{f}{K,\varepsilon}.
  \end{equation}
  
  \smallskip
  
  Let $f \in \Bcal_I$ be real. Let $C := 1+2/(K \varepsilon)$. We choose $f_+ := f+C\norm{f}{\Bcal_I}$ and $f_- := C \norm{f}{\Bcal_I}$. Then $f_- \in C_0 (\varepsilon)$ and 
  \begin{equation*}
   \frac{2 \norm{f}{\Bcal_I}}{K \varepsilon} 
   \leq f_+ 
   \leq \frac{2 (1+K \varepsilon) \norm{f}{\Bcal_I}}{K \varepsilon}.
  \end{equation*}
  We deduce from the lower bound on $f_+$:
  \begin{equation*}
   \norm{(\Id-\Pi^* \Pi_*) f_+}{\Bcal_I} 
   = \norm{(\Id-\Pi^* \Pi_*) f}{\Bcal_I} 
   \leq 2 \norm{f}{\Bcal_I} 
   \leq K \varepsilon \norm{f_+}{\Lbb^1 ([I], \mu_I)}.
  \end{equation*}
  Hence $f_+ \in C_K (\varepsilon)$. Using the upper bound on $f_+$, 
  \begin{equation*}
   \norm{f_+}{\Lbb^1 ([I], \mu_I)} \leq \frac{2 (1+K \varepsilon)}{K \varepsilon} \norm{f}{\Bcal_I}.
  \end{equation*}
  Doing the same with the imaginary part of $f$ finally yields:
  \begin{equation*}
   \norm{f}{K, \varepsilon} 
   \leq \frac{2 (1+K \varepsilon)}{K \varepsilon} \norm{f}{\Bcal_I}. \qedhere
  \end{equation*}  
 \end{proof}

 Thanks to Lemma~\ref{lem:ConvergenceProduitMatrices}, we now apply 
 Proposition~\ref{prop:ApproximationParMatrices}.

\begin{corollary}\quad
\label{cor:ConvergenceUniformeCompacts}
 
 Assume Hypotheses~\ref{hyp:Recurrence}, \ref{hyp:Mixing} and \ref{hyp:AsymptoticsP}.
 
 \smallskip
 
 Let $K$, $C$, $T>0$.
 
 \smallskip
 
 Let $(f_\varepsilon)_{\varepsilon >0}$ be a family of functions in $\Bcal_I$. Assume that $\sup_{\varepsilon>0} \norm{f_\varepsilon}{K, \varepsilon} \leq C$. Then 
 \begin{equation}
 \label{eq:ConvergenceUniformeCompactsDynamique}
  \lim_{\varepsilon \to 0} \sup_{t \in [0,T]} \norm{ \Pi_* \Lcal_\varepsilon^{\lfloor \varepsilon^{-1} t \rfloor} f_\varepsilon - e^{-t(\transposee{R})} \Pi_* f_\varepsilon }{\ell^1 (I)} 
  = 0,
 \end{equation}
 where the convergence is uniform for fixed $K$ and $C$.
\end{corollary}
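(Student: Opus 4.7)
The plan is to reduce to the cone $C_K(\varepsilon)$, apply Proposition~\ref{prop:ApproximationParMatrices} to turn the iteration $\Pi_* \Lcal_\varepsilon^n$ into a product of perturbed matrices, and then invoke Lemma~\ref{lem:ConvergenceProduitMatrices} to pass to the limit.

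First, I would split $f_\varepsilon$ into pieces inside $C_K(\varepsilon)$. By definition of the $(K,\varepsilon)$-norm and the bound $\sup_\varepsilon \norm{f_\varepsilon}{K,\varepsilon} \leq C$, for each $\delta > 0$ and each $\varepsilon > 0$ one can write
\[
 f_\varepsilon = (f_{\varepsilon,+}^{\Re} - f_{\varepsilon,-}^{\Re}) + \ii (f_{\varepsilon,+}^{\Im} - f_{\varepsilon,-}^{\Im}),
\]
with each of the four nonnegative pieces lying in $C_K(\varepsilon)$ and having $\Lbb^1([I],\mu_I)$-norm at most $C + \delta$. By linearity of $\Pi_*$, $\Lcal_\varepsilon$, and $e^{-t(\transposee{R})}$, it suffices to prove Equation~\eqref{eq:ConvergenceUniformeCompactsDynamique} separately for a family $(g_\varepsilon)_{\varepsilon > 0}$ with $g_\varepsilon \in C_K(\varepsilon)$ and $\sup_\varepsilon \norm{g_\varepsilon}{\Lbb^1([I],\mu_I)} \leq C + \delta$, with constants depending only on $K$ and $C$.

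Next, I would iterate Proposition~\ref{prop:ApproximationParMatrices}: it yields a function $\omega$ with $\omega(\varepsilon) = o(\varepsilon)$ (depending only on $K$) and matrices $(R_{\varepsilon,n})_{\varepsilon > 0, n \geq 0}$ with $\sup_n \norm{R_{\varepsilon,n}}{} \leq \omega(\varepsilon)$ such that
\[
 \Pi_* \Lcal_\varepsilon^N g_\varepsilon
 = \left( \prod_{n=0}^{N-1} (\Id - \varepsilon \transposee{R} + R_{\varepsilon,n}) \right) \Pi_* g_\varepsilon
 \quad \text{for every } N \geq 0.
\]
Crucially, the bound $\omega$ does not depend on the particular family $(g_\varepsilon)$, only on $K$. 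Now I apply Lemma~\ref{lem:ConvergenceProduitMatrices} with $R$ replaced by $\transposee{R}$: the product $\prod_{n=0}^{\lfloor \varepsilon^{-1} t \rfloor - 1} (\Id - \varepsilon \transposee{R} + R_{\varepsilon,n})$ converges to $e^{-t(\transposee{R})}$ uniformly for $t \in [0,T]$, and the convergence is uniform in $(R_{\varepsilon,n})$ at fixed $\omega$.

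Combining the last two displays and using that $\norm{\Pi_* g_\varepsilon}{\ell^1(I)} \leq \norm{g_\varepsilon}{\Lbb^1([I],\mu_I)} \leq C + \delta$, we obtain
\[
 \lim_{\varepsilon \to 0} \sup_{t \in [0,T]} \norm{\Pi_* \Lcal_\varepsilon^{\lfloor \varepsilon^{-1} t \rfloor} g_\varepsilon - e^{-t(\transposee{R})} \Pi_* g_\varepsilon}{\ell^1(I)} = 0,
\]
uniformly in the family $(g_\varepsilon)$ subject to $g_\varepsilon \in C_K(\varepsilon)$ and $\norm{g_\varepsilon}{\Lbb^1} \leq C + \delta$. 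Summing the four pieces from the first step gives the desired uniform convergence for $(f_\varepsilon)$, with constants depending only on $K$ and $C$ (letting $\delta \to 0$ if one wants). The main obstacle is bookkeeping the dependencies on parameters to ensure that, at every stage, the error terms depend only on $K$ and $C$ and not on the particular family $(f_\varepsilon)$; this is guaranteed by the ``uniform in $(R_{\varepsilon,n})$ at fixed $\omega$'' clause of Lemma~\ref{lem:ConvergenceProduitMatrices} together with the fact that $\omega$ in Proposition~\ref{prop:ApproximationParMatrices} depends only on $K$.
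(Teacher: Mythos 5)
Your proposal is correct and follows essentially the same route as the paper's proof: decompose $f_\varepsilon$ via the $(K,\varepsilon)$-norm into four nonnegative pieces in $C_K(\varepsilon)$, turn $\Pi_* \Lcal_\varepsilon^n$ into a product of matrices via Proposition~\ref{prop:ApproximationParMatrices}, pass to the limit with Lemma~\ref{lem:ConvergenceProduitMatrices} (using that $\omega$ depends only on $K$), and sum the pieces. The only slip is cosmetic: for nonnegative $g_\varepsilon$ one has $\norm{\Pi_* g_\varepsilon}{\ell^1(I)} = |I|\,\norm{g_\varepsilon}{\Lbb^1([I],\mu_I)}$, not $\leq \norm{g_\varepsilon}{\Lbb^1([I],\mu_I)}$, a harmless factor of $|I|$.
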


\begin{proof}
 
 Let $K$, $C$, $T>0$. Let $(f_\varepsilon)_{\varepsilon >0}$ be a family of functions in $\Bcal_I$ such that $\sup_{\varepsilon>0} \norm{f_\varepsilon}{K, \varepsilon} \leq C$.
 
 \smallskip
 
 Let $(\varepsilon_k)$ be a positive sequence converging to $0$. For all $k$, there exist $f_{k,+}$, $f_{k,-}$, $f_{k,i+}$, $f_{k,i-}$ in $C_K (\varepsilon)$ such that $f_{\varepsilon_k} = f_{k,+}-f_{k,-}+if_{k,i+}-if_{k,i-}$ and $\norm{f_{k,+}}{\Lbb^1 ([I], \mu_I)} \leq 2 C$, and likewise for $f_{k,-}$, $f_{k,i+}$, $f_{k,i-}$.
 
 \smallskip
 
 Note that
 \begin{align*}
  \sup_{t \in [0,T]} & \norm{ \Pi_* \Lcal_\varepsilon^{\lfloor \varepsilon_k^{-1} t \rfloor} f_{k,+} - e^{-t(\transposee{R})} \Pi_* f_{k,+} }{\ell^1 (I)} \\
  & \leq \sup_{t \in [0,T]}\norm{ \prod_{n=0}^{\lfloor \varepsilon^{-1} t \rfloor-1} (\Id-\varepsilon_k (\transposee{R}) + R_{\varepsilon_k, n}) - e^{-t(\transposee{R})} }{\ell^1 (I) \to \ell^1 (I)} \norm{\Pi_* f_{k,+}}{\ell^1 (I)}, 
 \end{align*}
 where $(R_{\varepsilon_k, n})_{k,n \geq 0}$ is a sequence of matrices obtained by Proposition~\ref{prop:ApproximationParMatrices}. 
 The quantity $\norm{\Pi_* f_{k,+}}{\ell^1 (I)}$ is equal to $|I| \cdot \norm{f_{k,+}}{ \Lbb^1 ([I], \mu_I)}$, and thus is bounded by $2C|I|$.
 By Lemma~\ref{lem:ConvergenceProduitMatrices}, 
 \begin{equation*}
  \lim_{k \to + \infty} \sup_{t \in [0,T]} \norm{ \Pi_* \Lcal_\varepsilon^{\lfloor \varepsilon_k^{-1} t \rfloor} f_{k,+} - e^{-t(\transposee{R})} \Pi_* f_{k,+} }{\ell^1 (I)} 
  = 0,
 \end{equation*}
 where the convergence is uniform in $C$. The same holds for $f_{k,-}$, $f_{k,i+}$, $f_{k,i-}$. Summing all four limits yields
 \begin{equation*}
  \lim_{k \to + \infty} \sup_{t \in [0,T]} \norm{ \Pi_* \Lcal_\varepsilon^{\lfloor \varepsilon_k^{-1} t \rfloor} f_{\varepsilon_k} - e^{-t(\transposee{R})} \Pi_* f_{\varepsilon_k} }{\ell^1 (I)}
  = 0.
 \end{equation*}
 Since the subsequence $(\varepsilon_k)$ is arbitrary, this finishes the proof.
\end{proof}

Corollary~\ref{cor:ConvergenceUniformeCompacts} gives us convergence of $\Lcal_\varepsilon^{\lfloor \varepsilon^{-1} t \rfloor} f_\varepsilon$ to $e^{-t(\transposee{R})} \Pi_* f_\varepsilon$, in a relatively weak sense, but uniformly on 
all compacts in $t$. As announced, we now implement a coupling argument to show that $(\Lcal_\varepsilon^{\lfloor \varepsilon^{-1} t \rfloor} f_\varepsilon)_{t \geq 0, \varepsilon >0}$ is exponentially tight.

\begin{proposition}\quad
\label{prop:Couplage}
 
 Assume Hypotheses~\ref{hyp:Recurrence}, \ref{hyp:Mixing} and \ref{hyp:AsymptoticsP}.
 
 \smallskip
 
 For any $\rho \in (0, \rho_R)$ and $K \geq 0$, there exist constants $C$, $\varepsilon_0>0$ such that, for all $\varepsilon \leq \varepsilon_0$ and $n \geq 0$, the operator norm of $\Lcal_\varepsilon^n$ acting on $(\Bcal_{I,0}, \norm{\cdot}{K, \varepsilon})$ 
 is no larger than  $C e^{- \rho \varepsilon n}$.
\end{proposition}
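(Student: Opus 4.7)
The plan is to combine cone contraction at time scale $\Theta(1)$ (Proposition~\ref{prop:FastAsymptotics}) with the averaged dynamics at time scale $\Theta(\varepsilon^{-1})$ (Corollary~\ref{cor:ConvergenceUniformeCompacts}) to show that $\Lcal_\varepsilon^{\lfloor \varepsilon^{-1} T \rfloor}$ is a strict contraction on $(\Bcal_{I,0}, \norm{\cdot}{K, \varepsilon})$ for a well-chosen $T$, and then to iterate. I fix $\rho \in (0, \rho_R)$ and choose $T>0$ large together with $\sigma > 0$ small so that $C e^{-\rho_R T} + 2\sigma \leq e^{-\rho T}$, where $C$ bounds the operator norm of $e^{-t \transposee{R}}$ on $\C_0^I$. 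By enlarging $K$ if necessary (which is harmless since only existence of constants is sought), I may assume $K \geq K_\sigma$ from Proposition~\ref{prop:FastAsymptotics} and $K \geq K_1$ from Corollary~\ref{cor:ConeStability}. I then set $n_1 := \lfloor \varepsilon^{-1} T \rfloor$, which exceeds $n_\sigma$ for $\varepsilon$ small enough.

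For the contraction step, I take $f \in \Bcal_{I, 0}$ with $\norm{f}{K, \varepsilon} \leq 1$ and decompose $f = f_+ - f_-$ with $f_\pm \in C_K(\varepsilon)$ and $\norm{f_+}{\Lbb^1} = \norm{f_-}{\Lbb^1} = M$ arbitrarily close to $\norm{f}{K, \varepsilon}$; the equality of $\Lbb^1$-norms is possible because $\int_{[I]} f \dd \mu_I = 0$. By Proposition~\ref{prop:FastAsymptotics}, $\Lcal_\varepsilon^{n_1} f_\pm \in C_{\sigma K}(\varepsilon)$, so writing $\Lcal_\varepsilon^{n_1} f_\pm = u_\pm + r_\pm$ with $u_\pm := \Pi^*\Pi_* \Lcal_\varepsilon^{n_1} f_\pm$ and $\Pi_* r_\pm = 0$ yields $\norm{r_\pm}{\Bcal_I} \leq \sigma K \varepsilon M$. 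Now I estimate the two pieces of $\Lcal_\varepsilon^{n_1} f = (u_+ - u_-) + (r_+ - r_-)$ separately in the $(K,\varepsilon)$-norm. The piecewise constant part $u_+ - u_- = \Pi^*(\Pi_* \Lcal_\varepsilon^{n_1} f)$ decomposes into positive and negative parts of $\Pi_*\Lcal_\varepsilon^{n_1} f \in \C_0^I$; combining Corollary~\ref{cor:ConvergenceUniformeCompacts} with the decay of $e^{-T\transposee{R}}$ on $\C_0^I$ gives $\norm{u_+ - u_-}{K, \varepsilon} \leq C e^{-\rho_R T} M + o_\varepsilon(1)$. The oscillation $r_+ - r_-$ admits the shift decomposition $(r_+ + \sigma M \cdot \mathbf{1}) - (r_- + \sigma M \cdot \mathbf{1})$, whose terms are nonnegative and in $C_K(\varepsilon)$ with $\Lbb^1$-norm exactly $\sigma M$, so $\norm{r_+ - r_-}{K, \varepsilon} \leq \sigma M$. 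Summing and letting the approximation parameters vanish produces $\norm{\Lcal_\varepsilon^{n_1} f}{K, \varepsilon} \leq (C e^{-\rho_R T} + \sigma)\norm{f}{K, \varepsilon} + o_\varepsilon(1) \leq e^{-\rho T} \norm{f}{K, \varepsilon}$ for $\varepsilon$ small enough.

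Iterating yields an operator-norm bound $e^{-k\rho T}$ after $kn_1$ steps. For general $n = k n_1 + r$ with $0 \leq r < n_1$, I bound the operator norm of $\Lcal_\varepsilon^r$ uniformly by $\lambda_1$: since $\Lcal_\varepsilon^r f_\pm \in C_{\lambda_1 K}(\varepsilon)$ by Corollary~\ref{cor:ConeStability}, inflating each by the constant $(\lambda_1 - 1) M$ produces a decomposition of $\Lcal_\varepsilon^r f$ into $C_K(\varepsilon)$-functions of $\Lbb^1$-norm $\lambda_1 M$. Combining these two estimates with $n/n_1 \geq n\varepsilon/T$ gives the claim with $C := \lambda_1 e^{\rho T}$. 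The main obstacle is the contraction step: the naive decomposition $\Lcal_\varepsilon^{n_1} f = \Lcal_\varepsilon^{n_1} f_+ - \Lcal_\varepsilon^{n_1} f_-$ only bounds the $\Lbb^1$-norm by $M$ with no decay whatsoever. The gain comes from the coupling-like separation of the site-averaged part, which inherits the exponential decay at rate $e^{-\rho_R T}$ of the effective Markov chain, from the oscillation part, which is small because iteration squeezes the cones; combining these requires the fine control on tight decompositions within the rigid $(K, \varepsilon)$-norm that Lemma~\ref{lem:EquivalenceNormes} together with the cone stability results make available.
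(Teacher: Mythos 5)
Your argument is correct, and it reaches the conclusion by a somewhat different mechanism than the paper, although it draws on exactly the same toolbox (Proposition~\ref{prop:FastAsymptotics}, Corollary~\ref{cor:ConeStability}, Corollary~\ref{cor:ConvergenceUniformeCompacts}, and the spectral gap of $R$ from Lemma~\ref{lem:SpectreLMatrice}). The paper's proof is a Doeblin-type coupling: it introduces the renormalization map $\Phi_{\sigma,\varepsilon}(f_+)=\sigma^{-1}\bigl(\Lcal_\varepsilon^{\lfloor\varepsilon^{-1}T\rfloor}f_+-(1-\sigma)\int_{[I]}f_+\dd\mu_I\bigr)$, shows with the same two ingredients you use that $\Phi_{\sigma,\varepsilon}$ preserves $C_K(\varepsilon)$, and deduces $\Lcal_\varepsilon^{m\lfloor\varepsilon^{-1}T\rfloor}f=\sigma^m\bigl(\Phi_{\sigma,\varepsilon}^m(f_+)-\Phi_{\sigma,\varepsilon}^m(f_-)\bigr)$, so the per-block decay factor is $\sigma$; since $T$ and $\sigma$ are tied by $C|I|e^{-\rho'T}\leq\sigma/3$, a final optimization in $\sigma$ is then needed to turn this into the rate $e^{-\rho\varepsilon n}$ for arbitrary $\rho<\rho_R$. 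You instead bound directly the $(K,\varepsilon)$-operator norm of $\Lcal_\varepsilon^{\lfloor\varepsilon^{-1}T\rfloor}$ by $Ce^{-\rho'T}+\sigma+o_\varepsilon(1)$, splitting the image into the site-averaged part (which inherits the decay of $e^{-T\transposee{R}}$ on $\C_0^I$, via Corollary~\ref{cor:ConvergenceUniformeCompacts} and the positive/negative-part decomposition of $\Pi_*\Lcal_\varepsilon^{\lfloor\varepsilon^{-1}T\rfloor}f$) and the oscillation (re-centred by the constant $\sigma M\mathbf{1}$ so as to land in $C_K(\varepsilon)$ with $\Lbb^1$-norm $\sigma M$), and then you iterate by plain submultiplicativity. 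Two features make this work and are worth flagging: the automatic equality $\norm{f_+}{\Lbb^1([I],\mu_I)}=\norm{f_-}{\Lbb^1([I],\mu_I)}$ for decompositions of a zero-mean $f$, which is what lets your per-block estimate be a genuine operator-norm contraction without losing a factor $2$ (the paper sidesteps this by propagating the explicit functions $\Phi_{\sigma,\varepsilon}^m(f_\pm)$ instead of an operator norm), and the fact that your per-block factor already decays in $T$, which spares you the paper's final $\sigma$-versus-$T$ optimization. Your treatment of the remainder $n=k\lfloor\varepsilon^{-1}T\rfloor+r$ by inflating $\Lcal_\varepsilon^r f_\pm$ with the constant $(\lambda_1-1)M$ coincides with the paper's.

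Two small touch-ups. First, state the semigroup bound with some $\rho'\in(\rho,\rho_R)$ rather than with $\rho_R$ itself: if an eigenvalue of minimal real part carries a nontrivial Jordan block, $\norm{e^{-t\transposee{R}}}{}$ restricted to $\C_0^I$ need not be $O(e^{-\rho_R t})$; this is exactly the paper's choice of $\rho'$ and changes nothing downstream in your argument. Second, the nonnegativity of $r_\pm+\sigma M\mathbf{1}$ uses $K\varepsilon\leq 1$, which should be folded into the choice of $\varepsilon_0$; and your reduction to $K\geq\max\{K_\sigma,K_1\}$ is the same without-loss-of-generality step the paper itself makes.
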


\begin{proof}
 
 Let $\rho' \in (\rho, \rho_R)$. Since $R$ is an irreducible bi-L-matrix, so is $\transposee{R}$. By Lemma~\ref{lem:SpectreLMatrice}, there exists a constant $C$ such that, for all $t \geq 0$,
 \begin{equation*}
  \norm{e^{-t(\transposee{R})}-\pi}{\ell^1 (I) \to \ell^1 (I)} 
  \leq C e^{-\rho' t},
 \end{equation*}
 where $\pi (F) = |I|^{-1} \left(\sum_I F\right) \mathbf{1}$ is the eigenprojection associated with the eigenvalue $0$ of $\transposee{R}$. 
 
 \smallskip
  
 Let $\sigma \in (0,1)$, to be fixed at the end of this proof. Let $K_\sigma$ and $n_\sigma$ be given by Lemma~\ref{prop:FastAsymptotics}. Let $T >0$ be such that $C |I| e^{-\rho' T} \leq \sigma/3$. We assume without loss of generality that $K \geq K_\sigma$.
 
 \medskip
 \textbf{A renormalization map}
 \smallskip
 
 Given $f_+ \in C_K (\varepsilon)$, we define~:
 \begin{equation*}
  \Phi_{\sigma,\varepsilon} (f_+) 
  := \sigma^{-1} \left( \Lcal_\varepsilon^{\lfloor \varepsilon^{-1} T \rfloor} f_+ - (1-\sigma) \int_{[I]} f_+ \dd \mu_I \right).
 \end{equation*}
 Our intermediate goal is to prove that $\Phi_{\sigma,\varepsilon}$ maps $C_K (\varepsilon)$ into itself if $\varepsilon$ is small enough.
 
 \smallskip
 
 Let $C'>0$. Let $(\varepsilon_k)_{k \geq 0}$ be a sequence of positive numbers converging to $0$, and $(f_{k,+})_{k \geq 0} \in C_K (\varepsilon_k)$ with $\norm{f_{k,+}}{\Lbb^1 ([I], \mu_I)} \leq C'$. By Corollary~\ref{cor:ConvergenceUniformeCompacts}, 
 \begin{equation*}
  \lim_{k \to + \infty} \sup_{t \in [0,T]} \norm{  \Pi_* \Lcal_{\varepsilon_k}^{\lfloor \varepsilon_k^{-1} t \rfloor} f_{k,+} - e^{-t(\transposee{R})} \Pi_* f_{k,+}}{\ell^1 (I)} 
  = 0,
 \end{equation*}
 uniformly for fixed $K$ and $C'$. By our choice of $T$,
 \begin{equation*}
  e^{-T(\transposee{R})} \Pi_* f_{k,+} 
  \geq \pi (\Pi_* f_{k,+} ) - C e^{-\rho' T} \norm{ \Pi_* f_{k,+}}{\ell^1 (I)} 
  \geq \left(1-\frac{\sigma}{3}\right) \int_{[I]} f_{k,+} \dd \mu_I.
 \end{equation*}
 Hence, if $k$ is large enough, then 
 \begin{equation*}
  \Pi_* \Lcal_{\varepsilon_k}^{\lfloor \varepsilon_k^{-1} T \rfloor} f_{k,+} 
  \geq \left(1-\frac{2\sigma}{3}\right) \int_{[I]} f_{k,+} \dd \mu_I.
 \end{equation*}
 
 \smallskip
 
 If $k$ is large enough, then $\varepsilon_k^{-1} T \geq n_\sigma$, so that $\Lcal_{\varepsilon_k}^{\lfloor \varepsilon_k^{-1} T \rfloor} f_{k,+} \in C_{\sigma K} (\varepsilon_k)$. This has two consequences. The first is that, if $k$ is large enough,
 \begin{align*}
  \Lcal_{\varepsilon_k}^{\lfloor \varepsilon_k^{-1} T \rfloor} f_{k,+} 
  & \geq \Pi_* \Lcal_{\varepsilon_k}^{\lfloor \varepsilon_k^{-1} T \rfloor} f_{k,+} - \sigma K \varepsilon_k \int_{[I]} f_{k,+} \dd \mu_I \\
  & \geq (1-\sigma) \int_{[I]} f_{k,+} \dd \mu_I.
 \end{align*}
 In particular, $\Phi_{\sigma, \varepsilon_k} (f_{k,+}) \geq 0$. The second consequence is that, if $k$ is large enough, then 
 \begin{align*}
  \norm{(\Id-\Pi^* \Pi_*) \Phi_{\sigma, \varepsilon_k} (f_{k,+})}{\Bcal_I} 
  & = \sigma^{-1} \norm{(\Id-\Pi^* \Pi_*) \Lcal_{\varepsilon_k}^{\lfloor \varepsilon_k^{-1} T \rfloor} f_{k,+}}{\Bcal_I} \\
  & \leq \sigma^{-1} \sigma K \varepsilon_k \norm{\Lcal_{\varepsilon_k}^{\lfloor \varepsilon_k^{-1} T \rfloor} f_{k,+}}{\Lbb^1 ([I], \mu_I)} \\
  & = K \varepsilon_k \norm{\Phi_{\sigma, \varepsilon_k} (f_{k,+})}{\Lbb^1 ([I], \mu_I)}.
 \end{align*}
 Hence, $\Phi_{\sigma, \varepsilon_k} (f_{k,+}) \in C_K (\varepsilon_k)$. Since $\Phi_{\sigma, \varepsilon_k}$ is linear, $\Phi_{\sigma,\varepsilon_k} ( C_K (\varepsilon_k)) \subset C_K (\varepsilon_k)$ for all large enough $k$. Finally, since $(\varepsilon_k)_{k \geq 0}$ is arbitrary,  $\Phi_{\sigma,\varepsilon} ( C_K (\varepsilon)) \subset C_K (\varepsilon)$ 
 for all small enough $\varepsilon$.
 
 \medskip
 \textbf{Bound on the operator norm}
 \smallskip
 
 Let $\varepsilon$ be small enough that $\Phi_{\sigma,\varepsilon}$ preserves $C_K (\varepsilon)$. Let $f \in \Bcal_{I,0}$, which we assume without loss of generality to be real. Let $f_+$, $f_- \in C_K (\varepsilon)$ be such that $f = f_+-f_-$ and $\norm{f_+}{\Lbb^1([I],\mu_I)}$, $\norm{f_-}{\Lbb^1([I],\mu_I)} \leq 2 \norm{f}{K, \varepsilon}$. Then 
 \begin{align*}
  \Lcal_\varepsilon^{\lfloor \varepsilon^{-1} T \rfloor} f 
  & = \Lcal_\varepsilon^{\lfloor \varepsilon^{-1} T \rfloor} f_+ - \Lcal_\varepsilon^{\lfloor \varepsilon^{-1} T \rfloor} f_- \\
  & = \left( \Lcal_\varepsilon^{\lfloor \varepsilon^{-1} T \rfloor} f_+  - (1-\sigma) \int_{[I]} f_+ \dd \mu_I \right) - \left( \Lcal_\varepsilon^{\lfloor \varepsilon^{-1} T \rfloor} f_+  - (1-\sigma) \int_{[I]} f_+ \dd \mu_I \right) \\
  & = \sigma \left( \Phi_{\sigma,\varepsilon} (f_+) - \Phi_{\sigma,\varepsilon} (f_-)\right).
 \end{align*}
 By recursion, we get, for all $m \geq 0$:
 \begin{equation*}
  \Lcal_\varepsilon^{m \lfloor \varepsilon^{-1} T \rfloor} f 
  = \sigma^m \left( \Phi_{\sigma,\varepsilon}^m (f_+) - \Phi_{\sigma,\varepsilon}^m (f_-)\right).
 \end{equation*}
 The functions $\Phi_{\sigma,\varepsilon}^m (f_+)$ and $\Phi_{\sigma,\varepsilon}^m (f_-)$ belong to $C_K ( \varepsilon)$ and have the same integral as $f_+$. 
 
 \smallskip
 
 Let $k \geq 0$. By Corollary~\ref{cor:ConeStability}, without loss of generality, there exists a constant $\lambda_1 \geq 1$ 
 such that, and all $\varepsilon >0$,  $k \geq 0$, the functions $\Lcal_\varepsilon^k \Phi_{\sigma,\varepsilon}^m (f_+)$ and $\Lcal_\varepsilon^k \Phi_{\sigma,\varepsilon}^m (f_-)$ belong to $C_{\lambda_1 K} (\varepsilon)$. Hence, setting 
 \begin{equation*}
  \tilde{f}_+ 
  := (\lambda_1-1)\norm{f_+}{\Lbb^1 ([I], \mu_I)} + \Lcal_\varepsilon^k \Phi_{\sigma,\varepsilon}^m (f_+)
 \end{equation*}
 and likewise for $f_-$,
 \begin{equation*}
  \Lcal_\varepsilon^{m \lfloor \varepsilon^{-1} T \rfloor+k} f 
  = \sigma^m ( \tilde{f}_+ - \tilde{f}_- ).
 \end{equation*}
 By construction, 
 \begin{align*}
  \norm{(\Id-\Pi^* \Pi_*) \tilde{f}_+}{\Bcal_I} 
  & = \norm{(\Id-\Pi^* \Pi_*) \Lcal_\varepsilon^k \Phi_{\sigma,\varepsilon}^m (f_+)}{\Bcal_I} \\
  & \leq \lambda_1 K \varepsilon \norm{ \Lcal_\varepsilon^k \Phi_{\sigma,\varepsilon}^m (f_+)}{\Lbb^1 ([I], \mu_I)} \\
  & = \lambda_1 K \varepsilon \norm{ f_+}{\Lbb^1 ([I], \mu_I)} \\
  & = K \varepsilon \norm{\tilde{f}_+}{\Lbb^1 ([I], \mu_I)}.
 \end{align*}
 Since the same estimate holds for $\tilde{f}_-$, the functions $\tilde{f}_+$, $\tilde{f}_-$ belong to $C_K (\varepsilon)$ and their $\Lbb^1$-norm is bounded by $\lambda_1 \norm{f_+}{\Lbb^1 ([I], \mu_I)} \leq 2 \lambda_1 \norm{f}{K, \varepsilon}$. 
 Hence,
 \begin{equation}
  \label{eq:CouplageRapide}
  \norm{\Lcal_\varepsilon^{m \lfloor \varepsilon^{-1} T \rfloor+k} f}{K, \varepsilon}
  \leq 2 \lambda_1 \sigma^m \norm{f}{K, \varepsilon}.
 \end{equation}
 Let $n \geq 0$. Take $m := \lfloor \varepsilon T^{-1} n \rfloor$ and $k := n - m \lfloor \varepsilon^{-1} T \rfloor$.
 Equation~\eqref{eq:CouplageRapide} implies~:
 \begin{equation*}
  \norm{\Lcal_\varepsilon^n f}{K, \varepsilon}
  \leq 2 \lambda_1 \sigma^{\lfloor \varepsilon T^{-1} n \rfloor} \norm{f}{K, \varepsilon}.
 \end{equation*}
 Finally, recall that the only constraint on $T$ is that $C |I| e^{-\rho' T} \leq \sigma/3$. Hence, 
 we can take $T := \rho'^{-1} |\ln(\sigma/(3C|I|))|$, so that:
 \begin{align*}
  \norm{\Lcal_\varepsilon^n f}{K, \varepsilon}
  & \leq 2 \lambda_1 e^{-|\ln(\sigma)|\lfloor \rho' \varepsilon |\ln(\sigma/(3C|I|))|^{-1} n \rfloor} \norm{f}{K, \varepsilon} \\
  & \leq 2 \lambda_1 \sigma^{-1} e^{- \rho' \frac{\ln(\sigma)}{\ln(\sigma) - \ln (3C|I|)} \varepsilon n} \norm{f}{K, \varepsilon}.
 \end{align*}
 Choose $\sigma$ small enough that 
 \begin{equation*}
  \rho' \frac{\ln(\sigma)}{\ln(\sigma) - \ln (3C|I|)} 
  \leq \rho \quad ;
 \end{equation*}
 then 
 \begin{equation*}
  \norm{\Lcal_\varepsilon^n f}{K, \varepsilon} 
  \leq 2 \lambda_1 \sigma^{-1}  e^{- \rho \varepsilon n} \norm{f}{K, \varepsilon}. \qedhere
 \end{equation*}
\end{proof}

\begin{remark}[Bound in operator norm]\quad
\label{rmk:BorneNormeInitiale}
 
 The bound on the operator norm of $\Lcal_\varepsilon^n$ in Proposition~\ref{prop:Couplage} is given for the norm $\norm{\cdot}{K, \varepsilon}$ For the original Lipschitz norm $\norm{\cdot}{\Bcal_I}$, which does not depend on $\varepsilon$, for any $\rho < \rho_R$, the bounds in the proof of Lemma~\ref{lem:EquivalenceNormes} yield a constant $C$ such that:
 \begin{equation}
 \label{eq:BorneNormeInitiale}
  \norm{\Lcal_\varepsilon^n}{\Bcal_{I,0} \to \Bcal_{I,0}} 
  \leq \frac{C}{\varepsilon} e^{- \rho \varepsilon n}.
 \end{equation}
\end{remark}

While not necessary for the remainder of this article, notice that Lemma~\ref{lem:EquivalenceNormes} and Proposition~\ref{prop:Couplage} give an explicit lower bound on the spectral gap of $\Lcal_\varepsilon$. By ergodicity of $([I], \mu_I, T_\varepsilon)$, the operator $\Lcal_\varepsilon$ has $1$ as a simple eigenvalue, corresponding to constant eigenfunctions. 
Define the \emph{spectral gap} of $\Lcal_\varepsilon$ acting on $\Bcal_I$ as, equivalently, one minus the spectral radius of $\Lcal_\varepsilon$ acting on $\Bcal_{I,0}$, that is  
\begin{equation*}
 \min\{ 1-|\lambda| : \ \lambda \in \Sp (\Lcal_\varepsilon \acts \Bcal_I), \ \lambda \neq 1  \}.
\end{equation*}

\begin{corollary}\quad
\label{cor:TrouSpectral}
 
 Assume Hypotheses~\ref{hyp:Recurrence}, \ref{hyp:Mixing} and \ref{hyp:AsymptoticsP}.
 
 \smallskip
 
 When $\varepsilon$ goes to $0$, the spectral gap of $\Lcal_\varepsilon$ acting on $\Bcal_I$ 
 is at least $\rho_R \varepsilon (1-o(1))$.
\end{corollary}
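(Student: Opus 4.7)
The plan is to reduce the spectral gap estimate to the operator norm bound already obtained in Proposition~\ref{prop:Couplage}, via Gelfand's formula for the spectral radius. By ergodicity of $([I], \mu_I, T_\varepsilon)$, the eigenvalue $1$ of $\Lcal_\varepsilon$ acting on $\Bcal_I$ is simple and corresponds to the constant eigenfunctions, so that $\Bcal_I = \Vect(\mathbf{1}) \oplus \Bcal_{I,0}$ and the spectral gap of $\Lcal_\varepsilon$ on $\Bcal_I$ equals $1 - r(\Lcal_\varepsilon \acts \Bcal_{I,0})$, where $r$ denotes the spectral radius.

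Next, I would fix $\rho \in (0, \rho_R)$ and invoke Remark~\ref{rmk:BorneNormeInitiale} (which, as noted, is the translation of Proposition~\ref{prop:Couplage} through the norm equivalence of Lemma~\ref{lem:EquivalenceNormes}): there exists a constant $C_\rho > 0$ such that, for all small enough $\varepsilon$ and all $n \geq 0$,
\begin{equation*}
 \norm{\Lcal_\varepsilon^n}{\Bcal_{I,0} \to \Bcal_{I,0}}
 \leq \frac{C_\rho}{\varepsilon} e^{-\rho \varepsilon n}.
\end{equation*}
Applying Gelfand's formula to this bound yields
\begin{equation*}
 r(\Lcal_\varepsilon \acts \Bcal_{I,0})
 = \lim_{n \to +\infty} \norm{\Lcal_\varepsilon^n}{\Bcal_{I,0} \to \Bcal_{I,0}}^{1/n}
 \leq \lim_{n \to +\infty} \left(\frac{C_\rho}{\varepsilon}\right)^{1/n} e^{-\rho \varepsilon}
 = e^{-\rho \varepsilon},
\end{equation*}
so that the spectral gap satisfies $1 - r(\Lcal_\varepsilon \acts \Bcal_{I,0}) \geq 1 - e^{-\rho \varepsilon}$.

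Finally, I would conclude by taking $\rho$ arbitrarily close to $\rho_R$. Indeed, for every $\rho \in (0, \rho_R)$,
\begin{equation*}
 \liminf_{\varepsilon \to 0} \frac{1-r(\Lcal_\varepsilon \acts \Bcal_{I,0})}{\rho_R \varepsilon}
 \geq \liminf_{\varepsilon \to 0} \frac{1-e^{-\rho\varepsilon}}{\rho_R \varepsilon}
 = \frac{\rho}{\rho_R},
\end{equation*}
and letting $\rho \to \rho_R^-$ gives $\liminf_{\varepsilon \to 0} (1-r(\Lcal_\varepsilon \acts \Bcal_{I,0}))/(\rho_R \varepsilon) \geq 1$, which is exactly the assertion that the spectral gap is at least $\rho_R \varepsilon (1-o(1))$.

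I do not expect any significant obstacle here: all the hard work lies in Proposition~\ref{prop:Couplage}, and this corollary is a purely formal consequence of Gelfand's formula plus the elementary asymptotic $1 - e^{-\rho \varepsilon} \sim \rho \varepsilon$. The only thing to be mildly careful about is that the prefactor $C_\rho/\varepsilon$ in the operator norm bound depends on $\varepsilon$, but since Gelfand's formula extracts an $n$-th root and $(C_\rho/\varepsilon)^{1/n} \to 1$ for any fixed $\varepsilon > 0$, this $\varepsilon$-dependence is irrelevant to the spectral radius bound.
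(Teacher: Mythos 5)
Your argument is correct and is exactly the route the paper intends (it states the corollary without proof, as a direct consequence of Lemma~\ref{lem:EquivalenceNormes} and Proposition~\ref{prop:Couplage}): the bound of Remark~\ref{rmk:BorneNormeInitiale} plus Gelfand's formula gives $r(\Lcal_\varepsilon \acts \Bcal_{I,0}) \leq e^{-\rho\varepsilon}$ for each fixed $\rho < \rho_R$ and all small $\varepsilon$, and your final $\liminf$ argument letting $\rho \to \rho_R^-$ correctly upgrades this to the stated $\rho_R \varepsilon (1-o(1))$ lower bound on the gap. The handling of the $\varepsilon$-dependent prefactor $C/\varepsilon$ is also right, since it disappears under the $n$-th root at fixed $\varepsilon$.
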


Integrating over $t \geq 0$, we can now relate $S$ with $R_0^{-1}$.

\begin{corollary}\quad
\label{cor:EquivalencePotentielInduit}
 
 Assume Hypotheses~\ref{hyp:Recurrence}, \ref{hyp:Mixing} and \ref{hyp:AsymptoticsP}.
 
 \smallskip
 
 Let $K \geq 0$. Let $(f_\varepsilon)_{\varepsilon >0}$ be a family of functions in $\Bcal_{I,0}$.
 Assume that $\sup_{\varepsilon >0} \norm{f_\varepsilon}{K, \varepsilon} < +\infty$. Then, in $\Bcal_{I,0}$,
 \begin{equation}
  \label{eq:EquivalencePotentielInduit}
  \sum_{n = 0}^{+ \infty} \Lcal_\varepsilon^n f_\varepsilon 
  = \frac{1}{\varepsilon} \Pi^* (\transposee{R}_0^{-1}) \Pi_* f_\varepsilon + o (\varepsilon^{-1}).
 \end{equation}
\end{corollary}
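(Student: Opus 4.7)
The plan is to interpret the series $\sum_n \Lcal_\varepsilon^n f_\varepsilon$ as a discrete Laplace transform, split it into its ``constant on each site'' part (which will carry the $\varepsilon^{-1}$ divergence and be computed via a Riemann-sum argument) and its ``zero-average on each site'' part (which will be bounded uniformly in $\varepsilon$).

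First I would control the fluctuation part. By Proposition~\ref{prop:Couplage}, for some $\rho \in (0,\rho_R)$ and some constant $C>0$, one has $\norm{\Lcal_\varepsilon^n f_\varepsilon}{K,\varepsilon} \leq C e^{-\rho \varepsilon n} \norm{f_\varepsilon}{K,\varepsilon}$ for all small $\varepsilon$. Decomposing $\Lcal_\varepsilon^n f_\varepsilon$ (in real and imaginary parts) into positive and negative pieces in $C_K(\varepsilon)$ whose $\Lbb^1$-norms are bounded by $2 C e^{-\rho \varepsilon n} \norm{f_\varepsilon}{K,\varepsilon}$, the defining inequality of $C_K(\varepsilon)$ yields
\begin{equation*}
 \norm{(\Id - \Pi^*\Pi_*) \Lcal_\varepsilon^n f_\varepsilon}{\Bcal_I}
 \leq 8 K C \varepsilon \, e^{-\rho \varepsilon n} \norm{f_\varepsilon}{K,\varepsilon}.
\end{equation*}
Summing the geometric series in $n$, $\sum_{n \geq 0} \varepsilon e^{-\rho \varepsilon n} = \varepsilon/(1 - e^{-\rho \varepsilon}) = O(1)$, so that $\norm{(\Id - \Pi^*\Pi_*) \sum_n \Lcal_\varepsilon^n f_\varepsilon}{\Bcal_I} = O(1)$ uniformly in $\varepsilon$.

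Next I would handle the averaged part $\Pi_* \sum_n \Lcal_\varepsilon^n f_\varepsilon = \sum_n \Pi_* \Lcal_\varepsilon^n f_\varepsilon$. The same decomposition gives $\norm{\Pi_* \Lcal_\varepsilon^n f_\varepsilon}{\ell^1(I)} \leq C' e^{-\rho \varepsilon n} \norm{f_\varepsilon}{K,\varepsilon}$, so the series converges absolutely. Writing the sum as a Riemann sum,
\begin{equation*}
 \varepsilon \sum_{n=0}^{+\infty} \Pi_* \Lcal_\varepsilon^n f_\varepsilon
 = \int_0^{+\infty} \Pi_* \Lcal_\varepsilon^{\lfloor \varepsilon^{-1} t\rfloor} f_\varepsilon \dd t.
\end{equation*}
The integrand is uniformly dominated: $\norm{\Pi_* \Lcal_\varepsilon^{\lfloor \varepsilon^{-1} t\rfloor} f_\varepsilon}{\ell^1(I)} \leq C' e^\rho e^{-\rho t} \sup_\varepsilon \norm{f_\varepsilon}{K,\varepsilon}$. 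By Corollary~\ref{cor:ConvergenceUniformeCompacts}, $\Pi_* \Lcal_\varepsilon^{\lfloor \varepsilon^{-1} t\rfloor} f_\varepsilon - e^{-t \transposee{R}} \Pi_* f_\varepsilon \to 0$ uniformly on $[0,T]$ for every $T$. Splitting the integral at a large $T$, the contribution from $[0,T]$ goes to $0$ as $\varepsilon \to 0$ thanks to uniform convergence, while the contribution from $[T,+\infty)$ (of both the integrand and of $e^{-t\transposee{R}}\Pi_* f_\varepsilon$) is bounded by $C'' e^{-\rho T}$ uniformly in $\varepsilon$. Letting $\varepsilon \to 0$ then $T \to +\infty$ yields
\begin{equation*}
 \varepsilon \sum_{n=0}^{+\infty} \Pi_* \Lcal_\varepsilon^n f_\varepsilon - \int_0^{+\infty} e^{-t \transposee{R}} \Pi_* f_\varepsilon \dd t \underset{\varepsilon \to 0}{\longrightarrow} 0.
\end{equation*}
Since $R$ is a bi-L-matrix, $\transposee{R}$ also stabilizes $\C_0^I$, and since $\Pi_* f_\varepsilon \in \C_0^I$, the integral equals $\transposee{R}_0^{-1} \Pi_* f_\varepsilon$. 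Hence $\sum_n \Pi_* \Lcal_\varepsilon^n f_\varepsilon = \varepsilon^{-1} \transposee{R}_0^{-1} \Pi_* f_\varepsilon + o(\varepsilon^{-1})$ in $\ell^1(I)$.

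Applying $\Pi^*$ (continuous into $\Bcal_I$) and adding back the $O(1)$ fluctuation term proved above gives the claimed expansion in $\Bcal_{I,0}$. The main technical subtlety is the interplay between the three norms ($\Bcal_I$, $\Lbb^1$ and $\norm{\cdot}{K,\varepsilon}$) and the fact that $\Pi_* f_\varepsilon$ itself depends on $\varepsilon$, which forbids a simple dominated-convergence argument in a fixed ambient object and forces the two-step $(\varepsilon \to 0$, then $T \to \infty)$ truncation above.
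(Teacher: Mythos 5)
Your argument is correct and follows essentially the same route as the paper's proof: rescale the series into the integral $\varepsilon^{-1}\int_0^{+\infty}\Lcal_\varepsilon^{\lfloor \varepsilon^{-1}t\rfloor}f_\varepsilon \dd t$, combine the uniform-on-compacts convergence of Corollary~\ref{cor:ConvergenceUniformeCompacts} with the exponential decay in the $(K,\varepsilon)$-norm from Proposition~\ref{prop:Couplage} to pass to the limit, and identify $\int_0^{+\infty}e^{-t\transposee{R}_0}\dd t = \transposee{R}_0^{-1}$. The only difference is bookkeeping: you bound the $(\Id-\Pi^*\Pi_*)$ component of the whole sum separately (getting an $O(1)$ remainder), whereas the paper upgrades the $\ell^1$ convergence of the averages to $\Bcal_I$-convergence of the rescaled functions before integrating; both rest on the same cone estimates.
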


\begin{proof}

 Changing the time scale by a factor of $\varepsilon^{-1}$, we get:
 \begin{equation*}
  \sum_{n = 0}^{+ \infty} \Lcal_\varepsilon^n f_\varepsilon 
  = \frac{1}{\varepsilon} \int_0^{+\infty} \Lcal_\varepsilon^{\lfloor \varepsilon^{-1} t \rfloor} f_\varepsilon \dd t.
 \end{equation*}

 By Corollary~\ref{cor:ConvergenceUniformeCompacts}, on all compact subsets of $\R_+$,
 \begin{equation*}
   \lim_{\varepsilon \to 0} \sup_{t \in [0,T]} \norm{ \Pi_* \Lcal_\varepsilon^{\lfloor \varepsilon^{-1} t \rfloor} f_\varepsilon - e^{-t(\transposee{R})} \Pi_* f_\varepsilon }{\ell^1 (I)} 
   = 0.
 \end{equation*}
 By Proposition~\ref{prop:Couplage}, there exists a constant $C$ such that $\norm{\Lcal_\varepsilon^{\lfloor \varepsilon^{-1} t \rfloor} f_\varepsilon}{K, \varepsilon} 
 \leq C \norm{f}{K, \varepsilon}$. We write $\Lcal_\varepsilon^{\lfloor \varepsilon^{-1} t \rfloor} f_\varepsilon = f_+-f_-$ 
 with $f_+$, $f_- \in C_K (\varepsilon)$ and $\norm{f_+}{\Lbb^1 ([I], \mu_I)}$, $\norm{f_-}{\Lbb^1 ([I], \mu_I)} \leq 2 \norm{f}{K, \varepsilon}$. 
 Up to taking a larger value of $K$, the function $\Lcal_\varepsilon^{\lfloor \varepsilon^{-1} t \rfloor} f_+$ belongs to 
 $C_{\lambda_1 K} (\varepsilon)$ for all $t$ by Corollary~\ref{cor:ConeStability}. Hence,
 \begin{equation*}
  \norm{(\Id-\Pi^* \Pi_*) \Lcal_\varepsilon^{\lfloor \varepsilon^{-1} t \rfloor} f_+}{\Bcal_I} 
  \leq \lambda_1 K \varepsilon \norm{\Lcal_\varepsilon^{\lfloor \varepsilon^{-1} t \rfloor} f_+}{\Lbb^1 ([I], \mu_I)} 
  \leq 2 \lambda_1 K \varepsilon \norm{f}{K, \varepsilon}.
 \end{equation*}
 The function $f_-$ satisfies a similar estimate. It follows that 
 \begin{equation*}
  \norm{(\Id- \Pi^*\Pi_*) \Lcal_\varepsilon^{\lfloor \varepsilon^{-1} t \rfloor} f_\varepsilon }{\Bcal_I} 
  \leq 4 \lambda_1 K \varepsilon \norm{f}{K, \varepsilon}.
 \end{equation*}
 Hence,
 \begin{equation}
  \lim_{\varepsilon \to 0} \sup_{t \in [0,T]} \norm{ \Lcal_\varepsilon^{\lfloor \varepsilon^{-1} t \rfloor} f_\varepsilon - \Pi^* e^{-t(\transposee{R})} \Pi_* f_\varepsilon }{\Bcal_I} 
  = 0.
 \end{equation}
 In addition, by Proposition~\ref{prop:Couplage} again, there exist constants $C$, $\rho>0$ such that the operator norm of $\Lcal_\varepsilon^n$ acting on $(\Bcal_{I,0}, \norm{\cdot}{K, \varepsilon})$ is no larger than  $C e^{- \rho \varepsilon n}$. Using Equation~\eqref{eq:BorneBcalI<Keps},
 \begin{equation*}
  \norm{\Lcal_\varepsilon^{\lfloor \varepsilon^{-1} t \rfloor} f_\varepsilon}{\Bcal_I} 
  \leq 4(1+K\varepsilon) \norm{\Lcal_\varepsilon^{\lfloor \varepsilon^{-1} t \rfloor} f_\varepsilon}{K, \varepsilon}
  \leq 4(1+K\varepsilon) C e^{\rho \varepsilon} e^{- \rho t} \norm{f}{K, \varepsilon}.
 \end{equation*}
 This gives the tightness needed for the convergence of integrals:
 \begin{equation*}
  \lim_{\varepsilon \to 0} \norm{ \int_0^{+\infty} \Lcal_\varepsilon^{\lfloor \varepsilon^{-1} t \rfloor} f_\varepsilon \dd t - \int_0^{+\infty} \Pi^* e^{-t(\transposee{R})} \Pi_* f_\varepsilon \dd t}{\Bcal_I} 
  = 0.
 \end{equation*}
 Finally, denoting by $\transposee{R}_0$ the action of $\transposee{R}$ on $\C_0^I$, to which $\Pi_* f_\varepsilon$ belongs,
 \begin{equation*}
  \int_0^{+\infty} \Pi^* e^{-t(\transposee{R})} \Pi_* f_\varepsilon \dd t 
  = \Pi^* \left( \int_0^{+\infty} e^{-t(\transposee{R}_0)} \dd t \right) \Pi_* f_\varepsilon 
  = \Pi^* (\transposee{R}_0^{-1}) \Pi_* f_\varepsilon.
 \end{equation*}
 We get the claim by multiplying both side by $\varepsilon^{-1}$:
 \begin{equation*}
  \norm{ \frac{1}{\varepsilon} \int_0^{+\infty} \Lcal_\varepsilon^{\lfloor \varepsilon^{-1} t \rfloor} f_\varepsilon \dd t - \frac{1}{\varepsilon} \Pi^* (\transposee{R}_0^{-1}) \Pi_* f_\varepsilon}{\Bcal_I} 
  = o(\varepsilon^{-1}). \qedhere
 \end{equation*}
\end{proof}

We now finish the proof of Lemma~\ref{lem:MasterLemma}.

\begin{proof}[Proof of Lemma~\ref{lem:MasterLemma}]
 Given $f \in \C_0^I$, the quantity $\norm{\Pi^* f}{K, \varepsilon}$ 
is bounded jointly in $K$ and $\varepsilon$, and Corollary~\ref{cor:EquivalencePotentielInduit} applies. 
By Lemma~\ref{lem:R2QIrred}, $\transposee{R}_0^{-1}$ is irreducible in the sense of Definition~\ref{def:QMatriceIrreductible}. This yields, for all $f$, $g \in \C_0^I$,
\begin{align*}
 \varepsilon^{-1} \langle f, S (g) \rangle_{\ell^2 (I)} + o(\varepsilon^{-1}) 
 & = \langle f, Q_\varepsilon (g) \rangle_{\ell^2 (I)} \\
 & = \langle \Pi_* (\Id - \Lcal_\varepsilon)^{-1} \Pi^* (f), g \rangle_{\ell^2 (I)} \\
 & = \varepsilon^{-1} \langle \Pi_* \Pi^* (\transposee{R}_0^{-1}) \Pi_* \Pi^* (f), g \rangle_{\ell^2 (I)}  + o (\varepsilon^{-1}) \\
 & = \varepsilon^{-1} \langle f, R_0^{-1} (g) \rangle_{\ell^2 (I)}  + o (\varepsilon^{-1}),
\end{align*}
whence $S = R_0^{-1}$.
\end{proof}

\section{The ergodic case}
\label{sec:ErgodicStructure}

All the work in Section~\ref{sec:MasterLemma} was done under Hypothesis~\ref{hyp:Mixing}, that is, that the first return map $(A, \mu, T_{[0]})$ is mixing. This hypothesis is inconvenient, as we would like to work only with conditions involving the initial data: the system $(A, \mu, T)$ and the function $F$.

\smallskip

If $([\Z^d], \widetilde{\mu}, \widetilde{T})$ is recurrent and ergodic, then $(A, \mu, T_{[0]})$ is well-defined and ergodic. Hence, ergodicity comes freely, and we would like to use this property instead of the stronger mixing hypothesis. This is the goal of this section.

\subsection{Structure of extensions with non-mixing first return maps}

Let $([\Z^d], \widetilde{\mu}, \widetilde{T})$ be a Markov $\Z^d$-extension of a Gibbs-Markov map $(A, \alpha, d, \mu, T)$ with jump function $F$. Assume that $([\Z^d], \widetilde{\mu}, \widetilde{T})$ is ergodic and recurrent. As the induced map $(A, \mu, T_{[0]})$ is well-defined and ergodic, by Corollary~\ref{cor:DecroissanceCorrelationsInduit0Ergodique}, there exists an integer $M \geq 1$ and a partition $A = \bigsqcup_{\ell \in \Z_{/ M \Z}} A_\ell$ such that each $A_\ell$ is $\alpha^*$-measurable, 
$T_{[0]} (A_\ell) \subset A_{\ell+1}$ for all $\ell$, and $T_{[0]}^M : A_\ell \to A_\ell$ is mixing for all $\ell$. 

\smallskip

At first, we show the constraints that the existence of a non-trivial period for $(A, \mu, T_{[0]})$ impose on the whole extension $([\Z^d], \widetilde{\mu}, \widetilde{T})$. We introduce a colouration of $[\Z^d]$.

\begin{definition}[Color of a site]\quad
 
 Let $([\Z^d], \widetilde{\mu}, \widetilde{T})$ be a Markov $\Z^d$-extension of a Gibbs-Markov map $(A, \alpha, d, \mu, T)$. Assume that $([\Z^d], \widetilde{\mu}, \widetilde{T})$ is ergodic and recurrent. Let $A = \bigsqcup_{\ell \in \Z_{/M\Z}} A_\ell$ be the decomposition of $(A, \mu, T_{[0]})$ associated with its period $M$. For $\widetilde{\mu}$-almost every $(x,p) \in [\Z^d]$, let
 \begin{equation}
  \widetilde{\varphi}_{[0]} (x,p) 
  := \inf \{n \geq 1 : \ \widetilde{T}^n (x,p) \in [0]\}.
 \end{equation}
 The \emph{colour} of $(x,p)$ is defined as the element $\ell \in \Z_{/M\Z}$ such that $\widetilde{T}^{\widetilde{\varphi}_{[0]} (x,p)} (x,p) \in A_\ell$. We shall denote it by $C(x,p)$.
\end{definition}

Interestingly, the structural constraints we get are different depending on whether $d=1$ or $d=2$, the later case being simpler. As the following proposition states, in the $d=2$ case, all sites have the same colour, up to a neighbourhood of $[0]$. In the $d=1$ case, we may need two colours. This difference comes from the fact that the plane minus a point is connected, while the line minus a point has two connected components.

\begin{proposition}\quad
\label{prop:StructureColoration}
 
 Let $([\Z^d], \widetilde{\mu}, \widetilde{T})$ be a Markov $\Z^d$-extension of a Gibbs-Markov map $(A, \alpha, d, \mu, T)$ with jump function $F$. Assume that $([\Z^d], \widetilde{\mu}, \widetilde{T})$ is ergodic and recurrent.
 
 \smallskip
 
 If $d = 2$, or $d = 1$ and $\norm{F}{\Lbb^\infty (A, \mu)} = +\infty$, then there 
 exists $R > 0$ and $\ell \in \Z_{/M\Z}$ such that $C(x,p) = \ell$ whenever $\norm{p}{} \geq R$.
 
 \smallskip
 
 If $d = 1$ and $\norm{F}{\Lbb^\infty (A, \mu)} < +\infty$, then there 
 exists $R > 0$ and $\ell_-$, $\ell_+ \in \Z_{/M\Z}$ (not necessarily distinct) 
 such that $C(x,p) = \ell_-$ whenever $p \leq -R$ and $C(x,p) = \ell_+$ whenever $p \geq R$.
\end{proposition}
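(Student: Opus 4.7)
The plan is to interpret the color geometrically via excursions of $\widetilde{T}$, then use the Gibbs-Markov structure to propagate constancy. Identifying $[0]$ with $A$, the key observation is that the color is preserved along each excursion: if $(y,p) = \widetilde{T}^i(x,0)$ with $x \in A_k$ and $1 \leq i < \widetilde{\varphi}_{[0]}(x,0)$, then the forward orbit of $(y,p)$ first hits $[0]$ at $(T_{[0]}(x),0) \in A_{k+1} \times \{0\}$, so $C(y,p) = k+1$. Setting $V(p) := \{x \in A : \text{the excursion from } (x,0) \text{ under } \widetilde{T} \text{ visits site } p\}$, the proposition reduces to two claims: (i) for every $p$ in the appropriate region, $V(p)$ is $\mu$-essentially contained in a single atom $A_{k(p)}$ of the period decomposition; (ii) the value $k(p)$ is constant in $p$ on each connected piece of $\Z^d$ specified by the statement.

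For claim (i), I would argue by contradiction using a splicing construction. Assume $V(p) \cap A_{k_1}$ and $V(p) \cap A_{k_2}$ both have positive measure with $k_1 \neq k_2$, pick generic $x_j \in V(p) \cap A_{k_j}$, and record the $\alpha$-cylinder traces $(a^{(j)}_0, \ldots, a^{(j)}_{\varphi_j})$ followed by their excursions, with first visit to $p$ at time $i_j$. Using the big-image property, bounded distortion, and mixing of $T_{[0]}^M$ on each $A_\ell$, I would produce an admissible cylinder word $(b_0, \ldots, b_{m-1})$ describing a partial orbit of $T$ that starts and ends at site $p$ (so that the $F$-increments sum to zero) and stays entirely at non-zero sites. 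The concatenation $(a^{(1)}_0, \ldots, a^{(1)}_{i_1-1}, b_0, \ldots, b_{m-1}, a^{(2)}_{i_2}, \ldots, a^{(2)}_{\varphi_2})$ then has positive $\mu$-measure and describes actual excursions starting in $A_{k_1}$; but the final cylinder $a^{(2)}_{\varphi_2}$ forces their endpoints into $A_{k_2+1}$, whereas $T_{[0]}(A_{k_1}) = A_{k_1+1}$ is disjoint from $A_{k_2+1}$ when $k_1 \neq k_2$, a contradiction. The existence of the loop $(b_0, \ldots, b_{m-1})$ at site $p$ is the geometric input that distinguishes the three cases.

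For claim (ii), I use the observation that whenever there is a positive-measure orbit segment of $\widetilde{T}$ from $[p]$ to $[q]$ avoiding $[0]$, the color is preserved along that segment, so $k(p) = k(q)$. The rest is a combinatorial analysis of the \emph{avoidance graph} on $\Z^d \setminus \{0\}$ whose edges are $p \to p+v$ for $v$ a value taken by $F$ with positive probability, with the vertex $0$ removed. For $d=2$: ergodicity of $\widetilde{T}$ forces the values of $F$ to generate $\Z^2$ as a group, and elementary combinatorics show that $\Z^2 \setminus B(0,R)$ is strongly connected in this directed graph for large enough $R$, yielding a single connectivity class. For $d=1$ with $\norm{F}{\infty}=+\infty$: there exist jumps of arbitrarily large magnitude in both directions, allowing a single step to cross $0$ without landing on it, so all sites outside a bounded neighbourhood of $0$ lie in one class. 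For $d=1$ with $\norm{F}{\infty}=B<+\infty$: from any $p > B$ every single-step jump keeps us in $\{1, 2, \ldots\}$, hence no trajectory avoiding $[0]$ can leave the positive half-line, producing the two classes $\{p \leq -R\}$ and $\{p \geq R\}$ for $R := B+1$.

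The hardest step will be the construction of the loop $(b_j)$ in claim (i): it must avoid $[0]$, be admissible as a cylinder word for $T$, and have random-walk increments summing to zero, all while lying inside a prescribed $A_\ell$ (to exploit the $M$-mixing). The existence of such loops with the required lower bounds on $\mu$-measure comes from pairing the mixing of $T_{[0]}^M$ on $A_\ell$ with the geometric availability of closed loops at $p$ avoiding $0$ in each of the three cases, which is precisely what the case hypothesis ensures.
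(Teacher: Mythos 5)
Your reduction of the statement to (i) monochromaticity of each far site and (ii) colour transfer between far sites is sound, and several of your observations coincide with the paper's: colour is constant along orbit segments avoiding $[0]$, and each visit of an excursion to a site $p$ paints a whole big image $T(a)$, so colour classes at $[p]$ are unions of $\alpha^*$-atoms and are either null or of measure at least $\delta_0 = \inf_{a\in\alpha}\mu(Ta)$. The genuine gap is the splicing step at the heart of your claim (i). The connector word $(b_0,\ldots,b_{m-1})$ must simultaneously (1) be admissible at both junctions, i.e.\ $b_0 \subset T(a^{(1)}_{i_1-1})$ and $a^{(2)}_{i_2} \subset T(b_{m-1})$, (2) have zero net displacement, and (3) avoid $[0]$, and the tools you invoke do not produce it. Mixing of $T_{[0]}^M$ is a statement about the return dynamics \emph{through} $[0]$, whereas the connector is an orbit segment of $\widetilde{T}$ that must never meet $[0]$; it cannot by itself yield such segments, and your second junction is not addressed at all: a generic loop at $p$ will not end in an $\alpha^*$-atom containing the prescribed cylinder $a^{(2)}_{i_2}$, and iterating loops to fix this amounts to an ergodicity-type property of the dynamics at $[p]$ conditioned to avoid $[0]$, which is exactly what is at stake. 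Moreover, the case hypotheses ($d=2$, or $d=1$ with $F$ bounded or unbounded) are not what makes loops at a single far site available (such loops exist in all three cases); they only matter for your claim (ii). What makes ``far from $0$'' relevant is quantitative: by \cite[Lemma~4.17]{PeneThomine:2019} the probability of hitting $[0]$ before returning to $[p]$ tends to $0$ as $\norm{p}{} \to \infty$. The paper's proof supplies precisely the machinery your splicing lacks, without any explicit word construction: the dichotomy above, the uniform Ces\`aro spectral gap of $\Lcal_{[0]}$ (Corollary~\ref{cor:DecroissanceCorrelationsInduit0Ergodique}), which shows that any colour present at $[p]$ sweeps all of $A$ within a uniform number $n_0$ of $T_{[0]}$-iterates, and the hitting-time estimate, which guarantees that for $\norm{p}{}$ large the induced dynamics at $[p]$ coincides with $T_{[0]}$ for $n_0$ steps outside a set of small measure; hence a present colour has measure $>1-\delta_0$, and monochromaticity follows from the dichotomy. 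Any honest patch of your claim (i) essentially imports this argument.

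Your claim (ii) is repairable but also needs care. A multi-step path in your avoidance graph is not automatically realized by the dynamics, since only one-step availability $\mu(F=v)>0$ is used; you must chain one-step colour transfers through intermediate sites already known to be monochromatic, so the path has to stay inside $\{\norm{p}{}\geq R\}$, and the ``go far out first, then adjust'' construction needs recurrence to rule out all jump values lying in a closed half-plane (for $d=2$), or to guarantee jumps of both signs (for $d=1$). Also, $\norm{F}{\Lbb^\infty(A,\mu)}=+\infty$ only provides arbitrarily large jumps in one direction, which still suffices for a single crossing of $0$. The paper sidesteps this combinatorics by transferring colour between nearest-neighbour far sites via $\mu(\varphi_{[p]\to[p+q]} < \varphi_{[p]\to[0]})>0$, again a consequence of the same hitting-time lemma.
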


\begin{proof}

We work under the hypotheses of Proposition~\ref{prop:StructureColoration}. First, let us note that $C \circ \widetilde{T} (x,p) = C (x,p)$ for all $(x,p) \in [\Z^d] \setminus [0]$. 

\smallskip

By ergodicity and recurrence, $[\Z^d]$ is swept by the forward images of $[0]$, that is, 
\begin{equation*}
 [\Z^d] 
 = \bigcup_{x \in A} \bigsqcup_{k=0}^{\varphi_{[0]} (x)-1} \widetilde{T}^k (x,0).
\end{equation*}
Let $\ell \in \Z_{/M\Z}$. The property that $T_{[0]} (A_{\ell-1}) = A_\ell$ implies that, for all $x \in A_{\ell-1}$ and all $0 \leq k < \varphi_{[0]} (x)$, 
\begin{equation*}
 \widetilde{T}^{\widetilde{\varphi}_{[0]} (\widetilde{T}^k (x,0))} \circ \widetilde{T}^k (x,0) 
 = \widetilde{T}^{\varphi_{[0]} (x)-k} \circ \widetilde{T}^k (x,0) 
 = T_{[0]} (x) 
 \in A_\ell,
\end{equation*}
so that $C (\widetilde{T}^k (x,0)) = \ell$. Doing this for all $\ell$ yields $\{C=\ell\} = \bigcup_{x \in A_{\ell-1}} \bigsqcup_{k=0}^{\varphi_{[0]} (x)-1} \widetilde{T}^k (x,0)$: modulo a negligible subset, the points of colour $\ell$ are exactly the forward images of $A_{\ell-1}$ up to time $\varphi_{[0]}-1$.

\smallskip

Let $\ell \in \Z_{/M\Z}$. We can also write the later fact as
\begin{equation*}
 \{C=\ell\} 
 = \bigcup_{k = 0}^{+ \infty} \widetilde{T}^k (A_{\ell-1} \cap \{\varphi_{[0]} > k\}).
\end{equation*}
Since $\varphi_{[0]}$ is a stopping time, $\{\varphi_{[0]} > k\}$ is $\sigma(\alpha^{(k)})$-measurable. 
The set $A_{\ell-1}$ is also $\sigma(\alpha^*)$-measurable by Corollary~\ref{cor:DecroissanceCorrelationsInduit0Ergodique}, and thus also $\sigma(\alpha^{(k)})$-measurable. The function $S_k F$ is constant on each cylinder of length $k$, so that we can write it as a function of the cylinder. Hence, setting $B(p, \ell) := \{x \in A : \ C(x,p) =\ell \}$, we have
\begin{equation}
\label{eq:ErgodiciteImagesDirectes}
 B(p, \ell) 
 = \bigcup_{k = 0}^{+ \infty} \bigsqcup_{\substack{a \in \alpha^{(k)} \\ a \subset A_{\ell-1} \cap \{\varphi_{[0]} > k\} \\ S_k (a) = p}} T^k (a).
\end{equation}
By the big image property of Gibbs-Markov maps, for all $k \geq 0$ and nonempty $a \in \alpha^{(k)}$, we have $\mu (T^k (a)) \geq \delta_0 := \inf_{a \in \alpha} \mu (T (a)) > 0$. Each subset $T^k (a)$ in Equation~\eqref{eq:ErgodiciteImagesDirectes} has measure at least $\delta_0$. Hence, for any $p \in \Z^d$ and $\ell \in \Z_{/M\Z}$, we have a dichotomy:
\begin{itemize}
 \item either $\mu(B(p, \ell)) = 0$;
 \item or $\mu(B(p, \ell)) \geq \delta_0 >0$.
\end{itemize}
In other words, on each site, either a colour is absent, or it is present in a quantity of at least $\delta_0$.

\smallskip

Let $p \in \Z^d \setminus \{0\}$ and $\ell$ such that $\mu(B(p, \ell)) >0$. From the expression in Equation~\eqref{eq:ErgodiciteImagesDirectes}, the set $B(p, \ell)$ is $\sigma(\alpha^*)$-measurable, so that $\norm{\mathbf{1}_{B(p, \ell)}}{\Bcal} = 1$. Since $T_{[0]}$ is ergodic, by Corollary~\ref{cor:DecroissanceCorrelationsInduit0Ergodique}, there exists $M \geq 1$,  $C>0$ and $\rho \in (0,1)$, independent from $p$ and $\ell$, such that
\begin{equation*}
 \norm{\frac{1}{M} \sum_{k=0}^{M-1} \Lcal_{[0]}^{n+k} (\mathbf{1}_{B(p, \ell)}) - \mu(B(p, \ell))}{\Lbb^\infty} 
 \leq C \rho^n \norm{\mathbf{1}_{B(p, \ell)}}{\Bcal} 
 = C \rho^n.
\end{equation*}
Hence, there exists $n_0 > 0$ such that $\sum_{k=0}^{n_0-1} \Lcal_{[0]}^k (\mathbf{1}_{B(p, \ell)}) \geq \delta_0/2 > 0$ almost everywhere for all $p \in \Z^d \setminus \{0\}$ and $\ell$ such that $\mu(B(p, \ell)) >0$. But the support of the function $\sum_{k=0}^{n_0-1} \Lcal_{[0]}^k (\mathbf{1}_{B(p, \ell)})$ is $\bigcup_{k=0}^{n_0-1} T_{[0]}^k (B(p, \ell))$, so that, almost everywhere, 
\begin{equation*}
 \bigcup_{k=0}^{n_0-1} T_{[0]}^k (B(p, \ell)) 
 = A.
\end{equation*}

Let us define $\varphi_{[p] \to [0]} (x) := \inf \{n \geq 1 : T^n_{[\{p,0\}]} (x,p) \in [0]\}$. Since the probability of hitting $[0]$ before going back to $[p]$ converges to $0$ as $p$ goes to infinity by~\cite[Lemma~4.17]{PeneThomine:2019}, the random variable $\varphi_{[p] \to [0]}$ converges in distribution to $+ \infty$ when $p$ goes to infinity. In particular, there exists $r_0 >0$ such that $\mu (\varphi_{[p] \to [0]} < n_0) < \delta_0/n_0$ whenever $\norm{p}{} > r_0$.

\smallskip

Assume that $\norm{p}{} > r_0$. Then 
\begin{equation*}
 \mu \left( \bigcup_{k=0}^{n_0-1} T_{[0]}^k \left(B(p, \ell) \cap \{\varphi_{[p] \to [0]} \geq n_0\} \right) \right) 
 \geq 1-n_0 \mu (\varphi_{[p] \to [0]} < n_0) 
 > 1-\delta_0.
\end{equation*}
In addition, for $x \in B(p, \ell) \cap \{\varphi_{[p] \to [0]} \geq n_0\}$ and $0 \leq k < n_0$, we have $C(T_{[\{0,p\}]}^k (x,p)) = C(x,p) = \ell$, and thus $\mu (B(p, \ell)) > 1-\delta_0$. By the dichotomy on $\mu (B(p, \ell))$, we thus have $\mu (B(p, \ell')) = 0$ for $\ell' \neq \ell$, so that $\mu (B(p, \ell)) = 1$. In other words, if $\norm{p}{} > r_0$, then $[p]$ is monochromatic.

\smallskip

Let $\leftrightarrow$ be the nearest neighbor relation in $\Z^d$. For all $q \leftrightarrow 0$, the random variable $\varphi_{[p] \to [p+q]}$ does not depend on $p$, while $\varphi_{[p] \to [0]}$ converges in distribution to $+ \infty$ as $p$ goes to infinity. In particular, there exists $r_1$ such that $\mu (\varphi_{[p] \to [p+q]} < \varphi_{[p] \to [0]}) > 0$ for all $\norm{p}{} > r_1$ and $q \leftrightarrow 0$.

\smallskip

Let $p \in \Z^d$ with $\norm{p}{}> R := \max\{r_0+1,r_1\}$. Then $[p]$ is monochromatic; let $\ell$ be its colour. For each $p' \leftrightarrow p$, we have $\mu (\varphi_{[p] \to [p']} < \varphi_{[p] \to [0]}) > 0$, so that $[p']$ also contains the colour $\ell$. But $\norm{p'}{} > r_0$, so $[p']$ is also monochromatic, and thus is the same colour as $[p]$.

\smallskip

At this point, our conclusion depends on the dimension.
\begin{itemize}
 \item For the case $d = 2$: the graph $\Z^2 \setminus B(0, R)$ is connected, so that $[\Z^2 \setminus B(0, R)]$ is monochromatic.
 \item For the case $d = 1$: the graph $\Z \setminus B(0, R)$ has two connected components, so that the sets $[(-\infty, -R) \cap \Z]$ and $[(R, +\infty) \cap \Z]$ are both monochromatic.
\end{itemize}
In the later case, assume furthermore that $\norm{f}{\Lbb^\infty (A, \mu)} = + \infty$; without loss of generality, assume that the essential supremum of $F$ is $+ \infty$. Then there exists $p \in (-\infty, -R) \cap \Z$ and $q \in (R, +\infty) \cap \Z$ such that $\mu ([p] \cap \widetilde{T}^{-1} ([q])) > 0$. In particular, $[p]$ and $[q]$ have the same colour, so $[\Z \setminus B(0, R)]$ is monochromatic.
\end{proof}

Proposition~\ref{prop:StructureColoration} gives very strong constraints on the structure of a Markov $\Z^d$-extension whose induced map in $[0]$ is not mixing. We now prove that, under these constraints, the induced map $T_{[\Sigma]}$ itself has a decomposition in periodic components, as long as the elements of $\Sigma$ are far enough apart.

\begin{proposition}\quad
\label{prop:StructureColorationInduits}
 
  Let $([\Z^d], \widetilde{\mu}, \widetilde{T})$ be a Markov $\Z^d$-extension of a Gibbs-Markov map $(A, \alpha, d, \mu, T)$ with jump function $F$. Assume that $([\Z^d], \widetilde{\mu}, \widetilde{T})$ is ergodic and recurrent. Let $M \geq 1$ be the period of $([0], \mu, T_{[0]})$. Then there exists $R >0$ with the following property. Let $\Sigma \subset \Z^d$ be non-empty and such that $\min_{p \neq q \in \Sigma} \norm{p-q}{} \geq R$. 
  
 \smallskip
 
 If $[\Z^d \setminus B(0,r)]$ is monochromatic for large enough $r$ (and in particular if $d = 2$, or $d = 1$ and $\norm{F}{\Lbb^\infty (A, \mu)} = +\infty$), then $T_{[\Sigma]} (A_k \times \Sigma) \subset A_{k+1} \times \Sigma$ for all $k \in \Z_{/M\Z}$. In addition, there exists $\ell \in \Z_{/M\Z}$ such that, almost surely, transitions between sites occur only from $A_{\ell-1} \times \Sigma$ to $A_\ell \times \Sigma$.
 
 \smallskip
 
 If $[\Z \setminus B(0,r)]$ is bichromatic for large enough $r$, sort $\Sigma$ in increasing order, for instance: 
 \begin{equation*}
  \Sigma = \{\ldots, \sigma_{-1}, \sigma_0, \sigma_1, \ldots\} = \{\sigma_n : n \in I\}.
 \end{equation*}
 Then $T_{[\Sigma]}$ sends $\bigsqcup_{n \in I} A_{k+n(\ell_+ - \ell_-)} \times \{\sigma_n\}$ to $\bigsqcup_{n \in I} A_{k+n(\ell_+ - \ell_-)+1} \times \{\sigma_n\}$ for all $k \in \Z_{/M\Z}$. In addition, transitions occur only from $A_{\ell_+ -1} \times \{\sigma_n\}$ to $A_{\ell_-} \times \{\sigma_{n+1}\}$ or from $A_{\ell_- -1} \times \{\sigma_n\}$ to $A_{\ell_+} \times \{\sigma_{n-1}\}$.
\end{proposition}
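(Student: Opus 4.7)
The plan is to combine Proposition~\ref{prop:StructureColoration} with a careful orbit-tracking argument exploiting the $\alpha^*$-measurability of the partition $\{A_\ell\}$ supplied by Corollary~\ref{cor:DecroissanceCorrelationsInduit0Ergodique}. First, I fix $R$ large enough for Proposition~\ref{prop:StructureColoration} to apply and, by requiring $\min_{p\neq q\in\Sigma}\|p-q\|\geq R$ together with an enlargement forcing every nonzero $p\in\Sigma$ to lie outside $B(0,R)$, I arrange that every such $[p]$ is monochromatic in the first case of the statement or belongs to one of the two chromatic half-spaces in the second. I then treat self-loops and inter-site transitions separately.

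Self-loops of $T_{[\Sigma]}$ at $[p]$ are handled through the identification $T_{[p]}(x,p)=(T_{[0]}x,p)$, which comes from the fact that the first $\widetilde{T}$-return to $[p]$ coincides with the first return of the Birkhoff sums $S_nF$ to $0$ regardless of the starting site; since $T_{[0]}(A_k)=A_{k+1}$, a self-loop shifts the $A$-section by $+1$. For an inter-site transition $T_{[\Sigma]}(x,p)=(y,q)$ with $q\neq p$ and $(x,p)\in A_k\times\{p\}$, I extend the $\widetilde{T}$-orbit past $(y,q)$ up to its first return to $[p]$, which lands at $(T_{[0]}x,p)\in A_{k+1}\times\{p\}$. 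By monochromaticity of $[p]$ the first visit to $[0]$ along this orbit lies in $A_\ell$, and successive visits correspond to iterations of $T_{[0]}$ on $[0]$ and so increment the section by $+1$, producing the progression $A_\ell, A_{\ell+1},\ldots$. Monochromaticity of the endpoint $(T_{[0]}x,p)$ then forces the next $[0]$-visit after the endpoint to again land in $A_\ell$, so the number $J$ of $[0]$-visits between $(x,p)$ and $(T_{[0]}x,p)$ satisfies $J\equiv 0\pmod M$.

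This parity constraint, combined with the $\alpha^*$-measurability of $A_\ell$ (which lets me read the section of $T^{n_1}x$ off the $\alpha^{(n_1)}$-cylinder of $x$, where $n_1=\varphi_{[\Sigma]}(x,p)$), is meant to pin the shift from the section of $x$ to that of $y=T^{n_1}x$ down to exactly $+1$, giving $y\in A_{k+1}$. The restriction that transitions occur only from $A_{\ell-1}\times\Sigma$ to $A_\ell\times\Sigma$ then follows a posteriori: the monochromaticity of $[q]$ forces the first $[0]$-visit from $(y,q)$ to land in $A_\ell$, and compatibility with the $+1$ shift just established yields $k+1=\ell$. The bichromatic case is handled by ordering $\Sigma$ along the line and tracking which half-space each $\sigma_n$ belongs to; every hop $\sigma_n\to\sigma_{n\pm 1}$ switches the relevant colour, introducing an additional shift of $\pm(\ell_+-\ell_-)$ on top of the base shift of $+1$, and iterating along the ordered sites produces the claimed decomposition $\bigsqcup_n A_{k+n(\ell_+-\ell_-)}\times\{\sigma_n\}$ together with the admissible transition sections.

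The main obstacle is the section-matching step for a single inter-site transition: converting the parity statement ``$J\equiv 0\pmod M$'' on $[0]$-visits into the pointwise identity $y\in A_{k+1}$ requires combining the monochromaticity input from Proposition~\ref{prop:StructureColoration} with the Gibbs-Markov combinatorics, namely the $\alpha^*$-measurability of $A_\ell$ interacting with the cylinder partition $\alpha^{(n_1)}$ and with the cocycle $S_nF$. Making this precise in the corner cases, in particular when $0\in\Sigma$ or when $J=0$ (so the orbit transitions without ever visiting $[0]$), is where most of the technical work sits.
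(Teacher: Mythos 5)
There is a genuine gap, and it sits exactly where you locate it. First, the hypothesis of the proposition only controls the pairwise distances $\norm{p-q}{}$ for $p\neq q\in\Sigma$; it says nothing about the position of $\Sigma$ relative to the origin, and $0$ may well belong to $\Sigma$. Your argument, however, invokes the monochromaticity of the sites $[p]$, $[q]$ themselves and tracks visits of the excursion to the literal fibre $[0]$; the ``enlargement forcing every nonzero $p\in\Sigma$ to lie outside $B(0,R)$'' is not available, and the situations you defer as corner cases ($0\in\Sigma$, or an excursion with no $[0]$-visit, $J=0$) are part of the generic setting the proposition must cover. More seriously, even granting those extra assumptions, the congruence $J\equiv 0\pmod M$ on the number of $[0]$-visits of the excursion from $[p]$ back to $(T_{[0]}x,p)$ does not identify which component $A_j$ contains the arrival point $y$: it is a global statement about the whole excursion and carries no information about where, relative to those $[0]$-visits, the passage through $[q]$ occurs. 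Since this ``section-matching'' step is precisely the content of the proposition, and you yourself flag it as unresolved, the proof is not complete.

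The missing idea is the recentring used in the paper's proof: since $\widetilde{T}$ commutes with the $\Z^d$-translations of the fibre, the first visit of the orbit of $(x,p)$ to $[q]$ is, after translating by $-q$, the first visit of the orbit of $(x,p-q)$ to $[0]$, so its $A$-component is by definition the colour $C(x,p-q)$; and the first return of $(x,p)$ to $[p]$, namely $(T_{[0]}x,p)$, occurs strictly after the transition and is, after translating by $-p$, the first $[0]$-visit of the orbit of the transition point, so $T_{[0]}x\in A_{C(y,q-p)}$. Only the differences $p-q$ enter, so $\norm{p-q}{}\geq R$ and Proposition~\ref{prop:StructureColoration} determine these colours ($\ell$ in the monochromatic case, $\ell_\pm$ according to the sign of the difference in the bichromatic case); this gives directly that the arrival lies in $A_\ell\times\{q\}$ and that $x\in A_{\ell-1}$, with no visit counting, no hypothesis on the location of $\Sigma$, and no special treatment of $0\in\Sigma$ or of excursions missing $[0]$. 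Note also that in the bichromatic case you take for granted that only nearest-neighbour hops occur; this is part of the claim and needs the same mechanism (a hop over an intermediate site $q$ would, after recentring at $q$, carry a point of colour $\ell_-$ to a point of colour $\ell_+$ along an orbit segment avoiding $[0]$, on which the colour is invariant, forcing $\ell_+=\ell_-$ and contradicting bichromaticity).
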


\begin{proof}
 
 Let $([\Z^d], \widetilde{\mu}, \widetilde{T})$ be such an extension, and choose $R$ as in Proposition~\ref{prop:StructureColoration}. Let $\Sigma \subset \Z^d$ satisfying the conditions of 
 Proposition~\ref{prop:StructureColorationInduits}.
 
 \medskip
 \textbf{First case: monochromatic extensions}
 \smallskip
 
 We start with the first case. Since the extension is monochromatic away from $[0]$, let $\ell \in \Z_{/M\Z}$ be its colour. Let $p \in \Sigma$ and $(x,p) \in [p]$.
 
 \smallskip
 
 If $T_{[\Sigma]} (x,p) \in [p]$, then $T_{[\Sigma]} (x,p) = (T_{[0]} (x), p)$. Given $k$ such that $x \in A_k$, we get $T_{[0]} (x) \in A_{k+1}$, and thus $T_{[\Sigma]} (x,p) \in A_{k+1} \times \Sigma$.

 \smallskip
 
 Otherwise, let $q \in \Sigma$ be such that $T_{[\Sigma]} (x,p) \in [q]$. Then $T_{[\Sigma-q]} (x,p-q)$ is the first return to $[0]$ of the orbit of $(x,p-q)$ under $\widetilde{T}$, and $(x,p-q) \in [\Z^d \setminus B(0,R)]$. Since the colour of the later is $\ell$, this means that $T_{[\Sigma-q]} (x,p-q) \in A_\ell \times \{0\}$, and thus $T_{[\Sigma]} (x,p) \in A_\ell \times \{q\}$. In addition, using again the monochromaticity of $[\Z^d \setminus B(0,R)]$, the first return of $T_{[\Sigma]} (x,p)$ to $[p]$ belongs to $A_\ell \times \{p\}$. But, since this first return is $(T_{[0]} (x),p)$, this implies that $x \in A_{\ell-1}$.
 
 \medskip
 \textbf{Second case: bichromatic extensions}
 \smallskip
 
  We now tackle the second case. By Proposition~\ref{prop:StructureColoration}, the extension is a $\Z$-extension. It has colour $\ell_-$ on the large enough negatives and $\ell_+\neq \ell_-$ on the large enough positives numbers. Let $p \in \Sigma$ and $(x,p) \in [p]$.
  
  \smallskip
  
  If there exists non-adjacent sites $p<q<r$ in $\Sigma$ such that $\mu ([p] \cap T_{[\Sigma]}^{-1} ([r])) > 0$, then $p-q$ and $r-q$ have the same colour, so $\ell_+ = \ell_-$; the same holds if $\mu ([r] \cap T_{[\Sigma]}^{-1} ([q])) > 0$. This contradicts the bichromaticity of the extension. Hence the only allowed transitions are between adjacent sites in $\Sigma$.
  
  \smallskip
 
  Again, if $T_{[\Sigma]} (x,p) \in [p]$, then $T_{[\Sigma]} (x,p) = (T_{[0]} (x), p)$. Given $k$ such that $x \in A_k$, we get $T_{[0]} (x) \in A_{k+1}$, and thus $T_{[\Sigma]} (x,p) \in A_{k+1} \times \Sigma$.

  \smallskip
  
  Otherwise, let $q \in \Sigma$ be such that $T_{[\Sigma]} (x,p) \in [q]$. Assume without loss of generality that $q>p$. Then the same arguments as in the first case yield $T_{[\Sigma]} (x,p) \in A_{\ell_-} \times \{q\}$ and $(x,p) \in A_{\ell_+ -1} \times [p]$.  
  
  \smallskip
  
  Let us sort the elements of $\Sigma$ in increasing order, indexing them either by $I = \{0, \ldots, |\Sigma|-1 \}$, $I = \N$ or $I = \Z$; this let us write, for instance, $\Sigma = \{\ldots, \sigma_{-1}, \sigma_0, \sigma_1, \ldots\}$. A transition from $\sigma_n$ to $\sigma_{n+1}$ starts from $A_{\ell_+ -1} \times \{\sigma_n\}$ and ends in $A_{\ell_-} \times \{\sigma_{n+1}\}$, while a transition from $\sigma_n$ to $\sigma_{n-1}$ starts from $A_{\ell_- -1} \times \{\sigma_n\}$ and ends in $A_{\ell_+} \times \{\sigma_{n-1}\}$. In particular, $T_{[\Sigma]}$ sends $\bigsqcup_{n \in I} A_{k+n(\ell_+ - \ell_-)} \times \{\sigma_n\}$ 
  to $\bigsqcup_{n \in I} A_{k+n(\ell_+ - \ell_-)+1} \times \{\sigma_n\}$ for all $k \in \Z_{/M\Z}$.
\end{proof}

\begin{remark}[Examples of monochromatic and bichromatic extensions]\quad
 
 Constructing a monochromatic $\Z^d$-extension is easy. Choose a Gibbs-Markov map $(A, \alpha, d, \mu, T)$ with a non-trivial period, and jump only at a given instant in the period. For instance, take
 \begin{itemize}
  \item $A = \Tbb \times \Z_{/5\Z}$,
  \item $\mu$ is proportional to the Lebesgue measure on $A$,
  \item $T(x,k) = (3x, k+1)$,
  \item $F(x,0) = -1$ if $x \in [0, 1/3)$, then $F(x,0) = +1$ if $x \in [2/3, 1)$ and $F(x,k) = 0$ otherwise.
\end{itemize}
 Then $A_k = \Tbb \times \{k\}$ and $\ell = 1$.
 
 \begin{figure}[!h]
 \centering
 \scalebox{1}{
 \begin{tikzpicture}
 
   \draw (0:0.75cm) circle (0.25cm);
   \draw (72:0.75cm) circle (0.25cm);
   \draw (144:0.75cm) circle (0.25cm);
   \draw (216:0.75cm) circle (0.25cm);
   \draw (288:0.75cm) circle (0.25cm);
   
   \draw[-Latex] plot [smooth, tension=1] coordinates {(16:0.88cm) (36:0.95cm) (56:0.88cm)};
   \draw[-Latex] plot [smooth, tension=1] coordinates {(88:0.88cm) (108:0.95cm) (128:0.88cm)};
   \draw[-Latex] plot [smooth, tension=1] coordinates {(160:0.88cm) (180:0.95cm) (200:0.88cm)};
   \draw[-Latex] plot [smooth, tension=1] coordinates {(232:0.88cm) (252:0.95cm) (272:0.88cm)};
   \draw[-Latex] plot [smooth, tension=1] coordinates {(304:0.88cm) (324:0.95cm) (344:0.88cm)};
   
   \draw node at (0:0.75cm) {\small{$A_1$}};
   \draw node at (72:0.75cm) {\small{$A_2$}};
   \draw node at (144:0.75cm) {\small{$A_3$}};
   \draw node at (216:0.75cm) {\small{$A_4$}};
   \draw node at (288:0.75cm) {\small{$A_0$}};
 \end{tikzpicture}
 
 \hspace{1cm}
 
  \begin{tikzpicture}
 
   \draw[fill = blue!30!] (0:0.75cm) circle (0.25cm);
   \draw[fill = blue!50!] (72:0.75cm) circle (0.25cm);
   \draw[fill = blue!70!] (144:0.75cm) circle (0.25cm);
   \draw[fill = blue!90!] (216:0.75cm) circle (0.25cm);
   \draw[fill = blue!10!] (288:0.75cm) circle (0.25cm);
   
   \draw[-Latex] plot [smooth, tension=1] coordinates {(16:0.88cm) (36:0.95cm) (56:0.88cm)};
   \draw[-Latex] plot [smooth, tension=1] coordinates {(88:0.88cm) (108:0.95cm) (128:0.88cm)};
   \draw[-Latex] plot [smooth, tension=1] coordinates {(160:0.88cm) (180:0.95cm) (200:0.88cm)};
   \draw[-Latex] plot [smooth, tension=1] coordinates {(232:0.88cm) (252:0.95cm) (272:0.88cm)};
   \draw[-Latex] plot [smooth, tension=1] coordinates {(304:0.88cm) (324:0.95cm) (344:0.85cm)};
   
   \draw[-Latex] plot [smooth, tension=1] coordinates {(288:1cm) ([xshift=1.25cm] 300:0.95cm) ([xshift=2.5cm] 0:0.5cm)};
   \draw[-Latex] plot [smooth, tension=1] coordinates {(288:1cm) ([xshift=-1.25cm] 280:1cm) ([xshift=-2.5cm] 344:0.9cm)};
   
   \begin{scope}[shift={(2.5cm,0)}]
   \draw[fill = blue!30!] (0:0.75cm) circle (0.25cm);
   \draw[fill = blue!50!] (72:0.75cm) circle (0.25cm);
   \draw[fill = blue!70!] (144:0.75cm) circle (0.25cm);
   \draw[fill = blue!90!] (216:0.75cm) circle (0.25cm);
   \draw[fill = blue!10!] (288:0.75cm) circle (0.25cm);
   
   \draw[-Latex] plot [smooth, tension=1] coordinates {(16:0.88cm) (36:0.95cm) (56:0.88cm)};
   \draw[-Latex] plot [smooth, tension=1] coordinates {(88:0.88cm) (108:0.95cm) (128:0.88cm)};
   \draw[-Latex] plot [smooth, tension=1] coordinates {(160:0.88cm) (180:0.95cm) (200:0.88cm)};
   \draw[-Latex] plot [smooth, tension=1] coordinates {(232:0.88cm) (252:0.95cm) (272:0.88cm)};
   \draw[-Latex] plot [smooth, tension=1] coordinates {(304:0.88cm) (324:0.95cm) (344:0.85cm)};
   
   \draw[-Latex] plot [smooth, tension=1] coordinates {(288:1cm) ([xshift=1.25cm] 300:0.95cm) ([xshift=2.5cm] 0:0.5cm)};
   \draw[-Latex] plot [smooth, tension=1] coordinates {(288:1cm) ([xshift=-1.25cm] 280:1cm) ([xshift=-2.5cm] 344:0.9cm)};
   \end{scope}
   
   \begin{scope}[shift={(-2.5cm,0)}]
   \draw[fill = blue!30!] (0:0.75cm) circle (0.25cm);
   \draw[fill = blue!50!] (72:0.75cm) circle (0.25cm);
   \draw[fill = blue!70!] (144:0.75cm) circle (0.25cm);
   \draw[fill = blue!90!] (216:0.75cm) circle (0.25cm);
   \draw[fill = blue!10!] (288:0.75cm) circle (0.25cm);
   
   \draw[-Latex] plot [smooth, tension=1] coordinates {(16:0.88cm) (36:0.95cm) (56:0.88cm)};
   \draw[-Latex] plot [smooth, tension=1] coordinates {(88:0.88cm) (108:0.95cm) (128:0.88cm)};
   \draw[-Latex] plot [smooth, tension=1] coordinates {(160:0.88cm) (180:0.95cm) (200:0.88cm)};
   \draw[-Latex] plot [smooth, tension=1] coordinates {(232:0.88cm) (252:0.95cm) (272:0.88cm)};
   \draw[-Latex] plot [smooth, tension=1] coordinates {(304:0.88cm) (324:0.95cm) (344:0.85cm)};
   
   \draw[-Latex] plot [smooth, tension=1] coordinates {(288:1cm) ([xshift=1.25cm] 300:0.95cm) ([xshift=2.5cm] 0:0.5cm)};
   \draw[-Latex] plot [smooth, tension=1] coordinates {(288:1cm) ([xshift=-1.25cm] 280:1cm) ([xshift=-2.5cm] 344:0.9cm)};
   \end{scope}
   
   \begin{scope}[shift={(5cm,0)}]
   \draw[fill = blue!30!] (0:0.75cm) circle (0.25cm);
   \draw[fill = blue!50!] (72:0.75cm) circle (0.25cm);
   \draw[fill = blue!70!] (144:0.75cm) circle (0.25cm);
   \draw[fill = blue!90!] (216:0.75cm) circle (0.25cm);
   \draw[fill = blue!10!] (288:0.75cm) circle (0.25cm);
   
   \draw[-Latex] plot [smooth, tension=1] coordinates {(16:0.88cm) (36:0.95cm) (56:0.88cm)};
   \draw[-Latex] plot [smooth, tension=1] coordinates {(88:0.88cm) (108:0.95cm) (128:0.88cm)};
   \draw[-Latex] plot [smooth, tension=1] coordinates {(160:0.88cm) (180:0.95cm) (200:0.88cm)};
   \draw[-Latex] plot [smooth, tension=1] coordinates {(232:0.88cm) (252:0.95cm) (272:0.88cm)};
   \draw[-Latex] plot [smooth, tension=1] coordinates {(304:0.88cm) (324:0.95cm) (344:0.85cm)};
   
   \draw[-Latex, dashed] plot [smooth, tension=1] coordinates {(288:1cm) ([xshift=1.25cm] 300:0.95cm)};
   \draw[-Latex] plot [smooth, tension=1] coordinates {(288:1cm) ([xshift=-1.25cm] 280:1cm) ([xshift=-2.5cm] 344:0.9cm)};
   \end{scope}

   \begin{scope}[shift={(-5cm,0)}]
   \draw[fill = blue!30!] (0:0.75cm) circle (0.25cm);
   \draw[fill = blue!50!] (72:0.75cm) circle (0.25cm);
   \draw[fill = blue!70!] (144:0.75cm) circle (0.25cm);
   \draw[fill = blue!90!] (216:0.75cm) circle (0.25cm);
   \draw[fill = blue!10!] (288:0.75cm) circle (0.25cm);
   
   \draw[-Latex] plot [smooth, tension=1] coordinates {(16:0.88cm) (36:0.95cm) (56:0.88cm)};
   \draw[-Latex] plot [smooth, tension=1] coordinates {(88:0.88cm) (108:0.95cm) (128:0.88cm)};
   \draw[-Latex] plot [smooth, tension=1] coordinates {(160:0.88cm) (180:0.95cm) (200:0.88cm)};
   \draw[-Latex] plot [smooth, tension=1] coordinates {(232:0.88cm) (252:0.95cm) (272:0.88cm)};
   \draw[-Latex] plot [smooth, tension=1] coordinates {(304:0.88cm) (324:0.95cm) (344:0.85cm)};
   
   \draw[-Latex] plot [smooth, tension=1] coordinates {(288:1cm) ([xshift=1.25cm] 300:0.95cm) ([xshift=2.5cm] 0:0.5cm)};
   \draw[-Latex, dashed] plot [smooth, tension=1] coordinates {(288:1cm) ([xshift=-1.25cm] 280:1cm)};
   \end{scope}
 \end{tikzpicture}
 }
 \caption{On the left: the allowed transitions for the transformation $T$ and the five elements of the partition in periodic components $\{A_0, A_1, \ldots, A_4\}$ of $T_{[0]}$. On the right: the $\Z$-extension of $T$ and its allowed transitions. The five shades of blue correspond to the five elements of the partition $\{A_0 \times \Z, A_1 \times \Z, \ldots, A_4 \times \Z\}$ of $[\Z]$. All allowed transitions lead to a subset one shade darker, or from the darkest shade to the lightest.}
 \label{fig:DecompositionPeriodiqueStandard}
 \end{figure}
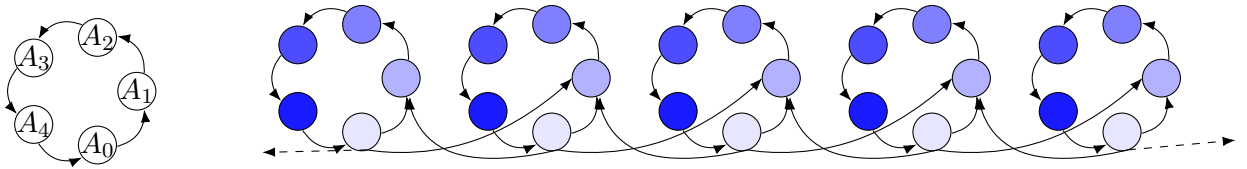
 
 The analyses in the proofs of Propositions~\ref{prop:StructureColoration} and~\ref{prop:StructureColorationInduits} give a guide to construct a bichromatic extension. We tweak the Gibbs-Markov map defined above. We still take $A = \Tbb \times \Z_{/5\Z}$ and 
 $\mu$ proportional to the Lebesgue measure, but:
 \begin{equation*}
  T(x,k) = \left\{ \begin{array}{lcl}
                   (2x, 3) & \text{ if } & x \in [0, 1/2) \text{ and } k = 0 \\
                   (2x, 1) & \text{ if } & x \in [0, 1/2) \text{ and } k = 2 \\
                   (2x, k+1) & \text{ otherwise} & \\
                   \end{array}
\right. .
 \end{equation*}
 We choose for $F$:
 \begin{equation*}
  F(x,k) = \left\{ \begin{array}{lcl}
                   +1 & \text{ if } & x \in [0, 1/2) \text{ and } k = 0 \\
                   -1 & \text{ if } & x \in [0, 1/2) \text{ and } k = 2 \\
                   0 & \text{ otherwise} & \\
                   \end{array}
\right. .
 \end{equation*}
 Then $A_k = \Tbb \times \{k\}$, $\ell_+ = 1$ and $\ell_- = 3$.

  \begin{figure}[!h]
 \centering
 \scalebox{1}{
 \begin{tikzpicture}
 
   \draw (0:0.75cm) circle (0.25cm);
   \draw (72:0.75cm) circle (0.25cm);
   \draw (144:0.75cm) circle (0.25cm);
   \draw (216:0.75cm) circle (0.25cm);
   \draw (288:0.75cm) circle (0.25cm);
   
   \draw[-Latex] plot [smooth, tension=1] coordinates {(16:0.88cm) (36:0.95cm) (56:0.88cm)};
   \draw[-Latex] plot [smooth, tension=1] coordinates {(88:0.88cm) (108:0.95cm) (128:0.88cm)};
   \draw[-Latex] plot [smooth, tension=1] coordinates {(160:0.88cm) (180:0.95cm) (200:0.88cm)};
   \draw[-Latex] plot [smooth, tension=1] coordinates {(232:0.88cm) (252:0.95cm) (272:0.88cm)};
   \draw[-Latex] plot [smooth, tension=1] coordinates {(304:0.88cm) (324:0.95cm) (344:0.88cm)};
   \draw[-Latex] plot [smooth, tension=1] coordinates {(56:0.5cm) (36:0.4cm) (16:0.5cm)};
   \draw[-Latex] plot [smooth, tension=1] coordinates {(288:0.5cm) (216:0.2cm) (144:0.5cm)};
   
   \draw node at (0:0.75cm) {\small{$A_1$}};
   \draw node at (72:0.75cm) {\small{$A_2$}};
   \draw node at (144:0.75cm) {\small{$A_3$}};
   \draw node at (216:0.75cm) {\small{$A_4$}};
   \draw node at (288:0.75cm) {\small{$A_0$}};
 \end{tikzpicture}
 
 \hspace{1cm}
 
  \begin{tikzpicture}
 
   \draw[fill = blue!30!] (0:0.75cm) circle (0.25cm);
   \draw[fill = blue!50!] (72:0.75cm) circle (0.25cm);
   \draw[fill = blue!70!] (144:0.75cm) circle (0.25cm);
   \draw[fill = blue!90!] (216:0.75cm) circle (0.25cm);
   \draw[fill = blue!10!] (288:0.75cm) circle (0.25cm);
   
   \draw[-Latex] plot [smooth, tension=1] coordinates {(16:0.88cm) (36:0.95cm) (56:0.88cm)};
   \draw[-Latex] plot [smooth, tension=1] coordinates {(88:0.88cm) (108:0.95cm) (128:0.88cm)};
   \draw[-Latex] plot [smooth, tension=1] coordinates {(160:0.88cm) (180:0.95cm) (200:0.88cm)};
   \draw[-Latex] plot [smooth, tension=1] coordinates {(232:0.88cm) (252:0.95cm) (272:0.88cm)};
   \draw[-Latex] plot [smooth, tension=1] coordinates {(304:0.88cm) (324:0.95cm) (344:0.85cm)};
   
   \draw[-Latex] plot [smooth, tension=1] coordinates {(288:1cm) (340:1.3cm) ([xshift=2.5cm] 166:0.8cm)};
   \draw[-Latex] plot [smooth, tension=1] coordinates {(88:0.9cm) (144:1.2cm) ([xshift=-2.5cm] 0:1cm)};
   
   \begin{scope}[shift={(2.5cm,0)}]
   \draw[fill = blue!90!] (0:0.75cm) circle (0.25cm);
   \draw[fill = blue!10!] (72:0.75cm) circle (0.25cm);
   \draw[fill = blue!30!] (144:0.75cm) circle (0.25cm);
   \draw[fill = blue!50!] (216:0.75cm) circle (0.25cm);
   \draw[fill = blue!70!] (288:0.75cm) circle (0.25cm);
   
   \draw[-Latex] plot [smooth, tension=1] coordinates {(16:0.88cm) (36:0.95cm) (56:0.88cm)};
   \draw[-Latex] plot [smooth, tension=1] coordinates {(88:0.88cm) (108:0.95cm) (128:0.88cm)};
   \draw[-Latex] plot [smooth, tension=1] coordinates {(160:0.88cm) (180:0.95cm) (200:0.88cm)};
   \draw[-Latex] plot [smooth, tension=1] coordinates {(232:0.88cm) (252:0.95cm) (272:0.88cm)};
   \draw[-Latex] plot [smooth, tension=1] coordinates {(304:0.88cm) (324:0.95cm) (344:0.85cm)};
   
   \draw[-Latex] plot [smooth, tension=1] coordinates {(288:1cm) (340:1.3cm) ([xshift=2.5cm] 166:0.8cm)};
   \draw[-Latex] plot [smooth, tension=1] coordinates {(88:0.9cm) (144:1.2cm) ([xshift=-2.5cm] 0:1cm)};
   \end{scope}
   
   \begin{scope}[shift={(-2.5cm,0)}]
   \draw[fill = blue!70!] (0:0.75cm) circle (0.25cm);
   \draw[fill = blue!90!] (72:0.75cm) circle (0.25cm);
   \draw[fill = blue!10!] (144:0.75cm) circle (0.25cm);
   \draw[fill = blue!30!] (216:0.75cm) circle (0.25cm);
   \draw[fill = blue!50!] (288:0.75cm) circle (0.25cm);
   
   \draw[-Latex] plot [smooth, tension=1] coordinates {(16:0.88cm) (36:0.95cm) (56:0.88cm)};
   \draw[-Latex] plot [smooth, tension=1] coordinates {(88:0.88cm) (108:0.95cm) (128:0.88cm)};
   \draw[-Latex] plot [smooth, tension=1] coordinates {(160:0.88cm) (180:0.95cm) (200:0.88cm)};
   \draw[-Latex] plot [smooth, tension=1] coordinates {(232:0.88cm) (252:0.95cm) (272:0.88cm)};
   \draw[-Latex] plot [smooth, tension=1] coordinates {(304:0.88cm) (324:0.95cm) (344:0.85cm)};
   
   \draw[-Latex] plot [smooth, tension=1] coordinates {(288:1cm) (340:1.3cm) ([xshift=2.5cm] 166:0.8cm)};
   \draw[-Latex] plot [smooth, tension=1] coordinates {(88:0.9cm) (144:1.2cm) ([xshift=-2.5cm] 0:1cm)};
   \end{scope}
   
   \begin{scope}[shift={(5cm,0)}]
   \draw[fill = blue!50!] (0:0.75cm) circle (0.25cm);
   \draw[fill = blue!70!] (72:0.75cm) circle (0.25cm);
   \draw[fill = blue!90!] (144:0.75cm) circle (0.25cm);
   \draw[fill = blue!10!] (216:0.75cm) circle (0.25cm);
   \draw[fill = blue!30!] (288:0.75cm) circle (0.25cm);
   
   \draw[-Latex] plot [smooth, tension=1] coordinates {(16:0.88cm) (36:0.95cm) (56:0.88cm)};
   \draw[-Latex] plot [smooth, tension=1] coordinates {(88:0.88cm) (108:0.95cm) (128:0.88cm)};
   \draw[-Latex] plot [smooth, tension=1] coordinates {(160:0.88cm) (180:0.95cm) (200:0.88cm)};
   \draw[-Latex] plot [smooth, tension=1] coordinates {(232:0.88cm) (252:0.95cm) (272:0.88cm)};
   \draw[-Latex] plot [smooth, tension=1] coordinates {(304:0.88cm) (324:0.95cm) (344:0.85cm)};
   
   \draw[-Latex, dashed] plot [smooth, tension=1] coordinates {(288:1cm) (340:1.3cm)};
   \draw[-Latex] plot [smooth, tension=1] coordinates {(88:0.9cm) (144:1.2cm) ([xshift=-2.5cm] 0:1cm)};
   \end{scope}

   \begin{scope}[shift={(-5cm,0)}]
   \draw[fill = blue!10!] (0:0.75cm) circle (0.25cm);
   \draw[fill = blue!30!] (72:0.75cm) circle (0.25cm);
   \draw[fill = blue!50!] (144:0.75cm) circle (0.25cm);
   \draw[fill = blue!70!] (216:0.75cm) circle (0.25cm);
   \draw[fill = blue!90!] (288:0.75cm) circle (0.25cm);
   
   \draw[-Latex] plot [smooth, tension=1] coordinates {(16:0.88cm) (36:0.95cm) (56:0.88cm)};
   \draw[-Latex] plot [smooth, tension=1] coordinates {(88:0.88cm) (108:0.95cm) (128:0.88cm)};
   \draw[-Latex] plot [smooth, tension=1] coordinates {(160:0.88cm) (180:0.95cm) (200:0.88cm)};
   \draw[-Latex] plot [smooth, tension=1] coordinates {(232:0.88cm) (252:0.95cm) (272:0.88cm)};
   \draw[-Latex] plot [smooth, tension=1] coordinates {(304:0.88cm) (324:0.95cm) (344:0.85cm)};
   
   \draw[-Latex] plot [smooth, tension=1] coordinates {(288:1cm) (340:1.3cm) ([xshift=2.5cm] 166:0.8cm)};
   \draw[-Latex, dashed] plot [smooth, tension=1] coordinates {(88:0.9cm) (144:1.2cm)};
   \end{scope}
 \end{tikzpicture}
 }
 \caption{On the left: the allowed transitions for the transformation $T$ and the five elements of the partition in periodic components $\{A_0, A_1, \ldots, A_4\}$ of $T_{[0]}$. On the right: the $\Z$-extension of $T$ and its allowed transitions. The five shades of blue correspond to the five elements of the partition $\left\{ \bigsqcup_{n \in \Z} A_{k-2n} \times \{n\}: \ k \in \Z_{/5\Z}\right\}$ of $[\Z]$. All allowed transitions lead to a subset one shade darker, or from the darkest shade to the lightest.}
 \label{fig:DecompositionPeriodiqueExceptionnelle}
 \end{figure}
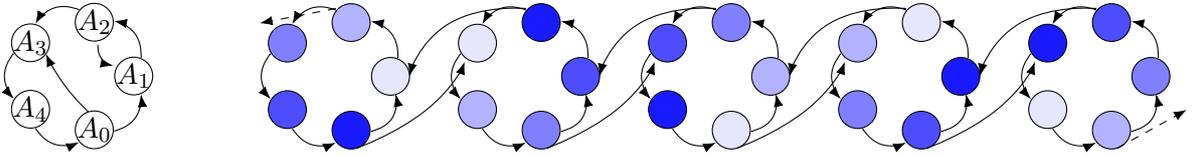
\end{remark}

\begin{remark}[Structure of bichromatic extensions]\quad
 
 As per Proposition~\ref{prop:StructureColoration}, a bichromatic extension can occur only if $d = 1$ and $\norm{F}{\Lbb^\infty (A, \mu)} < +\infty$. A reasonable conjecture is that such extensions can only occur if $d=1$ and there exists a bounded coboundary $u \circ T - u$ such that $F + u \circ T - u$ takes its values in $\{-1, 0, 1\}$.
\end{remark}

\begin{remark}[Non Gibbs-Markov maps]\quad
 
 The structure of $\Z^d$-extensions when the induced map $(A, \mu, T_{[0]})$ has some periodicity may be different for maps which are not Gibbs-Markov. An interesting example is that of piecewise expanding maps of the interval with a compatible jump function. Then $(A, \mu, T_{[0]})$ is still a piecewise expanding map of the interval (with infinitely many pieces). 
 
 \smallskip
 
 Up to additional conditions to check~\cite{LasotaYorke:1973, Rychlik:1983}, we may expect its transfer operator to act quasi-compactly on the space of functions with bounded variation, from which we can develop a similar description of its spectrum. In particular, if $(A, \mu, T_{[0]})$ is not mixing, then it has a decomposition in periodic components, from which we can construct a colouring of $[\Z^d]$. However, our argument in this article relies heavily on the big image property of Gibbs-Markov maps, which is not satisfied for general piecewise expanding maps of the interval. In this case, how are Propositions~\ref{prop:StructureColoration} and~\ref{prop:StructureColorationInduits} to be adapted? Are $[\Z^d]$-extensions of piecewise expanding maps either monochromatic or bichromatic?
\end{remark}

\subsection{End of the proof of Lemma~\ref{lem:MasterLemmaErgodic}}

We now finish the proof of Lemma~\ref{lem:MasterLemmaErgodic}. We explain the main adaptations to make to the proof of Lemma~\ref{lem:MasterLemma}, first in the case of monochromatic extensions, and then in the case of bichromatic extensions.

\begin{proof}[Proof of Lemma~\ref{lem:MasterLemmaErgodic}]
 
 Assume Hypotheses~\ref{hyp:Recurrence} and~\ref{hyp:AsymptoticsP}. Let $M \geq 1$ be the period of $([0], \mu, T_{[0]})$ and $(A_k)_{k \in \Z_{/M\Z}}$ a decomposition of $A$ in periodic components for $T_{[0]}$.
 
 \medskip
 \textbf{First case: monochromatic extensions}
 \smallskip
 
 By Hypothesis~\ref{hyp:AsymptoticsP}, the probability of transition between different sites in $[\Sigma_\varepsilon]$ 
 converges to $0$. By recurrence and ergodicity, $\lim_{\varepsilon \to 0} \min_{p \neq q \in \Sigma_\varepsilon} \norm{p-q}{} = + \infty$. We can thus apply Proposition~\ref{prop:StructureColorationInduits}: there exists $\ell \in \Z_{/M\Z}$ such that, for all small enough $\varepsilon > 0$, the map $T_\varepsilon$ sends each $A_k \times I$ into $A_{k+1} \times I$, and all transitions between distinct sites occur from $A_{\ell-1} \times I$ to $A_\ell \times I$. Hence, $T_\varepsilon^M (A_k \times I) = A_k \times I$ for all $k$.
 
 \smallskip
 
 Let us fix some $k \in \Z_{/M\Z}$, and work with the measure-preserving dynamical system $(A_k \times I, M\mu_{I|A_k \times I}, T_\varepsilon^M)$. A suitable Banach space is given by the space $\Bcal_{I, k}$ of functions in $\Bcal_I$ supported on $A_k \times I$ (recall that $A_k$ is $\sigma(\alpha^*)$-measurable, so multiplication by $\mathbf{1}_{A_k \times I}$ is continuous on $\Bcal_I$). We define operators $\Pi_{k,*} : \Bcal_{I, k} \to \C^I$ and $\Pi_k^* : \C^I \to \Bcal_{I, k}$ in the same way as we defined $\Pi_*$ and $\Pi^*$, that is respectively by averaging in each $A_k \times \{i\}$ and by finding the preimage that is constant on each $A_k \times \{i\}$. From there a definition of cones $C_{k, K} (\varepsilon)$ is straightforward.
 
 \smallskip
 
 The operator $\Lcal_{[0]}$ acts quasi-compactly on $\Bcal$, and thus so does $\Lcal_{[0]}^M$. It follows that the action of $\Lcal_{[0]}^M$ on the space $\Bcal_k$ of functions in $\Bcal$ supported on $A_k$ is also quasi-compact. Since $(A_k, M \mu_{|A_k}, T_{[0]}^M)$ is mixing, the action $\Lcal_{[0]}^M \acts \Bcal_k$ has a spectral gap.
  
  \smallskip
  
  Let $i \neq j \in I$. Let $\ell'$ be to smallest integer larger than $k$ and congruent to $\ell$ modulo $M$. A point $(x, i) \in A_k \times \{i\}$ belongs to $T_\varepsilon^{-M} (A_k \times \{j\})$ if and only if $T_{[0]}^{\ell' - k-1} (x, i)$ belongs to $T_\varepsilon^{-1} ([j])$, so that
  \begin{align*}
   M \mu_{|A_k} (x : \ (x,i) \in T_\varepsilon^{-M} (A_k \times \{j\})) 
   & = M \mu \left( T_{[0]}^{1+k-\ell'} (\{ x : \ (x, i) \in T_\varepsilon^{-1} ([j])\}) \right) \\
   & = M \mu ([i] \cap T_\varepsilon^{-1} ([j])\}) \\
   & = \varepsilon M R_{ij} + o (\varepsilon).
  \end{align*}
  In addition, if $(x, i) \in A_k \times \{i\}$ and $T_\varepsilon^M (x, i) \in A_k \times \{i\}$, then $T_\varepsilon^n (x, i) \in A_k \times \{i\}$ for all $0 \leq n \leq M$, so that $T_\varepsilon^M (x, i) = T_{[0]}^M (x, i)$.
  
  \smallskip
  
  From there, the proof of Proposition~\ref{prop:FastAsymptotics} follows mostly unchanged, replacing $\Lcal_\varepsilon$ by $\Lcal_\varepsilon^M$ and $C_K (\varepsilon)$ by $C_{k,K} (\varepsilon)$. More explicitly, for all $\sigma >0$, there exist $K_\sigma$, $n_\sigma \geq 0$ such that, for all $K \geq K_\sigma$, for all $\varepsilon>0$ and $n \geq n_\sigma$,
 \begin{equation*}
  \Lcal_\varepsilon^{Mn} (C_{k,K} (\varepsilon)) 
  \subset C_{k, \sigma K} (\varepsilon).
 \end{equation*}
 Corollary~\ref{cor:ConeStability} follows directly. Subsection~\ref{subsec:ProduitMatrices} is unchanged as it makes no reference to the dynamics. Corollary~\ref{cor:ConvergenceUniformeCompacts} follows with only the obvious adaptations, and so does Proposition~\ref{prop:Couplage}. We get a variant of Equation~\eqref{eq:ConvergenceUniformeCompactsDynamique}:
 \begin{equation*}
  \lim_{\varepsilon \to 0} \sup_{t \in [0,T]} \norm{ \Pi_{k,*} \Lcal_\varepsilon^{M \lfloor \varepsilon^{-1} M^{-1} t \rfloor} f_\varepsilon - e^{-t(\transposee{R})} \Pi_{k,*} f_\varepsilon }{\ell^1 (I)} 
  = 0 \quad \forall (f_\varepsilon)_{\varepsilon > 0} \in C_{k, K} (\varepsilon).
 \end{equation*}
 The map $\Lcal_{[0]}^n : \Bcal_{k, I} \to \Bcal_{k+n, I}$ is continuous with a norm of at most $\norm{\Lcal_{[0]}^n}{ \Bcal_I \to \Bcal_I}$, since we are only restricting $\Lcal_{[0]}^n$ to functions supported on $A_k$. We can also apply Proposition~\ref{prop:ApproximationParMatrices} to get that 
 \begin{equation*}
  \Pi_{k+n,*} \Lcal_\varepsilon^n f_\varepsilon 
  = (1+O(\varepsilon)) \Pi_{k,*} f_\varepsilon,
 \end{equation*}
 where $n = \lfloor \varepsilon^{-1} t \rfloor - M \lfloor \varepsilon^{-1} M^{-1} t \rfloor$ and the $O(\varepsilon)$ term depends only on the choice of $K$. This yields
 \begin{equation*}
  \lim_{\varepsilon \to 0} \sup_{t \in [0,T]} \norm{ \Pi_{k+\lfloor \varepsilon^{-1} t \rfloor,*} \Lcal_\varepsilon^{\lfloor \varepsilon^{-1} t \rfloor} f_\varepsilon - e^{-t(\transposee{R})} \Pi_{k+\lfloor \varepsilon^{-1} t \rfloor,*} f_\varepsilon }{\ell^1 (I)} 
  = 0 \quad \forall (f_\varepsilon)_{\varepsilon > 0} \in C_{k, K} (\varepsilon).
 \end{equation*}
 
 Proposition~\ref{prop:Couplage} also carries over:
 \begin{equation*}
  \norm{\Lcal_\varepsilon^{Mn}}{(\Bcal_{k, I, 0}, \norm{\cdot}{k, K, \varepsilon}) \to (\Bcal_{k, I, 0}, \norm{\cdot}{k, K, \varepsilon})} 
  \leq C e^{- \rho M \varepsilon n}
 \end{equation*}
 for some constants $C$, $\rho >0$ (which can be taken arbitrarily close to $\rho_R$), all $n \geq 0$ and all small enough $\varepsilon >0$. Again, using the boundedness of $\Lcal_\varepsilon^n : \Bcal_{k, I} \to \Bcal_{k+n, I}$, up to increasing the constants $C$ and $K$, we get
 \begin{equation*}
  \norm{\Lcal_\varepsilon^n}{(\Bcal_{k, I, 0}, \norm{\cdot}{k, K, \varepsilon}) \to (\Bcal_{k+n, I, 0}, \norm{\cdot}{k+n, K, \varepsilon})} 
  \leq C e^{- \rho \varepsilon n}.
 \end{equation*}
 
 The statement of Corollary~\ref{cor:EquivalencePotentielInduit} can be adapted by replacing $\Bcal_{I,0}$ by $\Bcal_{k, I, 0}$. However, we want in the end a statement for functions constant on each $[i] \in [I]$, so this is not satisfactory. Some caution is needed at this point; what we get is that, given $K \geq 0$ and a family on functions $(f_\varepsilon)_{\varepsilon > 0} \in \bigoplus_{k \in \Z_{/M\Z}} \Bcal_{k, I, 0}$ such that $\sup_{\varepsilon > 0} \sup_{k \in \Z_{/M\Z}} \norm{f_\varepsilon}{k, K, \varepsilon}$,
 \begin{equation*}
  \sum_{n = 0}^{+ \infty} \Lcal_\varepsilon^n f_\varepsilon 
  = \frac{1}{\varepsilon} \Pi^* (\transposee{R}_0^{-1}) \Pi_* f_\varepsilon + o(\varepsilon^{-1}),
 \end{equation*}
 where the equality holds in $\Bcal_{I, 0}$. The space $\bigoplus_{k \in \Z_{/M\Z}} \Bcal_{k, I, 0}$ is of codimension $M-1$ in $\Bcal_{I, 0}$: if we want the series $\sum_{n = 0}^{+ \infty} \Lcal_\varepsilon^n f_\varepsilon$ to actually be summable, we need $f_\varepsilon$ to have zero average on each $A_k \times I$, and not just on the whole of $[I]$. Under this slight modification, the proof of Corollary~\ref{cor:EquivalencePotentielInduit} can be adapted up to straightforward modifications, starting from
 \begin{equation*}
  \sum_{n = 0}^{+ \infty} \Lcal_\varepsilon^n f_\varepsilon 
  = \sum_{k \in \Z_{/M\Z}} \sum_{n = 0}^{+ \infty} \mathbf{1}_{A_{k+n} \times I} \Lcal_\varepsilon^n ( f_\varepsilon \mathbf{1}_{A_k \times I}),
 \end{equation*}
 and working with each $k$ separately. Finally, given $f \in \C^I_0$, the function $\Pi^* f$ do belong to the space $\bigoplus_{k \in \Z_{/M\Z}} \Bcal_{k, I, 0}$, which finishes the proof of Lemma~\ref{lem:MasterLemmaErgodic} in the case of monochromatic extensions.
 
 \medskip
 \textbf{Second case: bichromatic extensions}
 \smallskip
 
 As with monochromatic extensions, under Hypothesis~\ref{hyp:AsymptoticsP}, the minimal distance between sites in $\Sigma_\varepsilon$ converges to $+\infty$, so that we can apply Proposition~\ref{prop:StructureColorationInduits}.
 
 \smallskip
 
 Let us index the sites in $\Sigma_\varepsilon$ in increasing order by $I := \{0, \ldots, |\Sigma_\varepsilon|-1\}$. This reindexing conjugate both the matrices $P_\varepsilon$ and $Q_\varepsilon$ by the same permutation matrix, which depends on $\varepsilon >0$. Since there are only finitely many permutation matrices, it is enough to prove the theorem on the subsequences of $\varepsilon > 0$ corresponding to any fixed permutation. Hence, up to restriction to suitable subsequences, we assume simultaneously that Hypothesis~\ref{hyp:AsymptoticsP} holds and that the indexing of $\Sigma_\varepsilon$ is increasing.
 
 \smallskip
 
 By Proposition~\ref{prop:StructureColorationInduits}, there exist $\ell_- \neq \ell_+ \in \Z_{/M\Z}$ such that 
 all transitions for $T_\varepsilon$ occur from $A_{\ell_+ -1} \times \{n\}$ to $A_{\ell_-} \times \{n+1\}$ or from $A_{\ell_- -1} \times \{n\}$ to $A_{\ell_+} \times \{n-1\}$. We set $\widetilde{A}_k := \bigsqcup_{n \in I} A_{k+n(\ell_+ - \ell_-)} \times \{n\}$ so that $T_\varepsilon^M$ preserves all the subsets $\widetilde{A}_k$ for all small enough $\varepsilon$.
 
 \smallskip
 
 The adaptation is then mostly the same as in the monochromatic case, replacing $A_k \times I$ by $\widetilde{A}_k$. There are two significant differences. The first is that jumps upwards and jumps downwards happen at different times, so the argument which in the monochromatic case yielded
 \begin{equation*}
   M \mu_{|A_k} (x : \ (x,i) \in T_\varepsilon^{-M} (A_k \times \{j\})) 
   = \varepsilon M R_{ij} + o (\varepsilon)
  \end{equation*}
 does not work immediately. Here is how this argument can be fixed. First, note that, from times $k=0$ to $k = M$, a trajectory cannot get up then down, or down then up. Indeed, a trajectory of the first kind would have to go through $A_{\ell_+-1} \times \{n\}$, then $A_{\ell_-} \times \{n+1\}$, then $A_{\ell_--1} \times \{n+1\}$, then $A_{\ell_+} \times \{n\}$, which requires at least $M+1$ steps. The same reasoning applies to trajectories of the second kind.
 
 \smallskip
 
 Let $k \in \Z_{/M\Z}$. Let $\ell'_-$ (respectively $\ell'_+$) be the least integer strictly larger than $k$ and congruent to $\ell_-$ (respectively $\ell_+$) modulo $M$. Assume without loss of generality that $\ell'_- < \ell'_+$. Then 
 \begin{align*}
   M \mu_{|A_k} (x : \ (x,n) \in T_\varepsilon^{-M} ([\{0, \ldots, n-1\}])) 
   & = M \mu (x : \ (x,n) \in T_\varepsilon^{-1} ([n-1])) \\
   & = \varepsilon M R_{n, n-1} + o (\varepsilon),
  \end{align*}
  with the same argument as in the monochromatic case. In addition,
  \begin{align*}
   M \mu_{|A_k} & \left(x : \ (x,n) \in T_\varepsilon^{-M} ([\{n+1, \ldots, |I|\}])\right) \\ 
   & = M \mu_{|A_k} \left(x : \ (x,n) \in T_\varepsilon^{-M} ([\{n+1, \ldots, |I|\}]) \text{ and } (x,n) \notin T_\varepsilon^{-M} ([\{0, \ldots, n-1\}]) \right) \\
   & = M \int_{A_{\ell_+-1}} \mathbf{1}_{\{x : (x,n) \in T_\varepsilon^{-1} ([n+1])\}} \Lcal_{[0]}^{\ell_+-\ell_-} \left(\mathbf{1}-\mathbf{1}_{\{x : (x,n) \in T_\varepsilon^{-1} ([n-1])\}}\right) \dd \mu \\
   & = M \mu (x : \ (x,n) \in T_\varepsilon^{-1} ([n-1])) \\ & \hspace{2cm} - M \int_{A_{\ell_+-1}} \mathbf{1}_{\{x : (x,n) \in T_\varepsilon^{-1} ([n+1])\}} \Lcal_{[0]}^{\ell_+-\ell_-} \left(\mathbf{1}_{\{x : (x,n) \in T_\varepsilon^{-1} ([n-1])\}}\right) \dd \mu.
  \end{align*}
  By Corollary~\ref{cor:BorneUniformeTempsArret}, 
  \begin{equation*}
   \norm{\Lcal_{[0]}^{\ell_+-\ell_-} \left(\mathbf{1}_{\{x : (x,n) \in T_\varepsilon^{-1} ([n-1])\}}\right)}{\Bcal} 
   \leq C \mu \left(x : (x,n) \in T_\varepsilon^{-1} ([n-1]) \right) 
   = O(\varepsilon),
  \end{equation*}
  so that 
  \begin{align*}
   M \mu_{|A_k} (x : \ (x,n) \in T_\varepsilon^{-M} ([\{n+1, \ldots, |I|\}])) 
   & = M (1+O(\varepsilon)) \mu \left(x : \ (x,n) \in T_\varepsilon^{-1} ([n+1]) \right) \\
   & = \varepsilon M R_{n, n+1} + o(\varepsilon).
  \end{align*}
  Finally, the same reasoning let us show that the measure of points which undergo at least two jumps before time $M$ is in $O(\varepsilon^2)$, so that 
 \begin{align*}
   M \mu_{|A_k} (x : \ (x,n) \in T_\varepsilon^{-M} ([n-1])) 
   & = M \mu_{|A_k} (x : \ (x,n) \in T_\varepsilon^{-M} ([\{0, \ldots, n-1\}])) + O(\varepsilon^2) \\
   & = \varepsilon M R_{n, n-1} + o (\varepsilon),
  \end{align*}
  and the same for $M \mu_{|A_k} (x : \ (x,n) \in T_\varepsilon^{-M} ([n+1]))$.
  
  \smallskip
  
  The second difference is that, when adapting Proposition~\ref{prop:FastAsymptotics}, we work with the action of $\Lcal_{[0]}^M$ on $\Bcal_{k+n(\ell_+-\ell_-)}$, which depends on the site $n$. In the end, this does not matter: as $k$ takes only finitely many values, there are finitely many such actions, so that all relevant controls are uniform in $n$.
\end{proof}

\section{From potential to transition probabilities}
\label{sec:Inversion}

We now finish the proof of Theorem~\ref{thm:MainTheorem}. 

\smallskip

In Sections~\ref{sec:MasterLemma} (under the mixing Hypothesis~\ref{hyp:Mixing}) and~\ref{sec:ErgodicStructure}, 
knowing an asymptotic development $P_\varepsilon = \Id-\varepsilon R + o(\varepsilon)$, 
we were able to deduce an asymptotic development $Q_\varepsilon = \varepsilon^{-1} R_0^{-1} + o(\varepsilon^{-1})$. However, in practice, we can compute $Q_\varepsilon$ and want to estimate $P_\varepsilon$. 
Since the direct direction comes from Lemma~\ref{lem:MasterLemmaErgodic}, we shall 
work under Hypothesis~\ref{hyp:AsymptoticsQ}:
\begin{equation*}
 Q_\varepsilon = \varepsilon^{-1} S + o(\varepsilon^{-1}),
\end{equation*}
where $S$ is irreducible in the sense of Definition~\ref{def:QMatriceIrreductible}.

 \smallskip
 
 In order to deduce the asymptotics of $P_\varepsilon$ from those of $Q_\varepsilon$, we shall 
 consider an adherence point $R$ of $\varepsilon^{-1} (\Id-P_\varepsilon)$. Then, an application of  
 Lemma~\ref{lem:MasterLemma} yields that $R_0 = S^{-1}$, and thus Theorem~\ref{thm:MainTheorem}. 
 The two main obstructions are:
 \begin{itemize}
  \item the existence of an adherence point of $\varepsilon^{-1} (\Id-P_\varepsilon)$. We shall prove that $\varepsilon^{-1} (\Id-P_\varepsilon)$ stays bounded when $\varepsilon$ vanishes. This is the object of Subsection~\ref{subsec:Bootstrapping}.
  \item in order to apply Lemma~\ref{lem:MasterLemma}, the matrix $R$ needs to be irreducible. This is the object of Subsection~\ref{subsec:IrredR}.
 \end{itemize}
 Our main tool is that Lemma~\ref{lem:MasterLemma} implies Theorem~\ref{thm:MainTheorem} whenever we induce on two sites, i.e.\ whenever $|I|=2$. This let us transfer precious information from $Q_\varepsilon$ to $P_\varepsilon$.

\subsection{Bootstrapping}
\label{subsec:Bootstrapping}

We start our analysis with the case $|I| = 2$, which can be deduced directly from Section~\ref{sec:MasterLemma}. The general case will follow.

\smallskip

If $|I| = 2$, then $\C_0^I$ is $1$-dimensional, so $Q_\varepsilon$ is a scalar, which we shall denote by $q_\varepsilon$. The matrix $P_\varepsilon$ is bi-stochastic, 
and thus can be written as
\begin{equation}
\label{eq:ExpressionRI2}
 P_\varepsilon 
  = \left( \begin{array}{cc}
           1-p_\varepsilon & p_\varepsilon \\
           p_\varepsilon & 1-p_\varepsilon
          \end{array}
  \right)
 = \Id - p_\varepsilon \left( \begin{array}{cc}
           1 & -1 \\
           -1 & 1
          \end{array}
  \right)
 =: \Id-p_\varepsilon R
\end{equation}
for some constant $p_\varepsilon>0$.

\begin{lemma}\quad
\label{lem:MixingLemmaI2}
 
 Assume Hypotheses~\ref{hyp:Recurrence}, \ref{hyp:Mixing} and \ref{hyp:AsymptoticsQ}. Assume furthermore that the matrix $S$ from Hypothesis~\ref{hyp:AsymptoticsQ} is irreducible in the sense of Definition~\ref{def:QMatriceIrreductible}. Assume finally that $|I| = 2$, so that $S = s$ is a scalar. Then
 \begin{equation*}
  p_\varepsilon 
  \sim_{\varepsilon \to 0} \frac{\varepsilon}{2s}.
 \end{equation*}
\end{lemma}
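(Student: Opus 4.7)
My plan is to reduce the scalar $|I|=2$ case to the forward implication (Lemma~\ref{lem:MasterLemma}) by a simple \emph{reparametrization trick}: use $p_\varepsilon$ itself as the small parameter instead of $\varepsilon$. The key observation is that the matrix $R$ of Equation~\eqref{eq:ExpressionRI2} is automatically an irreducible bi-L-matrix whenever $p_\varepsilon > 0$, with $R_0 = 2$ on the one-dimensional space $\C_0^I$, so $R_0^{-1} = 1/2$. Under the change of parameter $\delta := p_\varepsilon$, the identity $P = \Id - \delta R$ is \emph{exact}, so Hypothesis~\ref{hyp:AsymptoticsP} holds trivially as $\delta \to 0$ without any additional information on the rate.

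First I would prove $p_\varepsilon \to 0$ as $\varepsilon \to 0$. By the translation equivariance of the $\Z^d$-extension, both $p_\varepsilon$ and $q_\varepsilon$ depend only on the difference $\sigma_\varepsilon(1) - \sigma_\varepsilon(2) \in \Z^d$. Hypothesis~\ref{hyp:AsymptoticsQ} together with the irreducibility $s > 0$ forces $q_\varepsilon \to +\infty$; were $|\sigma_\varepsilon(1) - \sigma_\varepsilon(2)|$ bounded along some subsequence, $q_\varepsilon$ would take only finitely many (hence bounded) values along that subsequence, a contradiction. Hence $|\sigma_\varepsilon(1) - \sigma_\varepsilon(2)| \to +\infty$, and~\cite[Lemma~3.17]{PeneThomine:2019} then yields $P_\varepsilon \to \Id$, i.e., $p_\varepsilon \to 0$.

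Next, given an arbitrary sequence $\varepsilon_k \to 0$, set $\delta_k := p_{\varepsilon_k}$, so $\delta_k \to 0$ by the previous step. Consider the sequence of injections $(\sigma_{\varepsilon_k})$ re-indexed by $\delta_k$; along this re-labeled sequence Hypothesis~\ref{hyp:AsymptoticsP} holds exactly with the irreducible bi-L-matrix $R$ above. The proof of Lemma~\ref{lem:MasterLemma} uses $\varepsilon$ only as a small parameter tending to $0$, and every estimate is uniform, so its conclusion applies along any such sequence. This yields $q_{\varepsilon_k} = Q_{\delta_k} = (2\delta_k)^{-1} + o(\delta_k^{-1})$. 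Combining with Hypothesis~\ref{hyp:AsymptoticsQ} along the same sequence, $q_{\varepsilon_k} = s/\varepsilon_k + o(\varepsilon_k^{-1})$, we obtain $p_{\varepsilon_k}/\varepsilon_k \to 1/(2s)$. Since the sequence $(\varepsilon_k)$ was arbitrary, this yields $p_\varepsilon \sim \varepsilon/(2s)$.

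The principal delicate point is invoking Lemma~\ref{lem:MasterLemma} along the discrete sequence $(\delta_k)$ rather than a continuous family indexed by a real parameter. This amounts to verifying that nothing in the proof of Lemma~\ref{lem:MasterLemma} uses continuity in $\varepsilon$: each estimate in Section~\ref{sec:MasterLemma} is stated pointwise in $\varepsilon$ with uniform constants, and the conclusion is obtained by letting $\varepsilon \to 0$ through an arbitrary sequence, so the extraction is legitimate. Beyond this bookkeeping, the argument is essentially a one-dimensional computation exploiting the fact that in the $|I|=2$ setting the bi-stochastic constraint reduces $P_\varepsilon$ to a single scalar degree of freedom.
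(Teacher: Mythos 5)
Your proof is correct and takes essentially the same route as the paper: first deduce $p_\varepsilon \to 0$ from the divergence of $q_\varepsilon$ (via the growth of the distance between the two sites and the cited lemma of P\`ene--Thomine), then apply Lemma~\ref{lem:MasterLemma} to the exact identity $P_\varepsilon = \Id - p_\varepsilon R$ with $R_0 = 2\Id$, and compare with Hypothesis~\ref{hyp:AsymptoticsQ} to get $p_\varepsilon \sim \varepsilon/(2s)$. The only difference is presentational: you make explicit the reparametrization by $p_\varepsilon$ and the subsequence bookkeeping that the paper's proof leaves implicit when it writes $q_\varepsilon \sim p_\varepsilon^{-1} R_0^{-1}$.
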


\begin{proof}
 
 In the $|I| = 2$ case, Hypothesis~\ref{hyp:AsymptoticsQ} reads $q_\varepsilon \sim_{\varepsilon \to 0} \varepsilon^{-1} s$. 
 The irreducibility of $S$ in the sense of Definition~\ref{def:QMatriceIrreductible} is equivalent to the property that $s>0$.
 
 \smallskip
 
 As a consequence, $\lim_{\varepsilon \to 0} q_\varepsilon = + \infty$. This is only possible if the distance between the two sites of $\Sigma_\varepsilon$ grows to $+ \infty$ when $\varepsilon$ vanishes. By~\cite[Lemma~4.17]{PeneThomine:2019}, we have $\lim_{\varepsilon \to 0} p_\varepsilon = 0$: indeed, if the two sites are very far away one from the other, then the probability of transitioning from one to the other gets very small.
 
 \smallskip
 
 The matrix $R$ in equation~\eqref{eq:ExpressionRI2} is irreducible. Hence, by Lemma~\ref{lem:MasterLemma},
 \begin{equation*}
    q_\varepsilon 
    \sim_{\varepsilon \to 0} \frac{1}{p_\varepsilon} R_0^{-1}.
 \end{equation*}
 But $R(f) = 2f$ for all $f \in \C_0^I$, or in other words $R_0 = 2\Id$, so
 \begin{equation*}
    q_\varepsilon 
    \sim_{\varepsilon \to 0} \frac{1}{2 p_\varepsilon} 
    \sim_{\varepsilon \to 0} \frac{1}{\varepsilon} s,
 \end{equation*}
 from which the claim follows.
\end{proof}

\begin{remark}\quad
 
 The $|I|=2$ case was one of the main results of~\cite{PeneThomine:2019}, where it was proved using an argument on the distributional asymptotics of the Birkhoff sums for $\widetilde{T}$ of $\mathbf{1}_{[\sigma_\varepsilon(i)]}-\mathbf{1}_{[\sigma_\varepsilon(j)]}$. Lemma~\ref{lem:MixingLemmaI2} is already a more general variant of~\cite[Corollary~2.9]{PeneThomine:2019}.
 
 \smallskip
 
 In addition, much of the proof of Lemma~\ref{lem:MasterLemma} can be simplified in the case $|I|=2$, since we can replace 
 matrices by scalars, giving a more streamlined proof of Lemma~\ref{lem:MixingLemmaI2}. More generic tools, for instance from~\cite{KellerLiverani:2009a}, may be applicable.
\end{remark}

The boundedness of the family $\varepsilon^{-1}(\Id-P_\varepsilon)$ in the general case readily follows.

\begin{lemma}\quad
\label{lem:MatriceTransitionBornee}
 
 Assume Hypotheses~\ref{hyp:Recurrence}, \ref{hyp:Mixing} and \ref{hyp:AsymptoticsQ}. Assume furthermore that the matrix $S$ from Hypothesis~\ref{hyp:AsymptoticsQ} is irreducible in the sense of Definition~\ref{def:QMatriceIrreductible}.
 
 \smallskip
 
 Then $\Id-P_\varepsilon = O(\varepsilon)$.
\end{lemma}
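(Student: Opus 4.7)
The plan is to reduce the claim to the two-site case and invoke Lemma~\ref{lem:MixingLemmaI2}. For each pair $i \neq j \in I$, induce further on $[\{i,j\}] \subset [\Sigma_\varepsilon]$ and let $p_\varepsilon^{(i,j)}$ denote the off-diagonal entry of the resulting $2\times 2$ bi-stochastic transition matrix. Since any trajectory starting at $[i]$ whose first return to $[\Sigma_\varepsilon]$ lies in $[j]$ a fortiori has not visited $[\{i,j\}] \setminus [j]$ beforehand, the event ``first return to $[\Sigma_\varepsilon]$ is at $[j]$'' is contained in the event ``first return to $[\{i,j\}]$ is at $[j]$'', so $P_{\varepsilon, ij} \leq p_\varepsilon^{(i,j)}$.

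Next, I would check that Hypothesis~\ref{hyp:AsymptoticsQ} with its irreducibility condition also holds for the two-site subsystem. Set $g_\varepsilon := (\Id - \Lcal_\varepsilon)^{-1} (\mathbf{1}_{[i]} - \mathbf{1}_{[j]}) \in \Bcal_{I, 0}$. Since the right-hand side is supported on $[\{i,j\}]$, the balayage identity for the transfer operator, applied inside the induced ergodic recurrent system $([\Sigma_\varepsilon], \mu_{[\Sigma_\varepsilon]}, T_{[\Sigma_\varepsilon]})$, gives $(\Id - \Lcal_\varepsilon^{\{i,j\}}) g_\varepsilon|_{[\{i,j\}]} = \mathbf{1}_{[i]} - \mathbf{1}_{[j]}$. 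The restriction $g_\varepsilon|_{[\{i,j\}]}$ may fail to have $\mu_{[\{i,j\}]}$-mean zero, but subtracting its mean only changes the pairing against the mean-zero source $\mathbf{1}_{[i]} - \mathbf{1}_{[j]}$ by $0$. Accounting carefully for the different normalizations of $\mu_I$ and $\mu_{[\{i,j\}]}$ (which produces a factor $|I|/2$) leads to the clean identity
\begin{equation*}
 \langle \mathbf{1}_i - \mathbf{1}_j, Q_\varepsilon^{\{i,j\}}(\mathbf{1}_i - \mathbf{1}_j) \rangle_{\ell^2(\{i,j\})}
 = \langle \mathbf{1}_i - \mathbf{1}_j, Q_\varepsilon (\mathbf{1}_i - \mathbf{1}_j) \rangle_{\ell^2(I)}.
\end{equation*}

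Combining this identity with Hypothesis~\ref{hyp:AsymptoticsQ} for the $I$-site problem and with Definition~\ref{def:QMatriceIrreductible} applied to $S$, I obtain that the scalar $q_\varepsilon^{(i,j)}$ by which $Q_\varepsilon^{\{i,j\}}$ acts on the one-dimensional space $\C_0^{\{i,j\}}$ satisfies $q_\varepsilon^{(i,j)} \sim \varepsilon^{-1} s_{ij}$ with $s_{ij} := \tfrac{1}{2}\langle \mathbf{1}_i - \mathbf{1}_j, S (\mathbf{1}_i - \mathbf{1}_j) \rangle > 0$. This is exactly Hypothesis~\ref{hyp:AsymptoticsQ} together with its (scalar) irreducibility condition for the two-site problem, so Lemma~\ref{lem:MixingLemmaI2} applies and yields $p_\varepsilon^{(i,j)} \sim \varepsilon/(4 s_{ij})$, in particular $p_\varepsilon^{(i,j)} = O(\varepsilon)$. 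With $P_{\varepsilon, ij} \leq p_\varepsilon^{(i,j)}$, this gives $P_{\varepsilon, ij} = O(\varepsilon)$ for every $i \neq j \in I$; by the bi-stochasticity of $P_\varepsilon$ (Lemma~\ref{lem:MatriceBiStochastique}) the diagonal terms absorb the off-diagonal row sums, and $\Id - P_\varepsilon = O(\varepsilon)$.

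The main delicate step is the pairing identity: one must verify that the balayage identity may be invoked inside the already-induced system $([\Sigma_\varepsilon], \mu_{[\Sigma_\varepsilon]}, T_{[\Sigma_\varepsilon]})$ (which is itself ergodic, recurrent and measure-preserving, so the hypotheses of the balayage identity for transfer operators are met) and keep the normalization factors straight. The comparison $P_{\varepsilon, ij} \leq p_\varepsilon^{(i,j)}$ and the final bi-stochasticity argument are essentially routine.
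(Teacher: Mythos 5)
Your argument is correct and is essentially the paper's own proof: for each pair $i \neq j$ you pass to the two-site potential via the balayage identity inside the induced system (the paper phrases this as recovering $\widehat{Q}_\varepsilon$ from $Q_\varepsilon$ by extending by zero, applying $Q_\varepsilon$, restricting and adding a constant), check that the irreducibility of $S$ gives a positive scalar asymptotic, invoke Lemma~\ref{lem:MixingLemmaI2}, bound $P_{\varepsilon, ij}$ by the two-site hitting probability, and finish with bi-stochasticity. The only blemish is the constant: with $q_\varepsilon^{(i,j)} \sim \varepsilon^{-1} s_{ij}$, Lemma~\ref{lem:MixingLemmaI2} yields $p_\varepsilon^{(i,j)} \sim \varepsilon/(2 s_{ij})$ rather than $\varepsilon/(4 s_{ij})$, which is immaterial for the $O(\varepsilon)$ conclusion.
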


\begin{proof}
 
 Let $i \neq j \in I$. Let $\widehat{\Sigma}_\varepsilon := \{\sigma_\varepsilon (i), \sigma_\varepsilon (j)\} \subset \Sigma_\varepsilon$. The potential operator $\widehat{Q}_\varepsilon$ associated with the induced map on $\widehat{\Sigma}_\varepsilon$ can be recovered from $Q_\varepsilon$ in the following way:
 \begin{itemize}
  \item Take a function $f \in \C_0^{\{i, j\}}$. Extend $f$ to a function $f \mathbf{1}_{\{i,j\}} \in \C_0^I$ by setting $f \mathbf{1}_{\{i,j\}} (k) := 0$ for $k \notin \{i,j\}$.
  \item Apply $Q_\varepsilon$ to $f \mathbf{1}_{\{i,j\}}$ to get a function $g \in \C_0^I$.
  \item Restrict $g$ to $\{i,j\}$, to get a function $g_{|\{i,j\}} \in \C^{\{i,j\}}$.
  \item Add a constant to $g_{|\{i,j\}}$ to get the function $\widehat{Q}_\varepsilon f \in \C_0^{\{i, j\}}$.
 \end{itemize}
 The family $(\widehat{Q}_\varepsilon)_{\varepsilon > 0}$ thus has the asymptotics
 \begin{equation*}
  \widehat{Q}_\varepsilon 
  = \varepsilon^{-1} \widehat{S} + o(\varepsilon^{-1}),
 \end{equation*}
 where $\widehat{S}$ is the restriction of $S$ to $\C_0^{\{i, j\}}$ composed with the projection onto $\C_0^{\{i, j\}}$ parallel to constants. Since $(\mathbf{1}_i - \mathbf{1}_j)$ is supported on $\{i,j\}$ and has zero sum,
 \begin{equation*}
  \langle (\mathbf{1}_i-\mathbf{1}_j), \widehat{S} (\mathbf{1}_i-\mathbf{1}_j) \rangle_{\ell^2 (I)} 
  = \langle (\mathbf{1}_i-\mathbf{1}_j), S (\mathbf{1}_i-\mathbf{1}_j) \rangle_{\ell^2 (I)} 
  > 0,
 \end{equation*}
 so that $\widehat{S}$ is also irreducible in the sense of Definition~\ref{def:QMatriceIrreductible}. By Lemma~\ref{lem:MixingLemmaI2},
 \begin{equation}
 \label{eq:BorneRetourTheta}
    \mu \left( (\widetilde{T}^n (x,\sigma_\varepsilon (i)))_{n \geq 1} \text{ reaches } [\sigma_\varepsilon (j)] \text{ before going back to } [\sigma_\varepsilon (i)] \right) 
    \sim_{\varepsilon \to 0} \frac{\varepsilon}{\langle (\mathbf{1}_i-\mathbf{1}_j), \widehat{S} (\mathbf{1}_i-\mathbf{1}_j) \rangle_{\ell^2 (I)}}.
\end{equation}
But the left hand-side is larger than or equal to $(P_\varepsilon)_{ij}$, so that all the off-diagonal terms of $P_\varepsilon$ are in $O(\varepsilon)$.
\end{proof}

\subsection{Irreducibility of the bi-L-matrix}
\label{subsec:IrredR}

We now end the proof of Theorem~\ref{thm:MainTheorem} when there are more than two sites. Up to extraction of a subsequence, we may assume that $\Id-P_\varepsilon = \varepsilon R + o(\varepsilon)$. The matrix $R$ is a bi-L-matrix. We want to show that $R_0^{-1} = S$. This is a consequence of Lemma~\ref{lem:MasterLemma} if $R$ is irreducible. Our next focus is to prove 
that $R$ is indeed irreducible.

\begin{lemma}\quad
\label{lem:SToIrredR}
 
 Assume Hypotheses~\ref{hyp:Recurrence}, \ref{hyp:Mixing} and~\ref{hyp:AsymptoticsQ}. 
Assume furthermore that the matrix $S$ from Hypothesis~\ref{hyp:AsymptoticsQ} is irreducible 
 in the sense of Definition~\ref{def:QMatriceIrreductible}.
 
 \smallskip
 
 Let $R$ be an adherence point of $(\varepsilon^{-1} (\Id-P_\varepsilon))_{\varepsilon >0}$. 
 Then $R$ is an irreducible bi-L-matrix.
\end{lemma}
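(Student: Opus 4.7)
The plan is to argue by contradiction: assuming $R$ is not irreducible, I will exhibit a pair $(i,j)\in I\times I$ for which the probability of reaching $[\sigma_\varepsilon(j)]$ before returning to $[\sigma_\varepsilon(i)]$ is $o(\varepsilon)$, contradicting the $\Theta(\varepsilon)$ estimate obtained by applying Lemma~\ref{lem:MixingLemmaI2} to the two-site system $\{i,j\}$. That $R$ is a bi-L-matrix to begin with is essentially automatic: by Lemma~\ref{lem:MatriceBiStochastique} each $P_\varepsilon$ is bi-stochastic, so each $\Id-P_\varepsilon$ is a bi-L-matrix, and the conditions in Definition~\ref{def:LMatrice} are homogeneous and closed under pointwise limits, so any adherence point of $\varepsilon^{-1}(\Id-P_\varepsilon)$ is again a bi-L-matrix.

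\textbf{Step 1: structural reduction.} Suppose $R$ is not irreducible. By Definition~\ref{def:LMatrice} there exist $i_0\in I$ and a non-trivial $V\subsetneq I$ containing $i_0$ (namely the set of indices reachable from $i_0$ through sequences of strictly negative off-diagonal entries of $R$) such that $R_{k\ell}=0$ for every $k\in V$, $\ell\in V^c$. I claim $R$ is in fact block-diagonal with respect to $(V,V^c)$, i.e.\ also $R_{\ell k}=0$ for $\ell\in V^c$, $k\in V$. Indeed, the vanishing of $R_{k\ell}$ for $k\in V$, $\ell\in V^c$ together with the row-sum-zero condition forces $R|_V$ to have row sums zero, so $\sum_{k,\ell\in V}R_{k\ell}=0$. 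On the other hand, the column-sum-zero condition gives, for each $\ell\in V$, $\sum_{k\in V}R_{k\ell}=-\sum_{k\in V^c}R_{k\ell}\geq 0$ (the right-hand side is a sum of nonpositive off-diagonal entries). Summing over $\ell\in V$ yields $0=\sum_{\ell\in V}\sum_{k\in V}R_{k\ell}\geq 0$, and the equality case forces $\sum_{k\in V^c}R_{k\ell}=0$ for every $\ell\in V$, hence $R_{\ell k}=0$ for all $\ell\in V^c$, $k\in V$. Refining $V$ if necessary, I may further assume $R|_V$ is itself an irreducible bi-L-matrix on $V$. Fix $i\in V$ and $j\in V^c$.

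\textbf{Step 2: showing $p_{\varepsilon,ij}=o(\varepsilon)$.} Let $p_{\varepsilon,ij}$ denote the probability, starting in $[\sigma_\varepsilon(i)]$, that the $\widetilde T$-trajectory first hits $[\{\sigma_\varepsilon(i),\sigma_\varepsilon(j)\}]$ at $[\sigma_\varepsilon(j)]$; equivalently, the probability that the Markov chain with kernel $P_\varepsilon$ hits $j$ before returning to $i$. Since $j\in V^c$, such a trajectory must exit $V$ before reaching $j$, so a one-step decomposition yields
\begin{equation*}
 p_{\varepsilon,ij}\;\leq\;\sum_{\ell\in V^c} P_{\varepsilon,i\ell}\,+\,\sum_{k\in V\setminus\{i\}} P_{\varepsilon,ik}\,g_k,
\end{equation*}
where $g_k$ is the probability of exiting $V$ before hitting $i$, starting from $k$. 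The first sum is $o(\varepsilon)$ by Step~1. For the second, the key is the estimate $g_k=o(1)$, obtained by coupling: the stochastic kernel $\widehat P_\varepsilon^V$ on $V$ obtained from $P_\varepsilon|_V$ by conditioning on staying in $V$ equals $\Id-\varepsilon R|_V+o(\varepsilon)$, so by Lemma~\ref{lem:SpectreLMatrice} applied to the irreducible bi-L-matrix $R|_V$ its spectral gap is of order $\varepsilon$, and the mean hitting time of $i$ in $\widehat P_\varepsilon^V$ from any state of $V$ is $O(\varepsilon^{-1})$; since the per-step exit probability from $V$ to $V^c$ is uniformly $o(\varepsilon)$, the probability of an exit during an $O(\varepsilon^{-1})$ excursion is $o(1)$. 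Combined with $P_{\varepsilon,ik}=O(\varepsilon)$, this gives $p_{\varepsilon,ij}=o(\varepsilon)$. (The degenerate case $|V|=1$ is immediate: then $R_{i\ell}=0$ for all $\ell\neq i$ forces $1-P_{\varepsilon,ii}=o(\varepsilon)$, so $p_{\varepsilon,ij}\leq 1-P_{\varepsilon,ii}=o(\varepsilon)$.)

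\textbf{Step 3: contradiction.} The restriction argument in the proof of Lemma~\ref{lem:MatriceTransitionBornee} applied to the two-site system $\{i,j\}$ yields $\widehat Q^{\{i,j\}}_\varepsilon=\varepsilon^{-1}\widehat s_{ij}+o(\varepsilon^{-1})$ with $2\widehat s_{ij}=\langle\mathbf{1}_i-\mathbf{1}_j,\,S(\mathbf{1}_i-\mathbf{1}_j)\rangle>0$ by the irreducibility of $S$ in the sense of Definition~\ref{def:QMatriceIrreductible}. Lemma~\ref{lem:MixingLemmaI2} then gives $p_{\varepsilon,ij}\sim\varepsilon/(2\widehat s_{ij})=\Theta(\varepsilon)$, contradicting Step~2. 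The main obstacle is Step~2, specifically the coupling estimate $g_k=o(1)$: it combines the $O(\varepsilon^{-1})$ relaxation time in the restricted irreducible chain (from Lemma~\ref{lem:SpectreLMatrice} applied to $R|_V$) with the $o(\varepsilon)$ per-step leakage from $V$, the latter crucially relying on the block-diagonal structure established in Step~1.
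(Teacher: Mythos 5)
Your Steps 1 and 3 are sound and coincide with the paper's argument: the block structure of a reducible bi-L-matrix (symmetry of the reachability classes forced by the zero column sums) and the $\Theta(\varepsilon)$ lower bound on the two-site transition probability obtained by restricting $Q_\varepsilon$ to $\{i,j\}$ and invoking Lemma~\ref{lem:MixingLemmaI2}, exactly as in the proof of Lemma~\ref{lem:MatriceTransitionBornee}. The genuine gap is Step 2: you treat the sequence of sites visited under $T_\varepsilon$ as a Markov chain with transition kernel $P_\varepsilon$. For a $\Z^d$-extension of a Gibbs-Markov map this is not legitimate: $([I],\mu_I,T_\varepsilon)$ is a deterministic system and $P_\varepsilon$ only records one-step transition probabilities under the invariant measure, not conditional probabilities given the past. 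Consequently your first-step decomposition is not available as stated: after conditioning on $T_\varepsilon(x,i)\in[k]$, the entrance law on $[k]$ is not $\mu$ restricted to $[k]$, so the quantity $g_k$ is not even well defined independently of that law; likewise the conditioned kernel $\widehat P_\varepsilon^V$, the inference ``spectral gap of order $\varepsilon$ $\Rightarrow$ mean hitting time $O(\varepsilon^{-1})$'', and the Wald-type bound ``$o(\varepsilon)$ per-step leakage over $O(\varepsilon^{-1})$ steps $\Rightarrow$ exit probability $o(1)$'' all presuppose Markovianity (and uniform control of conditional one-step probabilities along trajectories) that you have not established. Repairing this along your lines would require distortion or transfer-operator estimates far beyond what you invoke.

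The paper's proof of Step 2 avoids all of this with a purely measure-theoretic argument. With $I_1$ the class of $i$, it sets $B_{\varepsilon,1}:=\{(x,k)\in[I_1]:\ T_\varepsilon(x,k)\in[I\setminus I_1]\}$, whose measure is $o(\varepsilon)$ because $P_{\varepsilon,k\ell}=o(\varepsilon)$ for $k\in I_1$, $\ell\notin I_1$ (a statement about one-step marginals only, hence legitimate). A trajectory from $[i]$ that reaches $[j]$ before returning to $[i]$ must, for the map $T_{\varepsilon,1}$ induced on $[I_1]$, visit $B_{\varepsilon,1}$ before its return to $[i]$; bounding the indicator of this event by the number of such visits and applying Kac's formula to the measure-preserving system $([I_1],\mu_{I_1},T_{\varepsilon,1})$ gives the exact bound $|I_1|\,\mu_{I_1}(B_{\varepsilon,1})=o(\varepsilon)$. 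This uses only measure preservation of the induced map — no Markov property, no spectral gap, no coupling — and is the ingredient your proposal is missing.
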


\begin{proof}
 
Let $R$ be such an adherence point. Since $P_\varepsilon$ is bi-stochastic by Lemma~\ref{lem:MatriceBiStochastique} and the space of bi-L-matrices is closed, $R$ is a bi-L-matrix.

\smallskip

By Equation~\eqref{eq:BorneRetourTheta}, under these hypotheses, for any two distinct $i$, $j \in I$,
\begin{equation*}
  \mu \left( (\widetilde{T}^n (x,\sigma_\varepsilon (i)))_{n \geq 1} \text{ reaches } [\sigma_\varepsilon (j)] \text{ before going back to } [\sigma_\varepsilon (i)] \right) 
  = \Theta (\varepsilon).
\end{equation*}

Inducing on $[\Sigma_\varepsilon]$ yields 
\begin{equation*}
  \mu \left( (T_\varepsilon^n (x,i))_{n \geq 1} \text{ reaches } [j] \text{ before going back to } [i] \right) 
  = \Theta (\varepsilon).
\end{equation*}

We proceed by contradiction: if $R$ is reducible, we prove that there exist $i$ and $j$ such that the above probability is an $o(\varepsilon)$.

\smallskip

Let $\sim_R$ be the relation on $I$ generated by $i \sim_R j$ if $R_{ij} <0$ and $i \sim_R i$. The relation $\sim_R$ is reflexive and transitive. Since the counting measure is in the kernel of $R$, there are no absorbing subsets of $I$, so the relation $\sim_R$ is also symmetric.

\smallskip

Assume that $R$ is not irreducible. Let $I_1$ be an equivalence class for $\sim_R$. Let $i \in I_1$ and $j \in I \setminus I_1$. By definition of $\sim_R$, we have $P_{\varepsilon, k \ell} = o(\varepsilon)$ for all $k \in I_1$ and $\ell \in I \setminus I_1$. Setting
\begin{equation*}
B_{\varepsilon,1} = \{(x,k) \in [I_1]: \ T_\varepsilon (x,k) \in [I \setminus I_1]\},
\end{equation*}
the set of points in $[I_1]$ which exit $[I_1]$ under $T_\varepsilon$, we get $\mu_{I_1} (B_{\varepsilon,1}) = o (\varepsilon)$.

\smallskip

We want to estimate the probability, starting from $[i]$, to hit $[j]$ before going 
back to $[i]$. Let $T_{\varepsilon,1} : [I_1] \to [I_1]$ be the map obtained by inducing $T_\varepsilon$ on $[I_1]$. Let 
\begin{equation*}
\varphi_{[i]}^{T_{\varepsilon, 1}} (x) 
:= \inf \{n \geq 1 : \ T_{\varepsilon, 1}^n (x,i) \in [I] \}.
\end{equation*}
Note that
\begin{equation*}
 \left\{ (T_\varepsilon^n (x,i))_{n \geq 1} \text{ reaches } [j] \text{ before going back to } [i] \right\}
 \subset \left\{ \exists 0 \leq n < \varphi_{[i]}^{T_{\varepsilon, 1}} (x) : \ T_{\varepsilon, 1}^n (x,i) \in B_{\varepsilon,1} \right\},
\end{equation*}
whence, using Kac's formula,
\begin{align*}
 \mu \left( (T_\varepsilon^n (x,i))_{n \geq 1} \text{ reaches } [j] \text{ before going back to } [i] \right)
 & \leq \mu \left( \exists 0 \leq n < \varphi_{[i]}^{T_{\varepsilon, 1}} (x) : \ T_{\varepsilon, 1}^n (x,i) \in B_{\varepsilon,1} \right) \\
 & = \int_A \max_{0 \leq n < \varphi_{[i]}^{T_{\varepsilon, 1}} (x)} \mathbf{1}_{B_{\varepsilon,1}} \circ T_{\varepsilon, 1}^n (x,i) \dd \mu (x) \\
 & \leq \int_A \sum_{0 \leq n < \varphi_{[i]}^{T_{\varepsilon, 1}} (x)} \mathbf{1}_{B_{\varepsilon,1}} \circ T_{\varepsilon, 1}^n (x,i) \dd \mu (x) \\
 & = |I_1| \mu_{I_1} (B_{\varepsilon,1}) \\
 & = o(\varepsilon).
\end{align*}
This gives the contradiction we want. Hence, our additional assumption is false: $R$ is indeed irreducible. 
\end{proof}

Let us wrap up to proof of Theorem~\ref{thm:MainTheorem}.

\begin{proof}[Proof of Theorem~\ref{thm:MainTheorem}]
 
 The direct implication is the content of Lemma~\ref{lem:MasterLemma}; we prove the converse direction. Assume Hypotheses~\ref{hyp:Recurrence} and~\ref{hyp:AsymptoticsQ}. Assume furthermore that the matrix $S$ from Hypothesis~\ref{hyp:AsymptoticsQ} is irreducible in the sense of Definition~\ref{def:QMatriceIrreductible}.
 
 \smallskip
 
 By Lemma~\ref{lem:MatriceTransitionBornee}, the family $(\varepsilon^{-1} (\Id-P_\varepsilon))_{\varepsilon>0}$ is bounded. Let $R$ be one if its adherence points. By Lemma~\ref{lem:SToIrredR}, $R$ is an irreducible bi-L-matrix.
 
 \smallskip
 
 Let $(\varepsilon_k)_{k \geq 0}$ be converging to $0$ such that $P_{\varepsilon_k} = \Id-\varepsilon_k R + o(\varepsilon_k)$. Since $R$ is a bi-L-matrix, it is null on constant functions and preserves $\C_0^I$. By Lemma~\ref{lem:MasterLemma}, we have $S = R_0^{-1}$, whence $R_0 = S^{-1}$. This characterizes the action of $R$ on $\C_0^I$. Hence the matrix $R$ is entirely characterized by $S$, and thus the adherence point of $(\varepsilon^{-1} (\Id-P_\varepsilon))_{\varepsilon>0}$ is unique. 
 This proves that $P_\varepsilon = \Id-\varepsilon R + o(\varepsilon)$, and finishes the proof of Theorem~\ref{thm:MainTheorem}. 
\end{proof}

\section{Fourier transform and perturbations of the transfer operator}
\label{sec:FourierTransform}

Let $(A, \mu, T)$ be an ergodic Gibbs-Markov map with transfer operator $\Lcal$ acting on $\Bcal$. Let $F : A \to \Z^d$ be Markov, and assume that the $\Z^d$-extension $([\Z^d], \widetilde{\mu}, \widetilde{T})$ is ergodic. Let $(\Sigma_\varepsilon)_{\varepsilon > 0}$ be a family of subsets of $\Z^d$. Theorem~\ref{thm:MainTheorem} relates the transition matrices $(P_\varepsilon)_{\varepsilon > 0}$ to the potential operators $(Q_\varepsilon)_{\varepsilon > 0}$.

\smallskip

Our goal is to compute, in a few cases, the asymptotics of the operators $(Q_\varepsilon)_{\varepsilon > 0}$, and in particular the operator $S$ appearing in Hypothesis~\ref{hyp:AsymptoticsQ}; from there we shall deduce, at least in some cases, the asymptotics of the transition matrices $(P_\varepsilon)_{\varepsilon > 0}$. First, we shall develop general tools involving perturbations of transfer operators.

\smallskip

In order to circumvent some summability issues which may arise in the computation of 
\begin{equation*}
 (\Id-\widetilde{\Lcal})^{-1} 
 = \sum_{n \geq 0} \widetilde{\Lcal}^n,
\end{equation*}
we shall approximate this operator by 
\begin{equation*}
 (\Id-\rho \widetilde{\Lcal})^{-1} 
 = \sum_{n \geq 0} \rho^n\widetilde{\Lcal}^n
\end{equation*}
with $|\rho|<1$, and then take the limit as $\rho \to 1$. However, the requires working with a twisted Poisson equation and complicates significantly the relationship between $\widetilde{\Lcal}$ and $\Lcal_\varepsilon$. This will be the object of Subsection~\ref{subsec:PoissonTordue}, where we shall prove the following:

\begin{proposition}\quad
\label{prop:LimitePoissonTordue}
 
 Let $([\Z^d], \widetilde{\mu}, \widetilde{T})$ be an ergodic and recurrent Markov $\Z^d$-extension of a Gibbs-Markov map. Let $I$ be non-empty and finite and $\sigma : I \hookrightarrow \Z^d$ with image $\Sigma$. 
 
 \smallskip
 
 Let $Q_\sigma : \C_0^I \to \C_0^I$ be defined, as in Equation~\eqref{eq:ExpressionIntegraleQ}, by :
 \begin{equation*}
  \langle f, Q_\sigma (g) \rangle_{\ell^2 (I)} 
  := |I| \int_{[\Z^d]} (\Id-\Lcal_{[\Sigma]})^{-1} \left( \sum_{i \in I} f_i \mathbf{1}_{[\sigma(i)]} \right) \cdot  \left(\sum_{j \in I} g_j \mathbf{1}_{[\sigma(j)]}\right) \dd \mu_{[\Sigma]}.
 \end{equation*}
 Then:
 \begin{equation*}
  \langle f, Q_\sigma (g) \rangle_{\ell^2 (I)} 
  = \lim_{\rho \to 1^-} \sum_{n \geq 0} \rho^n \sum_{i, j \in I} f_i g_j \mu\left(S_n F = \sigma(j) - \sigma(i)\right).
 \end{equation*}
\end{proposition}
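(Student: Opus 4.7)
The plan is to write both sides as $\rho \to 1^-$ limits of expressions involving $(\Id - \rho\widetilde{\Lcal})^{-1}$, then use the twisted balayage identity~\cite[Lemma~1.7]{PeneThomine:2020} to pass from the full extension to the induced map on $[\Sigma]$.

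First I would rewrite the right-hand side in terms of $\widetilde{\Lcal}$. Setting $F := \Pi^*(f) = \sum_i f_i \mathbf{1}_{[\sigma(i)]}$ and $G := \Pi^*(g)$, both supported on $[\Sigma]$, the equality $\mu(S_n F = \sigma(j) - \sigma(i)) = \widetilde{\mu}([\sigma(i)] \cap \widetilde{T}^{-n}[\sigma(j)])$ combined with the duality defining $\widetilde{\Lcal}$ yields, for each $\rho \in [0,1)$,
\begin{equation*}
\sum_{n \geq 0} \rho^n \sum_{i,j \in I} f_i g_j\, \mu(S_n F = \sigma(j) - \sigma(i))
= \int_{[\Z^d]} (\Id - \rho \widetilde{\Lcal})^{-1}(F) \cdot G \, \dd \widetilde{\mu}.
\end{equation*}
The Neumann series $(\Id - \rho \widetilde{\Lcal})^{-1} = \sum_n \rho^n \widetilde{\Lcal}^n$ converges in operator norm on $\Lbb^\infty([\Z^d], \widetilde{\mu})$ because $\widetilde{\Lcal}$ is a weak contraction and $|\rho|<1$.

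Next I would invoke~\cite[Lemma~1.7]{PeneThomine:2020}: for $|\rho|<1$ and $F$ supported on $[\Sigma]$, the restriction $(\Id - \rho \widetilde{\Lcal})^{-1}(F)_{|[\Sigma]}$ can be expressed as the resolvent at $1$ of a $\rho$-weighted induced transfer operator $\Lcal_{[\Sigma], \rho}$ (obtained by weighting each first-return of length $n$ by $\rho^n$), which degenerates to $\Lcal_{[\Sigma]}$ when $\rho \to 1^-$. Because $F = \Pi^*(f) \in \Bcal_{[\Sigma], 0}$ and $\Lcal_{[\Sigma]}$ has a spectral gap on $\Bcal_{[\Sigma], 0}$ (end of Sub-subsection~\ref{subsubsec:TransferOperators}), the resolvents $(\Id - \Lcal_{[\Sigma], \rho})^{-1}$ are uniformly bounded for $\rho$ near $1$ and converge to $(\Id - \Lcal_{[\Sigma]})^{-1}$ in operator norm on $\Bcal_{[\Sigma], 0}$.

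Taking the limit $\rho \to 1^-$ and using that $G$ is supported on $[\Sigma]$ (so that the $\widetilde{\mu}$-integral is the same as the $\mu_{[\Sigma]}$-integral), one obtains
\begin{equation*}
\lim_{\rho \to 1^-} \int_{[\Z^d]} (\Id - \rho \widetilde{\Lcal})^{-1}(F) \cdot G \, \dd \widetilde{\mu}
= \int_{[\Z^d]} (\Id - \Lcal_{[\Sigma]})^{-1}(F) \cdot G \, \dd \mu_{[\Sigma]},
\end{equation*}
which, up to the factor $|I|$ built into the definition of $Q_\sigma$, is $\langle f, Q_\sigma(g) \rangle_{\ell^2(I)}$. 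The delicate step is the second one: the untwisted balayage identity cannot be applied directly, because on a recurrent extension $(\Id - \widetilde{\Lcal})$ has a non-trivial kernel and the Poisson equation $(\Id - \widetilde{\Lcal})(g) = F$ has no $\Lbb^\infty$-solution; the $\rho$-twisted version of~\cite{PeneThomine:2020} circumvents this obstruction, and the spectral gap of $\Lcal_{[\Sigma]}$ on $\Bcal_{[\Sigma], 0}$ makes the passage to the limit $\rho \to 1^-$ harmless.
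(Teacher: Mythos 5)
Your first two reductions are exactly those of the paper: rewrite the right-hand side via the Neumann series and duality as $\int_{[\Z^d]}(\Id-\rho\widetilde{\Lcal})^{-1}(\Pi^*f)\cdot\Pi^*g\,\dd\widetilde{\mu}$, then apply the twisted balayage identity of~\cite[Lemma~1.7]{PeneThomine:2020} to replace this by the resolvent of the weighted induced operator $\Lcal_k:=\Lcal_{[\Sigma]}(e^{-k\varphi_{[\Sigma]}}\,\cdot)$ with $k=-\ln\rho$. The gap is in your final step. You justify the passage to the limit by saying that the resolvents $(\Id-\Lcal_{[\Sigma],\rho})^{-1}$ are uniformly bounded and converge in operator norm on $\Bcal_{[\Sigma],0}$ because $\Lcal_{[\Sigma]}$ has a spectral gap there. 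But $\Bcal_{[\Sigma],0}=\Ker(\mu_{[\Sigma]})$ is \emph{not} invariant under the weighted operator: $T_{[\Sigma]}$ preserves $\mu_{[\Sigma]}$, but the weight $e^{-k\varphi_{[\Sigma]}}$ destroys this, so $\mu_{[\Sigma]}(\Lcal_k h)\neq\mu_{[\Sigma]}(h)$ in general. The correct invariant splitting is along the \emph{perturbed} eigendata $(\lambda_k,h_k,\mu_{I,k})$, and $\Pi^*f$, while $\mu_I$-mean-zero, has a nonzero component along $h_k$ which the resolvent amplifies by $(1-\lambda_k)^{-1}$ --- a quantity that blows up as $\rho\to1^-$. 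So the uniform bound you assert is exactly what needs to be proved, and it rests on a cancellation, not on the unperturbed spectral gap alone.

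Making this rigorous is the actual content of the paper's proof (its Lemma~\ref{lem:Tricherie}). One must show that $1-\lambda_k\sim H(k)$ with $H(k)=\int_{[I]}(1-e^{-k\varphi_{[\Sigma]}})\,\dd\mu_I$, that $\norm{\Lcal_k-\Lcal_{[\Sigma]}}{\Bcal_I\to\Bcal_I}=O(H(k))$, and hence (by Kato perturbation theory) that $h_k-\mathbf{1}$ and $\mu_{I,k}-\mu_I$ are $O(H(k))$; then $\mu_{I,k}(\Pi^*f)=O(H(k))$ cancels the $(1-\lambda_k)^{-1}$ blow-up, and pairing against the mean-zero $\Pi^*g$ produces a further factor $O(H(k))$, so the eigendirection contribution vanishes in the limit while the complementary part converges to $(\Id-\Lcal_{[\Sigma]})^{-1}\Pi^*f$. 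Note also that the operator estimate $\norm{\Lcal_k-\Lcal_{[\Sigma]}}{}=O(H(k))$ is itself not routine: since the extension is infinite-measure, $\varphi_{[\Sigma]}$ is not integrable, $H(k)$ is not $O(k)$, and one cannot simply differentiate in $k$; the paper obtains it via the auxiliary $\Lip^1$ norm together with the regularizing property of $\Lcal_{[\Sigma]}$ (Proposition~\ref{prop:ContinuiteL1LInfty}). None of this machinery appears in your proposal, so as written the argument does not close.
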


For random walks, the computation of $Q_\varepsilon = Q_{\sigma_\varepsilon}$ involves taking the Fourier transform of the distribution of $F$, or in other words, uses the characteristic function of $F$. As mentioned in Sub-subsection~\ref{subsubsec:Examples}, in a dynamical context, the role of this characteristic function is taken by the main eigenvalue of twisted transfer operators. We recall that these operators are defined, for $w \in \Tbb^d := \R^d_{2 \pi \Z^d}$, by
\begin{equation}
 \Lcal_w (\cdot)
 = \Lcal (e^{i \langle w, F \rangle} \cdot),
\end{equation}
which acts on $\Lbb^p (A, \mu)$ for all $p \in [1, \infty]$, as well as on $\Bcal$. 

\smallskip

In addition, we also define a Fourier transform for functions defined on $I$, transporting them first to $\Sigma$ before taking their Fourier transform on $\Z^d$:

\begin{definition}[Fourier transform operator]\quad
 
 Let $d \geq 1$, $I$ be finite and $\sigma : I \hookrightarrow \Z^d$ an injective map. We define for all $f\in \C^I$:
 \begin{equation}
  \Fcal_\sigma (f) (w) 
  := \sum_{j \in I} f_j e^{-i\langle w, \sigma (j)\rangle} \quad \forall w \in \Tbb^d
 \end{equation}
 the Fourier transform of the function taking value $f_j$ on $\sigma (j)$ and $0$ everywhere else. We also set $\widecheck{\Fcal}_\sigma (f) (w) := \Fcal_\sigma (f) (-w)$.
 
 \smallskip
 
 Given a family $\sigma_\varepsilon : I \hookrightarrow \Z^d$ of injective maps, we denote by $\Fcal_\varepsilon = \Fcal_{\sigma_\varepsilon}$ the corresponding Fourier transform operators.
\end{definition}

We want to understand the behaviour of $\sum_{n \geq 0} \widetilde{\Lcal}^n$ using Fourier transform, which implies understanding the behaviour of $\Lcal_w$ for all $w \in \Tbb^d$. This will be done in two steps. In Subsection~\ref{subsec:Periodicites}, 
we explore the eigenvalues of modulus $1$ of $\Lcal_w$ in order to ensure that $(\Id-\Lcal_w)^{-1}$ is well-defined. Then, in Subsection~\ref{subsec:TransformeeFourier}, we apply this study together with Proposition~\ref{prop:LimitePoissonTordue} to get the proposition mentioned in Sub-subsection~\ref{subsubsec:Examples}:

\begin{proposition}\quad
\label{prop:LimitePoissonTF}
 
 Let $([\Z^d], \widetilde{\mu}, \widetilde{T})$ be an ergodic and recurrent Markov $\Z^d$-extension of a Gibbs-Markov map. Let $I$ be non-empty and finite and $\sigma : I \hookrightarrow \Z^d$ with image $\Sigma$. Then, for all $f$, $g \in \C_0^I$:
 
 \smallskip
 
 \begin{equation}
 \label{eq:ExpressionPotentielFourier}
 \langle f, Q_\sigma (g) \rangle_{\ell^2 (I)} 
 = \lim_{\rho \to 1^-} \frac{1}{(2\pi)^d} \int_{\Tbb^d} \widecheck{\Fcal}_\sigma (f) \Fcal_\sigma (g) \left( \int_A (\Id-\rho \Lcal_w )^{-1} (\mathbf{1})\dd \mu \right) \dd w.
 \end{equation}

 \smallskip
 
 In particular, let $U$ be a small enough neighbourhood of $0 \in \Tbb^d$. Let $\lambda_w$ be the main eigenvalue of $\Lcal_w$ such that $\lambda_0 = 1$, which is well-defined and continuous on $U$. Then:
 \begin{equation}
 \label{eq:BonneBornePourTF}
  \langle f, Q_\sigma (g) \rangle_{\ell^2 (I)} 
  = \lim_{\rho \to 1^-} \frac{1}{(2\pi)^d} \int_U \widecheck{\Fcal}_\sigma (f) \Fcal_\sigma (g) \frac{1+\delta_w}{1-\rho \lambda_w} \dd w + O_U (1) \norm{f}{} \cdot \norm{g}{},
 \end{equation}
 where $w \mapsto \delta_w$ is continuous on a neighbourhood of $0\in \Tbb^d$ with $\delta_0 = 0$, and the error term $O_U (1)$ is uniform in $\rho$ and $\sigma$ but depends on $U$. 
\end{proposition}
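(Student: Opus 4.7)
The plan is to derive \eqref{eq:ExpressionPotentielFourier} from Proposition~\ref{prop:LimitePoissonTordue} by applying Fourier inversion on $\Z^d$ termwise in the series, and then to obtain \eqref{eq:BonneBornePourTF} by splitting the integral over $\Tbb^d$ using the spectral theory of the twisted transfer operators $\Lcal_w$. First, Fourier inversion gives
\begin{equation*}
 \mu(S_n F = \sigma(j) - \sigma(i)) = \frac{1}{(2\pi)^d} \int_{\Tbb^d} e^{-i\langle w, \sigma(j) - \sigma(i)\rangle} \int_A e^{i \langle w, S_n F\rangle} \dd \mu \dd w.
\end{equation*}
A direct recursion on $n$, using the identity $\psi \cdot \Lcal(\phi) = \Lcal(\phi \cdot \psi \circ T)$, yields $\Lcal_w^n(\phi) = \Lcal^n(e^{i \langle w, S_n F\rangle} \phi)$, whence $\int_A \Lcal_w^n(\mathbf{1}) \dd \mu = \int_A e^{i \langle w, S_n F\rangle} \dd \mu$ since $\Lcal$ preserves integrals. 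The sum over $(i,j) \in I^2$ factors as $\widecheck{\Fcal}_\sigma(f)(w) \cdot \Fcal_\sigma(g)(w)$, and for each fixed $\rho \in (0,1)$ the geometric series $\sum_{n \geq 0} \rho^n \Lcal_w^n = (\Id - \rho \Lcal_w)^{-1}$ converges in operator norm on $\Lbb^1(A, \mu)$ uniformly in $w$ (since $\Lcal_w$ is a weak $\Lbb^1$-contraction), which justifies Fubini. Combining with Proposition~\ref{prop:LimitePoissonTordue} proves \eqref{eq:ExpressionPotentielFourier}.

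To extract \eqref{eq:BonneBornePourTF}, the plan is to split $\Tbb^d = U \sqcup (\Tbb^d \setminus U)$ and treat each region separately. On a small enough neighbourhood $U$ of $0$, Nagaev--Guivarc'h perturbation theory applies: since $\Lcal$ acts quasi-compactly on $\Bcal$ with $1$ as a simple isolated eigenvalue (corresponding to the projector $\Pi_0 = \mathbf{1} \otimes \mu$), for $w$ close enough to $0$ we have a decomposition $\Lcal_w = \lambda_w \Pi_w + N_w$, where $\lambda_w$, $\Pi_w$ and $N_w$ depend continuously on $w$, $\Pi_w$ is a rank-one projector with $\Pi_w N_w = N_w \Pi_w = 0$, and $\sup_{w \in U} \rho_\infty(N_w) \leq \rho_0 < 1$ for some $\rho_0$. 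Setting $\delta_w := \int_A \Pi_w(\mathbf{1}) \dd \mu - 1$, continuous with $\delta_0 = 0$, we obtain
\begin{equation*}
 \int_A (\Id - \rho \Lcal_w)^{-1}(\mathbf{1}) \dd \mu = \frac{1+\delta_w}{1-\rho\lambda_w} + \int_A (\Id - \rho N_w)^{-1}(\mathbf{1} - \Pi_w \mathbf{1}) \dd \mu,
\end{equation*}
and the second term is bounded uniformly in $w \in U$ and $\rho \in [0,1]$, contributing at most $O_U(1) \norm{f}{} \cdot \norm{g}{}$ after trivially bounding $|\widecheck{\Fcal}_\sigma(f)| \leq \sum_i |f_i|$ and $|\Fcal_\sigma(g)| \leq \sum_j |g_j|$.

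For the complementary region $\Tbb^d \setminus U$, the analysis carried out in Subsection~\ref{subsec:Periodicites} (relying on ergodicity of the $\Z^d$-extension to rule out any $w \not\equiv 0 \pmod{2\pi \Z^d}$ at which $\Lcal_w$ would have an eigenvalue of modulus $1$) combined with the quasi-compactness of each $\Lcal_w$ and the continuity $w \mapsto \Lcal_w$ yields, after a compactness argument on $\Tbb^d \setminus U$, a uniform bound
\begin{equation*}
 \sup_{w \in \Tbb^d \setminus U, \; \rho \in [0,1]} \norm{(\Id - \rho \Lcal_w)^{-1}}{\Bcal \to \Bcal} < + \infty.
\end{equation*}
Integrating against $\widecheck{\Fcal}_\sigma(f) \Fcal_\sigma(g)$ produces a contribution of the form $O_U(1) \norm{f}{} \cdot \norm{g}{}$, uniform in $\rho$ and $\sigma$, which can be absorbed in the error term of \eqref{eq:BonneBornePourTF}.

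The main obstacle I anticipate lies in establishing the uniform spectral bound on $\Tbb^d \setminus U$: one needs to rule out the existence of any $w \in \Tbb^d \setminus \{0\}$ for which $\Lcal_w$ has spectral radius equal to $1$. Such a point would correspond to a measurable solution of a cohomological equation $e^{i\langle w, F\rangle} = \psi / \psi \circ T$, which by a classical argument contradicts the ergodicity of the $\Z^d$-extension. Once this pointwise statement is established, upper semi-continuity of the spectrum together with compactness of $\Tbb^d \setminus U$ upgrades it to the required uniform bound, and the remaining steps are routine.
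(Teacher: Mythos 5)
Your proposal is correct and follows essentially the same route as the paper: Proposition~\ref{prop:LimitePoissonTordue}, Fourier inversion on $\Z^d$ with $\int_A e^{i\langle w, S_n F\rangle}\dd\mu = \int_A \Lcal_w^n(\mathbf{1})\dd\mu$, factorization into $\widecheck{\Fcal}_\sigma(f)\Fcal_\sigma(g)$, summation of the geometric series for $\rho<1$, then a Nagaev--Guivarc'h splitting near $w=0$ (your $\delta_w$ coincides with the paper's $\mu_w(\mathbf{1})-1$) and a uniform resolvent bound on $\Tbb^d\setminus U$ via the analysis of Subsection~\ref{subsec:Periodicites}. One small overstatement in your last paragraph: ergodicity of the extension does not rule out eigenvalues of modulus $1$ of $\Lcal_w$ for $w\neq 0$ (Lemma~\ref{lem:DecompositionSpectrale} shows such peripheral eigenvalues can occur at $w=kw_0$); it only excludes the eigenvalue $1$ itself, which is precisely what your cohomological equation detects and is all that is needed, since for real $\rho\in[0,1]$ the quantity $1-\rho\lambda$ can only vanish when $\lambda=1$, so the compactness-plus-continuity argument for the uniform bound goes through unchanged.
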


\begin{conjecture}\quad
 
 If $|1-\lambda_w| \geq c \norm{w}{}^2$ for some $c > 0$, or even $|1-\lambda_w| \geq c \norm{w}{}^{3-\iota}$ for some $c$, $\iota > 0$, then Equation~\eqref{eq:BonneBornePourTF} can be simplified, using the fact that $\Fcal_\sigma (f)$ and $\Fcal_\sigma (f)$ are both $O(\norm{w}{})$ near $0$:
 \begin{equation*}
 \lim_{\rho \to 1^-} \frac{1}{(2\pi)^d} \int_U \widecheck{\Fcal}_\sigma (f) \Fcal_\sigma (g) \frac{1+\delta_{\rho, w}}{1-\rho \lambda_w} \dd w
 = \frac{1}{(2\pi)^d} \int_U \widecheck{\Fcal}_\sigma (f) \Fcal_\sigma (g) \frac{1+\delta_w}{1-\lambda_w} \dd w.
 \end{equation*}
 The condition $|1-\lambda_w| \geq c \norm{w}{}^2$ is true for ergodic random walks, and all examples in Section~\ref{sec:Calculs}. A general argument still eludes the author. 
\end{conjecture}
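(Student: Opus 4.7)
The plan is to reduce the conjecture to a dominated convergence argument on $U$, with the integrability near $w=0$ coming from the cancellation $f, g \in \C_0^I$. Since $\widecheck{\Fcal}_\sigma(f)(0) = \sum_{i \in I} f_i = 0$ and $\Fcal_\sigma(g)(0) = \sum_{j \in I} g_j = 0$, and both functions are smooth trigonometric polynomials, a first-order Taylor expansion yields a constant $C_{f,g}$, depending linearly on $\norm{f}{}$ and $\norm{g}{}$, such that $|\widecheck{\Fcal}_\sigma(f)(w) \Fcal_\sigma(g)(w)| \leq C_{f,g} \norm{w}{}^2$ uniformly on $U$.

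The second ingredient is a uniform lower bound on the denominator. I claim that $|1-\rho \lambda_w| \geq \rho |1 - \lambda_w|$ for all $\rho \in [0,1]$ and $w \in U$, after shrinking $U$ if necessary so that $|\lambda_w| \leq 1$ there. Since $|e^{i\langle w, F\rangle}| \equiv 1$, the twisted transfer operator $\Lcal_w$ is a contraction on $\Lbb^\infty$, so its leading eigenvalue satisfies $|\lambda_w| \leq 1$; writing $\mu_w := 1 - \lambda_w$, expanding $|1-\mu_w|^2 \leq 1$ gives $2\Re(\mu_w) \geq |\mu_w|^2 \geq 0$. Therefore
\begin{equation*}
 |1 - \rho \lambda_w|^2 = (1-\rho)^2 + 2\rho(1-\rho) \Re(\mu_w) + \rho^2 |\mu_w|^2 \geq \rho^2 |1 - \lambda_w|^2,
\end{equation*}
proving the claim. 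Combined with the hypothesis $|1 - \lambda_w| \geq c \norm{w}{}^{3-\iota}$, for $\rho \in [1/2, 1]$ this gives $|1 - \rho \lambda_w| \geq (c/2) \norm{w}{}^{3-\iota}$ on $U$.

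Combining these two estimates and using (see below) that $1 + \delta_{\rho, w}$ is uniformly bounded on $[1/2, 1] \times U$, the integrand is pointwise dominated on $U$ by a constant multiple of $\norm{w}{}^2 \cdot \norm{w}{}^{\iota - 3} = \norm{w}{}^{\iota - 1}$, uniformly in $\rho$. This bound is integrable on a neighborhood of $0 \in \Tbb^d$ whenever $\iota + d - 1 > 0$, which is exactly the content of the condition $\iota > 0$ in the critical dimension $d = 1$ (and is automatic for $d \geq 2$). Pointwise convergence of the integrand as $\rho \to 1^-$ follows from the continuity of $w \mapsto \lambda_w$ and of $\rho \mapsto \delta_{\rho, w}$ at $\rho = 1$; dominated convergence then yields the desired identity.

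The main obstacle is verifying the uniform boundedness of $\delta_{\rho, w}$ on $[1/2, 1] \times U$, together with its pointwise convergence $\delta_{\rho, w} \to \delta_w$ as $\rho \to 1^-$. This boils down to re-examining the derivation of equation~\eqref{eq:BonneBornePourTF} from the Nagaev--Guivarc'h spectral decomposition $\Lcal_w = \lambda_w \Pi_w + N_w$ valid on $U$. Since the spectral radius of $N_w$ is bounded above by some $\rho_0 < 1$ uniformly on $U$ (after shrinking $U$), the resolvent $(\Id - \rho N_w)^{-1}$ is well defined and bounded on $\Bcal$ uniformly in $(\rho, w) \in [0, 1] \times U$. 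Tracing through the decomposition
\begin{equation*}
\int_A (\Id - \rho \Lcal_w)^{-1}(\mathbf{1}) \dd \mu = \frac{\int_A \Pi_w(\mathbf{1}) \dd \mu}{1 - \rho \lambda_w} + \int_A (\Id - \rho N_w)^{-1}(\Id - \Pi_w)(\mathbf{1}) \dd \mu,
\end{equation*}
one reads off the precise form of $\delta_{\rho, w}$ and checks that it is the sum of a $\rho$-independent continuous function vanishing at $w = 0$ and a correction of order $|1 - \rho \lambda_w|$ which is harmless both for boundedness and for the $\rho \to 1^-$ limit. This bookkeeping, though standard, is the only non-trivial technical step.
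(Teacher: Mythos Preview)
The paper does \emph{not} provide a proof of this statement: it is explicitly labelled a Conjecture, and the closing sentence ``A general argument still eludes the author'' confirms that no argument is given. So there is nothing to compare your proof against.

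That said, your argument for the \emph{conditional} statement is essentially correct and complete. The three ingredients --- the $O(\norm{w}{}^2)$ bound on $\widecheck{\Fcal}_\sigma(f)\Fcal_\sigma(g)$ coming from $f,g\in\C_0^I$, the uniform-in-$\rho$ lower bound $|1-\rho\lambda_w|\geq \rho|1-\lambda_w|$ from $|\lambda_w|\leq 1$, and the uniform control on the remainder via the spectral decomposition $\Lcal_w = \lambda_w \Pi_w + N_w$ --- assemble into a clean dominated convergence argument. Your computation of the integrability threshold $\iota+d-1>0$ is correct and makes transparent why the weaker hypothesis $|1-\lambda_w|\geq c\norm{w}{}^{3-\iota}$ suffices in dimension $d=1$ (the only case where the stronger quadratic bound is actually needed). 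One small comment: the paper's $\delta_w = \mu_w(\mathbf{1})-1$ is already $\rho$-independent, so the appearance of $\delta_{\rho,w}$ in the conjecture is likely a notational slip; your bookkeeping via the full decomposition handles either reading.

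It is worth noting that what ``eludes the author'' is almost certainly not the conditional implication you have proved, but rather the \emph{hypothesis} $|1-\lambda_w|\geq c\norm{w}{}^2$ itself, in full generality for ergodic recurrent Gibbs--Markov $\Z^d$-extensions. Your proof does not (and cannot) address that; it takes the hypothesis as given.
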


Equation~\eqref{eq:BonneBornePourTF} is exactly what we need to compute the asymptotics of $(Q_\varepsilon)_{\varepsilon > 0}$ from the asymptotics of the main eigenvalue of the twisted transfer operators $\Lcal_w$. Applications will be given in Section~\ref{sec:Calculs}, where we shall explore a few examples with more constraints on the family $(\Sigma_\varepsilon)_{\varepsilon > 0}$ and the jump function $F$: extensions with square-integrable jumps in dimensions $1$ and $2$, as well as extensions with jumps in the basin of attraction of a L\'evy stable distribution in dimension $d=1$.

\subsection{Twisted Poisson equations}
\label{subsec:PoissonTordue}

Our first goal is to prove Proposition~\ref{prop:LimitePoissonTordue}. Let $([\Z^d], \widetilde{\mu}, \widetilde{T})$ be an ergodic and recurrent Markov $\Z^d$-extension of a Gibbs-Markov map. Let $I$ be non-empty and finite and $\sigma : I \hookrightarrow \Z^d$ with image $\Sigma$.

\begin{definition}\quad
 
 For all $\rho \in B(0,1)$, let $Q_{\sigma, \rho} : \C_0^I \to \C_0^I$ be the operator such that, for all $f$, $g \in \C_0^I$,
 \begin{equation*}
   \langle f, Q_{\sigma, \rho} (g) \rangle_{\ell^2 (I)} 
   := \int_{[\Z^d]} \sum_{n=0}^{+ \infty} \rho^n \widetilde{\Lcal}^n \left( \sum_{i \in I} f_i \mathbf{1}_{[\sigma(i)]} \right) \cdot  \left(\sum_{j \in I} g_j \mathbf{1}_{[\sigma(j)]}\right) \dd \widetilde{\mu}.
 \end{equation*}
 Note that the sum is well-defined since $\rho \widetilde{\Lcal}$ is a strict contraction on $\Lbb^1 ([\Z^d], \widetilde{\mu})$. 
 We define $Q_{\sigma, 1}$ as in Equation~\eqref{eq:ExpressionIntegraleQ}, replacing $\Sigma_\varepsilon$ 
 by $\Sigma$.
\end{definition}

Note that, for all $n \geq 0$,
\begin{align*}
 \int_{[\Z^d]} \widetilde{\Lcal}^n \left( \sum_{i \in I} f_i \mathbf{1}_{[\sigma(i)]} \right) \cdot  \left(\sum_{j \in I} g_j \mathbf{1}_{[\sigma(j)]}\right) \dd \widetilde{\mu} 
 & = \int_{[\Z^d]} \left( \sum_{i \in I} f_i \mathbf{1}_{[\sigma(i)]} \right) \cdot \left(\sum_{j \in I} g_j \mathbf{1}_{[\sigma(j)]}\right) \circ \widetilde{T}^n \dd \widetilde{\mu} \\
 & = \sum_{i, j \in I} f_i g_j \int_{[\sigma(i)]} \mathbf{1}_{[\sigma(j)]} \circ \widetilde{T}^n \dd \mu \\
 & = \sum_{i, j \in I} f_i g_j \mu \left(S_n F = \sigma(j)-\sigma(i)\right),
\end{align*}
so we need to prove that $\lim_{\rho \to 1} Q_{\sigma, \rho} = Q_{\sigma, 1}$.

\smallskip

Note that $[I]$ is endowed with a partition $\alpha_\Sigma$ generated by $\varphi_{[\Sigma]}$. More precisely, elements of $\alpha_\Sigma$ in $[i] \subset [I]$ are elements of the partition $\alpha_{i, \Sigma}$ generated by the stopping time
\begin{equation*}
 \varphi_{i, [\Sigma]} 
  := \inf \{n \geq 1 : \ \sigma(i) + S_n F \in \Sigma\}.
\end{equation*}
That said, we will need the following

\begin{lemma}\quad
\label{lem:Tricherie}
 
 Let $([\Z^d], \widetilde{\mu}, \widetilde{T})$ be an ergodic and recurrent Markov $\Z^d$-extension of a Gibbs-Markov map. Let $I$ be non-empty and finite and $\sigma : I \hookrightarrow \Z^d$ with image $\Sigma$. 
 
 \smallskip
 
 Let $\varphi : [I] \to \R_+$ a nonnegative function constant on the elements of the partition $\alpha_\Sigma$ of $[I]$. Then:
 \begin{itemize}
  \item the family of operators $(\Lcal_k)_{k \geq 0}$ acting on $\Bcal_I$ and defined by $\Lcal_k (f) := \Lcal_{[\Sigma]} (e^{-k \varphi}f)$ is continuous in the operator topology.
  \item for $k$ small enough, there exists a continuous family of eigenvalues $\lambda_k$ of $\Lcal_k$ such that $\lambda_0 = 1$.
  \item setting $H(k) := \int_{[I]} (1-e^{-k\varphi}) \dd \mu_I$, we have $(1-\lambda_k) \sim_{k \to 0} H(k)$.
  \item $\norm{\Lcal-\Lcal_k}{\Bcal_I \to \Bcal_I} = O(H(k))$.
 \end{itemize}
\end{lemma}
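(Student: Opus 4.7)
The plan is to establish the fourth bullet first (which implies the first), and then deduce the second and third bullets from standard perturbation theory for isolated simple eigenvalues. The key structural input is that $\varphi$ is constant on each atom of $\alpha_\Sigma$, so even though $\varphi$ itself need not be Lipschitz, multiplication by $e^{-k\varphi}$ followed by $\Lcal_{[\Sigma]}$ can be analyzed atom by atom. Throughout, I denote by $\varphi(a)$ the common value of $\varphi$ on $a \in \alpha_\Sigma$.

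First I would write, for any $f \in \Bcal_I$,
\begin{equation*}
 \Lcal_{[\Sigma]} (f) - \Lcal_k (f)
 = \Lcal_{[\Sigma]}\left( (1-e^{-k\varphi}) f \right)
 = \sum_{a \in \alpha_\Sigma} (1-e^{-k\varphi(a)}) \Lcal_{[\Sigma]}(\mathbf{1}_a f).
\end{equation*}
Because each $a \in \alpha_\Sigma$ is measurable with respect to the $\sigma$-algebra generated by the stopping time $\varphi_{[\Sigma]}$, the Gibbs-Markov estimates of Corollary~\ref{cor:BorneUniformeTempsArret} apply and give a constant $C>0$, independent of $a$, such that $\norm{\Lcal_{[\Sigma]} (\mathbf{1}_a f)}{\Bcal_I} \leq C \mu_I(a) \norm{f}{\Bcal_I}$. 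Summing and using $1 - e^{-k\varphi(a)} \in [0,1]$ yields
\begin{equation*}
 \norm{\Lcal_{[\Sigma]} - \Lcal_k}{\Bcal_I \to \Bcal_I}
 \leq C \sum_{a \in \alpha_\Sigma} (1-e^{-k\varphi(a)}) \mu_I (a)
 = C \int_{[I]} (1-e^{-k\varphi}) \dd \mu_I
 = C H(k),
\end{equation*}
which is the fourth bullet. Since $H(k) \to 0$ as $k \to 0$ by dominated convergence, the first bullet follows at once.

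Now I would invoke perturbation theory. By Proposition~\ref{prop:ActionInduiteQuasiCompacte} and ergodicity, $\Lcal_{[\Sigma]}$ acts quasi-compactly on $\Bcal_I$ with $1$ as a simple isolated eigenvalue (eigenfunction $\mathbf{1}$, dual eigenvector $\mu_I$), the rest of the spectrum lying inside some disk of radius $\rho < 1$. Combined with the operator-norm continuity just obtained, Kato's perturbation theory yields a neighbourhood of $0$ on which $\Lcal_k$ has a unique eigenvalue $\lambda_k$ close to $1$, simple and isolated, with associated spectral projector $\Pi_k$ and eigenfunction $h_k := \Pi_k \mathbf{1}$ depending analytically on $k$. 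Moreover $\norm{h_k - \mathbf{1}}{\Bcal_I} = O(\norm{\Lcal_{[\Sigma]}-\Lcal_k}{\Bcal_I \to \Bcal_I}) = O(H(k))$. This gives the second bullet.

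For the third bullet, I would extract $\lambda_k$ by integrating against $\mu_I$. Because $\Lcal_{[\Sigma]}$ preserves $\mu_I$,
\begin{equation*}
 \int_{[I]} \Lcal_k (\mathbf{1}) \dd \mu_I
 = \int_{[I]} \Lcal_{[\Sigma]} (e^{-k\varphi}) \dd \mu_I
 = \int_{[I]} e^{-k\varphi} \dd \mu_I
 = 1 - H(k).
\end{equation*}
Expanding $\mathbf{1} = h_k + (\mathbf{1}-h_k)$, writing $\mathbf{1}-h_k$ in the stable complement (on which $\Lcal_k$ has spectral radius bounded away from $1$), and using $\int h_k \dd\mu_I = 1 + O(H(k))$, I get
\begin{equation*}
 1 - H(k) = \lambda_k \int_{[I]} h_k \dd \mu_I + \int_{[I]} \Lcal_k(\mathbf{1}-h_k) \dd \mu_I = \lambda_k (1 + O(H(k))) + O(H(k)) \cdot O(H(k)),
\end{equation*}
where the last error comes from $\norm{\mathbf{1}-h_k}{\Bcal_I} = O(H(k))$ and the uniform bound on $\Lcal_k$. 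Rearranging gives $1 - \lambda_k = H(k) + O(H(k)^2)$, i.e.\ $(1-\lambda_k) \sim H(k)$.

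The main obstacle I expect is the bound $\norm{\Lcal_{[\Sigma]}(\mathbf{1}_a f)}{\Bcal_I} \leq C \mu_I(a) \norm{f}{\Bcal_I}$ with a constant $C$ independent of $a \in \alpha_\Sigma$ and of $\Sigma$: although Corollary~\ref{cor:BorneUniformeTempsArret} is tailored exactly to stopping-time-measurable cylinders, one must verify that $\varphi_{[\Sigma]}$ enters the framework of Definition~\ref{def:TempsDArret} and that the resulting constant only depends on the Gibbs-Markov distortion and big-image constants of $(A,\mu,T)$, not on the arbitrary subset $\Sigma$. Once that step is secured, the remainder of the argument is a routine application of Kato's perturbation theory.
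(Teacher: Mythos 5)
Your overall strategy is the same as the paper's: the fourth bullet via the uniform Gibbs--Markov stopping-time estimate, then Kato perturbation theory for the second bullet, then a first-order eigenvalue computation for the third. For the fourth bullet the paper channels the very same estimate through the auxiliary norm $\norm{\cdot}{\Lip^1([I])}$ and Proposition~\ref{prop:ContinuiteL1LInfty} instead of summing Corollary~\ref{cor:BorneUniformeTempsArret} atom by atom, but these are two packagings of one bound, and your uniformity worry resolves exactly as you hope: each atom of $\alpha_\Sigma$ sitting over a site $p$ is an atom of the partition generated by the stopping time $\varphi_{p,\Sigma}$, and the constant of Corollary~\ref{cor:BorneUniformeTempsArret} depends only on the distortion, expansion and big-image constants of $(A,\mu,T)$, not on $\Sigma$. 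Two small caveats: the first bullet asserts continuity of $k\mapsto\Lcal_k$ at every $k$, not only at $0$, so you should run your estimate with $|e^{-k\varphi}-e^{-k'\varphi}|\leq 1-e^{-|k-k'|\varphi}$ to get $\norm{\Lcal_k-\Lcal_{k'}}{\Bcal_I\to\Bcal_I}=O(H(|k-k'|))$ (this is what the paper proves); and the family is not analytic in $k$ (as $\varphi$ is unbounded, $\Lcal_k$ is in general not even differentiable at $k=0$), only continuous --- which is all you use.

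The genuine flaw is in the bookkeeping for the third bullet. The claim $\int_{[I]}\Lcal_k(\mathbf{1}-h_k)\dd\mu_I=O(H(k)^2)$ does not follow from ``$\norm{\mathbf{1}-h_k}{\Bcal_I}=O(H(k))$ and the uniform bound on $\Lcal_k$'': that argument yields a single factor $O(H(k))$. And with only $\int_{[I]}h_k\dd\mu_I=1+O(H(k))$, your displayed relation gives $1-\lambda_k=H(k)+O(H(k))$, where the unspecified first-order constant could be comparable to $1$, so the equivalence $(1-\lambda_k)\sim H(k)$ does not follow as written. The terms do in fact cancel, but seeing it requires the extra structure $\mu_I\circ\Lcal_{[\Sigma]}=\mu_I$: writing $\int_{[I]}\Lcal_k(\mathbf{1}-h_k)\dd\mu_I=\int_{[I]}(\mathbf{1}-h_k)\dd\mu_I-\int_{[I]}(1-e^{-k\varphi})(\mathbf{1}-h_k)\dd\mu_I$, the second term is $O(H(k)^2)$ and the first recombines with $\lambda_k\mu_I(h_k)$ into $1-(1-\lambda_k)\mu_I(h_k)$, whose error against $1-\lambda_k$ is $(1-\lambda_k)\,O(H(k))=O(H(k)^2)$; alternatively one can prove $\mu_I(h_k)=1+O(H(k)^2)$ by a second-order perturbation argument. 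The paper avoids the issue entirely by normalizing $\mu_I(h_k)=1$ and testing the eigen-equation for $h_k$ (not for $\mathbf{1}$) against $\mu_I$:
\begin{equation*}
 1-\lambda_k
 = \int_{[I]} (\Lcal_{[\Sigma]}-\Lcal_k)h_k \dd\mu_I
 = \int_{[I]} (1-e^{-k\varphi})h_k \dd\mu_I
 = H(k) + \int_{[I]} (1-e^{-k\varphi})(h_k-\mathbf{1}) \dd\mu_I,
\end{equation*}
where the last term is bounded by $H(k)\norm{h_k-\mathbf{1}}{\Lbb^\infty([I],\mu_I)}=O(H(k)^2)$. With that repair (or the cancellation spelled out), your proof is complete.
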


\begin{proof}
 
 We start by proving a strengthened version of the last point. We introduce the auxiliary semi-norm:
 \begin{equation*}
  |f|_{\Lip^1 ([I])} 
  := \sum_{p \in \Sigma} \sum_{a \in \alpha_{p, \Sigma}} \mu (a) |f (\cdot, p)|_{\Lip(a)},
 \end{equation*}
 and the auxiliary norm $\norm{f}{\Lip^1 ([I])} := \norm{f}{\Lbb^1 ([I], \mu_I)} + |f|_{\Lip^1 ([I])}$.
 
 \smallskip
 
 Let $k$, $k' \geq 0$ and $f \in \Bcal_I$. Then
 \begin{align*}
  \norm{e^{-k \varphi}f - e^{-k' \varphi}f}{\Lbb^1 ([I], \mu_I)} 
  & \leq \norm{e^{-\min\{k, k'\} \varphi} }{\Lbb^\infty ([I], \mu_I)} \norm{1-e^{-|k-k'| \varphi}}{\Lbb^1 ([I], \mu_I)} \norm{f}{\Lbb^\infty ([I], \mu_I)} \\
  & \leq H(|k-k'|) \norm{f}{\Lbb^\infty ([I], \mu_I)}.
 \end{align*}
 In addition, seeing $\varphi$ as a function from $\alpha_\Sigma$ to $\R_+$,
 \begin{align*}
  \left| e^{-k \varphi}f - e^{-k' \varphi}f \right|_{\Lip^1 ([I])} 
  & = \sum_{a \in \alpha_\Sigma} \mu_I (a) \left| (e^{-k \varphi} - e^{-k' \varphi}) f \right|_{\Lip (a)} \\
  & = \sum_{a \in \alpha_\Sigma} \mu_I (a) \left| e^{-k \varphi(a)} - e^{-k' \varphi(a)} \right| \cdot |f|_{\Lip (a)} \\
  & \leq \left( \sum_{a \in \alpha_\Sigma} \mu_I (a) \left| 1-e^{-|k-k'| \varphi(a)} \right| \right) \sup_{a \in \alpha_\Sigma} |f|_{\Lip (a)} \\ 
  & \leq H(|k-k'|) \max_{i \in I} \sup_{a \in \alpha^*} |f(\cdot, i)|_{\Lip (a)},
 \end{align*}
 where we used in the last step that $\alpha^*$ is coarser than $\alpha_{i, \Sigma}$ for all $i \in I$. Writing $(e^{- k \varphi} \cdot )$ the operator of multiplication by $e^{- k \varphi}$, these two computations together yield
 \begin{equation*}
  \norm{(e^{- k \varphi} \cdot ) - (e^{- k' \varphi} \cdot )}{\Bcal_I \to \Lip^1 ([I])} 
  \leq H(|k-k'|).
 \end{equation*}
 By Proposition~\ref{prop:ContinuiteL1LInfty}, the operator $\Lcal_{[\Sigma]} : \Lip^1 ([I]) \to \Bcal_I$ is bounded. Composition of the multiplication operators and $\Lcal_{[\Sigma]}$ gives 
 \begin{equation*}
  \norm{\Lcal_k - \Lcal_{k'}}{\Bcal_I \to \Bcal_I} 
  = O(H(|k-k'|)),
 \end{equation*}
 finishing the proof of the fourth point. 
 
 \smallskip
 
 By the dominated convergence theorem, $\lim_{0^+} H = 0$, finishing the proof of the first point.
 
 \smallskip
 
 Since $([\Z^d], \widetilde{\mu}, \widetilde{T})$ is ergodic, so is $([I], \mu_I, T_{\Sigma})$, so $1$ is an isolated eigenvalue of multiplicity $1$ of $\Lcal_{[\Sigma]}$. The second point follows from~\cite[Part~IV.3.5]{Kato:1995}.
 
 \smallskip
 
 All is left to prove is the third point. Let $(h_k)_{k \geq 0}$ be the family of eigenfunctions of $(\Lcal_k)_{k \geq 0}$ corresponding the the eigenvalue $\lambda_k$ and such that $\mu_I (h_k) = 1$, for all small enough $k$. Then $(h_k)_{k \geq 0}$ is continuous and $h_0 = \mathbf{1}$. Furthermore,
 \begin{align*}
  1-\lambda_k 
  & = \int_{[I]} (\Lcal_{[\Sigma]} - \Lcal_k) h_k \dd \mu_I \\
  & = \int_{[I]} (1-e^{-k \varphi}) h_k \dd \mu_I \\ 
  & = H(k) + \int_{[I]} (1-e^{-k \varphi}) (h_k - \mathbf{1}) \dd \mu_I.
 \end{align*}
 Again using~\cite[Part~IV.3.5]{Kato:1995}, $\norm{\mathbf{1} - h_k}{\Bcal_I} = O \left( \norm{\Lcal_{[\Sigma]} - \Lcal_k}{\Bcal_I \to \Bcal_I} \right) = O(H(k))$, 
 so that
 \begin{align*}
  \left| \int_{[I]} (1-e^{-k \varphi}) (h_k - \mathbf{1}) \dd \mu_I \right| 
  & \leq \norm{1-e^{-k \varphi}}{\Lbb^1 ([I], \mu_I)} \norm{\mathbf{1} - h_k}{\Lbb^\infty ([I], \mu_I)} \\
  & \leq H(k)  \norm{\mathbf{1} - h_k}{\Bcal_I} \\
  & = O(H(k)^2),
 \end{align*}
 whence $(1-\lambda_k) \sim_{k \to 0} H(k)$.
\end{proof}

We now proceed to the proof of Proposition~\ref{prop:LimitePoissonTordue}.

\begin{proof}[Proof of Proposition~\ref{prop:LimitePoissonTordue}]

By~\cite[Lemma~1.7]{PeneThomine:2020}, for all $\rho \in (0,1)$ and $f$, $g \in \C_0^I$,
\begin{equation*}
 \frac{1}{|I|}\int_{[\Sigma]} (\Id-\rho \widetilde{\Lcal})^{-1} \left( \sum_{i \in I} f_i \mathbf{1}_{[\sigma(i)]} \right) \cdot \left( \sum_{j \in I} g_j \mathbf{1}_{[\sigma(j)]} \right) \dd \widetilde{\mu}
 = \int_{[I]} (\Id-\Lcal_I (e^{\ln (\rho) \varphi_{[\Sigma]}} \cdot ) )^{-1} \Pi^* (f) \cdot \Pi^* (g) \dd \mu_I
\end{equation*}
where the operator $(\Id-\rho \widetilde{\Lcal})^{-1}$ is defined on $\Lbb^\infty ([\Z^d], \widetilde{\mu})$ and the operator $(\Id-\Lcal_I (e^{\ln (\rho) \varphi_{[\Sigma]}} \cdot ) )^{-1}$ 
on $\Lbb^\infty ([I], \mu_I)$.

\smallskip

We set $k := -\ln(\rho) \geq 0$. Let $\Lcal_k := \Lcal_I (e^{-k \varphi_{[\Sigma]}} \cdot)$. By Lemma~\ref{lem:Tricherie}, there exists an eigendecomposition of $\Lcal_k$, continuous in $k$, such that 
\begin{equation*}
 \Lcal_k 
 = \lambda_k h_k \otimes \mu_{I, k} + Z_k,
\end{equation*}
 with $\mu_{I, k} (h_k) = \mu_I (h_k) = 1$ for all $k$ and $Z_k$ the restriction of $\Lcal_k$ to $\Ker(\mu_k) \subset \Bcal_I$. Note that $Z_k$ converges to the restriction of $\Lcal_I$ to $\Bcal_{I, 0}$, and $h_k$ to $\mathbf{1}$ as well as $\mu_{I, k}$ to $\mu_I$.
 
 \smallskip
 
 We then get for $f$, $g \in \C_0^I$,
 \begin{align*}
  \lim_{\rho \to 1^-} \langle f, Q_{\sigma, \rho} (g) \rangle_{\ell^2 (I)} 
  & = \lim_{k \to 0^+} |I| \int_{[I]} (\Id-\Lcal_k)^{-1} \Pi^* (f) \cdot \Pi^* (g) \dd \mu_I \\ 
  & = \lim_{k \to 0^+} \left( |I| \int_{[I]} (\Id-Z_k)^{-1} (\Pi^* (f) - \mu_{I, k} (\Pi^* (f))h_k ) \cdot \Pi^* (g) \dd \mu_I \right. \\ 
  & \hspace{2cm} \left.+ |I| \int_{[I]} \frac{\Pi^* (g) h_k \mu_{I,k} (\Pi^* (f))}{1-\lambda_k} \dd \mu_I \right).
 \end{align*}
 On the one hand, since $\Pi^* (f) \in \Bcal_{I, 0}$,
 \begin{align*}
  \lim_{k \to 0^+} |I| \int_{[I]} (\Id-Z_k)^{-1} & (\Pi^* (f) - \mu_{I, k} (\Pi^* (f))h_k ) \cdot  \Pi^* (g) \dd \mu_I \\
  & = \lim_{k \to 0^+} |I| \int_{[I]} (\Id-Z_k)^{-1} (\Pi^* (f)) \cdot  \Pi^* (g) \dd \mu_I \\
  & = |I| \int_{[I]} (\Id-\Lcal_{[\Sigma]})^{-1} \Pi^* (f) \cdot \Pi^* (g) \dd \mu_I \\
  & = \langle f, Q_\sigma (g) \rangle_{\ell^2 (I)}.
 \end{align*}
 On the other hand, $\norm{\mathbf{1}-h_k}{\Bcal_I} = O(H(k))$, 
 as well as $\mu_{I,k} (\Pi^* (g)) = O(H(k))$ and $(1-\lambda_k) \sim_{k \to 0^+} H(k)$, 
 where $H(k) = \int_{[I]} (1-e^{-k \varphi_{[\Sigma]}} ) \dd \mu_I$. 
 Hence
 \begin{align*}
  |I| \int_{[I]} \frac{\Pi^* (g) h_k \mu_{I,k} (\Pi^* (f))}{1-\lambda_k} \dd \mu_I 
  & = \int_{[I]} \Pi^* (g) h_k \dd \mu_I \cdot O(1) \\
  & = \left( \int_{[I]} \Pi^* (g) \dd \mu_I + O(H(k)) \right) O(1) \\
  & = O(H(k)).
 \end{align*}
 The claim follows from the fact that $\lim_{0^+} H = 0$.
 \end{proof}

\subsection{Transfer operators and periodicities}
\label{subsec:Periodicites}

After taking a Fourier transform, we will have to work with the inverse of twisted transfer operators $(\Id - \Lcal_w)^{-1}$. We need to find the values of $w$ such that $1 \in \Sp (\Lcal_w \acts \Bcal)$ in order to control these inverse operators. This leads us to a refinement of the spectral analysis done in ~\cite{PeneThomine:2019}, in particular~\cite[Proposition~3.2]{PeneThomine:2019}, so that we can remove the assumption of aperiodicity present in that article.

\smallskip

The action of $\Lcal$ on $\Bcal$ is quasi-compact: it admits $1$ as a simple eigenvalue, has spectral radius $1$ and essential spectral radius $\Lambda^{-1}<1$, where $\Lambda$ is the expansivity constant of the Gibbs-Markov map. The essential 
spectral radius of the operators $\Lcal_w$ is also bounded by $\Lambda^{-1}$, as they satisfy a Lasota-Yorke inequality with the same constants.

\begin{lemma}\quad
\label{lem:DecompositionSpectrale}

 Let $([\Z^d], \widetilde{\mu}, \widetilde{T})$ be a Markov $\Z^d$-extension of a Gibbs-Markov map 
 $(A, \alpha, d, \mu, T)$ with jump function $F$. Assume that $([\Z^d], \widetilde{\mu}, \widetilde{T})$ is ergodic.
 
 \smallskip
 
 Let $M := |\{\lambda \in \Sp(\Lcal \acts \Bcal) : \ |\lambda| = 1\}|$. Then there exist an integer $n \geq 1$ and an element 
 $w_0 \in \Tbb^d$ of primitive order $n$ such that 
 \begin{equation*}
  H 
  := \{(w,\lambda) \in \Tbb^d \times \Sbb^1 : \ \lambda \in \Sp (\Lcal_w \acts \Bcal) \} 
  = \left\{\left(kw_0, e^{2 \pi i \frac{k}{Mn}} \right) : \ k \in \Z_{/ Mn \Z} \right\}.
 \end{equation*}
 In addition, each of these eigenvalues is simple.
\end{lemma}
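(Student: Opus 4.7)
The plan is to show that $H$ is a closed subgroup of $\Tbb^d \times \Sbb^1$, that the second projection $\pi_2 : H \to \Sbb^1$ is injective by ergodicity, and that $H$ is finite by combining ergodicity with the perturbation theory of $\Lcal_w$. A finite subgroup of $\Sbb^1$ is cyclic, and from there the structural statement follows by counting.

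First, I would establish that $H$ is a subgroup. The key observation is that if $\Lcal_w h = \lambda h$ with $|\lambda|=1$ and $h \in \Bcal$ nonzero, then $|h|$ is constant: indeed, $|\Lcal_w h| \leq \Lcal |h|$ pointwise, so $|h| \leq \Lcal |h|$; integrating and using $\int \Lcal|h| \dd\mu = \int |h| \dd\mu$ forces $|h| = \Lcal |h|$ a.e., hence $|h|$ is $T$-invariant, hence constant by ergodicity of $(A,\mu,T)$. After normalizing to $|h|=1$, the eigenvalue equation becomes $e^{i\langle w, F\rangle} h = \lambda \, h \circ T$ a.e. A direct multiplication then shows that if $(w_1,\lambda_1),(w_2,\lambda_2) \in H$ with modulus-one eigenfunctions $h_1,h_2$, the product $h_1 h_2$ witnesses $(w_1+w_2, \lambda_1\lambda_2) \in H$; inverses come from complex conjugation. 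Closedness of $H$ is immediate from analytic perturbation theory applied to the quasi-compact family $w \mapsto \Lcal_w$.

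Next, I would use the ergodicity of $\widetilde{T}$ to show $\pi_2|_H$ is injective. Suppose $(w, 1) \in H$ with eigenfunction $h$ of modulus one, so $h \circ T = e^{i\langle w, F\rangle} h$. The lift $\tilde h(x,p) := h(x) e^{-i\langle w, p\rangle}$ then satisfies $\tilde h \circ \widetilde{T} = \tilde h$ by a routine computation, giving a bounded $\widetilde T$-invariant function. Ergodicity forces $\tilde h$ to be constant $\widetilde\mu$-a.e., which (since it depends on $p$ via $e^{-i\langle w,p\rangle}$) requires $w = 0$.

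Now the main obstacle: proving $H$ is finite. Here I would argue that if $H$ had a connected component of positive dimension, it would contain a continuous one-parameter subgroup $\{(tv, e^{it\theta}) : t \in \R\}$ with $(v,\theta) \neq 0$. If $v = 0$ this contradicts the finiteness of the peripheral spectrum of $\Lcal$ (quasi-compactness). If $v \neq 0$, analytic perturbation theory (à la Kato) applied to $\Lcal_{tv}$ yields a smooth branch of eigenvalues $\lambda(t)$ with $\lambda(0) = 1$ and $\dot\lambda(0) = i\langle v, \int F \dd\mu\rangle$; recurrence of the extension (in dimensions $d=1,2$) forces $\int F \dd\mu = 0$, so $\dot\lambda(0) = 0$, which combined with $\lambda(t) = e^{it\theta}$ gives $\theta = 0$. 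But then $(tv, 1) \in H$ for all $t$ with $v \neq 0$, contradicting the injectivity of $\pi_2$ obtained above. Hence $H$ is discrete, and being closed in a compact group, finite.

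Finally, $\pi_2(H)$ is a finite subgroup of $\Sbb^1$, hence cyclic of some order $N$; by injectivity, $H$ itself is cyclic of order $N$, generated by some $(w_0, \lambda_0)$ with $\lambda_0$ a primitive $N$-th root of unity. I would reorder if needed so that $\lambda_0 = e^{2\pi i/N}$. Letting $n$ denote the order of $w_0$ in $\Tbb^d$, the kernel of $\pi_1|_H$ is $\{(kw_0, e^{2\pi ik/N}) : n \mid k\}$, which has $N/n$ elements; on the other hand this kernel is the peripheral spectrum of $\Lcal = \Lcal_0$ which has $M$ elements, so $N = Mn$, giving exactly the description stated. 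Simplicity of each eigenvalue $(w,\lambda) \in H$ follows from two observations: geometric multiplicity is one because any two modulus-one eigenfunctions with the same $(w,\lambda)$ differ by a $T$-invariant function, hence by a constant; algebraic multiplicity equals geometric since a Jordan block at a peripheral simple eigenvalue would produce polynomial growth of $\Lcal_w^n$ on $\Bcal$, contradicting the power-boundedness deduced from $\norm{\Lcal_w^n f}{\infty} \leq \norm{\Lcal^n |f|}{\infty}$.
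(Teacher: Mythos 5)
Your subgroup step (peripheral eigenfunctions have constant modulus, hence satisfy the cohomological equation $e^{i\langle w,F\rangle}h=\lambda\, h\circ T$, and products and conjugates of such witnesses generate $H$), your proof that the projection $H\to\Sbb^1$ is injective via the lifted invariant function $(x,p)\mapsto h(x)e^{-i\langle w,p\rangle}$ and ergodicity of the extension, the simplicity argument, and the final counting against the group $\Ubb_M$ of peripheral eigenvalues of $\Lcal$ all follow the same route as the paper's proof (the paper lifts on the transfer-operator side, checking $\widetilde{\Lcal}f=f$, rather than on the Koopman side; this difference is immaterial).

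The genuine problem is your finiteness argument, which is precisely the point the paper treats most briskly. In the case $v\neq 0$ you differentiate the perturbed eigenvalue and use $\dot\lambda(0)=i\langle v,\int_A F\dd\mu\rangle$ together with ``recurrence forces $\int_A F\dd\mu=0$''. Neither ingredient is available under the stated hypotheses: the lemma assumes only ergodicity (recurrence is not listed), it imposes no moment condition on $F$, and in general one only has norm continuity of $w\mapsto\Lcal_w$, not differentiability. Indeed, in the paper's own applications the branch is genuinely non-smooth at $0$ (for Cauchy or L\'evy jumps, $1-\lambda_w$ is of order $L(|w|^{-1})^{-1}$), and even when $F\in\Lbb^1$ the centering comes from recurrence, not from ergodicity. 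So the exclusion of a one-parameter subgroup $\{(tv,e^{it\theta})\}$ with $v\neq0$ does not go through as written. Your instinct that recurrence must enter is nevertheless sound: for a mixing base and the constant jump $F\equiv 1$ (with $d=1$) the extension has no nontrivial strictly invariant set, yet $H=\{(w,e^{iw}):\ w\in\Tbb^1\}$ is a full circle; hence finiteness of $H$ cannot be extracted from ergodicity alone, and a complete argument has to use recurrence directly (for instance through the identity $e^{i\langle w,S_nF\rangle}h=\lambda^n\, h\circ T^n$ along the returns $S_nF=0$, i.e.\ by inducing on $[0]$) rather than through derivatives of $\lambda_w$. The paper itself passes from the injectivity of the projection into $\Sbb^1$ to cyclicity of $H$ without an explicit finiteness argument, so you were filling a real gap --- but the tool you chose fails for exactly the heavy-tailed jump functions the paper is designed to handle.
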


\begin{proof}
 
 The argument of~\cite[Lemma~3.1]{Thomine:2020} can be adapted to prove that $H$ is a subgroup of $\Tbb^d \times \Sbb^1$. The map $T$ is not invertible. However, if $h$ is a non-zero function such that $\Lcal_w (h) = \lambda h$ for some $(w, \lambda) \in H$, then, by the equality case of the triangular inequality, $h$ is constant on almost every subset $T^{-1} (x)$ for $x \in A$, 
 so that the computations in the proof of~\cite[Lemma~3.1]{Thomine:2020} stay valid. In addition, each of these eigenvalues is simple.
 
 \smallskip
 
 Let $w \in \Tbb^d$ be such that $1 \in \Sp (\Lcal_w \acts \Bcal)$. Let $h_{w,1}$ be an eigenfunction for this eigenvalue. Then $\Lcal (e^{i \langle w, F \rangle} h_{w,1}) = h_{w,1}$.  Define $f(x,p) := e^{-i \langle w, p \rangle} h_{w,1} (x)$ on $[\Z^d]$. As we check, $f$ is invariant under the transfer operator $\widetilde{\Lcal}$:
 \begin{align*}
  \widetilde{\Lcal} (f) (x,p) 
  & = \sum_{(y,q) \in \widetilde{T}^{-1} (\{(x,p)\})} \frac{f (y,q) }{|\Jac (T) (y)|} \\
  & = \sum_{y \in T^{-1} (\{x\})} \frac{f (y,p-F(y)) }{|\Jac (T) (y)|} \\
  & = e^{-i \langle w, p \rangle} \sum_{y \in T^{-1} (\{x\})} \frac{e^{i \langle w, F \rangle (y)} h_{w,1} (y) }{|\Jac (T) (y)|} \\
  & = e^{-i \langle w, p \rangle} \Lcal_w (h_{w,1}) (x) \\
  & = e^{-i \langle w, p \rangle} h_{w,1} (x) \\ 
  & = f(x,p).
 \end{align*}
 Since $([\Z^d], \widetilde{\mu}, \widetilde{T})$ is ergodic, $f$ is constant, and in particular the map $p \mapsto e^{-i \langle w, p \rangle}$ is constant. Hence $w = 0$ in $\Tbb^d$. In other words, the canonical projection $H \to \Sbb^1$ is injective. Hence $H$ is isomorphic to a subgroup of $\Sbb_1$, and thus cyclic.
 
 \smallskip
 
 Let $\Ubb_M$ be the group of peripheral eigenvalues of $\Lcal$. Then $\{0\} \times \Ubb_M$ 
 is a subgroup of $H$, so $M$ divides $|H|$. Setting $n := |H|/M$, there exists $w_0 \in \Tbb^d$ 
 such that
 \begin{equation*}
  H 
  = \left\{ \left(k w_0, e^{2 \pi i \frac{k}{Mn}} \right) : \ k \in \Z_{/Mn\Z} \right\}.
 \end{equation*}
 Taking $k = n$, we get that $\left(n w_0, e^{2 \pi i \frac{1}{M}} \right) \in H$. But its image under the second projection is $e^{2 \pi i \frac{1}{M}} \in \Sp(\Lcal \acts \Bcal)$, so by injectivity of the second projection, $n w_0 = 0$. In addition, $k w_0 \neq 0$ for $0 < k < n$; otherwise, we would get an eigenvalue of $\Lcal$ not in $\Ubb_M$.
\end{proof}

Lemma~\ref{lem:DecompositionSpectrale} let us control the operators $(\Id-\Lcal_w)^{-1}$.

\begin{corollary}\quad
\label{cor:ControleTFInverse}

 Let $([\Z^d], \widetilde{\mu}, \widetilde{T})$ be a Markov $\Z^d$-extension of a Gibbs-Markov map 
 $(A, \alpha, d, \mu, T)$ with jump function $F$. Assume that $([\Z^d], \widetilde{\mu}, \widetilde{T})$ is ergodic.
 
 \smallskip
 
 Then, for any neighbourhood $U$ of $0 \in \Tbb^d$, 
 \begin{equation}
  \sup_{w \in \Tbb^d \setminus U} \norm{(\Id-\Lcal_w)^{-1}}{\Bcal \to \Bcal}
  < +\infty.
 \end{equation}
\end{corollary}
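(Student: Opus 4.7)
The plan is to combine Lemma~\ref{lem:DecompositionSpectrale}, a Keller-Liverani style perturbation argument, and compactness. First, Lemma~\ref{lem:DecompositionSpectrale} pins down the set of $w \in \Tbb^d$ for which $1$ is an eigenvalue of $\Lcal_w \acts \Bcal$: from the parametrization $H = \{(kw_0, e^{2\pi i k/(Mn)}) : k \in \Z_{/Mn\Z}\}$, a pair $(w,1) \in H$ forces $k \equiv 0 \pmod{Mn}$, hence $w = 0$ in $\Tbb^d$. Since each $\Lcal_w$ is quasi-compact with essential spectral radius at most $\Lambda^{-1}$, it follows that $\Id - \Lcal_w$ is invertible on $\Bcal$ for every $w \in \Tbb^d \setminus \{0\}$; only the uniformity of the inverse norm on $\Tbb^d \setminus U$ remains to be established.

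Next I would exploit two standard properties of twisted transfer operators of Gibbs-Markov maps. The family $\{\Lcal_w\}_{w \in \Tbb^d}$ satisfies a Lasota-Yorke inequality on $\Bcal$ with constants independent of $w$, because the multiplier $e^{i\langle w, F \rangle}$ has modulus $1$ and is constant on each element of $\alpha$, so the expansion and distortion bounds of Definition~\ref{def:ApplicationGibbsMarkov} transfer verbatim to $\Lcal_w$. Moreover, the map $w \mapsto \Lcal_w$ is continuous from $\Tbb^d$ into the space of bounded operators $\Bcal \to \Lbb^1(A, \mu)$, via the identity $\Lcal_w - \Lcal_{w'} = \Lcal\bigl((e^{i\langle w, F\rangle} - e^{i\langle w', F\rangle})\cdot\bigr)$ combined with the bound $\norm{(\Lcal_w - \Lcal_{w'})(f)}{\Lbb^1(\mu)} \leq \norm{f}{\infty} \cdot \norm{e^{i\langle w, F\rangle} - e^{i\langle w', F\rangle}}{\Lbb^1(\mu)}$ and dominated convergence.

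These are precisely the hypotheses of the Keller-Liverani perturbation theorem. It yields upper semi-continuity of the isolated part of $\Sp(\Lcal_w \acts \Bcal)$ lying outside any fixed disk of radius $r \in (\Lambda^{-1}, 1)$ and, crucially, uniform boundedness of the resolvent $(z\Id - \Lcal_w)^{-1}$ for $w$ in a neighborhood of a fixed $w_\infty$ and $z$ on a small contour avoiding $\Sp(\Lcal_{w_\infty})$. Specializing to $z = 1$ and any $w_\infty \in \Tbb^d \setminus \{0\}$, the first step of the proof gives $1 \notin \Sp(\Lcal_{w_\infty})$, so we extract a neighborhood $V_{w_\infty}$ of $w_\infty$ with $\sup_{w \in V_{w_\infty}} \norm{(\Id - \Lcal_w)^{-1}}{\Bcal \to \Bcal} < +\infty$. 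Covering the compact set $\Tbb^d \setminus U$ by finitely many such neighborhoods gives the claimed bound; equivalently, one can run a direct contradiction argument on a sequence $w_n \to w_\infty \in \Tbb^d \setminus U$ along which the resolvent norm would blow up, forcing $1$ into $\Sp(\Lcal_{w_\infty})$ and contradicting Lemma~\ref{lem:DecompositionSpectrale}.

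The main technical obstacle is the careful invocation of Keller-Liverani: when $F$ is unbounded, $w \mapsto \Lcal_w$ is \emph{not} continuous in the full operator norm on $\Bcal$, so the proof must rely on the weaker $\Bcal \to \Lbb^1$ continuity paired with the uniform Lasota-Yorke estimate in order to upgrade to strong resolvent continuity. The required technicalities are by now standard for Gibbs-Markov maps and should be available from the appendix on which Subsection~\ref{subsubsec:TransferOperators} builds.
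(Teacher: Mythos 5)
Your argument is correct, but it takes a heavier route than the paper. The paper's proof is three lines: by Lemma~\ref{lem:DecompositionSpectrale}, $1 \notin \Sp(\Lcal_w \acts \Bcal)$ for $w \neq 0$; the family $w \mapsto \Lcal_w$ is continuous \emph{in operator norm} on $\Bcal$ (citing the discussion after \cite[Corollary~4.1.3]{Gouezel:2008e}), hence $w \mapsto (\Id-\Lcal_w)^{-1}$ is continuous on $\Tbb^d \setminus \{0\}$; one concludes by compactness of $\Tbb^d \setminus U$. You replace the middle step by a Keller--Liverani argument (uniform Lasota--Yorke in $w$ plus $\Bcal \to \Lbb^1$ continuity, giving uniform resolvent bounds at $z=1$ near each $w_\infty \neq 0$), which is valid and indeed more robust in settings where only weak-norm continuity is available. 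However, your motivation for invoking Keller--Liverani rests on a premise that is false here: since $F$ is Markov, i.e.\ constant on the elements of $\alpha$, the twist $e^{i\langle w, F\rangle}$ is constant on each inverse branch, and the big image and bounded distortion properties give $\norm{\Lcal_w - \Lcal_{w'}}{\Bcal \to \Bcal} \leq C \sum_{a \in \alpha} \mu(a)\min\bigl(2, \norm{w-w'}{}\,\norm{F(a)}{}\bigr) \to 0$ by dominated convergence over the countable partition, \emph{without any integrability or boundedness assumption on $F$}. So norm continuity does hold in this Markov setting (this is exactly what the paper's citation provides), and the elementary continuity-plus-compactness argument suffices; your Keller--Liverani detour buys generality (e.g.\ non-Markov or heavier perturbations) that is not needed for this corollary, at the cost of invoking machinery whose hypotheses you then have to verify.
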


\begin{proof}
 
 For $w \neq 0$, the spectrum of $\Lcal_w$ does not contain $1$ by Lemma~\ref{lem:DecompositionSpectrale}, so that $(\Id-\Lcal_w)$ is invertible. By the discussion following the proof of~\cite[Corollary~4.1.3]{Gouezel:2008e}, the family of operators $(\Lcal_w)_{w \in \Tbb^d}$ is continuous, so the family of operators $\left((\Id-\Lcal_w)^{-1} \right)_{w \in \Tbb^d \setminus \{0\}}$ is also continuous. The result follows by relative compactness of $\Tbb^d \setminus U$ in $\Tbb^d \setminus \{0\}$.
\end{proof}

\subsection{Fourier transform and twisted transfer operators}
\label{subsec:TransformeeFourier}

As announced, we now prove Proposition~\ref{prop:LimitePoissonTF}.

\begin{proof}[Proof of Proposition~\ref{prop:LimitePoissonTF}]
 
 Let $([\Z^d], \widetilde{\mu}, \widetilde{T})$ be an ergodic and recurrent Markov $\Z^d$-extension of a Gibbs-Markov map. Let $I$ be non-empty and finite and $\sigma : I \hookrightarrow \Z^d$ with image $\Sigma$. 
 
 \smallskip
 
 Let $f$, $g \in \C_0^I$. By Proposition~\ref{prop:LimitePoissonTordue},
 \begin{equation*}
  \langle f, Q_\sigma (g) \rangle_{\ell^2 (I)} 
  = \lim_{\rho \to 1^-} \sum_{n \geq 0} \rho^n \sum_{i, j \in I} f_i g_j \mu \left( S_n F = \sigma(j)-\sigma(i) \right).
 \end{equation*}
 Let $n \geq 0$ and $i$, $j \in I$. Using the Fourier transform,
 \begin{align*}
  \mu \left( S_n F = \sigma(j)-\sigma(i) \right) 
  & = \int_A \mathbf{1}_{0} (S_n F + \sigma(i)-\sigma(j)) \dd \mu \\
  & = \int_A \frac{1}{(2\pi)^d} \int_{\Tbb^d} e^{i \langle w, S_n F + \sigma(i)-\sigma(j)\rangle} \dd w \dd \mu \\
  & = \frac{1}{(2\pi)^d} \int_{\Tbb^d} e^{i \langle w, \sigma(i)-\sigma(j)\rangle} \int_A e^{i \langle w, S_n F\rangle} \dd \mu \dd w\\
  & = \frac{1}{(2\pi)^d} \int_{\Tbb^d} e^{i \langle w, \sigma(i)-\sigma(j)\rangle} \int_A \Lcal_w^n (\mathbf{1}) \dd \mu \dd w.
 \end{align*}
 Summing over $i$, $j \in I$ yields
 \begin{align*}
  \sum_{i, j \in I} f_i g_j & \mu \left( S_n F = \sigma(j)-\sigma(i) \right) \\
  & = \frac{1}{(2\pi)^d} \int_{\Tbb^d}  \left( \sum_{i \in I} f_i e^{i \langle w, \sigma(i)\rangle}\right) \left( \sum_{j \in I} g_j e^{-i \langle w, \sigma(j)\rangle} \right) \left( \int_A \Lcal_w^n (\mathbf{1}) \dd \mu \right) \dd w \\
  & = \frac{1}{(2\pi)^d} \int_{\Tbb^d} \widecheck{\Fcal}_\sigma (f) \Fcal_\sigma (g) \left( \int_A \Lcal_w^n (\mathbf{1}) \dd \mu \right) \dd w,
 \end{align*}
 whence, weighting by $\rho^n$ and summing over $n$,
 \begin{align*}
  \langle f, Q_\sigma (g) \rangle_{\ell^2 (I)} 
  & = \lim_{\rho \to 1^-} \sum_{n \geq 0} \rho^n \frac{1}{(2\pi)^d} \int_{\Tbb^d} \widecheck{\Fcal}_\sigma(f) \Fcal_\sigma (g) \left( \int_A \Lcal_w^n (\mathbf{1}) \dd \mu \right) \dd w, \\
  & = \lim_{\rho \to 1^-} \frac{1}{(2\pi)^d} \int_{\Tbb^d} \widecheck{\Fcal}_\sigma (f) \Fcal_\sigma (g) \left( \int_A (\Id-\rho \Lcal_w )^{-1} (\mathbf{1})\dd \mu \right) \dd w.
 \end{align*}
This proves Equation~\eqref{eq:ExpressionPotentielFourier}.

\smallskip

Since $w \mapsto \Lcal_w$ is continuous and $1$ is an isolated eigenvalue of $\Lcal$, for any small enough neighbourhood $U$ of $0 \in \Tbb^d$, there exists a continuous eigendecomposition:
\begin{equation*}
 \Lcal_w 
 = \lambda_w h_w \otimes \mu_w + Z_w,
\end{equation*}
with $\mu_w (h_w) = \mu(h_w) = 1$ for all $w \in -U$, as well as $\lambda_0 = 1$. In addition, up to taking a smaller neighbourhood $U$, the operators $\left((\Id-\rho Z_w)^{-1}\right)_{\rho \geq 0, w \in U}$ are uniformly bounded by continuity. Then, 
\begin{align*}
 \int_A (\Id-\rho \Lcal_w )^{-1} (\mathbf{1})\dd \mu
 & = \int_A \frac{\mu_w (\mathbf{1}) h_w}{1-\rho \lambda_w} \dd \mu + \int_A (\Id-\rho Z_w )^{-1} (\mathbf{1}-\mu_w (\mathbf{1}) h_w)\dd \mu \\
 & = \frac{\mu_w (\mathbf{1})}{1-\rho \lambda_w} + O (1).
\end{align*}

In addition, the family of operators $\left((\Id-\Lcal_w)^{-1}\right)_{w \in \Tbb^d \setminus U}$ is bounded by Corollary~\ref{cor:ControleTFInverse}. By continuity, the family of operators $\left((\Id-\rho\Lcal_w)^{-1}\right)_{\rho \geq 0, w \in \Tbb^d \setminus U}$ is also bounded, and
\begin{equation*}
 \sup_{\rho \geq 0, w \in \Tbb^d \setminus U} \left| \int_A (\Id-\rho \Lcal_w )^{-1} (\mathbf{1})\dd \mu \right| 
 < +\infty.
\end{equation*}

 Putting everything together, we get
 \begin{equation*}
  \langle f, Q_\sigma (g) \rangle_{\ell^2 (I)} 
  = \lim_{\rho \to 1^-} \frac{1}{(2\pi)^d} \int_{\Tbb^d} \widecheck{\Fcal}_\sigma (f) \Fcal_\sigma (g) \frac{\mu_w (\mathbf{1})}{1-\rho \lambda_w} \dd w + O_U (1) \norm{f}{} \cdot \norm{g}{}.
 \end{equation*}
 Setting $\delta_w := \mu_w (\mathbf{1})-1$, which converges to $0$ when $w$ vanishes, yields  Proposition~\ref{prop:LimitePoissonTF}.
\end{proof}

\section{Computation of the potential operators}
\label{sec:Calculs}

With the groundwork done in Section~\ref{sec:FourierTransform}, we are ready to compute the asymptotics of the transition matrices in a few important cases. We shall consider sets $(\Sigma_\varepsilon)_{\varepsilon >0}$ with a limiting shape as introduced in Subsection~\ref{subsec:LimitShape}. We shall also add classical assumptions on the tails of the distribution the jump $F$, for instance square-integrability or being in the basin of attraction of a L\'evy stable distribution.

\smallskip

In this section, we assume that $([\Z^d], \widetilde{\mu}, \widetilde{T})$ is an ergodic and recurrent Markov $\Z^d$-extension of a Gibbs-Markov map $(A, \mu, T)$ with jump function $F : A \to \Z^d$.

\smallskip

Let $I$ be non-empty and finite and $\sigma : I \hookrightarrow \R^d$. We choose $(\sigma_t)_{t > 0} : I \hookrightarrow \Z^d$ such that, for all $i \in I$,
\begin{equation*}
 \sigma_t (i) 
 = t \sigma(i) + o(t),
\end{equation*}
as in Figure~\ref{fig:FamilleFormes}. This point of view, where $t$ is of the same order as the distance between sites of $\Sigma_t$, seems more intuitive, and shall be used to write the propositions. Letting $\varepsilon := t^{-1}$, the asymptotics $t \to +\infty$ correspond to asymptotic $\varepsilon \to 0$ as in the previous parts of this work, and the variable $\varepsilon$ shall be used in lieu of $t$ in the demonstrations.

\smallskip

 Once this setting is fixed, most computations rely on the same strategies as~\cite[Subsection~3.5]{PeneThomine:2019}. An important difference is that we compute a matrix $S$ such that $Q_\varepsilon \sim a(\varepsilon) S$ for some function $a(\varepsilon)$, and then invert $S$, while~\cite{PeneThomine:2019} only involved the computation of scalars. This adds a layer of linear algebra, which we solve in a few cases, but not in full generality. As a consequence, we split this section in four cases:
 \begin{itemize}
  \item square-integrable jump functions in dimension $1$ (Subsection~\ref{subsec:D1CarreIntegrable});
  \item jump functions in the basin of attraction of a L\'evy stable distribution with parameter in $(1,2)$ in dimension $1$ (Subsection~\ref{subsec:D1Levy}), for which the author was not able to invert $S$ in full generality;
  \item some jump functions in the basin of attraction of a Cauchy distribution (Subsection~\ref{subsec:D1Cauchy});
  \item square-integrable jump functions in dimension $2$ (Subsection~\ref{subsec:D2CarreIntegrable}), with a behaviour \textit{in fine} similar to the Cauchy case but quite different computations.
 \end{itemize}
 There are a few specialized cases which we do not write down, although they may be of some practical interest. In particular, we leave to the interested reader the situation where $F$ has tails with regular variation of index $2$, and thus lies in the basin in attraction of a Gaussian, while not being square-integrable. This specific situation can occur as a toy model for Lorentz gases in infinite horizon, for which the diffusion channels give rise to supercritical diffusion~\cite{Bleher:1992, SzaszVarju:2007}.

\subsection{Computation of the potential: $d=1$, square integrable jumps}
\label{subsec:D1CarreIntegrable}

In addition to the general setting of Section~\ref{sec:Calculs}, we assume in this Subsection that $F$ is square-integrable and centered, the later condition being already implied by the first condition and the ergodicity of the extension. 

\smallskip

Let $\Var$ be the asymptotic variance $F$:
\begin{equation}
\label{eq:GreenKubo}
 \Var
 := \int_A F^2 \dd \mu + 2 \sum_{n \geq 1} \int_A F\circ T^n \cdot F \dd \mu,
\end{equation}
where the infinite series is taken in the sense of Ces\`aro. Since $([\Z], \widetilde{\mu}, \widetilde{T})$ is ergodic, $F$ is not 
a coboundary and thus $\Var$ is positive~\cite[Th\'eor\`eme~4.1.4]{Gouezel:2008e}.

\smallskip

We shall now get our first application of Theorem~\ref{thm:MainTheorem}:

\begin{proposition}\quad
\label{prop:ApplicationD1CarreIntegrable}
 
 Let $([\Z], \widetilde{\mu}, \widetilde{T})$ be an ergodic and recurrent Markov $\Z$-extension of a Gibbs-Markov map $(A, \mu, T)$ with jump function $F : A \to \Z$. Assume furthermore that $F \in \Lbb^2 (A, \mu)$. Let $\Var > 0$ be that asymptotic variance of $F$ defined by Equation~\eqref{eq:GreenKubo}.

\smallskip

Let $I = \{1, \ldots, |I|\}$ and $\sigma : I \hookrightarrow \R$ be injective and increasing. For $t > 0$, let $\sigma_t : I \hookrightarrow \Z$ be such that $\sigma_t (i) = t \sigma(i) + o(t)$ for all $i \in I$. Let $P_t$ be the transition matrix of the Markov chain induced on $\Sigma_t := \sigma_t (I)$. Then
\begin{equation*}
 P_t 
 =_{t \to + \infty} \Id - \frac{\Var}{t} R + o (t^{-1}),
\end{equation*}
where $R$ is the irreducible, tridiagonal, symmetric bi-L-matrix defined by
\begin{equation}
\label{eq:DefinitionRD1CarreIntegrable}
\left\{
 \begin{array}{lcll}
 R_{i, i+1} & = & -\frac{1}{2 (\sigma(i+1) - \sigma(i))} & \quad \forall \ 1 \leq i < |I|, \\
 R_{i, i-1} & = & -\frac{1}{2 (\sigma(i) - \sigma(i-1))} & \quad \forall \ 1 < i \leq |I|, \\
 R_{ij} & = & 0 & \quad \text{whenever } |i-j| \geq 2, \\
 R_{ii} & = & \sum_{\substack{j \in I \\ j \neq i}}  \left| R_{ij} \right| & \quad \forall \ i \in I.
 \end{array}
 \right.
\end{equation}
\end{proposition}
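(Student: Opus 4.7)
The plan is to combine Theorem~\ref{thm:MainTheorem} with Proposition~\ref{prop:LimitePoissonTF}: compute the asymptotics of $Q_\varepsilon$ directly via Fourier transform, verify the irreducibility condition of Definition~\ref{def:QMatriceIrreductible}, and then recognize $S^{-1}$ as $\Var$ times the tridiagonal matrix $R$ described in the statement. Setting $\varepsilon = t^{-1}$, the target reduces to showing that, for $f, g \in \C_0^I$,
\[
 \langle f, Q_\varepsilon(g) \rangle_{\ell^2(I)} \sim_{t \to +\infty} -\frac{t}{\Var}\sum_{i,j \in I} f_i g_j \, |\sigma(i) - \sigma(j)|,
\]
since the right-hand side is positive on $f = g = \mathbf{1}_i - \mathbf{1}_j$ (equal to $2\Var^{-1}|\sigma(i) - \sigma(j)|$), which yields the required irreducibility of~$S$.

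The main analytical step is a Nagaev--Guivarc'h-type expansion: for a centered $\Lbb^2$ Markov jump function on an ergodic Gibbs-Markov map, the principal eigenvalue of $\Lcal_w$ on $\Bcal$ admits the expansion $\lambda_w = 1 - \frac{\Var}{2} w^2 + o(w^2)$ as $w \to 0$. Insertion into Proposition~\ref{prop:LimitePoissonTF}, followed by the change of variables $u = tw$, gives (after passing $\rho \to 1^-$ inside the integral, which is valid since $\widecheck{\Fcal}_{\sigma_t}(f)\Fcal_{\sigma_t}(g)$ vanishes to order~$2$ at the origin)
\[
 \frac{1}{t}\langle f, Q_\varepsilon(g) \rangle_{\ell^2(I)} \xrightarrow[t \to \infty]{} \frac{1}{\pi \Var} \int_\R \widehat{f}(-u)\, \widehat{g}(u)\, \frac{du}{u^2}, \quad \widehat{f}(u) := \sum_{i \in I} f_i e^{-iu\sigma(i)}.
\]
The dominated convergence argument is where the technical work lies. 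On a neighbourhood of $0$ one uses the lower bound $|1 - \lambda_w| \geq c w^2$ (non-degeneracy of $\Var > 0$) together with the uniform estimate $|\Fcal_{\sigma_t}(f)(w)| \leq C \min\{1, |w|t\}\norm{f}{\infty}$ arising from $\sum f_i = 0$; outside that neighbourhood, Corollary~\ref{cor:ControleTFInverse} makes $(\Id - \rho \Lcal_w)^{-1}$ uniformly bounded, and contributes only the $O_U(1)$ error term. Finally, $\sum_{i,j} f_i g_j = 0$ and the Plancherel-type identity $\int_\R u^{-2}(1 - \cos(au))\, du = \pi |a|$ turn the limiting integral into $-\pi \sum_{i,j} f_i g_j |\sigma(i)-\sigma(j)|$, as required.

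Once Theorem~\ref{thm:MainTheorem} is invoked, it remains to identify the resulting bi-L-matrix. Write $M_{ij} := -|\sigma(i) - \sigma(j)|$, so that $S = \Var^{-1} \pi_0 M|_{\C_0^I}$, where $\pi_0$ is the projection parallel to constants. Since any bi-L-matrix annihilates constants, the identification $R' = \Var R$ is equivalent to $R M = \Id$ on $\C_0^I$. This is an explicit algebraic check: for each fixed column $k$, set $v_k(j) := |\sigma(j) - \sigma(k)|$ and compute $(Rv_k)(i) = R_{i,i-1}v_k(i-1) + R_{ii} v_k(i) + R_{i,i+1}v_k(i+1)$. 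For interior $i$ with $i \notin \{k-1,k,k+1\}$ the function $v_k$ is affine in $\sigma(\cdot)$, and the identity $\sum_j R_{ij}\sigma(j) = 0$ (a one-line verification from the definition of $R$) forces $(Rv_k)(i) = 0$. The cases $i \in \{k-1,k,k+1\}$ and the boundary cases $i \in \{1,|I|\}$ are handled by direct substitution, yielding $(RM)_{ii} = 1$ for interior $i$, vanishing off-diagonal interior entries, and boundary entries $(RM)_{1,k} = \frac{1}{2}(2\delta_{k1}-1)$ and symmetrically at $i = |I|$. The $\sum_k g_k = 0$ condition then collapses these $\pm\frac{1}{2}$ contributions into $(RMg)(1) = g_1$ and $(RMg)(|I|) = g_{|I|}$, closing the identification.

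The main obstacle is the joint limit $\rho \to 1^-$, $t \to +\infty$ in the Fourier representation: one must establish a dominating function independent of both parameters, which requires carefully splitting the integration into a small neighbourhood of $w=0$ (where the quadratic non-degeneracy of $1-\lambda_w$ competes with the order-one vanishing of $\Fcal_{\sigma_t}(f)$) and its complement. The subsequent algebraic identification of $R'$ is elementary but demands bookkeeping for the boundary rows of $R$, which behave differently from interior rows.
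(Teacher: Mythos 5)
Your proposal is correct and takes essentially the same route as the paper: the expansion $\lambda_w = 1 - \tfrac{\Var}{2}w^2 + o(w^2)$ inserted into Proposition~\ref{prop:LimitePoissonTF}, the resulting asymptotics $\langle f, Q_\varepsilon(g)\rangle_{\ell^2(I)} \sim -\varepsilon^{-1}\Var^{-1}\sum_{i,j} f_i g_j |\sigma(i)-\sigma(j)|$, the irreducibility check on $\mathbf{1}_i-\mathbf{1}_j$, Theorem~\ref{thm:MainTheorem}, and an explicit algebraic inversion identifying $R_0$ with $S^{-1}$. The only differences are organizational and both are sound: you justify the joint limit by the rescaling $u = tw$ and dominated convergence (using $|1-\rho\lambda_w|\geq c w^2$ uniformly in $\rho$ and $|\Fcal_{\sigma_t}(f)(w)| \leq C\min\{1,|w|t\}$), where the paper splits the integral into three regions and sends $\eta \to 0$ after $\varepsilon \to 0$; and you verify $RM = \Id$ on $\C_0^I$ column-by-column (with the boundary rows collapsing via $\sum_k g_k = 0$), where the paper checks $R\widehat{S}e_i^{i+1} = e_i^{i+1}$ on the basis $e_i^{i+1} = \mathbf{1}_{i+1}-\mathbf{1}_i$.
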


Under the hypotheses of Proposition~\ref{prop:ApplicationD1CarreIntegrable}, and setting $\Delta_i^j := |\sigma(j) - \sigma(i)|$, 
the transition matrices $(P_t)_{t >0}$ have the following graphical representation:
\begin{equation*}
 P_t
 = \left(
 \begin{array}{cccccc}
  1 - \frac{2 \Var}{t\Delta_1^2} & \frac{2 \Var}{t\Delta_1^2} & 0 & \ldots & 0 & 0 \\
  \frac{2 \Var}{t\Delta_1^2} & 1 - \frac{2 \Var}{t\Delta_1^2} - \frac{2 \Var}{t\Delta_2^3} & \frac{2 \Var}{t\Delta_2^3} & \ldots & 0 & 0 \\ 
  0 & \frac{2 \Var}{t\Delta_2^3} & 1 - \frac{2 \Var}{t\Delta_2^3} - \frac{2 \Var}{t\Delta_3^4} & \ldots & 0 & 0 \\
  \vdots & \ddots & \ddots & \ddots & \vdots & \vdots \\
  0 & 0 & 0 & \ldots & 1 - \frac{2 \Var}{t\Delta_{|I|-2}^{|I|-1}} - \frac{2 \Var}{t\Delta_{|I|-1}^{|I|}} & \frac{2 \Var}{t\Delta_{|I|-1}^{|I|}} \\
  0 & 0 & 0 & \ldots & \frac{2 \Var}{t\Delta_{|I|-1}^{|I|}} & 1 - \frac{2 \Var}{t\Delta_{|I|-1}^{|I|}}
 \end{array}
 \right)
 + o(t^{-1}).
\end{equation*}

\begin{proof}
 
We assume the setting of Proposition~\ref{prop:ApplicationD1CarreIntegrable}. By~\cite[Th\'eor\`eme~4.1.4]{Gouezel:2008e}, the main eigenvalue of $\Lcal_w$ admits the expansion
\begin{equation}
\label{eq:DeveloppementVPD1CarreIntegrable}
 \lambda_w 
 = 1 - \frac{\Var w^2}{2} + o(w^2).
\end{equation}

\smallskip

We set $\varepsilon := t^{-1}$. By Proposition~\ref{prop:LimitePoissonTF}, for any small enough neighbourhood $U$ of $0 \in \Tbb$, 
for all $\varepsilon > 0$, for all $f$, $g \in \C_0^I$,
\begin{align*}
 \langle f, Q_\varepsilon (g) \rangle_{\ell^2 (I)} 
 & = \lim_{\rho \to 1^-} \frac{1}{2 \pi} \int_{\Tbb} \widecheck{\Fcal}_\varepsilon(f) \Fcal_\varepsilon(g) \int_A (\Id - \rho \Lcal_w)^{-1} (\mathbf{1}) \dd \mu \dd w \\
 & = \frac{1}{2 \pi} \int_U \widecheck{\Fcal}_\varepsilon(f) \Fcal_\varepsilon(g) \frac{1+\delta_w}{1-\lambda_w} \dd w + O_U (1) \norm{f}{} \norm{g}{}.
\end{align*}
Note that $\widecheck{\Fcal}_\varepsilon(f) (w)$ and $\Fcal_\varepsilon(g) (w)$ are both in $O(|w|)$ while the denominator is in $\Theta (w^2)$ by Equation~\eqref{eq:DeveloppementVPD1CarreIntegrable}, so the integrand is bounded for each $\varepsilon > 0$. This justifies that we took the limit in Equation~\eqref{eq:BonneBornePourTF}.

\smallskip

Let us set
\begin{equation*}
 1+\delta'_w
 := (1+\delta_w) \frac{\Var w^2}{2(1-\lambda_w)},
\end{equation*}
so that $\delta'_w$ is also a continuous function of $w$ with $\lim_{w \to 0} \delta'_w = 0$. In addition, we choose $U = (-\eta, \eta)$ for small enough $\eta > 0$. Then:
\begin{equation}
\label{eq:AControlerD1CarreIntegrable}
 \langle f, Q_\varepsilon (g) \rangle_{\ell^2 (I)} 
 = \frac{1}{\pi \Var} \int_{-\eta}^\eta \widecheck{\Fcal}_\varepsilon(f) \Fcal_\varepsilon(g) \frac{1+\delta'_w}{w^2} \dd w + O_\eta (1) \norm{f}{} \norm{g}{}.
\end{equation}
We split this integral in three parts:
\begin{itemize}
 \item the integral involving the error term $\delta'_w$ on a small neighbourhood $(-\varepsilon, \varepsilon)$ of $0$, which shall be negligible because both $\delta'_w$ and the term $\left( \widecheck{\Fcal}_\varepsilon(f) \Fcal_\varepsilon(g) \right) (w)$ are small near $0$.
 \item the integral involving the error term $\delta'_w$ on $(-\eta, \eta) \setminus (-\varepsilon, \varepsilon)$, which shall be negligible because $\delta'_w$ is small and $w^2$ is large.
 \item the integral without the error term $\delta'_w$, which is the main contribution 
 to $\langle f, Q_\varepsilon (g) \rangle_{\ell^2 (I)}$.
\end{itemize}

\medskip
\textbf{Control of $\delta'_w$ on the neighbourhood $(-\varepsilon, \varepsilon)$.}
\smallskip

To control the part of the integral~\eqref{eq:AControlerD1CarreIntegrable} involving $\delta'_w$ on $(-\varepsilon, \varepsilon)$, remark that, since both $f$ and $g$ have zero sum, $\widecheck{\Fcal}_\varepsilon(f)$ and $\Fcal_\varepsilon(g)$ are in $O(w)$ for all $\varepsilon > 0$. More precisely,
\begin{align}
 \widecheck{\Fcal}_\varepsilon(f) (w) 
 & = \sum_{i \in I} f_i e^{i w\sigma_\varepsilon (i)} \nonumber \\
 & = \sum_{i \in I} f_i \left( e^{i w( \varepsilon^{-1} \sigma (i) + o(\varepsilon^{-1}) )} -1\right) \nonumber \\
 & = O(\varepsilon^{-1} |w| \norm{f}{} ), \label{eq:TFPetiteFrequence}
\end{align}
and likewise for $\Fcal_\varepsilon(g)$. Hence
\begin{align*}
 \int_{-\varepsilon}^\varepsilon \widecheck{\Fcal}_\varepsilon(f) \Fcal_\varepsilon(g) \frac{\delta'_w}{w^2} \dd w 
 & = \int_{(-\varepsilon, \varepsilon)} O \left( \varepsilon^{-2} w^2 \norm{f}{} \norm{g}{} \right) \frac{\max_{w \in [-\eta, \eta]} \left|\delta'_w\right|}{w^2} \dd w \\
 & = O\left(\varepsilon^{-1} \norm{f}{} \norm{g}{} \max_{w \in [-\eta, \eta]} \left|\delta'_w\right|\right).
\end{align*}

\medskip
\textbf{Control of the error term $\delta'_w$ on $(-\eta, \eta) \setminus (-\varepsilon, \varepsilon)$.}
\smallskip

On $(-\eta, \eta) \setminus (-\varepsilon, \varepsilon)$, we get
\begin{align*}
 \left| \int_{(-\eta, \eta) \setminus (-\varepsilon, \varepsilon)} \right. & \left. \widecheck{\Fcal}_\varepsilon(f) \Fcal_\varepsilon(g) \frac{\delta'_w}{w^2} \dd w \right| \\
 & \leq C \int_{(-\eta, \eta) \setminus (-\varepsilon, \varepsilon)} \frac{\max_{w \in [-\eta, \eta]} \left|\delta'_w\right|}{w^2} \dd w \cdot \norm{f}{} \norm{g}{}\\
 & \leq C \varepsilon^{-1} \max_{w \in [-\eta, \eta]} \left|\delta'_w\right| \int_{\R \setminus (-1 , 1)} \frac{1}{w^2} \dd w \cdot \norm{f}{} \norm{g}{} \\
 & = O(\varepsilon^{-1}\norm{f}{} \norm{g}{} \max_{w \in [-\eta, \eta]} \left|\delta'_w\right|).
\end{align*}

\medskip
\textbf{Main contribution to the integral~\eqref{eq:AControlerD1CarreIntegrable}.}
\smallskip

Now let us focus on the remaining term. Let us split $\widecheck{\Fcal}_\varepsilon (f) \Fcal_\varepsilon(g)$ along its frequencies:
\begin{align*}
 \left( \widecheck{\Fcal}_\varepsilon(f) \Fcal_\varepsilon(g) \right) (w)
 & = \left( \sum_{i\in I} f_i e^{i w \sigma_\varepsilon (i)} \right) \left( \sum_{j \in I} g_j e^{-i w \sigma_\varepsilon (j)} \right) \\
 & = \sum_{i, j \in I} f_i g_j e^{i w (\sigma_\varepsilon (i) - \sigma_\varepsilon (j))} \\
 & = \sum_{i, j \in I} f_i g_j \left( e^{i w (\sigma_\varepsilon (i) - \sigma_\varepsilon (j))} - 1 \right).
\end{align*}
Injecting this expression into the main part of Equation~\eqref{eq:AControlerD1CarreIntegrable} yields
\begin{align*}
 \frac{1}{\pi \Var} \int_{-\eta}^\eta & \frac{\widecheck{\Fcal}_\varepsilon(f)\Fcal_\varepsilon(g)}{w^2} \dd w \\
 & = \frac{2}{\pi \Var} \int_0^\eta \sum_{i, j \in I} f_i g_j \frac{ \cos \left( w (\sigma_\varepsilon (i) - \sigma_\varepsilon (j)) \right) -1}{w^2} \dd w \\
 & = \frac{2}{\pi \Var} \sum_{i, j \in I} f_i g_j \int_0^\eta \frac{ \cos \left( \varepsilon^{-1} w (\sigma (i) - \sigma (j) + o(1)) \right) -1 }{w^2} \dd w \\
 & = \frac{2}{\varepsilon \pi \Var}  \sum_{i, j \in I} f_i g_j \left(\sigma (i) - \sigma (j) + o(1) \right) \int_0^{\varepsilon^{-1} (\sigma (i) - \sigma (j) + o(1)) \eta} \frac{ \cos (w) -1 }{w^2} \dd w \\
 & =_{\varepsilon \to 0} - \frac{1}{\varepsilon \Var}  \sum_{i, j \in I} f_i g_j\left|\sigma (i) - \sigma (j) \right| + o(\varepsilon^{-1}),
\end{align*}
were the last line comes from the well-known integral~\cite[Chap.~XVII.4,~\(4.11\)]{Feller:1966}
\begin{equation*}
 \int_0^{+\infty} \frac{\cos (w) -1}{w^2} \dd w 
 = -\frac{\pi}{2}.
\end{equation*}

\medskip
\textbf{Asymptotics of the potential $Q_\varepsilon$.}
\smallskip

Adding the terms involving $\delta'_w$ to the main contribution yields
\begin{align*}
 \limsup_{\varepsilon \to 0} \left| \varepsilon \langle f, Q_\varepsilon (g) \rangle_{\ell^2 (I)} +  \frac{1}{\Var}  \sum_{i, j \in I} f_i g_j\left|\sigma (i) - \sigma (j) \right| \right|
 & = \limsup_{\varepsilon \to 0}  \left( \varepsilon O_\eta (1) + O(\max_{w \in [-\eta, \eta]} \left|\delta_w\right|) \right) \norm{f}{} \norm{g}{} \\
 & = O(\max_{w \in [-\eta, \eta]} |\delta_w|) \norm{f}{} \norm{g}{};
\end{align*}
since the above holds for all small enough $\eta > 0$ and $\lim_{w \to 0} \delta_w = 0$, we get finally that
\begin{equation*}
 \langle f, Q_\varepsilon (g) \rangle_{\ell^2 (I)} 
 \sim_{\varepsilon \to 0} - \frac{1}{\varepsilon \Var}  \sum_{i, j \in I} f_i g_j\left|\sigma (i) - \sigma (j) \right|.
\end{equation*}

Let $S : \C_0^I \to \C^I$ be the operator defined by
\begin{equation*}
 \langle f, S (g) \rangle_{\ell^2 (I)} 
 = -\sum_{i, j \in I} f_i g_j \left|\sigma (i) - \sigma (j) \right|.
\end{equation*}
Then $S$ is symmetric and irreducible in the sense of Definition~\ref{def:QMatriceIrreductible}. Indeed, for all $i \neq j \in I$,
\begin{equation*}
 \langle \mathbf{1}_i-\mathbf{1}_j, S (\mathbf{1}_i-\mathbf{1}_j) \rangle_{\ell^2 (I)}
 = 2\left|\sigma (i) - \sigma (j) \right| 
 > 0.
\end{equation*}
Let $R'$ be the unique irreducible bi-L-matrix such that $R'_0 = S^{-1}$. By Theorem~\ref{thm:MainTheorem}, 
\begin{equation*}
 P_\varepsilon 
 = \Id - \varepsilon \Var R' + o(\varepsilon);
\end{equation*}
setting $t = \varepsilon^{-1}$, all is left is to check that $R' = R$, or in other words, that $R_0 = S^{-1}$.

\medskip
\textbf{Asymptotics of the transition matrix $P_\varepsilon$.}
\smallskip

We now use the assumption that $I = \{1, \ldots, |I|\}$ and that $\sigma$ is increasing, which always holds up to permutation of $I$. We shall work with a convenient basis for $\C_0^I$. Set $e_i^{i+1} := \mathbf{1}_{i+1} - \mathbf{1}_i$ for $1 \leq i < |I|$. Set also $\Delta_i^j := \left|\sigma (i) - \sigma (j) \right|$.

\smallskip

Let $\widehat{S} : \C^I \to \C^I$ such that $\widehat{S}_{ij} = -\Delta_i^j$, so that $S$ is the composition of $\widehat{S}_{|\C_0^I} : \C_0^I \to \C^I$ with the projection parallel to constants from $\C^I$ to $\C_0^I$. Then, for $1 \leq i < |I|$ and $j \in I$,
\begin{equation*}
 (\widehat{S} e_i^{i+1})_j 
 = \Delta_i^j - \Delta_{i+1}^j 
 = \left\{ 
    \begin{array}{lll} 
        - \Delta_i^{i+1} & \text{ if } & j \leq i, \\
        \Delta_i^{i+1} & \text{ if } & j \geq i+1.
        \end{array} \right.
\end{equation*}
For $j < i$, the non-zero coefficients of the $j$th line of $R$ are all in the range $\{1, \ldots, i \}$, so that 
\begin{equation*}
 (R\widehat{S} e_i^{i+1})_j 
 = - \Delta_i^{i+1} (R \mathbf{1})_j 
 = 0.
\end{equation*}
Likewise, $(R\widehat{S} e_i^{i+1})_j = 0$ for $j \geq i+2$. The last two cases $j = i$ and $j = i+1$ finally yield $R\widehat{S} e_i^{i+1} = e_i^{i+1}$ for all $i$. As $R$ preserves $\C_0^I$, its restriction $R_0$ to $\C_0^I$ is the inverse of $S$.
\end{proof}

\begin{remark}[Construction of $R$]\quad

 One can also use a probabilistic argument which illuminates the choice of the matrix $R$. As the asymptotics is, up to the variance, the same for all processes with $\Lbb^2$ increments, it is enough to compute $R$ in a specific example. For the simple 
 random walk, in particular, $R$ can be computed explicitly by a martingale argument.
\end{remark}

\subsection{Computation of the potential: $d=1$, L\'evy jumps}
\label{subsec:D1Levy}

We are now interested in processes in the basin of attraction of a L\'evy stable distribution. We recall the definition of regular variation:

\begin{definition}[Regular and slow variation]\quad
 
 For any real $\alpha$, a real-valued, measurable, positive function $L$ defined on an interval $[x_0, +\infty)$ is said to have \emph{regular variation of index $\alpha$ at infinity} if, for all $x > 0$,
 \begin{equation*}
  \lim_{y \to +\infty} \frac{L(xy)}{L(y)} 
  = x^\alpha.
 \end{equation*}
 A function $L$ defined on an interval $(0, x_0]$ is said to have \emph{regular variation of index $\alpha$ at $0$} if $x \mapsto L(1/x)$ has regular variation of index $-\alpha$ at infinity.
 
 \smallskip
 
 A function with regular variation of index $0$ is also said to have \emph{slow variation}.
\end{definition}

In addition to the general setting of Section~\ref{sec:Calculs}, we assume that there exist $\alpha \in (1,2)$, $c_-$, $c_+ \geq 0$ not both zero and $L$ with regular variation of index $\alpha$ at infinity such that
\begin{align*}
 \mu\left(F \geq x\right) & \sim_{x \to + \infty} \frac{c_+ }{L(x)}, \\
 \mu\left(F \leq -x\right) & \sim_{x \to + \infty} \frac{c_-}{L(x)}.
\end{align*}
In this setting, we are able to give explicit asymptotics for the potential matrices, but not to invert them in general. We aim to prove:

\begin{proposition}\quad
\label{prop:ApplicationD1Levy}
 
 Let $([\Z], \widetilde{\mu}, \widetilde{T})$ be an ergodic and recurrent Markov $\Z$-extension of a Gibbs-Markov map $(A, \mu, T)$ with jump function $F : A \to \Z$. Assume furthermore that there exists $\alpha \in (1,2)$, $c_-$, $c_+ \geq 0$ not both zero and $L$ with regular variation of index $\alpha$ at infinity such that
\begin{align*}
 \mu\left(F \geq x\right) & \sim_{x \to + \infty} \frac{c_+ }{L(x)}, \\
 \mu\left(F \leq -x\right) & \sim_{x \to + \infty} \frac{c_-}{L(x)}.
\end{align*}

\smallskip

Let $I$ be non-empty and finite and $\sigma : I \hookrightarrow \R$ be injective. For $t > 0$, let $\sigma_t : I \hookrightarrow \Z$ be such that $\sigma_t (i) = t \sigma(i) + o(t)$ for all $i \in I$. Let $P_t$ be the transition matrix of the Markov chain induced on $\Sigma_t := \sigma_t (I)$ and $Q_t : \C_0^I \to \C_0^I$ the associated family of potential matrices. Then, for all $f$, $g \in \C_0^I$,
\begin{align}
 \langle f, Q_t(g) \rangle_{\ell^2 (I)} 
 & =_{t \to +\infty} \frac{\sin(\alpha \pi)}{\pi \left[c_+^2 + c_-^2 +2c_+ c_- \cos ( \alpha\pi ) \right]} \frac{L(t)}{t} \nonumber \\ & \hspace{2cm} \cdot \sum_{i, j \in I} f_i g_j \left|\sigma(i) - \sigma(j) \right|^{\alpha-1} 
 \left( c_+ \mathbf{1}_{\sigma (j) > \sigma(i)} + c_- \mathbf{1}_{\sigma (j) < \sigma(i)} \right) + o\left(\frac{L(t)}{t}\right). \label{eq:ExpansionPotentielD1Levy}
\end{align}
As a consequence, there exists an irreducible bi-L-matrix $R$ such that
\begin{equation}
\label{eq:ExpansionTransitionsD1Levy}
 P_t 
 =_{t \to + \infty} \Id - \frac{t}{L(t)} R + o\left(\frac{t}{L(t)}\right).
\end{equation}
\end{proposition}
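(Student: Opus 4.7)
The plan is to mimic the proof of Proposition~\ref{prop:ApplicationD1CarreIntegrable}, substituting two ingredients: the $\Lbb^2$ expansion $1-\lambda_w = \Var\, w^2/2 + o(w^2)$ is replaced by a regularly varying expansion, and the Fourier kernel yielding $|\sigma(i)-\sigma(j)|$ is replaced by one yielding $|\sigma(i)-\sigma(j)|^{\alpha-1}$ with direction-dependent complex weights. First I would invoke a Nagaev-Guivarc'h perturbation result for Gibbs-Markov maps with jump function in the basin of attraction of a stable law (as in Aaronson-Denker, or Gou\"ezel) to get, for small $|w|$,
\begin{equation*}
 1 - \lambda_w = \frac{K_{\sgn(w)}}{L(1/|w|)}\bigl(1+o(1)\bigr), \qquad K_\pm := \Gamma(1-\alpha)\bigl[c_+ e^{\mp i\pi\alpha/2} + c_- e^{\pm i\pi\alpha/2}\bigr],
\end{equation*}
so that $K_- = \overline{K_+}$ and $K_+ K_- = \Gamma(1-\alpha)^2 \bigl[c_+^2+c_-^2+2c_+c_-\cos(\alpha\pi)\bigr] > 0$ under the hypotheses. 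Setting $\varepsilon = t^{-1}$ and applying Proposition~\ref{prop:LimitePoissonTF} on $U = (-\eta,\eta)$ then gives
\begin{equation*}
 \langle f, Q_\varepsilon(g) \rangle_{\ell^2(I)} = \frac{1}{2\pi} \int_{-\eta}^{\eta} \widecheck{\Fcal}_\varepsilon(f)\, \Fcal_\varepsilon(g)\, \frac{1+\delta_w}{1-\lambda_w}\,dw + O_\eta(1)\,\norm{f}{}\norm{g}{}.
\end{equation*}

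The next step will be to split this integral into contributions from $(-\varepsilon,\varepsilon)$, from $(-\eta,\eta)\setminus(-\varepsilon,\varepsilon)$, and the main term, exactly as in the $\Lbb^2$ case. The $\delta_w$ pieces will be absorbed into the error using $\widecheck{\Fcal}_\varepsilon(f)(w), \Fcal_\varepsilon(g)(w) = O(\varepsilon^{-1}|w|\norm{f}{})$ together with $|1-\lambda_w|^{-1} = \Theta(L(1/|w|))$; since $L(1/|w|)$ is regularly varying of index $-\alpha$ with $\alpha < 2$, both error contributions will be $o(L(t)/t)\norm{f}{}\norm{g}{}$. For the main part I would perform the change of variables $w = u/t$. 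The uniform convergence $\widecheck{\Fcal}_\varepsilon(f)(u/t)\Fcal_\varepsilon(g)(u/t) \to \sum_{i,j \in I} f_i g_j (e^{iu(\sigma(i)-\sigma(j))}-1)$ on compacts, the regular-variation asymptotic $L(t/|u|) \sim L(t)|u|^{-\alpha}$, and a Potter-bounds dominated convergence argument should give
\begin{equation*}
 \langle f, Q_t(g) \rangle_{\ell^2(I)} \sim \frac{L(t)}{2\pi t}\sum_{i,j\in I} f_i g_j \int_{\R} \frac{e^{iu(\sigma(i)-\sigma(j))}-1}{|u|^\alpha\, K_{\sgn(u)}}\,du.
\end{equation*}
The remaining scalar integral I would then evaluate by splitting into $u > 0$ and $u < 0$ halves and using the classical Mellin identity $\int_0^\infty u^{-\alpha}(e^{iu}-1)\,du = i\Gamma(1-\alpha) e^{-i\pi\alpha/2}$ and its conjugate. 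Collecting the four terms, using $K_+ K_-$ to clear denominators, and invoking the reflection formula $\Gamma(1-\alpha)\Gamma(\alpha) = \pi/\sin(\alpha\pi)$, the prefactor should simplify to $\sin(\alpha\pi)/\bigl[\pi(c_+^2+c_-^2+2c_+c_-\cos(\alpha\pi))\bigr]$, yielding Equation~\eqref{eq:ExpansionPotentielD1Levy}.

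Let $S$ denote the matrix thus identified. To invoke Theorem~\ref{thm:MainTheorem} it will remain to verify irreducibility of $S$ in the sense of Definition~\ref{def:QMatriceIrreductible}. For any $i \neq j$ and $\phi := \mathbf{1}_i - \mathbf{1}_j$, only four of the $(k,\ell)$-summands in $\langle \phi, S\phi \rangle_{\ell^2(I)}$ survive; the two diagonal ones vanish, and the two cross summands combine to give a multiple of $-|\sigma(i)-\sigma(j)|^{\alpha-1}(c_++c_-)$. Since for $\alpha \in (1,2)$ one has $\sin(\alpha\pi) < 0$ while the denominator $c_+^2+c_-^2+2c_+c_-\cos(\alpha\pi)$ is strictly positive, the prefactor is itself negative, and the two negatives cancel to produce a strictly positive quantity. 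Theorem~\ref{thm:MainTheorem} will then immediately produce the irreducible bi-L-matrix $R$ with $R_0 = S^{-1}$ and the asymptotic \eqref{eq:ExpansionTransitionsD1Levy}. The main obstacle, flagged in the introduction, is that unlike the $\Lbb^2$ case where $S$ admitted a tridiagonal inverse, here $S$ is a full asymmetric matrix whose inverse cannot be written in closed form in general; accordingly the proposition stops at an existence statement, and explicit computations of $R$ are only carried out in the examples of Subsection~\ref{subsec:D1Levy}.
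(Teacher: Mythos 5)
Your proposal follows essentially the same route as the paper's proof: the Aaronson--Denker expansion of $1-\lambda_w$ (your $K_\pm$ are exactly the paper's $\vartheta(1\mp i\zeta\tan(\pi\alpha/2))$), Proposition~\ref{prop:LimitePoissonTF}, the same three-way splitting of the Fourier integral controlled by Potter/Karamata-type bounds, rescaling and dominated convergence for the main term, and finally the irreducibility computation $\langle \mathbf{1}_i-\mathbf{1}_j, S(\mathbf{1}_i-\mathbf{1}_j)\rangle_{\ell^2(I)}>0$ feeding into Theorem~\ref{thm:MainTheorem}. The only cosmetic difference is that you evaluate the limiting half-line integrals via the Mellin identity and the reflection formula, whereas the paper invokes Feller's formula and simplifies the resulting trigonometric expressions directly; both yield the stated prefactor.
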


Note that, as soon as $c_+ \neq c_-$ (i.e.\ $F$ is in the basin of attraction of an asymmetric L\'evy distribution), the operator $S$ is not symmetric, and thus $R$ is not symmetric either.

\begin{proof}
 
The structure of this demonstration follows that of the proof of Proposition~\ref{prop:ApplicationD1CarreIntegrable}.

\smallskip

We assume the setting of Proposition~\ref{prop:ApplicationD1CarreIntegrable}. By \cite[Theorem~5.1]{AaronsonDenker:2001}\footnote{In that theorem, $(A, \mu, T)$ is assumed to be mixing; however, ergodicity is enough, as can be seen e.g.\ by working with $(A, \mu, T^M)$ for $M \geq 1$ the period of $T$. In addition, we correct here a typographic mistake in the expression of $\Re \ln \lambda_w$.}, there exist $\vartheta >0$, $\zeta \in [-1, 1]$ and a neighbourhood $U$ of $0 \in \Tbb^1$ such that, for all $w \in U$,
\begin{equation*}
 \lambda_w 
 = e^{-\vartheta (1-i \zeta \tan (\pi \alpha/2) \sgn (w))L(|w|^{-1})^{-1}} + o(L(|w|^{-1})^{-1}). 
\end{equation*}
Without loss of generality, $L$ is assumed to be continuous. The sign function $\sgn$ in the above equation takes value $1$ on $\R_+^*$ and $-1$ on $\R_-^*$. The constants $\vartheta$ and $\zeta$ can in addition be expressed as functions of the initial data on $F$:
\begin{align*}
 \vartheta & = (c_- + c_+) \Gamma (1-\alpha) \cos \left( \frac{\pi \alpha}{2}\right), \\
 \zeta & = \frac{c_+ - c_-}{c_+ + c_-}.
\end{align*}
In particular, the parameter $\vartheta$ quantifies the dispersion of $F$, while $\zeta$ quantifies its asymmetry. To make computations slightly lighter, we put $\zeta' :=  \zeta \tan (\pi \alpha/2)$.

\smallskip

We set $\varepsilon := t^{-1}$. By Proposition~\ref{prop:LimitePoissonTF}, for any small enough neighbourhood $U$ of $0 \in \Tbb$, 
for all $\varepsilon > 0$, for all $f$, $g \in \C_0^I$,
\begin{align*}
 \langle f, Q_\varepsilon (g) \rangle_{\ell^2 (I)} 
 & = \frac{1}{2 \pi} \int_U \widecheck{\Fcal}_\varepsilon(f) \Fcal_\varepsilon(g) \frac{1+\delta_w}{1-\lambda_w} \dd w + O_U (1) \norm{f}{} \norm{g}{}.
\end{align*}
Note that $\widecheck{\Fcal}_\varepsilon(f) (w)$ and $\Fcal_\varepsilon(g) (w)$ are both in $O(|w|)$ while the denominator is bounded from below by $c'|w|^2$ for some $c'>0$ by Equation~\eqref{eq:DeveloppementVPD1CarreIntegrable} and Potter's bound \cite[Theorem~1.5.6]{BinghamGoldieTeugels:1987}.This justifies that we took the limit in Equation~\eqref{eq:BonneBornePourTF}.

\smallskip

Let us set
\begin{equation*}
 1+\delta'_w
 := (1+\delta_w) \frac{\vartheta (1-i \zeta' \sgn (w))L(|w|^{-1})^{-1}}{1-\lambda_w},
\end{equation*}
so that $\delta'_w$ is also a continuous function of $w$ with $\lim_{w \to 0} \delta'_w = 0$. In addition, we choose $U = (-\eta, \eta)$ for small enough $\eta > 0$. Then:
\begin{equation}
\label{eq:AControlerD1Levy}
 \langle f, Q_\varepsilon (g) \rangle_{\ell^2 (I)} 
 = \frac{1}{2\pi} \int_{-\eta}^\eta \widecheck{\Fcal}_\varepsilon(f) \Fcal_\varepsilon(g) \frac{(1+i \zeta' \sgn (w))(1+\delta'_w)}{\vartheta (1+ \zeta'^2)} L(|w|^{-1}) \dd w + O_\eta (1) \norm{f}{} \norm{g}{}.
\end{equation}

As in the proof of Proposition~\ref{prop:ApplicationD1CarreIntegrable}, we split this integral in three parts, which can be dealt with roughly in the same way, with a few adaptations.

\medskip
\textbf{Control of $\delta'_w$ on the neighbourhood $(-\varepsilon, \varepsilon)$.}
\smallskip

As in the proof of Proposition~\ref{prop:ApplicationD1CarreIntegrable}, for some constant $C \geq 0$,
\begin{equation*}
 \left| \int_{-\varepsilon}^\varepsilon \widecheck{\Fcal}_\varepsilon(f) \Fcal_\varepsilon(g) \delta'_w L(|w|^{-1}) \dd w \right|
 \leq C \norm{f}{} \norm{g}{} \max_{w \in [-\eta, \eta]} |\delta'_w| \varepsilon^{-2} \int_0^\varepsilon w^2 L(w^{-1}) \dd w.
\end{equation*}
But the function $w \mapsto w^2 L(w^{-1})$ has regular variation of index $2-\alpha > 0$ near $0$, so by Karamata's theorem~\cite{Karamata:1933} (alternatively,~\cite[Proposition~1.5.8]{BinghamGoldieTeugels:1987}),
\begin{equation*}
 \left| \int_{-\varepsilon}^\varepsilon \widecheck{\Fcal}_\varepsilon(f) \Fcal_\varepsilon(g) \delta'_w L(|w|^{-1}) \dd w \right| 
 \leq C \norm{f}{} \norm{g}{} \max_{w \in [-\eta, \eta]} |\delta'_w| \varepsilon L(\varepsilon^{-1}).
\end{equation*}

\medskip
\textbf{Control of the error term $\delta'_w$ on $(-\eta, \eta) \setminus (-\varepsilon, \varepsilon)$.}
\smallskip

For all $\iota \in (0, \alpha-1)$,
\begin{align*}
 \Bigg| \int_{(-\eta, \eta) \setminus (- \varepsilon, \varepsilon)} \widecheck{\Fcal}_\varepsilon(f) \Fcal_\varepsilon(g)  & \delta'_w L(|w|^{-1}) \dd w \Bigg| \\
 & \leq C \norm{f}{} \norm{g}{} \max_{w \in [-\eta, \eta]} |\delta'_w| \int_\varepsilon^\eta L(|w|^{-1}) \dd w \\
 & \leq C \norm{f}{} \norm{g}{} \max_{w \in [-\eta, \eta]} |\delta'_w| \varepsilon \int_1^{+ \infty} L(\varepsilon^{-1}w^{-1}) \dd w \\
 & \leq C_\iota \norm{f}{} \norm{g}{} \max_{w \in [-\eta, \eta]} |\delta'_w| \varepsilon \int_1^{+ \infty} \frac{L(\varepsilon^{-1})}{w^{\alpha - \iota}} \dd w \\
 & = O \left( \norm{f}{} \norm{g}{} \max_{w \in [-\eta, \eta]} |\delta'_w| \varepsilon L(\varepsilon^{-1}) \right),
\end{align*}
where we used Potter's bound~\cite[Theorem~1.5.6]{BinghamGoldieTeugels:1987} to obtain the penultimate line, and the fact that $\alpha-\iota > 1$ to get the last line.

\medskip
\textbf{Main contribution to the integral~\eqref{eq:AControlerD1Levy}.}
\smallskip

Now let us focus on the remaining term. Again we split $\widecheck{\Fcal}_\varepsilon (f) \Fcal_\varepsilon(g)$ along its frequencies. For all $i$, $j \in I$:
\begin{align*}
 \int_{-\eta}^\eta & \left( e^{i \varepsilon^{-1} w (\sigma(i) - \sigma(j) + o(1))}-1\right) (1+i \zeta' \sgn (w))L(|w|^{-1}) \dd w \\
 & = \varepsilon \int_{-\varepsilon^{-1} \eta}^{\varepsilon^{-1}\eta} \left( e^{i w (\sigma(i) - \sigma(j) + o(1))}-1\right) (1+i \zeta' \sgn (w))L(\varepsilon^{-1} |w|^{-1}) \dd w \\
 & = \varepsilon L(\varepsilon^{-1}) \int_\R \frac{\left( e^{i w (\sigma(i) - \sigma(j))}-1\right)(1+i \zeta' \sgn (w))}{|w|^\alpha} \dd w + o(\varepsilon L(\varepsilon^{-1})),
 \end{align*}
 where we used on the last line the fact that $L(\varepsilon^{-1} |w|^{-1}) \sim_{\varepsilon \to 0} L(\varepsilon^{-1}) |w|^{-\alpha}$ to ensure pointwise convergence, and Potter's bounds and the dominated convergence theorem to ensure the convergence of the integrals. From there we get
 \begin{align*}
 \int_{-\eta}^\eta & \left( e^{i \varepsilon^{-1} w (\sigma(i) - \sigma(j) + o(1))}-1\right) (1+i \zeta' \sgn (w))L(|w|^{-1}) \dd w \\
 & = \varepsilon L(\varepsilon^{-1}) \int_0^{+\infty} \left[ \frac{\left( e^{i w (\sigma(i) - \sigma(j))}-1\right)(1+i \zeta')}{w^\alpha} + \frac{\left( e^{-i w (\sigma(i) - \sigma(j))}-1\right)(1-i \zeta')}{w^\alpha} \right] \dd w   + o(\varepsilon L(\varepsilon^{-1})) \\
 & = 2 \varepsilon L(\varepsilon^{-1}) |\sigma(i)-\sigma(j)|^{\alpha-1} \Gamma(1-\alpha) \Re \left[ (1+i \zeta') e^{-i\frac{(\alpha-1)\pi}{2} \sgn(\sigma(i)-\sigma(j))} \right] + o(\varepsilon L(\varepsilon^{-1})) \\
 & = 2 \varepsilon L(\varepsilon^{-1}) |\sigma(i)-\sigma(j)|^{\alpha-1} \Gamma(1-\alpha) \left[ \cos \left( \frac{(\alpha-1) \pi}{2} \right) + \zeta' \sgn(\sigma(i)-\sigma(j)) \sin \left( \frac{(\alpha-1) \pi}{2} \right) \right] \\ 
 & \hspace{2cm} + o(\varepsilon L(\varepsilon^{-1})). \\
 & = 2 \varepsilon L(\varepsilon^{-1}) |\sigma(i)-\sigma(j)|^{\alpha-1} \Gamma(1-\alpha) \sin \left( \frac{\alpha \pi}{2} \right) \left[ 1 + \zeta \sgn(\sigma(j)-\sigma(i)) \right] + o(\varepsilon L(\varepsilon^{-1})).
\end{align*}
where the integrals are evaluated using~\cite[Chap.~XVII.4,~\(4.11\)]{Feller:1966}. Reintroducing the constant factors of Equation~\eqref{eq:AControlerD1Levy} and replacing $\vartheta$ and $\zeta$ by their values then yield:
 \begin{align*}
 \frac{1}{2\pi} \int_{-\eta}^\eta & \left( e^{i \varepsilon^{-1} w (\sigma(i) - \sigma(j) + o(1))}-1\right) \frac{1+i \zeta' \sgn (w)}{\vartheta (1+ \zeta'^2)} L(|w|^{-1}) \dd w \\
 & = \varepsilon L(\varepsilon^{-1}) |\sigma(i)-\sigma(j)|^{\alpha-1} \frac{2 \Gamma(1-\alpha) \sin \left( \frac{\alpha \pi}{2} \right) \left[ 1 + \frac{c_+ - c_-}{c_+ + c_-} \sgn(\sigma(j)-\sigma(i)) \right]}{2 \pi (c_+ + c_-) \Gamma(1-\alpha) \cos \left( \frac{\alpha\pi}{2} \right) \left(1+\left(\frac{c_+ - c_-}{c_+ + c_-}\right)^2 \tan^2 \left( \frac{\alpha\pi}{2} \right) \right)} \\ 
 & \hspace{2cm} + o(\varepsilon L(\varepsilon^{-1})) \\
 & = \varepsilon L(\varepsilon^{-1}) |\sigma(i)-\sigma(j)|^{\alpha-1} \frac{\sin \left( \frac{\alpha \pi}{2} \right) \cos \left( \frac{\alpha\pi}{2} \right)  \left[ (c_+ + c_-) + (c_+ - c_-) \sgn(\sigma(j)-\sigma(i)) \right]}{\pi \left((c_+ + c_-)^2 \cos^2 \left( \frac{\alpha\pi}{2} \right)  +(c_+ - c_-)^2 \sin^2 \left( \frac{\alpha\pi}{2} \right) \right)} \\
 & \hspace{2cm} + o(\varepsilon L(\varepsilon^{-1})) \\
& = \frac{\sin (\alpha \pi)  \left|\sigma(i) - \sigma(j) \right|^{\alpha-1} \left( c_+ \mathbf{1}_{\sigma (j) > \sigma(i)} + c_- \mathbf{1}_{\sigma (j) < \sigma(i)} \right)}{\pi \left(c_+^2 + c_-^2 + 2 c_- c_+ \cos (\alpha \pi) \right)} \varepsilon L(\varepsilon^{-1})
+ o(\varepsilon L(\varepsilon^{-1})).
\end{align*}

Let us set
\begin{equation*}
 \langle f, S (g)\rangle_{\ell^2 (I)} 
 := \sum_{i, j \in I} f_i g_j \frac{\sin (\alpha \pi)  \left|\sigma(i) - \sigma(j) \right|^{\alpha-1} \left( c_+ \mathbf{1}_{\sigma (j) > \sigma(i)} + c_- \mathbf{1}_{\sigma (j) < \sigma(i)} \right)}{\pi \left(c_-^2 + c_+^2 + 2 c_- c_+ \cos (\alpha \pi) \right)}.
\end{equation*}
After summing over $i$, $j \in I$, we obtain
\begin{equation*}
 \frac{1}{2\pi} \int_{-\eta}^\eta \widecheck{\Fcal}_\varepsilon(f) \Fcal_\varepsilon(g) \frac{1+i \zeta' \sgn (w)}{\vartheta (1+ \zeta'^2)} L(|w|^{-1}) \dd w 
  = \varepsilon L(\varepsilon^{-1}) \langle f, S (g)\rangle_{\ell^2 (I)} + o(\varepsilon L(\varepsilon^{-1})).
\end{equation*}
Then, as in the proof of Proposition~\ref{prop:ApplicationD1CarreIntegrable}, we can control the error terms involving $\delta'_w$ to get
\begin{equation*}
 \langle f, Q_\varepsilon (g) \rangle_{\ell^2 (I)} 
 = \varepsilon L(\varepsilon^{-1}) \langle f, S (g)\rangle_{\ell^2 (I)} + o(\varepsilon L(\varepsilon^{-1})),
\end{equation*}
which, after replacing $\varepsilon$ by $t^{-1}$, finishes the proof of the first part of Proposition~\ref{prop:ApplicationD1Levy}.

\smallskip

In addition, for all $i \neq j \in I$,
\begin{equation*}
 \langle \mathbf{1}_i-\mathbf{1}_j, S (\mathbf{1}_i-\mathbf{1}_j) \rangle_{\ell^2 (I)} 
 = \frac{( c_+ + c_-) \left|\sin(\alpha \pi) \right|}{\pi \left[c_+^2 + c_-^2 +2c_+ c_- \cos ( \alpha\pi ) \right]} \left|\sigma(i) - \sigma(j) \right|^{\alpha-1} 
 > 0,
\end{equation*}
so that $S$ is irreducible in the sense of Definition~\ref{def:QMatriceIrreductible}. By Theorem~\ref{thm:MainTheorem}, there exists an irreducible bi-L-matrix $R$ such that
\begin{equation*}
 P_\varepsilon 
 =_{\varepsilon \to 0} \Id - \frac{1}{\varepsilon L(\varepsilon^{-1})} R + o\left(\frac{1}{\varepsilon L(\varepsilon^{-1})}\right);
\end{equation*}
replacing $\varepsilon$ by $t^{-1}$ finishes the proof.
\end{proof}

\begin{example}[Asymmetric L\'evy distributions]\quad
\label{ex:LevyAsymetrique}
 
 While we have no general explicit expression for $R$, it can be computed on given examples. Let us consider for instance the case where $c_-=0$ and, without loss of generality, $c_+ = 1$. This corresponds to a L\'evy distribution which is maximally asymmetric, supported of $\R_+$. Take, for instance, $I = \{A, B, C\}$ with $(\sigma(A), \sigma(B), \sigma(C)) = (-1, 0, 1)$.
 
 \smallskip
 
 Set $e_1 := \mathbf{1}_C - \mathbf{1}_B$ and $e_2 := \mathbf{1}_B - \mathbf{1}_A$. One computes
 \begin{equation*}
  \begin{array}{llll}
    \langle e_1, S (e_1) \rangle_{\ell^2 (I)} & = \frac{|\sin(\alpha \pi)|}{\pi};
    & \langle e_1, S (e_2) \rangle_{\ell^2 (I)} & = 0; \\
    \langle e_2, S (e_1) \rangle_{\ell^2 (I)} & = - (2-2^{\alpha-1})\frac{|\sin(\alpha \pi)|}{\pi}; 
    & \langle e_2, S (e_2) \rangle_{\ell^2 (I)} & = \frac{|\sin(\alpha \pi)|}{\pi}.
  \end{array}
 \end{equation*}
  
  Since $\langle e_i, e_j \rangle_{\ell^2 (I)} = 2$ if $i = j$ and $-1$ otherwise, we can write $S$ in the basis $(e_1, e_2)$:
  \begin{equation*}
   S 
   = \frac{|\sin(\alpha \pi)|}{3 \pi}
   \left(
   \begin{array}{cc}
    2^{\alpha-1} & 1 \\
    2^\alpha - 3 & 2
   \end{array}
   \right),
  \end{equation*}
  whence
  \begin{equation*}
   S^{-1} 
   = R_0 
   = \frac{\pi}{|\sin(\alpha \pi)|}
   \left(
   \begin{array}{cc}
    2 & -1 \\
    3 - 2^\alpha & 2^{\alpha-1}
   \end{array}
   \right).
  \end{equation*}
  We can then recover, in the basis $(\mathbf{1}_A, \mathbf{1}_B, \mathbf{1}_C)$,
  \begin{equation}
  \label{eq:ExempleLevyAsymetrique}
   R
   = \frac{\pi}{|\sin(\alpha \pi)|}
   \left(
   \begin{array}{ccc}
    1 & -(2^{\alpha-1}-1) & -(2-2^{\alpha-1}) \\
    -1 & 2^{\alpha-1} & -(2^{\alpha-1}-1) \\
    0 & -1 & 1 
   \end{array}
   \right),
  \end{equation}
  and $P_t =_{t \to + \infty} \Id - \frac{t}{L(t)} R + o(t L(t^{-1}))$ by Proposition~\ref{prop:ApplicationD1Levy}. The first term of the last line is $0$, which means that transitions from $\sigma_t(C) \sim t$ to $\sigma_t(A) \sim -t$ avoiding $0$ have low probability (in $o(t L(t^{-1}))$). Transitions from $\sigma_t(A) \sim -t$ to $\sigma_t(C) \sim t$ avoiding $0$ have a much higher probability.
  
  \smallskip
  
  This is expected: one can find random walks under these constraints whose jumps are supported on $[-1, +\infty)$, in which case transitions from $\sigma_t(C)$ to $\sigma_t(A)$ avoiding $0$ are forbidden.
  
  \smallskip
  
  The matrix $R$ of Equation~\eqref{eq:ExempleLevyAsymetrique} has other interesting features. If $\alpha$ is close to $2$, as expected, the matrix $R$ is close to the one of the $\Lbb^2$ case, written in Proposition~\ref{prop:ApplicationD1CarreIntegrable}. Transitions occur mostly between neighbours, that is, between $\sigma_t(A)$ and $\sigma_t(B)$ or between $\sigma_t(B)$ and $\sigma_t(C)$. If $\alpha$ is close to $1$, on the other hand, the transitions occur mostly in a circular way: from $\sigma_t(A)$ to $\sigma_t(C)$, or from $\sigma_t(C)$ to $\sigma_t(B)$, or from $\sigma_t(B)$ to $\sigma_t(A)$.
\end{example}

\begin{example}[Symmetric L\'evy distributions]\quad
\label{ex:LevySymetrique}
 
 Let us present briefly the symmetric case. Without loss of generality, we assume that $c_- = c_+ = 1$. We take $I = \{A, B, C\}$ with $(\sigma(A), \sigma(B), \sigma(C)) = (-1, 0, 1)$, as in Example~\ref{ex:LevyAsymetrique}. Similar computations yield

  \begin{equation}
  \label{eq:ExempleLevySymetrique}
   R
   = \frac{2\pi}{2^{\alpha-1}(4-2^{\alpha-1})|\tan\left(\frac{\alpha \pi}{2}\right)|}
   \left(
   \begin{array}{ccc}
    2 & -2^{\alpha-1} & -(2-2^{\alpha-1}) \\
    -2^{\alpha-1} & 2^\alpha & -2^{\alpha-1} \\
    -(2-2^{\alpha-1}) & -2^{\alpha-1} & 2
   \end{array}
   \right),
  \end{equation}
  and $P_t =_{t \to + \infty} \Id - \frac{t}{L(t)} R + o(t L(t^{-1}))$ by Proposition~\ref{prop:ApplicationD1Levy}.
  
  \smallskip
  
  If $\alpha$ is close to $2$, as expected, the matrix $R$ is close to the one of the $\Lbb^2$ case, written in Proposition~\ref{prop:ApplicationD1CarreIntegrable}. If $\alpha$ is close to $1$, on the other hand, the transitions are close to the symmetric Cauchy case, written in Proposition~\ref{prop:ApplicationD1Cauchy}.
\end{example}

\subsection{Computation of the potential: $d=1$, Cauchy jumps}
\label{subsec:D1Cauchy}

We include some distributions for the jumps in the basin of attraction of a $1$-stable law. These are of particular interest for at least two reasons. First, they occur in geometric settings, such at the geodesic flow on $\C \setminus \Z$ with a translation-invariant hyperbolic metric~\cite{AaronsonDenker:1998, AaronsonDenker:1999}. In addition, in the context of the current work, we can compute explicitly the asymptotics of their transition matrices $(P_t)_{t > 0}$ and not just the asymptotic potential operator $S$.

\smallskip

Getting exhaustive conditions ensuring that a given distribution is in the basin of attraction of a $1$-stable law is a notoriously difficult problem. As in~\cite[Corollary~2.9]{PeneThomine:2019}, we choose to make a spectral assumption which, while very restrictive\footnote{In particular, the $H_1$ and $H_2$ terms appearing in~\cite[Theorem~2]{AaronsonDenker:1998} must cancel up to an error in $o(L(|w|^{-1})^{-1})$.}, includes symmetric distributions. The work of J.~Aaronson and M.~Denker offers some explicit conditions on the tails of $F$ which guarantee that Hypothesis~\ref{hyp:FonctionCaracteristiqueCauchy} is satisfied~\cite{AaronsonDenker:1998}.

\begin{hypothesis}[Basin of attraction of $1$-stable distributions]\quad
\label{hyp:FonctionCaracteristiqueCauchy}
 
 There exist $\vartheta > 0$, $\zeta \in \R$ and $L$ with regular variation of index $1$ at infinity such that 
 \begin{equation}
  \label{eq:FonctionCaracteristiqueCauchy}
  \int_A e^{i w F} \dd \mu 
  =_{w \to 0} e^{- \vartheta (1-i\zeta \sgn(w)) L(|w|^{-1})^{-1} } + o(L(|w|^{-1})^{-1}).
 \end{equation}
 Furthermore, we then set
 \begin{equation*}
  J(t) 
  := \int_C^t \frac{L(s)}{s^2} \dd s,
 \end{equation*}
 which is well-defined for all large enough $C$ and all $t \geq C$.
\end{hypothesis}

We then get:

\begin{proposition}\quad
\label{prop:ApplicationD1Cauchy}
 
 Let $([\Z], \widetilde{\mu}, \widetilde{T})$ be an ergodic and recurrent Markov $\Z$-extension of a Gibbs-Markov map $(A, \mu, T)$ with jump function $F : A \to \Z$. Assume furthermore that $F$ satisfies Hypothesis~\ref{hyp:FonctionCaracteristiqueCauchy}.

\smallskip

Let $I$ be non-empty and finite and $\sigma : I \hookrightarrow \R$ be injective. For $t > 0$, let $\sigma_t : I \hookrightarrow \Z$ be such that $\sigma_t (i) = t \sigma(i) + o(t)$ for all $i \in I$. Let $P_t$ be the transition matrix of the Markov chain induced on $\Sigma_t := \sigma_t (I)$. 
Then
\begin{equation*}
 P_t 
 =_{t \to + \infty} \Id - \frac{\pi \vartheta (1+\zeta^2)}{J(t)} R + o (J(t)^{-1}),
\end{equation*}
where $R$ is the irreducible, symmetric bi-L-matrix defined by
\begin{equation}
\label{eq:DefinitionRD1Cauchy}
\left\{
 \begin{array}{lcll}
 R_{ij} & = & -\frac{1}{|I|} & \quad \text{whenever } i \neq j \in I, \\
 R_{ii} & = & \frac{|I|-1}{|I|} & \quad \forall \ i \in I.
 \end{array}
 \right.
\end{equation}
\end{proposition}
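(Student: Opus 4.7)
The plan is to follow the template of Propositions~\ref{prop:ApplicationD1CarreIntegrable} and~\ref{prop:ApplicationD1Levy}, with the critical adaptation required by the boundary index $\alpha = 1$. Set $\varepsilon := t^{-1}$. I would first combine Hypothesis~\ref{hyp:FonctionCaracteristiqueCauchy} with a Nagaev--Guivarc'h type argument (in the spirit of \cite{AaronsonDenker:1998}) to lift the expansion of the characteristic function to the main eigenvalue of the twisted transfer operator:
$$
1 - \lambda_w = \vartheta\bigl(1 - i\zeta\sgn(w)\bigr) L(|w|^{-1})^{-1} + o\bigl(L(|w|^{-1})^{-1}\bigr).
$$
Plugged into Equation~\eqref{eq:BonneBornePourTF} of Proposition~\ref{prop:LimitePoissonTF}, and using $\frac{1}{1-i\zeta\sgn w} = \frac{1+i\zeta\sgn w}{1+\zeta^2}$, for any small enough $\eta > 0$ and $f$, $g \in \C_0^I$ the quantity $\langle f, Q_\varepsilon(g)\rangle_{\ell^2(I)}$ rewrites, up to error terms $\delta_w \to 0$ controlled exactly as in the proof of Proposition~\ref{prop:ApplicationD1Levy}, as
$$
\frac{1}{2\pi\vartheta(1+\zeta^2)} \sum_{i, j \in I} f_i g_j\, J_{ij}(\varepsilon) + O_\eta(1)\,\norm{f}{}\,\norm{g}{},
$$
with $\Delta_{ij} := \sigma(i) - \sigma(j) + o(1)$ and
$$
J_{ij}(\varepsilon) := 2\int_0^\eta \bigl[\cos(w \varepsilon^{-1}\Delta_{ij}) - 1 - \zeta \sin(w \varepsilon^{-1}\Delta_{ij})\bigr] L(w^{-1})\, dw.
$$

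The crux of the argument, and what sets this case apart from both the $\Lbb^2$ and L\'evy regimes, is the asymptotic analysis of $J_{ij}(\varepsilon)$. In the L\'evy case the substitution $u = w \varepsilon^{-1}|\Delta_{ij}|$ renormalizes cleanly, but here $L$ has regular variation of exact index~$1$, so $L(|\Delta_{ij}|/(\varepsilon u))$ does not converge under any such rescaling. Instead, I would use the substitution $s = w^{-1}$ in the cosine integral, turning it into $\int_{1/\eta}^\infty (1 - \cos(a/s)) L(s)/s^2\, ds$ with $a := \varepsilon^{-1}|\Delta_{ij}|$. Splitting at $s = a$: the range $s > a$ contributes $O(\ell(a))$, where $\ell$ is the slowly varying part of $L$, since $1-\cos(a/s) \sim (a/s)^2/2$ there; on $s \in [1/\eta, a]$, a further substitution $u = a/s$ combined with slow variation of $\ell$ shows that the oscillatory $\cos(a/s)$ contribution is also $O(\ell(a))$, so the dominant part reduces to $\int_{1/\eta}^a L(s)/s^2\, ds = J(a) - J(1/\eta)$. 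Since $L$ has index~$1$, $J$ is slowly varying (see \cite[Proposition~1.5.9a]{BinghamGoldieTeugels:1987}), so $J(a) \sim J(\varepsilon^{-1})$; the recurrence hypothesis forces $J(\varepsilon^{-1}) \to +\infty$, so the $O(\ell(a))$ remainders are $o(J(\varepsilon^{-1}))$. A parallel (and easier) analysis of the sine integral shows it is also $O(\ell(\varepsilon^{-1})) = o(J(\varepsilon^{-1}))$. Combining, one obtains $J_{ij}(\varepsilon) = -2J(\varepsilon^{-1}) + o(J(\varepsilon^{-1}))$ for $i \neq j$, while $J_{ii}(\varepsilon) = 0$.

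Using $\sum_{i \neq j} f_i g_j = -\sum_i f_i g_i = -\langle f, g\rangle_{\ell^2(I)}$, this yields
$$
\langle f, Q_\varepsilon(g)\rangle_{\ell^2(I)} = \frac{J(\varepsilon^{-1})}{\pi \vartheta(1+\zeta^2)} \langle f, g\rangle_{\ell^2(I)} + o\bigl(J(\varepsilon^{-1})\bigr).
$$
Reparametrizing with $\tilde\varepsilon := \pi\vartheta(1+\zeta^2)/J(t)$, Hypothesis~\ref{hyp:AsymptoticsQ} is satisfied with $S = \Id$ on $\C_0^I$, which is irreducible in the sense of Definition~\ref{def:QMatriceIrreductible} since $\langle \mathbf{1}_i - \mathbf{1}_j, S(\mathbf{1}_i - \mathbf{1}_j)\rangle_{\ell^2(I)} = 2$ for all $i \neq j$. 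Theorem~\ref{thm:MainTheorem} then produces an irreducible bi-L-matrix $R$ with $R_0 = S^{-1} = \Id$ on $\C_0^I$ and $P_t = \Id - \tilde\varepsilon R + o(\tilde\varepsilon)$. Finally, the matrix defined by Equation~\eqref{eq:DefinitionRD1Cauchy}, whose entries are $\delta_{ij} - 1/|I|$, has zero row and column sums, is symmetric, and a one-line check shows that it acts as the identity on $\C_0^I$; by uniqueness of the bi-L-matrix with these properties, this is the correct $R$, yielding the announced expansion. The main technical obstacle throughout is the delicate treatment of the cosine integral at the critical index $\alpha = 1$, where neither the $\alpha = 2$ scaling of Proposition~\ref{prop:ApplicationD1CarreIntegrable} nor the $\alpha \in (1,2)$ scaling of Proposition~\ref{prop:ApplicationD1Levy} applies, and one must extract the correct logarithmic-type divergence $J(\varepsilon^{-1})$ by a more careful splitting and substitution.
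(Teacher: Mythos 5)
Your proposal is correct and follows essentially the same route as the paper: the eigenvalue expansion derived from Hypothesis~\ref{hyp:FonctionCaracteristiqueCauchy}, Proposition~\ref{prop:LimitePoissonTF}, negligibility of the oscillatory frequencies so that the main contribution is $2J(\varepsilon^{-1})\langle f,g\rangle_{\ell^2(I)}$, and finally Theorem~\ref{thm:MainTheorem} with $S=\Id_{\C_0^I}$ (your subtraction of the constant $1$, which moves the main term to the off-diagonal pairs, is an equivalent bookkeeping of the paper's diagonal term). The only two places where the paper does genuinely more work than your sketch are the divergence $J(t)\to+\infty$, which is not a direct consequence of recurrence but the paper's Lemma~\ref{lem:DivergenceJCauchy} (Halmos' recurrence theorem plus a Fourier lower bound on $|1-\lambda_w|$), and the $O(\ell(a))$ bound for the oscillatory piece: after your substitution, Potter's bounds and slow variation of $\ell$ alone only give $O(\ell(a)a^{\iota})$, so one must actually exploit the oscillation, which the paper does by choosing $L$ increasing and performing a Riemann--Stieltjes integration by parts to get the bound $O(\varepsilon L(\varepsilon^{-1}))=o(J(\varepsilon^{-1}))$.
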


Under the hypotheses of Proposition~\ref{prop:ApplicationD1Cauchy}, and setting
\begin{equation*}
 a(t) 
 := \frac{\pi \vartheta (1+\zeta^2)}{|I| J(t)},
\end{equation*}
the transition matrices $(P_t)_{t >0}$ have the following graphical representation:
\begin{equation*}
 P_t
 = \left(
 \begin{array}{ccccc}
  1 -(|I|-1) a(t) & a(t) & \ldots & a(t) & a(t) \\
  a(t) & 1 -(|I|-1) a(t) & \ldots & a(t) & a(t) \\ 
  \vdots & \vdots & \ddots & \vdots & \vdots \\
  a(t) & a(t) & \ldots & 1 -(|I|-1) a(t) & a(t) \\
  a(t) & a(t) & \ldots & a(t) & 1 -(|I|-1) a(t)
 \end{array}
 \right)
 + o(J(t)^{-1}).
\end{equation*}
It is possible to get less trivial limit matrices if one looks at shapes whose points lie at different spatial scales. One may consider, for instance, $L(t) \sim Ct$, so that $J(t) \sim C \ln (t)$, and $\norm{\sigma_t(i)- \sigma_t(j)}{} \sim t^{\max\{r(i), r(j)\}}$ for some function $r : I \to (0, +\infty)$.

\smallskip

Before proving Proposition~\ref{prop:ApplicationD1Cauchy}, we introduce a technical lemma:

\begin{lemma}\quad
 \label{lem:DivergenceJCauchy}
 
  Let $([\Z], \widetilde{\mu}, \widetilde{T})$ be an ergodic and recurrent Markov $\Z$-extension of a Gibbs-Markov map $(A, \mu, T)$ with jump function $F : A \to \Z$. Assume furthermore that $F$ satisfies Hypothesis~\ref{hyp:FonctionCaracteristiqueCauchy}. Then, for all large enough $C$,
\begin{equation*}
 \int_C^{+\infty} \frac{L(t)}{t^2} \dd t
 = +\infty.
\end{equation*}
\end{lemma}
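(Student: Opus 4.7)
The plan is to translate recurrence into a statement about the main eigenvalue $\lambda_w$ near $w = 0$ via Fourier inversion, then invoke Hypothesis~\ref{hyp:FonctionCaracteristiqueCauchy} to rewrite it in terms of $L$. The key intermediate object is the Abel-weighted Green function $\sum_{n \geq 0} \rho^n \mu(S_n F = 0)$, whose divergence as $\rho \to 1^-$ encodes recurrence, and which also admits a Fourier expression in terms of $(\Id - \rho \Lcal_w)^{-1}$.

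First I would use conservativity of $\widetilde T$: since $\widetilde\mu([0]) = 1 < +\infty$, the pointwise identity $\sum_{n \geq 0} \mathbf{1}_{[0]} \circ \widetilde T^n = +\infty$ holds almost surely on $[0]$, and integrating yields $\sum_{n \geq 0} \mu(S_n F = 0) = +\infty$. Weighting by $\rho^n$, monotone convergence gives $\lim_{\rho \to 1^-} \sum_n \rho^n \mu(S_n F = 0) = +\infty$. Via the Fourier identity derived in the proof of Proposition~\ref{prop:LimitePoissonTF}, this reads
\[
 \lim_{\rho \to 1^-} \frac{1}{2\pi} \int_\Tbb \int_A (\Id - \rho\Lcal_w)^{-1}(\mathbf{1}) \, d\mu \, dw = +\infty.
\]
Corollary~\ref{cor:ControleTFInverse} and continuity bound the inner integral uniformly in $\rho \in [0,1]$ on $\Tbb \setminus U$, for any neighbourhood $U$ of $0$; on a small enough $U$, the Nagaev--Guivarc'h eigendecomposition of Subsection~\ref{subsec:Periodicites} gives, as already computed in the proof of Proposition~\ref{prop:LimitePoissonTF},
\[
 \int_A (\Id - \rho \Lcal_w)^{-1}(\mathbf{1}) \, d\mu = \frac{\mu_w(\mathbf{1})}{1 - \rho \lambda_w} + O(1),
\]
uniformly in $\rho \in [0, 1)$ and $w \in U$, with $\mu_w(\mathbf{1}) \to 1$ as $w \to 0$.

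Since the Abel sum is real and nonnegative, only the real part of the singular term contributes; and since $\Re(1-\rho\lambda_w)^{-1} = (1-\rho\Re\lambda_w)/|1-\rho\lambda_w|^2 \geq 0$ (because $|\rho\lambda_w| < 1$), while the prefactor $\mu_w(\mathbf{1})$ is continuous and equal to $1$ at $w = 0$, hence bounded away from $0$ on a smaller $U$, Fatou's lemma lets me pass the limit $\rho \to 1^-$ inside the integral to obtain $\int_U \Re(1-\lambda_w)^{-1} \, dw = +\infty$. Hypothesis~\ref{hyp:FonctionCaracteristiqueCauchy} combined with the Nagaev--Guivarc'h-type identification of $\lambda_w$ with $\int e^{iwF}\,d\mu$ to leading order yields $1 - \lambda_w \sim \vartheta(1 - i\zeta\sgn(w))L(|w|^{-1})^{-1}$, and therefore $\Re(1-\lambda_w)^{-1} \sim L(|w|^{-1})/(\vartheta(1+\zeta^2))$ near $w = 0$. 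With $U = (-\eta, \eta)$ this forces $\int_0^\eta L(w^{-1})\,dw = +\infty$, and the change of variable $t = 1/w$ transforms this into $\int_{1/\eta}^{+\infty} L(t)/t^2\,dt = +\infty$, which proves the lemma for any $C \geq 1/\eta$. The delicate step is the Fatou application: one must verify that the $O(1)$ remainder in the eigendecomposition is genuinely uniform in $\rho$ up to $\rho = 1^-$, which in turn rests on the uniform spectral gap of $\rho Z_w$ for $(\rho, w)$ in a compact neighborhood of $(1, 0)$ in $[0,1] \times U$.
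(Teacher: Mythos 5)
Your overall route is the same as the paper's: recurrence (Halmos) gives $\sum_{n\geq 0}\mu(S_nF=0)=+\infty$, the Abel-weighted sum is rewritten through the Fourier identity and the eigendecomposition as $\frac{1}{2\pi}\int_U \frac{\mu_w(\mathbf{1})}{1-\rho\lambda_w}\dd w + O(1)$, and Hypothesis~\ref{hyp:FonctionCaracteristiqueCauchy} identifies $1-\lambda_w \sim \vartheta(1-i\zeta\sgn(w))L(|w|^{-1})^{-1}$, so that everything reduces to showing that $\int_U \frac{\dd w}{|1-\lambda_w|}$ (equivalently $\int_0^\eta L(w^{-1})\dd w$) diverges. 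The gap is in the step where you ``pass the limit $\rho\to 1^-$ inside the integral'' by Fatou. Fatou's lemma gives $\int_U \liminf_{\rho\to 1^-}(\cdot)\dd w \leq \liminf_{\rho\to 1^-}\int_U(\cdot)\dd w$; since the right-hand side is $+\infty$, this is vacuous and says nothing about $\int_U \Re\left[(1-\lambda_w)^{-1}\right]\dd w$. The implication you actually need, namely ``limit of the integrals infinite $\Rightarrow$ integral of the pointwise limit infinite'', is false for general nonnegative families: mass can concentrate at $w=0$, as for $g_\rho(w)=(1-\rho)^{1/2}\left((1-\rho)^2+w^2\right)^{-1}$, which satisfies $\int_{-\eta}^{\eta}g_\rho\,\dd w \to +\infty$ while $g_\rho(w)\to 0$ for every $w\neq 0$. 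So, as written, the key limiting step does not follow; the uniformity of the $O(1)$ remainder that you flag as the delicate point is the lesser issue.

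What closes the gap -- and what the paper's proof implicitly uses when it bounds $\limsup_{\rho\to1^-}\int_{-\eta}^{\eta}\frac{|\mu_w(\mathbf{1})|}{|1-\rho\lambda_w|}\dd w$ by $\int_{-\eta}^{\eta}\frac{\dd w}{|1-\lambda_w|}$ -- is a domination rather than Fatou: since $\Re(1-\lambda_w)\geq 0$, one has $\Re(1-\rho\lambda_w)\geq \rho\,\Re(1-\lambda_w)$ and $|\Im(1-\rho\lambda_w)|=\rho\,|\Im(1-\lambda_w)|$, hence $|1-\rho\lambda_w|\geq \rho\,|1-\lambda_w|$, so on $U$ the integrand is bounded, uniformly in $\rho\in[1/2,1)$, by $C/|1-\lambda_w|\leq C' L(|w|^{-1})$. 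Then either $\int_0^\eta L(w^{-1})\dd w=+\infty$ (which is the claim, after the change of variable $t=w^{-1}$), or this dominating function is integrable and the Abel sums stay bounded as $\rho\to1^-$, contradicting recurrence. With this one inequality your argument is repaired and coincides with the paper's; the remaining point that $\mu_w(\mathbf{1})$ is complex (so ``only the real part of the singular term contributes'' requires absorbing $|\mu_w(\mathbf{1})-1|$, which is $o(1)$ as $w\to 0$) is minor and handled by the same comparison.
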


\begin{proof}
 
 By Halmos' recurrence theorem~\cite{Halmos:1956}, if $([\Z], \widetilde{\mu}, \widetilde{T})$ is recurrent, then
 \begin{equation*}
  \sum_{n = 0}^{+ \infty} \widetilde{\mu} ((x,p) \in [0] \text{ and } \widetilde{T}^n (x, p) \in [0]) 
  = \sum_{n = 0}^{+ \infty} \mu (S_n F = 0) 
  = + \infty.
 \end{equation*}
 As in the proof of Proposition~\ref{prop:LimitePoissonTF}, we can express this sum using the Fourier transform and twisted transfer operators. For all small enough $\eta > 0$, such that in particular $|\mu_w (\mathbf{1})| \leq 2 \pi$ on $(-\eta, \eta)$:
 \begin{align*}
  \sum_{n = 0}^{+ \infty} \mu (S_n F = 0) 
  & = \lim_{\rho \to 1^-} \sum_{n = 0}^{+ \infty} \rho^n \mu (S_n F = 0) \\
  & = \lim_{\rho \to 1^-} \sum_{n = 0}^{+ \infty} \rho^n \int_A \frac{1}{2\pi} \int_{\Tbb^1} e^{i w S_n F} \dd w \dd \mu \\
  & = \frac{1}{2\pi} \lim_{\rho \to 1^-} \sum_{n = 0}^{+ \infty} \rho^n \int_{\Tbb^1} \int_A \Lcal_w^n (\mathbf{1}) \dd \mu \dd w \\
  & = \frac{1}{2\pi} \lim_{\rho \to 1^-} \int_{\Tbb^1} \int_A \left(\Id - \rho \Lcal_w\right)^{-1} (\mathbf{1}) \dd \mu \dd w \\
  & \leq \frac{1}{2\pi} \limsup_{\rho \to 1^-} \int_{-\eta}^\eta \frac{|\mu_w (\mathbf{1})|}{|1-\rho \lambda_w|} \dd w  + O(1) \\
  & \leq \int_{-\eta}^\eta \frac{1}{|1-\lambda_w|} \dd w  + O(1).
 \end{align*}

 By~\cite[Theorem~2]{AaronsonDenker:1999}, under Condition~\eqref{eq:FonctionCaracteristiqueCauchy} and with the same constants,
 \begin{equation*}
  \lambda_w 
   = e^{- \vartheta (1-i\zeta \sgn(w)) L(|w|^{-1})^{-1} } + o(L(|w|^{-1})^{-1}).
 \end{equation*}
 Taking real parts and possibly a lower value of $\eta$, the function $|1-\lambda_w|$ has a lower bound in $\vartheta L(|w|^{-1})^{-1}/2$, and in addition
 \begin{equation*}
  \int_{-\eta}^\eta \frac{2}{\vartheta L(|w|^{-1})^{-1}} \dd w 
  = \frac{4}{\vartheta} \int_{\eta^{-1}}^{+\infty} \frac{L(w)}{w^2} \dd w,
 \end{equation*}
 which must diverge.
\end{proof}

We can now proceed to the proof of Proposition~\ref{prop:ApplicationD1Cauchy}.

\begin{proof}[Proof of Proposition~\ref{prop:ApplicationD1Cauchy}]

The structure of this demonstration follows that of the proofs of Propositions~\ref{prop:ApplicationD1CarreIntegrable} and~\ref{prop:ApplicationD1Levy}.

\smallskip

We assume the setting of Proposition~\ref{prop:ApplicationD1Cauchy}. In particular, using Hypothesis~\ref{hyp:FonctionCaracteristiqueCauchy}, let $\vartheta > 0$, $\zeta \in \R$ and $L$ with regular variation of index $1$ at infinity such that 
\begin{equation*}
 \int_A e^{i w F} \dd \mu 
 =_{w \to 0} e^{- \vartheta (1-i\zeta \sgn(w)) L(|w|^{-1})^{-1} } + o(L(|w|^{-1})^{-1}).
\end{equation*}

We set $\varepsilon := t^{-1}$. With exact the same reasoning as in the beginning of the proof of Proposition~\ref{prop:ApplicationD1Levy}, we get for all $f$, $g \in \C_0^I$:
\begin{equation}
\label{eq:AControlerD1Cauchy}
 \langle f, Q_\varepsilon (g) \rangle_{\ell^2 (I)} 
 = \frac{1}{2\pi} \int_{-\eta}^\eta \widecheck{\Fcal}_\varepsilon(f) \Fcal_\varepsilon(g) \frac{(1+i \zeta \sgn (w))(1+\delta'_w)}{\vartheta (1 + \zeta^2)} L(|w|^{-1}) \dd w + O_\eta (1) \norm{f}{} \norm{g}{},
\end{equation}
where $\lim_{w \to 0} \delta'_w = 0$.

\smallskip

As in the proof of Proposition~\ref{prop:ApplicationD1Levy}, we split this integral in three parts; however these parts are different:
\begin{itemize}
 \item the whole integral on a small neighbourhood $(-\varepsilon, \varepsilon)$ of $0$, which shall be negligible because $\left( \widecheck{\Fcal}_\varepsilon(f) \Fcal_\varepsilon(g) \right) (w)$ is small near $0$, with a small additional gain thanks to~\cite[Proposition~1.5.9b]{BinghamGoldieTeugels:1987}.
 \item the integral involving the error term $\delta'_w$ on $(-\eta, \eta) \setminus (-\varepsilon, \varepsilon)$, which shall be negligible because $\delta'_w$ is small and $L(|w|^{-1})$ is large.
 \item the integral on $(-\eta, \eta) \setminus (-\varepsilon, \varepsilon)$ without the error term $\delta'_w$, which is the main contribution to $\langle f, Q_\varepsilon (g) \rangle_{\ell^2 (I)}$.
\end{itemize}

Another meaningful difference will reside in the computation of the asymptotics of the main part: the non-zero frequencies of $\widecheck{\Fcal}_\varepsilon(f) \Fcal_\varepsilon(g)$ will yield a negligible contribution by an argument \textit{\`a la} Riemann-Lebesgue.

\medskip
\textbf{Control of the integral on the neighbourhood $(-\varepsilon, \varepsilon)$.}
\smallskip

The exact same computation as in the proof of Proposition~\ref{prop:ApplicationD1Levy} yields
\begin{equation*}
 \int_{-\varepsilon}^\varepsilon \widecheck{\Fcal}_\varepsilon(f) \Fcal_\varepsilon(g) (1+\delta'_w) L(|w|^{-1}) \dd w 
 = \norm{f}{} \norm{g}{} O\left(\varepsilon L(\varepsilon^{-1}) \right).
\end{equation*}
In addition, since $t^{-2} L(t)$ is not integrable by Lemma~\ref{lem:DivergenceJCauchy}, we can apply~\cite[Proposition~1.5.9b]{BinghamGoldieTeugels:1987} to get that $t^{-1} L(t) = o(J(t))$, or by a change of variable
\begin{equation*}
 \int_{-\varepsilon}^\varepsilon \widecheck{\Fcal}_\varepsilon(f) \Fcal_\varepsilon(g) (1+\delta'_w) L(|w|^{-1}) \dd w 
 = \norm{f}{} \norm{g}{} o\left( J(\varepsilon^{-1}) \right).
\end{equation*}

\medskip
\textbf{Control of the error term $\delta'_w$ on $(-\eta, \eta) \setminus (-\varepsilon, \varepsilon)$.}
\smallskip

On $(-\eta, \eta) \setminus (-\varepsilon, \varepsilon)$, we get
\begin{align*}
 \Bigg| \int_{(-\eta, \eta) \setminus (- \varepsilon, \varepsilon)} \widecheck{\Fcal}_\varepsilon(f) \Fcal_\varepsilon(g)  & \delta'_w L(|w|^{-1}) \dd w \Bigg| \\
 & \leq C \norm{f}{} \norm{g}{} \max_{w \in [-\eta, \eta]} |\delta'_w| \int_\varepsilon^\eta L(w^{-1}) \dd w \\
 & = C \norm{f}{} \norm{g}{} \max_{w \in [-\eta, \eta]} |\delta'_w| \int_{\eta^{-1}}^{\varepsilon^{-1}} \frac{L(w)}{w^2} \dd w \\
 & = C \norm{f}{} \norm{g}{} \max_{w \in [-\eta, \eta]} |\delta'_w| \left[ J(\varepsilon^{-1}) - J(\eta^{-1}) \right] \\
 & \leq C \norm{f}{} \norm{g}{} \max_{w \in [-\eta, \eta]} |\delta'_w| J(\varepsilon^{-1}).
\end{align*}

\medskip
\textbf{Main contribution to the integral~\eqref{eq:AControlerD1Cauchy}.}
\smallskip

Now let us focus on the remaining term. Again we split $\widecheck{\Fcal}_\varepsilon (f) \Fcal_\varepsilon(g)$ along its frequencies. Without loss of generality, $L$ can be chosen increasing. The function $w \mapsto L(w^{-1})$ is then decreasing on $(\varepsilon, \eta)$. By the Riemann-Stieltjes version of the integration by parts~\cite[Theorem~7.6]{Apostol:1974}, whenever $i \neq j \in I$,
\begin{align*}
 \int_\varepsilon^\eta & e^{i \varepsilon^{-1} w (\sigma(i) - \sigma(j) + o(1))} L(w^{-1}) \dd w \\
  & = \frac{\varepsilon}{\sigma(i) - \sigma(j)+o(1)} \left[ e^{i \varepsilon^{-1} w (\sigma(i) - \sigma(j) + o(1))} L(w^{-1}) \right]_\varepsilon^\eta \\
 & \hspace{2cm} - \frac{\varepsilon}{\sigma(i) - \sigma(j)+o(1)} \int_\varepsilon^\eta e^{i \varepsilon^{-1} w (\sigma(i) - \sigma(j) + o(1))} \left[ L(w^{-1}) \right]' \dd w \\
 & = O \left( \varepsilon L(\varepsilon^{-1}) \right) \\
 & = o \left( J(\varepsilon^{-1})\right),
\end{align*}
where we again applied Lemma~\ref{lem:DivergenceJCauchy} and~\cite[Proposition~1.5.9b]{BinghamGoldieTeugels:1987}. The integral over $(-\eta, -\varepsilon)$ can be dealt with in the same way. If $i = j$, then 
\begin{align*}
 \int_{(-\eta, \eta) \setminus (- \varepsilon, \varepsilon)} e^{i \varepsilon^{-1} w (\sigma(j) - \sigma(i) + o(1))} (1+i \zeta \sgn (w))L(|w|^{-1}) \dd w
 & = 2 \int_\varepsilon^\eta L(w^{-1}) \dd w \\
 & = 2 \left[ J(\varepsilon^{-1}) - J(\eta^{-1})\right] \\
 & \sim_{\varepsilon \to 0} 2 J(\varepsilon^{-1}).
\end{align*}

Finally, summing up everything,
\begin{equation*}
 \langle f, Q_\varepsilon (g) \rangle_{\ell^2 (I)} 
 = \frac{J(\varepsilon^{-1})}{\pi \vartheta (1+\zeta^2)} \langle f, g \rangle_{\ell^2 (I)} 
+ \norm{f}{} \norm{g}{} o(J(\varepsilon^{-1}) )
+ \norm{f}{} \norm{g}{} \max_{w \in [-\eta, \eta]} |\delta'_w| O(J(\varepsilon^{-1})).
\end{equation*}
By the same argument as in the proof of Proposition~\ref{prop:ApplicationD1CarreIntegrable}, making $\eta$ smaller as $\varepsilon$ vanishes, we get
\begin{equation*}
 \langle f, Q_\varepsilon (g) \rangle_{\ell^2 (I)} 
 = \frac{J(\varepsilon^{-1})}{\pi \vartheta (1+\zeta^2)} \langle f, g \rangle_{\ell^2 (I)}
+ \norm{f}{} \norm{g}{} o(J(\varepsilon^{-1})),
\end{equation*}
or, in other words,
\begin{equation*}
 Q_\varepsilon 
 \sim_{\varepsilon \to 0} \frac{J(\varepsilon^{-1})}{\pi \vartheta (1+\zeta^2)} \Id_{\C_0^I}.
\end{equation*}

\medskip
\textbf{Asymptotics of the transition matrix $P_\varepsilon$.}
\smallskip

The identity is irreducible in the sense of Definition~\ref{def:QMatriceIrreductible}. Hence we can apply Theorem~\ref{thm:MainTheorem} with $S = \Id_{\C_0^I}$:
\begin{equation*}
 (\Id-P_\varepsilon)_{\C_0^I \to \C_0^I} 
 \sim_{\varepsilon \to 0} \frac{\pi \vartheta (1+\zeta^2)}{J(\varepsilon^{-1})} \Id_{\C_0^I}.
\end{equation*}

The last step is to get the asymptotics for $P_\varepsilon$ on $\C^I$. Writing $\varpi_I$ the projection parallel to constants onto $\C_0^I$, 
\begin{align*}
 P_\varepsilon 
 & = \mathbf{1} \otimes \mu_I + (P_\varepsilon)_{\C_0^I \to \C_0^I} \varpi_I \\
 & = \mathbf{1} \otimes \mu_I + \left( 1-\frac{\pi \vartheta (1+\zeta^2)}{J(\varepsilon^{-1})}\right) \varpi_I + o\left(J(\varepsilon^{-1})^{-1}\right) \\
 & = \left( 1-\frac{\pi \vartheta (1+\zeta^2)}{J(\varepsilon^{-1})}\right) \Id + \frac{\pi \vartheta (1+\zeta^2)}{J(\varepsilon^{-1})} \mathbf{1} \otimes \mu_I + o\left(J(\varepsilon^{-1})^{-1}\right).
\end{align*}

In other words, the diagonal terms of $P_\varepsilon$ are all 
\begin{equation*}
 1-\frac{|I|-1}{|I|} \frac{\pi \vartheta (1+\zeta^2)}{J(\varepsilon^{-1})} + o\left(J(\varepsilon^{-1})^{-1}\right),
\end{equation*}
while the off-diagonal terms are all
\begin{equation*}
 \frac{1}{|I|} \frac{\pi \vartheta (1+\zeta^2)}{J(\varepsilon^{-1})} + o\left(J(\varepsilon^{-1})^{-1}\right);
\end{equation*}
replacing $\varepsilon$ by $t^{-1}$ finishes the proof.
\end{proof}

\subsection{Computation of the potential: $d=2$, square integrable jumps}
\label{subsec:D2CarreIntegrable}

Finally, we tackle $\Z^2$-extensions with square-integrable jumps. In addition to the general setting of Section~\ref{sec:Calculs}, we assume in this Subsection that $F$ is square-integrable and centered, the later condition being already implied by the first condition and the ergodicity of the extension.

\smallskip

Let $\Cov$ be the asymptotic covariance matrix of $F$, defined for all $u \in \R^2$ by:
\begin{equation}
\label{eq:D2GreenKubo}
 \langle u, \Cov (u) \rangle_{\R^2}
 := \int_A \langle F, u \rangle_{\R^2}^2 \dd \mu + 2 \sum_{n \geq 1} \int_A \langle F, u \rangle_{\R^2}\circ T^n \cdot \langle F, u \rangle_{\R^2} \dd \mu,
\end{equation}
where the infinite series is taken in the sense of Ces\`aro. Since $([\Z^2], \widetilde{\mu}, \widetilde{T})$ is ergodic, $\langle F, u \rangle_{\R^2}$ is not a coboundary whenever $u \neq 0$ (otherwise the sums $S_n \langle F, u \rangle_{\R^2}$ would be 
almost surely bounded), and thus $\Cov$ is positive definite~\cite[Th\'eor\`eme~4.1.4]{Gouezel:2008e}.

\smallskip

The behaviour of the transition matrices is then similar to the Cauchy case (Proposition~\ref{prop:ApplicationD1Cauchy}), with a statement given in the introduction (Proposition~\ref{prop:ApplicationD2CarreIntegrable}) and which we shall now prove.

\begin{proof}[Proof of Proposition~\ref{prop:ApplicationD2CarreIntegrable}]
 
The structure of this demonstration is very close to that of Proposition~\ref{prop:ApplicationD1Cauchy}. We assume the setting of Proposition~\ref{prop:ApplicationD2CarreIntegrable} and set $\varepsilon := t^{-1}$. The main eigenvalue of $\Lcal_w$ then admits the expansion~\cite[Th\'eor\`eme~4.1.4]{Gouezel:2008e}
\begin{equation}
\label{eq:DeveloppementVPD2CarreIntegrable}
 \lambda_w 
 = 1 - \frac{\langle w, \Cov(w)\rangle}{2} + o(\norm{w}{}^2).
\end{equation}

With the exact same reasoning as in the beginning of the proof of Proposition~\ref{prop:ApplicationD1CarreIntegrable}, but with a two-dimensional Fourier transform, we get for all $f$, $g \in \C_0^I$ and all small enough neighborhood $U$ of $0$:
\begin{equation}
\label{eq:AControlerD2CarreIntegrable}
 \langle f, Q_\varepsilon (g) \rangle_{\ell^2 (I)} 
 = \frac{1}{2 \pi^2} \int_U \left( \widecheck{\Fcal}_\varepsilon(f) \Fcal_\varepsilon(g) \right) (w) \frac{1+\delta'_w}{\langle w, \Cov(w) \rangle} \dd w + O_U (1) \norm{f}{} \norm{g}{},
\end{equation}
where $\lim_{w \to 0} \delta'_w = 0$. 

\smallskip

We choose $U = \sqrt{\Cov} B(0, \eta)$ for small enough $\eta > 0$. Then, after a change of variables:
\begin{equation*}
 \langle f, Q_\varepsilon (g) \rangle_{\ell^2 (I)} 
 = \frac{1}{2 \pi^2 \sqrt{\det(\Cov)}} \int_{B(0, \eta)} \left( \widecheck{\Fcal}_\varepsilon(f) \Fcal_\varepsilon(g) \right) (\sqrt{\Cov}^{-1}w) \frac{1+\delta'_{\sqrt{\Cov}^{-1}w}}{\norm{w}{}^2} \dd w + O_\eta (1) \norm{f}{} \norm{g}{}.
\end{equation*}

As in the proof of Proposition~\ref{prop:ApplicationD1Cauchy}, we split this integral 
in three parts:
\begin{itemize}
 \item in dimension $2$, since both $f$ and $g$ have zero sum, the contribution of a small ball $B(0, \varepsilon)$ is in $O(1)$.
 \item the integral involving the error term $\delta'_w$ on $B(0, \eta) \setminus B(0, \varepsilon)$, which shall be negligible because $\delta'_w$ is small and $\norm{w}{}^2$ is large.
 \item for the remainder, we split $\widecheck{\Fcal}_\varepsilon(f) \Fcal_\varepsilon(g)$ along its different frequencies. Again, the frequency $0$ has to be considered separately, and will give the main contribution to $\langle f, Q_\varepsilon g \rangle_{\ell^2 (I)}$. The other frequencies will have a lower contribution thanks to cancellations \textit{\`a la} Riemman-Lebesgue. Computations will be handled in polar coordinates, thanks to our choice of neighbourhood $U$.
\end{itemize}
For the third part and non-zero frequencies, we will subdivise the region $B(0, \eta) \setminus B(0, \varepsilon)$ in two parts: one close to the line orthogonal to the frequency vector, which shall be small, and the remaining part of the annulus, on which the Riemann-Lebesgue argument works out.

 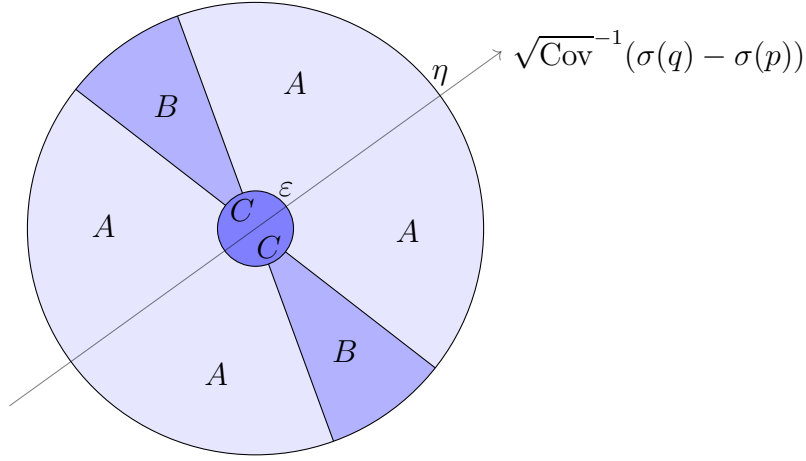
\begin{figure}[!h]
 \centering
 \scalebox{1}{
 \begin{tikzpicture}
   \fill[blue!10!] (0,0) circle (3cm);
   \fill[blue!30!] (110:0.5cm) -- (110:3cm) arc (110:142:3cm) -- (142:0.5cm) arc (142:110:0.5cm) ;
   \fill[blue!30!] (290:0.5cm) -- (290:3cm) arc (290:322:3cm) -- (322:0.5cm) arc (322:290:0.5cm) ;
   \fill[blue!50!] (0,0) circle (0.5cm);
   \draw (0,0) circle (0.5cm);
   \draw (0,0) circle (3cm);
   \draw[->, opacity = 0.5] (216:4cm) -- (36:4cm);
   \draw (110:0.5cm) -- (110:3cm);
   \draw (142:0.5cm) -- (142:3cm);
   \draw (290:0.5cm) -- (290:3cm);
   \draw (322:0.5cm) -- (322:3cm);
   \draw[right] node at (36:4cm) {$\sqrt{\Cov}^{-1} (\sigma(q) - \sigma(p))$};
   \draw node at (75:2cm) {$A$};
   \draw node at (179:2cm) {$A$};
   \draw node at (255:2cm) {$A$};
   \draw node at (359:2cm) {$A$};
   \draw node at (126:2cm) {$B$};
   \draw node at (306:2cm) {$B$};
   \draw node at (126:0.3cm) {$C$};
   \draw node at (306:0.3cm) {$C$};
   \draw[above] node at (36:0.5cm) {$\varepsilon$};
   \draw[above] node at (36:3cm) {$\eta$};
 \end{tikzpicture}
 }
 \caption{Regions of integration for non-zero frequencies. Light blue ($A$): negligible by a Riemann-Lebesgue argument. Middle blue ($B$): near the perpendicular to $\sqrt{\Cov}^{-1} (\sigma(q) - \sigma(p))$; negligible because the area is arbitrarily small. Dark blue ($C$): a disc of radius $\varepsilon$; negligible because the area is small and the function $\widecheck{\Fcal}_\varepsilon(f) \Fcal_\varepsilon(g)$ cancels the denominator.}
 \end{figure}

\medskip
\textbf{Control of the integral on the neighbourhood $B(0, \varepsilon)$.}
\smallskip

Our usual computations, as in the proof of Proposition~\ref{prop:ApplicationD1CarreIntegrable}, give:
\begin{align*}
 \int_{B(0, \varepsilon)} \left| \widecheck{\Fcal}_\varepsilon(f) \Fcal_\varepsilon(g) \right| (\sqrt{\Cov}^{-1}w) \frac{\left|1+\delta_{\sqrt{\Cov}^{-1}w} \right|}{\norm{w}{}^2} \dd w 
 & \leq C \norm{f}{} \norm{g}{} \varepsilon^{-2} \int_{B(0, \varepsilon)} 1 \dd w \\
 & \leq C \norm{f}{} \norm{g}{}.
\end{align*}

\medskip
\textbf{Control of the error term $\delta'_w$ on $B(0, \eta) \setminus B(0, \varepsilon)$.}
\smallskip

On $B(0, \eta) \setminus B(0, \varepsilon)$, we get
\begin{align*}
 \Bigg| \int_{B(0, \eta) \setminus B(0, \varepsilon)} \left( \widecheck{\Fcal}_\varepsilon(f) \Fcal_\varepsilon(g) \right) & (\sqrt{\Cov}^{-1}w) \frac{\delta_{\sqrt{\Cov}^{-1}w}}{\norm{w}{}^2} \dd w \Bigg| \\
 & \leq C \norm{f}{} \norm{g}{} \max_{w \in \sqrt{\Cov} \cdot \overline{B}(0, \eta)} |\delta'_w| \int_{B(0, \eta) \setminus B(0, \varepsilon)} \frac{1}{\norm{w}{}^2} \dd w \\
 & = C \norm{f}{} \norm{g}{} \max_{w \in \sqrt{\Cov} \cdot \overline{B}(0, \eta)} |\delta'_w| \int_\varepsilon^\eta \frac{2 \pi}{r} \dd r \\
 & \leq 2 \pi C \norm{f}{} \norm{g}{} \max_{w \in \sqrt{\Cov} \cdot \overline{B}(0, \eta)} |\delta'_w| \cdot \left|\ln(\varepsilon) \right|.
\end{align*}

\medskip
\textbf{Main contribution to the integral~\eqref{eq:AControlerD2CarreIntegrable}.}
\smallskip

Now let us focus on the remaining term. Again we split $\left( \widecheck{\Fcal}_\varepsilon (f) \Fcal_\varepsilon(g) \right) (\sqrt{\Cov}^{-1}w)$ along its frequencies. Fix some small $\kappa > 0$. For $i \neq j \in I$,
\begin{equation*}
 \int_{B(0, \eta)\setminus B(0, \varepsilon)} \frac{ e^{i \langle \sqrt{\Cov}^{-1} w, \varepsilon^{-1} (\sigma(i) - \sigma(j)) + o(\varepsilon^{-1}) \rangle} }{ \norm{w}{}^2 } \dd w \\
 = \int_{\Sbb_1} \int_\varepsilon^\eta \frac{ e^{i \varepsilon^{-1} r \langle (\cos(\theta), \sin(\theta)), \sqrt{\Cov}^{-1} (\sigma(j) - \sigma(i)) + o(1) \rangle} }{r} \dd r \dd \theta.
\end{equation*}
For fixed $\theta \in \Sbb_1$, the inner integral can be bounded in two ways. First, we have the crude but general:
\begin{equation*}
 \left| \int_\varepsilon^\eta \frac{ e^{i \varepsilon^{-1} r \langle (\cos(\theta), \sin(\theta)), \sqrt{\Cov}^{-1} (\sigma(j) - \sigma(i)) + o(1) \rangle} }{r} \dd r \right|
 \leq \int_\varepsilon^\eta \frac{1}{r} \dd r 
 = \ln (\eta) - \ln(\varepsilon) 
 \leq \left| \ln(\varepsilon) \right|.
\end{equation*}
A second way uses an integration by parts:
\begin{align}
 \left| \int_\varepsilon^\eta \right. & \left. \frac{ e^{i \varepsilon^{-1} r \langle (\cos(\theta), \sin(\theta)), \sqrt{\Cov}^{-1} (\sigma(j) - \sigma(i)) + o(1) \rangle} }{r} \dd r \right| \nonumber \\
 & = \left| \left[ \frac{ e^{i \varepsilon^{-1} r \langle (\cos(\theta), \sin(\theta)), \sqrt{\Cov}^{-1} (\sigma(j) - \sigma(i)) + o(1) \rangle} }{\left( i \varepsilon^{-1} \langle (\cos(\theta), \sin(\theta)), \sqrt{\Cov}^{-1} (\sigma(j) - \sigma(i)) + o(1) \rangle \right)r} \right]_\varepsilon^\eta \right. \nonumber \\ 
 & \hspace{2cm} \left. + \int_\varepsilon^\eta \frac{ e^{i \varepsilon^{-1} r \langle (\cos(\theta), \sin(\theta)), \sqrt{\Cov}^{-1} (\sigma(j) - \sigma(i)) + o(1) \rangle} }{\left( i \varepsilon^{-1} \langle (\cos(\theta), \sin(\theta)), \sqrt{\Cov}^{-1} (\sigma(j) - \sigma(i)) + o(1) \rangle \right)r^2} \dd r \right| \nonumber \\
 & \leq \frac{2\varepsilon}{\left| \langle (\cos(\theta), \sin(\theta)), \sqrt{\Cov}^{-1} (\sigma(j) - \sigma(i)) + o(1) \rangle \right|} \left( \frac{1}{\varepsilon} + \frac{1}{\eta} \right). \label{eq:ApresIPP}
\end{align}
Let $K_{\eta, \varepsilon, i, j}$ be the set of values of $\theta$ such that the right hand-side of Equation~\eqref{eq:ApresIPP} is smaller or equal to $\sqrt{|\ln(\varepsilon)|}$. Then $\lim_{n \to + \infty} \Leb (K_{\eta, \varepsilon, i, j}) = 2 \pi$, whence
\begin{align*}
 \left| \int_{B(0, \eta)\setminus B(0, \varepsilon)} \right. & \left.\frac{ e^{i \langle \sqrt{\Cov}^{-1} w, \varepsilon^{-1} (\sigma(i) - \sigma(j)) + o(\varepsilon^{-1}) \rangle} }{ \norm{w}{}^2 } \dd w \right| \\
 & \leq \int_{K_{\eta, \varepsilon, i, j}} \sqrt{|\ln(\varepsilon)|} \dd \theta + \int_{\Sbb_1 \setminus K_{\eta, \varepsilon, i, j}} |\ln (\varepsilon)| \dd \theta \\
 & \leq 2 \pi \sqrt{|\ln(\varepsilon)|} + \Leb (\Sbb_1 \setminus K_{\eta, \varepsilon, i, j}) |\ln (\varepsilon)| \\
 & = o\left(|\ln (\varepsilon)|\right).
\end{align*}

The zero frequency term is comparatively easier to compute:
\begin{equation*}
 \int_{B(0, \eta)\setminus B(0, \varepsilon)} \frac{1}{ \norm{w}{}^2 } \dd w 
 = 2 \pi \int_\varepsilon^\eta \frac{1}{r} \dd r 
 = 2 \pi \left( \ln(\eta) - \ln(\varepsilon) \right) 
 \sim_{\varepsilon \to 0} 2 \pi |\ln (\varepsilon)|.
\end{equation*}

Finally, summing up everything,
\begin{equation*}
 \langle f, Q_\varepsilon (g) \rangle_{\ell^2 (I)} 
 = \frac{|\ln(\varepsilon) |}{\pi \sqrt{\det(\Cov)}} \langle f, g \rangle_{\ell^2 (I)} 
+ \norm{f}{} \norm{g}{} \max_{w \in \sqrt{\Cov} \cdot \overline{B}(0, \eta)} |\delta'_w| O( |\ln(\varepsilon) |)
+ \norm{f}{} \norm{g}{} o( |\ln(\varepsilon)|).
\end{equation*}
By the same argument as in the proof of Proposition~\ref{prop:ApplicationD1CarreIntegrable}, 
making $\eta$ smaller as $\varepsilon$ vanishes, we get
\begin{equation*}
 \langle f, Q_\varepsilon (g) \rangle_{\ell^2 (I)} 
 = \frac{|\ln(\varepsilon) |}{\pi \sqrt{\det(\Cov)}} \langle f, g \rangle_{\ell^2 (I)} 
 + \norm{f}{} \norm{g}{} o( |\ln(\varepsilon)|),
\end{equation*}
or, in other words,
\begin{equation*}
 Q_\varepsilon 
 \sim_{\varepsilon \to 0} \frac{|\ln(\varepsilon) |}{\pi \sqrt{\det(\Cov)}} \Id_{\C_0^I}.
\end{equation*}
The computation of the asymptotics of $P_\varepsilon$ from those of $Q_\varepsilon$ are then the same as in the end of the proof of Proposition~\ref{prop:ApplicationD1Cauchy}.
\end{proof}

\begin{appendix}

\section{Gibbs-Markov maps}
\label{sec:GibbsMarkovAppendice}

In this appendix, we compile definitions and technical resutls relative to Gibbs-Markov maps. In what follows, $(A, \alpha, d, \mu, T)$ is a Gibbs-Markov map (see Definition~\ref{def:ApplicationGibbsMarkov}).

\subsection{Markov partitions and stopping times}
\label{subsec:TempsArret}

We aim to define various partitions and filtrations related to Gibbs-Markov maps, so as to introduce stopping times. The central definition is that of a \emph{cylinder}.

\begin{definition}[Cylinder]\quad

 For $n \geq 0$, a \emph{cylinder} of length $n$ is a non-trivial element of $\alpha_n := \bigvee_{k=0}^{n-1} T^{-k} \alpha$. 
It is given by a unique finite sequence $(a_k)_{0 \leq k < n}$ of elements of $\alpha$ such that $T (a_k) \cap a_{k+1}$ is non-negligible for all $0 \leq k < n-1$. Such a cylinder shall be denoted by $[a_0, \ldots, a_{n-1}]$.
\end{definition}

We denote by $\alpha^*$ the image partition, generated by $\{T(a) : \ a \in \alpha\}$. Since $T$ is Markov, $\alpha^*$ is coarser than $\alpha$.

\smallskip

With any Markov map comes a natural filtration given by $\Fcal_n := \sigma(\alpha_n)$ for all $n \geq 0$. The $\sigma$-algebra $\Fcal_0$ is trivial, while $\Acal = \Fcal_\infty := \bigvee_{n \geq 0} \Fcal_n$ is the Borel $\sigma$-algebra of $(A,d)$. From this filtration we define \emph{stopping times}.

\begin{definition}[Stopping time]
\label{def:TempsDArret}

Let $(A, \alpha, d, \mu, T)$ be a Gibbs-Markov map. Let $\varphi: \ A \to \N \cup \{+ \infty\}$ be measurable. We say that $\varphi$ is a \emph{stopping time} if $\{\varphi \leq n\} \in \Fcal_n$ for all $n \geq 0$, and $1 \leq \varphi < +\infty$ almost surely.

\smallskip

To any stopping time $\varphi$ we associate a countable partition of $A$ given by:
\begin{equation*}
\alpha_\varphi 
:= \bigcup_{n \geq 1} \{ \overline{a} \in \alpha_n: \ \mu (\overline{a}) > 0 \text{ and } \overline{a} \subset \{\varphi = n\} \},
\end{equation*}
and a transformation defined by:
\begin{equation*}
T_\varphi (x) 
:= T^{\varphi(x)} (x),
\end{equation*}
which is well-defined almost everywhere.
\end{definition}

In this article, and in opposition to other ressources, we shall always require stopping times to be positive and finite, for the following reasons. If we did not require stopping times to be positive, then $\varphi \equiv 0$ would be a stopping time (and the only stopping time such that $\mu (\varphi = 0) > 0$), for which the partition the partition $\alpha_\varphi$ would be trivial. If we did not require stopping times to be finite, then $T_\varphi$ may not be well-defined.

\smallskip

The iterates of $T_\varphi$ can be directly related to stopping times. For any $n \geq 1$, we define:
\begin{equation}
 \varphi^{(n)} 
 = S_n^{T_\varphi} \varphi 
 := \sum_{k=0}^{n-1} \varphi \circ T_\varphi^k.
\end{equation}
Then $T_\varphi^n = T_{\varphi^{(n)}}$. As the following lemma asserts, $\varphi^{(n)}$ is again a stopping time.

\begin{lemma}\quad
\label{lem:SommeTempsArret}

 Let $(A, \alpha, d, \mu, T)$ be a Gibbs-Markov map. Let $\varphi$ be a stopping time and $n \geq 1$. Then $\varphi^{(n)}$ is a stopping time. 
\end{lemma}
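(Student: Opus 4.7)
The plan is to argue by induction on $n$, the case $n=1$ being the hypothesis. The almost-sure conditions ($1 \leq \varphi^{(n)} < +\infty$) are immediate from the fact that each term $\varphi \circ T_\varphi^k$ satisfies them almost surely and the sum is finite. The content of the statement is therefore the measurability condition $\{\varphi^{(n)} \leq m\} \in \Fcal_m$ for every $m \geq 0$.

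Assume the claim for some $n \geq 1$. Since $\varphi^{(n+1)} = \varphi^{(n)} + \varphi \circ T^{\varphi^{(n)}}$ and $\varphi^{(n)} \geq n \geq 1$, I would decompose according to the value of $\varphi^{(n)}$:
\begin{equation*}
 \{\varphi^{(n+1)} \leq m\}
 = \bigsqcup_{k=1}^{m} \bigl( \{\varphi^{(n)} = k\} \cap T^{-k} \{\varphi \leq m-k\} \bigr).
\end{equation*}
By the induction hypothesis, $\{\varphi^{(n)} = k\} = \{\varphi^{(n)} \leq k\} \setminus \{\varphi^{(n)} \leq k-1\}$ belongs to $\Fcal_k \subset \Fcal_m$. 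The task therefore reduces to showing that $T^{-k} \Fcal_{m-k} \subset \Fcal_m$.

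This last inclusion is the only step that uses the Markov structure. Writing $\alpha_{m-k} = \bigvee_{j=0}^{m-k-1} T^{-j}\alpha$, one has
\begin{equation*}
 T^{-k} \alpha_{m-k}
 = \bigvee_{j=0}^{m-k-1} T^{-(k+j)} \alpha
 \subset \bigvee_{j=0}^{m-1} T^{-j} \alpha
 = \alpha_m,
\end{equation*}
so $T^{-k} \Fcal_{m-k} = \sigma(T^{-k} \alpha_{m-k}) \subset \Fcal_m$. The intersection of the two $\Fcal_m$-measurable sets is $\Fcal_m$-measurable, and the finite union over $k \in \{1, \ldots, m\}$ preserves this, which closes the induction.

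I do not expect any genuine obstacle: the argument is essentially bookkeeping, with the only non-trivial ingredient being the compatibility of the refining partitions $\alpha_n$ with pull-backs by $T$. The required fact $T^{-k} \alpha_{m-k} \subset \alpha_m$ is a direct consequence of the definition of $\alpha_n$ and does not even require the full Gibbs-Markov structure (expansivity, distortion, big image), only that $T$ is measurable with respect to the filtration $(\Fcal_n)_{n \geq 0}$.
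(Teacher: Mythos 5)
Your proof is correct and follows essentially the same route as the paper: induction on $n$, decomposition of $\{\varphi^{(n+1)} \leq m\}$ according to the value of one of the summands, and the pull-back compatibility $T^{-k}\Fcal_{m-k} \subset \Fcal_m$ of the filtration generated by the cylinders. The only (immaterial) difference is that you split off the last stopping time, writing $\varphi^{(n+1)} = \varphi^{(n)} + \varphi \circ T_{\varphi^{(n)}}$ and using the exact level sets $\{\varphi^{(n)} = k\}$, whereas the paper splits off the first, $\varphi^{(n+1)} = \varphi + \varphi^{(n)} \circ T_\varphi$.
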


\begin{proof}
 
 We proceed by induction on $n$. For $n=1$, the function $\varphi^{(1)} = \varphi$ is a stopping time. Let $n \geq 1$ and assume that $\varphi^{(n)}$ is a stopping time. We have
 \begin{equation*}
  \varphi^{(n+1)} 
  = \sum_{k=0}^n \varphi \circ T_\varphi^k 
  = \varphi + \varphi^{(n)} \circ T_\varphi,
 \end{equation*}
 so that, for all $k \geq 0$:
 \begin{equation*}
  \left\{ \varphi^{(n+1)} \leq k \right\} 
  = \bigcup_{j=0}^k \left\{ \varphi \leq j \right\} \cap T^{-j} \left\{ \varphi^{(n)} \leq k-j \right\}.
 \end{equation*}
 Let $0 \leq j \leq k$. Both $\varphi$ and $\varphi^{(n)}$ are stopping times, so $\left\{ \varphi \leq j \right\} \in \Fcal_j$ and $\left\{ \varphi^{(n)} \leq k-j \right\} \in \Fcal_{k-j}$. It follows that 
 \begin{equation*}
  \left\{ \varphi \leq j \right\} \cap T^{-j} \left\{ \varphi^{(n)} \leq k-j \right\} 
  \in \Fcal_j \vee \Fcal_k 
  \subset \Fcal_k,
 \end{equation*}
 and thus $\left\{ \varphi^{(n+1)} \leq k \right\}$ is $\Fcal_k$-measurable. 
\end{proof}

The Markov partitions $\alpha$ and $\alpha_\varphi$ can be refined by considering iterates of the transformation. For any stopping time $\varphi$ and any $n \geq 1$, we define:
\begin{equation*}
 \alpha_\varphi^{(n)} := \bigvee_{k=0}^{n-1} T_\varphi^{-k} \alpha_\varphi.
\end{equation*}
With these notations, $\alpha_\varphi^{(n)} = \alpha_{\varphi^{(n)}}$. We also denote by $\alpha^{(n)} := \alpha_{\mathbf{1}}^{(n)}$ the partition generated by cylinders of length $n$.

\subsection{Stopping times and regularizing effect of $\Lcal_\varphi$}
\label{subsec:BornitudeCompacite}

To each stopping time $\varphi$ is associated a transformation $T_\varphi$, and thus a transfer operator $\Lcal_\varphi$ defined by
\begin{equation*}
 \int_A \Lcal_\varphi (f) \cdot g \dd \mu 
 = \int_A f \cdot g \circ T_\varphi \dd \mu 
 \quad \forall \ f \in \Lbb^1 (A, \mu), \ \forall \ g \in \Lbb^\infty(A, \mu).
\end{equation*}
The transformation $T_\varphi$ in general does not preserve $\mu$, thus in general $\Lcal_\varphi (\mathbf{1}) \neq \mathbf{1}$. 
The operator $\Lcal_\varphi$ still acts quasi-compactly on the Banach space $\Bcal$ introduced in Definition~\ref{def:Lipschitz}. To make the statement more flexible, we introduce local Lipschitz seminorms:

\begin{definition}[Local Lipschitz seminorms]
 
 Let $(A, \alpha, d, \mu, T)$ be a Gibbs-Markov map and $f : A \to \C$. The function $D_\alpha f : A \to \R_+ \cup \{+ \infty\}$ is defined by
 \begin{equation*}
  D_\alpha f(x) 
  := \sup_{\substack{y, z \in a \\ y \neq z}} \frac{|f(y)-f(z)|}{d(y,z)}
  \quad \forall \ a \in \alpha, \ \forall \ x \in a.
 \end{equation*}
\end{definition}

For instance, $|f|_\Bcal = \norm{D_{\alpha^*} f}{\Lbb^\infty (A, \mu)}$, and $D_{\alpha^*} f \geq D_\alpha f$ since $\alpha^*$ is coarser than $\alpha$.

\begin{proposition}\quad
\label{prop:Lemme1113}
 
 Let $(A, \alpha, d, \mu, T)$ be a Gibbs-Markov map. There exists a constant $C$ such that, for any stopping time $\varphi$ and any $n \geq 1$, for any $\underline{a} \in \sigma(\alpha_\varphi^{(n)})$, for any $f : A \to \C$,
 \begin{equation}
 \label{eq:Lemme1113}
  \norm{\Lcal_\varphi^n (f \mathbf{1}_{\underline{a}})}{\Bcal} 
  \leq C \left[ \Lambda^{-\inf_{\underline{a}} \varphi^{(n)}} \int_{\underline{a}} D_{\alpha_\varphi^{(n)}} f \dd \mu + \int_{\underline{a}} |f| \dd \mu \right]
 \end{equation} 
\end{proposition}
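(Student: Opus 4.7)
The plan is to establish this as a cylinder-refined Lasota--Yorke estimate, following the usual Gibbs--Markov playbook but tracking carefully the dependence on the stopping time.

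First I would reduce to the case where $\underline{a}$ is a single atom of $\alpha_\varphi^{(n)}$. Since $\norm{\cdot}{\Bcal}$ is subadditive and the two integrals on the right are $\sigma$-additive in $\underline{a}$, writing $\underline{a} = \bigsqcup_j \underline{a}_j$ with $\underline{a}_j \in \alpha_\varphi^{(n)}$ gives
\begin{equation*}
 \norm{\Lcal_\varphi^n(f\mathbf{1}_{\underline{a}})}{\Bcal}
 \leq \sum_j \norm{\Lcal_\varphi^n(f\mathbf{1}_{\underline{a}_j})}{\Bcal},
\end{equation*}
and $\Lambda^{-\varphi^{(n)}|_{\underline{a}_j}} \leq \Lambda^{-\inf_{\underline{a}} \varphi^{(n)}}$, so the atomic bound implies the general one. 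On an atom, $\varphi^{(n)}$ is constant and equal to some integer $k$, so $T_\varphi^n|_{\underline{a}} = T^k|_{\underline{a}}$ is a bijection onto $T^k(\underline{a}) \in \alpha^*$, and I write $\Lcal_\varphi^n(f\mathbf{1}_{\underline{a}}) = (h \cdot f\circ\psi) \mathbf{1}_{T^k(\underline{a})}$ where $\psi := (T^k|_{\underline{a}})^{-1}$ and $h := |\Jac\psi|$.

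Next I would establish the distortion properties of $h$. By summing the single-step distortion inequality along the orbit and using expansivity (for $y,z$ in a common element of $\alpha^*$ inside $T^k(\underline{a})$, one has $d(T^{j+1}\psi(y), T^{j+1}\psi(z)) \leq \Lambda^{-(k-j-1)} d(y,z)$), the geometric series yields $|\ln h(y) - \ln h(z)| \leq C d(y,z)$, hence $D_{\alpha^*} h \leq C h$ and $\sup h \leq C\inf h$ on any element of $\alpha^*$ contained in $T^k(\underline{a})$. Combined with $\int_{T^k(\underline{a})} h \, \dd\mu = \mu(\underline{a})$ and the big image property $\mu(T^k(\underline{a})) \geq \delta_0 > 0$, this gives $\sup h \leq C \mu(\underline{a})$. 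The main technical point is this iterated distortion bound, but it proceeds as in~\cite[Chapter~1]{Gouezel:2008e}.

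Finally I would combine these with a standard mean-value estimate on $\underline{a}$. Since $D_{\alpha_\varphi^{(n)}} f$ is constant on the atom, its value equals $K := \mu(\underline{a})^{-1} \int_{\underline{a}} D_{\alpha_\varphi^{(n)}} f \dd\mu$, and for any $y \in \underline{a}$ averaging $|f(y)| \leq |f(z)| + K d(y,z)$ over $z \in \underline{a}$ gives
\begin{equation*}
 \sup_{\underline{a}} |f|
 \leq \frac{1}{\mu(\underline{a})} \int_{\underline{a}} |f| \dd\mu + \Diam(\underline{a}) \cdot K,
 \qquad \Diam(\underline{a}) \leq \Lambda^{-k} \Diam(A).
\end{equation*}
Plugging $\norm{h(f\circ\psi)}{\infty} \leq \sup h \cdot \sup_{\underline{a}} |f|$ and using $\sup h \leq C\mu(\underline{a})$ gives the $\norm{\cdot}{\infty}$ part. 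For the Lipschitz seminorm I split
\begin{equation*}
 |h(y) f(\psi(y)) - h(z) f(\psi(z))|
 \leq |h(y) - h(z)| \cdot |f(\psi(y))| + h(z) \cdot |f(\psi(y)) - f(\psi(z))|,
\end{equation*}
bounding the first term by $C h(y) d(y,z) \sup_{\underline{a}} |f|$ (via $D_{\alpha^*} h \leq C h$) and the second by $h(z) K \Lambda^{-k} d(y,z)$ (via expansivity of $T^k$ on $\underline{a}$). Summing these with the sup-norm bound and collecting the $\Lambda^{-k}$ terms yields exactly~\eqref{eq:Lemme1113} on an atom, and the reduction step concludes the proof.
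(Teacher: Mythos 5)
Your proposal is correct and follows essentially the same route as the paper: the paper reduces to a single cylinder $\underline{a}$ of length $k=\varphi^{(n)}(\underline{a})$ (where $\Lcal_\varphi^n(f\mathbf{1}_{\underline{a}})=\Lcal^k(f\mathbf{1}_{\underline{a}})$), invokes~\cite[Lemme~1.1.13]{Gouezel:2008e} for the core estimate, and concludes by countable additivity, exactly as you do. The only difference is that you reprove the cited lemma via the standard inverse-branch distortion and big-image argument, which in passing justifies the ``refinement'' the paper asserts without detail, namely that the local Lipschitz seminorm may be taken on $\underline{a}$ rather than on $a_0$.
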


\begin{proof}
 
 Let us assume that $\underline{a} = [a_0, \ldots, a_{k-1}]$ is a cylinder of length $k = \varphi^{(n)} (\underline{a})$. Then $\Lcal_\varphi^n (f \mathbf{1}_{\underline{a}}) = \Lcal^k (f \mathbf{1}_{\underline{a}})$. In addition, $\Lambda^{-k} \leq \Lambda^{-\inf_{\underline{a}} \varphi^{(n)}}$. With these observations in mind, the claim is given by~\cite[Lemme~1.1.13]{Gouezel:2008e}, with the refinement that the local Lipschitz seminorm can be taken on $\underline{a}$ instead of $a_0$.
 
 \smallskip
 
 In general, $\underline{a}$ is a countable union of such cylinders, and the claim follows by countable additivity of $\mu$. 
\end{proof}

Proposition~\ref{prop:Lemme1113} yields a uniform Lasota-Yorke inequality for $\Lcal_\varphi$ for all stopping times $\varphi$:

\begin{corollary}\quad
\label{cor:LasotaYorkeTempsArret}
 
 Let $(A, \alpha, d, \mu, T)$ be a Gibbs-Markov map. There exists a constant $C$ such that, for any stopping time $\varphi$ and any $n \geq 1$, for any $f \in \Bcal$,
 \begin{equation*}
  \norm{\Lcal_\varphi^n (f)}{\Bcal} 
  \leq C \left[ \Lambda^{-n} \norm{f}{\Bcal} + \norm{f}{\Lbb^1 (A, \mu)} \right].
 \end{equation*} 
\end{corollary}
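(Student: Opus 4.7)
The plan is to decompose $f$ according to the countable partition $\alpha_\varphi^{(n)}$, apply Proposition~\ref{prop:Lemme1113} atom by atom, and sum the resulting estimates. Writing $f = \sum_{\underline{a} \in \alpha_\varphi^{(n)}} f \mathbf{1}_{\underline{a}}$, the continuity of $\Lcal_\varphi^n$ on $\Lbb^1 (A, \mu)$ gives $\Lcal_\varphi^n(f) = \sum_{\underline{a}} \Lcal_\varphi^n(f \mathbf{1}_{\underline{a}})$ in $\Lbb^1$; I will upgrade this to convergence in $\Bcal$ a posteriori.

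Two observations make the summation effective. First, the hypothesis $\varphi \geq 1$ in Definition~\ref{def:TempsDArret} forces $\varphi^{(n)} \geq n$ everywhere, so the factor $\Lambda^{-\inf_{\underline{a}} \varphi^{(n)}}$ from Proposition~\ref{prop:Lemme1113} is uniformly bounded by $\Lambda^{-n}$, independently of the stopping time $\varphi$. Second, each element $\underline{a} \in \alpha_\varphi^{(n)} = \alpha_{\varphi^{(n)}}$ is a cylinder $[a_0, \ldots, a_{k-1}]$ contained in $a_0 \in \alpha$; since the Markov hypothesis makes every $T(a)$ a union of atoms of $\alpha$, the partition $\alpha$ is finer than $\alpha^*$, so the local Lipschitz seminorm satisfies $D_{\alpha_\varphi^{(n)}} f \leq D_{\alpha^*} f \leq |f|_\Bcal$ almost everywhere. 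Combining these with Proposition~\ref{prop:Lemme1113} and summing over $\underline{a} \in \alpha_\varphi^{(n)}$ yields
\begin{equation*}
\sum_{\underline{a} \in \alpha_\varphi^{(n)}} \norm{\Lcal_\varphi^n(f \mathbf{1}_{\underline{a}})}{\Bcal}
\leq C \left[ \Lambda^{-n} |f|_\Bcal \sum_{\underline{a}} \mu(\underline{a}) + \sum_{\underline{a}} \int_{\underline{a}} |f| \dd \mu \right]
= C \left[ \Lambda^{-n} |f|_\Bcal + \norm{f}{\Lbb^1 (A, \mu)} \right],
\end{equation*}
where we used that $\varphi^{(n)}$ is almost surely finite to conclude $\sum_{\underline{a}} \mu(\underline{a}) = \mu(A) = 1$. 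Bounding $|f|_\Bcal \leq \norm{f}{\Bcal}$ produces the desired right-hand side.

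The only mildly delicate step is transferring this bound on the sum of norms to a bound on $\norm{\Lcal_\varphi^n(f)}{\Bcal}$ itself. The estimate above shows that the series $\sum_{\underline{a}} \Lcal_\varphi^n(f \mathbf{1}_{\underline{a}})$ is absolutely summable in the Banach space $\Bcal$, hence converges there to some $g \in \Bcal$. The continuous embedding $\Bcal \hookrightarrow \Lbb^\infty (A, \mu) \hookrightarrow \Lbb^1 (A, \mu)$ identifies $g$ with the $\Lbb^1$-limit $\Lcal_\varphi^n(f)$, and the triangle inequality applied to the partial sums passes to the limit to yield the claimed Lasota--Yorke inequality. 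This is really the only nontrivial point: all other ingredients are direct consequences of $\varphi \geq 1$, the Markov structure of $\alpha$, and the atomwise estimate already supplied by Proposition~\ref{prop:Lemme1113}.
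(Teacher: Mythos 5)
Your proof is correct and rests on exactly the same ingredients as the paper's: Proposition~\ref{prop:Lemme1113}, the observation that $\varphi \geq 1$ forces $\inf_A \varphi^{(n)} \geq n$, and the comparison $D_{\alpha_\varphi^{(n)}} f \leq D_{\alpha^*} f \leq |f|_\Bcal$. The only difference is that the paper invokes the proposition a single time with $\underline{a} = A$ (its statement allows any $\sigma(\alpha_\varphi^{(n)})$-measurable set, not just atoms), so your atom-by-atom decomposition and the subsequent $\Bcal$-summability argument essentially reproduce the countable-additivity step already contained in the proposition's own proof.
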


\begin{proof}
 
 We apply Proposition~\ref{prop:Lemme1113} with $\underline{a} = A$. The partition $\alpha_\varphi^{(n)}$ is finer than $\alpha$, which is itself finer than $\alpha^*$, whence $ D_{\alpha_\varphi^{(n)}} f \leq D_{\alpha^*} f$ and
 \begin{equation*}
  \int_A D_{\alpha_\varphi^{(n)}} f \dd \mu 
  \leq \int_A D_{\alpha^*} f \dd \mu 
  \leq \norm{D_{\alpha^*} f}{\Lbb^\infty (A, \mu)} 
  = |f|_\Bcal 
  \leq \norm{f}{\Bcal}.
 \end{equation*}
 Finally, $\inf_A \varphi \geq 1$ so $\inf_A \varphi^{(n)} \geq n$.
\end{proof}

In particular, $\rho_{\ess} (\Lcal_\varphi \acts \Bcal) \leq \Lambda^{-1}$, and the usual arguments imply the existence of finitely many absolutely continuous invariant probability measures for $T_\varphi$. Another application of Proposition~\ref{prop:Lemme1113} yields

\begin{corollary}\quad
\label{cor:BorneUniformeTempsArret}
 
 Let $(A, \alpha, d, \mu, T)$ be a Gibbs-Markov map. There exists a constant $C$ such that, for any stopping time $\varphi$ and any $n \geq 1$, for any $\underline{a} \in \alpha_\varphi^{(n)}$ and $f \in \Bcal$,
 \begin{equation}
  \label{eq:BorneUniformeTempsArret}
  \norm{\Lcal_\varphi^n (f \mathbf{1}_{\underline{a}})}{\Bcal} 
  \leq C \mu(\underline{a}) \norm{f}{\Bcal} .
 \end{equation} 
\end{corollary}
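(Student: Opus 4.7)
The plan is to derive this uniform bound directly from Proposition~\ref{prop:Lemme1113}, which already provides an estimate for $\norm{\Lcal_\varphi^n (f \mathbf{1}_{\underline{a}})}{\Bcal}$ in terms of an integral of $D_{\alpha_\varphi^{(n)}} f$ over $\underline{a}$ and an $\Lbb^1$ integral of $|f|$ over $\underline{a}$. The key observation is that when $\underline{a}$ is a single atom of $\alpha_\varphi^{(n)}$, both of these integrals are easily controlled by $\mu(\underline{a})$ times a norm of $f$.

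First I would apply Proposition~\ref{prop:Lemme1113} to our single $\underline{a} \in \alpha_\varphi^{(n)}$, giving
\begin{equation*}
 \norm{\Lcal_\varphi^n (f \mathbf{1}_{\underline{a}})}{\Bcal}
 \leq C \left[ \Lambda^{-\inf_{\underline{a}} \varphi^{(n)}} \int_{\underline{a}} D_{\alpha_\varphi^{(n)}} f \dd \mu + \int_{\underline{a}} |f| \dd \mu \right].
\end{equation*}
The $\Lbb^1$ part is bounded trivially: $\int_{\underline{a}} |f| \dd \mu \leq \mu(\underline{a}) \norm{f}{\infty} \leq \mu(\underline{a}) \norm{f}{\Bcal}$. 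For the Lipschitz part, I would use that the partition $\alpha_\varphi^{(n)}$ is finer than $\alpha^{(n)}$, which is in turn finer than $\alpha$. Because $T$ is Markov with respect to $\alpha$, every atom of $\alpha$ is contained in an atom of the image partition $\alpha^*$. Consequently, $\underline{a}$ is contained in some $a^* \in \alpha^*$, and the Lipschitz constant of $f$ restricted to $\underline{a}$ is bounded by its Lipschitz constant on $a^*$, hence by $|f|_\Bcal \leq \norm{f}{\Bcal}$. Thus $D_{\alpha_\varphi^{(n)}} f \leq |f|_\Bcal$ pointwise on $\underline{a}$, giving $\int_{\underline{a}} D_{\alpha_\varphi^{(n)}} f \dd \mu \leq \mu(\underline{a}) \norm{f}{\Bcal}$.

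Finally, since $\varphi \geq 1$ implies $\varphi^{(n)} \geq n \geq 1$ on $\underline{a}$, and since $\Lambda > 1$, the factor $\Lambda^{-\inf_{\underline{a}} \varphi^{(n)}}$ is at most $1$. Combining the two bounds yields
\begin{equation*}
 \norm{\Lcal_\varphi^n (f \mathbf{1}_{\underline{a}})}{\Bcal}
 \leq 2C \mu(\underline{a}) \norm{f}{\Bcal},
\end{equation*}
which is the desired estimate up to renaming the constant. There is no real obstacle here; the only subtle point, which I would make sure to spell out, is the inclusion $\underline{a} \subset a^* \in \alpha^*$ needed to pass from the local Lipschitz seminorm with respect to the very fine partition $\alpha_\varphi^{(n)}$ to the Banach norm $\norm{\cdot}{\Bcal}$ which is defined via $\alpha^*$.
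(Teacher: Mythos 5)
Your proof is correct and takes essentially the same route as the paper: both apply Proposition~\ref{prop:Lemme1113} to the single atom $\underline{a}$, bound $D_{\alpha_\varphi^{(n)}} f$ pointwise by $|f|_\Bcal$ via the inclusion of atoms of $\alpha_\varphi^{(n)}$ in atoms of $\alpha^*$, control the $\Lbb^1$ term by $\mu(\underline{a})\norm{f}{\infty}$, and discard the factor $\Lambda^{-\inf_{\underline{a}}\varphi^{(n)}}\leq 1$. The only cosmetic difference is that the paper adds the two bounds using $|f|_\Bcal+\norm{f}{\infty}=\norm{f}{\Bcal}$ to keep the same constant $C$, whereas you end up with $2C$, which is immaterial.
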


\begin{proof}
 
 We apply Proposition~\ref{prop:Lemme1113}. The same argument as in the proof of Corollary~\ref{cor:LasotaYorkeTempsArret} yields 
 \begin{align*}
  \norm{\Lcal_\varphi^n (f \mathbf{1}_{\underline{a}})}{\Bcal} 
  & \leq C \left[ \Lambda^{-n} \int_{\underline{a}} D_{\alpha_\varphi^{(n)}} f \dd \mu + \int_{\underline{a}} |f| \dd \mu \right] \\
  & \leq C \left[ \Lambda^{-n} \mu(\underline{a}) \norm{D_{\alpha^*} f}{\Lbb^\infty (A, \mu)} + \mu(\underline{a}) \norm{f}{\Lbb^\infty (A, \mu)} \right] \\
  & \leq C \mu(\underline{a}) \norm{f}{\Bcal},
 \end{align*}
 with the same constant $C$.
\end{proof}

\subsection{Induced Gibbs-Markov maps}
\label{subsec:InducedGibbsMarkov}

Let $(A, \alpha, d, \mu, T)$ be a Gibbs-Markov map, and $F : A \to \Z^d$ be constant on the elements of $\alpha$. Let $([\Z^d],\widetilde{\mu}, \widetilde{T})$ be the associated $\Z^d$-extension, which we assume is recurrent. Let $\Sigma \subset \Z^d$ be non-empty and finite. We want to find a good Banach space on which the transfer operator associated with the induced system $([\Sigma], \mu_{[\Sigma]}, T_{[\Sigma]})$ acts. A natural candidate is the space $\Bcal_{[\Sigma]}$ defined by Equation~\eqref{eq:DefinitionBSigma}.

\smallskip

By adapting~\cite[Subsection~4.2.1]{PeneThomine:2019}, it is also possible to endow $([\Sigma], \mu_{[\Sigma]}, T_{[\Sigma]})$ with a Gibbs-Markov structure, which would then yield their own spaces of Lipschitz functions. However, these spaces are much too large, and their Lipschitz norm would depend on $[\Sigma]$. To keep some of our estimates uniform, we do not follow this line. Instead, we adapt the previous results on Gibbs-Markov maps to the specific case where the stopping time $\varphi$ comes from an induction process applied to $([\Z^d],\widetilde{\mu}, \widetilde{T})$.

\smallskip

Since $([\Z^d],\widetilde{\mu}, \widetilde{T})$ is recurrent and measure-preserving, the system $([\Sigma], \mu_{[\Sigma]}, T_{[\Sigma]})$ is well-defined and measure-preserving.

\begin{proposition}\quad
\label{prop:ActionInduiteQuasiCompacte} 

 Let $([\Z^d], \widetilde{\mu}, \widetilde{T})$ be a recurrent Markov $\Z^d$-extension of a Gibbs-Markov map $(A, \alpha, d, \mu, T)$. Let $\Sigma \subset \Z^d$ be non-empty and finite. Then:
 \begin{align*}
  \rho (\Lcal_{[\Sigma]} \acts \Bcal_{[\Sigma]}) & = 1, \\
  \rho_{\ess} (\Lcal_{[\Sigma]} \acts \Bcal_{[\Sigma]}) & \leq \Lambda^{-1},
 \end{align*}
 where $\Lambda >1$ is any expansion constant for $(A, \alpha, d, \mu, T)$.
\end{proposition}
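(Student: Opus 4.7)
The plan is to establish a uniform Lasota-Yorke inequality for $\Lcal_{[\Sigma]}^n$ acting on $\Bcal_{[\Sigma]}$, then invoke Hennion's theorem (together with the standard compactness of the inclusion $\Bcal \hookrightarrow \Lbb^1(A, \mu)$) to deduce both spectral radius statements. Since $T_{[\Sigma]}$ preserves $\mu_{[\Sigma]}$, one has $\Lcal_{[\Sigma]}(\mathbf{1}) = \mathbf{1}$, so once $\rho(\Lcal_{[\Sigma]} \acts \Bcal_{[\Sigma]}) \leq 1$ is established, the spectral radius is exactly $1$.

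The first step is to rewrite $\Lcal_{[\Sigma]}^n$ in terms of the transfer operators for stopping times on the base $(A, \mu, T)$. For each $p \in \Sigma$ and $n \geq 1$, let $\varphi_p^{(n)}(x)$ be the $n$-th instant at which $(p + S_k F(x))_{k \geq 1}$ returns to $\Sigma$; this is a stopping time in the sense of Definition~\ref{def:TempsDArret} because $F$ is $\alpha$-measurable. For each $q \in \Sigma$, the set
\begin{equation*}
A_{p, q}^{(n)} := \left\{ x \in A : \ p + S_{\varphi_p^{(n)}(x)} F(x) = q \right\}
\end{equation*}
is $\sigma(\alpha_{\varphi_p^{(n)}})$-measurable (the endpoint is constant on each cylinder where $\varphi_p^{(n)}$ is constant). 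A direct computation then yields the decomposition
\begin{equation*}
\Lcal_{[\Sigma]}^n (f)(\cdot, q) \;=\; \sum_{p \in \Sigma} \Lcal_{\varphi_p^{(n)}}\!\bigl( f(\cdot, p)\, \mathbf{1}_{A_{p,q}^{(n)}} \bigr).
\end{equation*}

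Next I would apply Proposition~\ref{prop:Lemme1113} to each summand with the $n=1$-iterate of $\varphi_p^{(n)}$ and $\underline{a} = A_{p,q}^{(n)}$. Since $\alpha_{\varphi_p^{(n)}}$ is finer than $\alpha^*$, one has $D_{\alpha_{\varphi_p^{(n)}}} f(\cdot, p) \leq |f(\cdot, p)|_{\Bcal}$, and $\inf_A \varphi_p^{(n)} \geq n$, so
\begin{equation*}
\bigl\| \Lcal_{\varphi_p^{(n)}}\!\bigl( f(\cdot, p)\, \mathbf{1}_{A_{p,q}^{(n)}} \bigr) \bigr\|_{\Bcal}
\;\leq\; C \Bigl[ \Lambda^{-n}\, \mu(A_{p,q}^{(n)})\, |f(\cdot, p)|_{\Bcal}
\;+\; \int_{A_{p,q}^{(n)}} |f(\cdot, p)|\, \dd \mu \Bigr].
\end{equation*}
Because $T_{[\Sigma]}$ preserves $\mu_{[\Sigma]}$, one has $\sum_{p \in \Sigma} \mu(A_{p,q}^{(n)}) = \mu_{[\Sigma]}([q]) = 1$; summing the displayed bounds over $p \in \Sigma$ and then taking the maximum over $q \in \Sigma$, this collapses to
\begin{equation*}
\norm{\Lcal_{[\Sigma]}^n(f)}{\Bcal_{[\Sigma]}} \;\leq\; C \Bigl[ \Lambda^{-n}\, \norm{f}{\Bcal_{[\Sigma]}} \,+\, \norm{f}{\Lbb^1([\Sigma], \mu_{[\Sigma]})} \Bigr],
\end{equation*}
with a constant $C$ independent of $n$ (but depending on $|\Sigma|$).

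From this Lasota-Yorke inequality the two conclusions follow by now-standard arguments. Iterating the inequality, combined with the $\Lbb^1$-contractivity of $\Lcal_{[\Sigma]}$ (dual to Koopman for a measure-preserving map) and the embedding $\Bcal_{[\Sigma]} \hookrightarrow \Lbb^1([\Sigma], \mu_{[\Sigma]})$, yields a uniform bound on $\norm{\Lcal_{[\Sigma]}^n}{\Bcal_{[\Sigma]} \to \Bcal_{[\Sigma]}}$, hence $\rho(\Lcal_{[\Sigma]} \acts \Bcal_{[\Sigma]}) \leq 1$; since $\Lcal_{[\Sigma]}(\mathbf{1}) = \mathbf{1}$, equality holds. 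Finally, since $A$ is a bounded Polish space, the inclusion $\Bcal \hookrightarrow \Lbb^1(A, \mu)$ is compact by an Arzel\`a-Ascoli argument, and hence so is $\Bcal_{[\Sigma]} \hookrightarrow \Lbb^1([\Sigma], \mu_{[\Sigma]})$ (finite direct sum). Hennion's theorem applied to the Lasota-Yorke inequality then gives $\rho_{\ess}(\Lcal_{[\Sigma]} \acts \Bcal_{[\Sigma]}) \leq \Lambda^{-1}$. The only delicate point in the plan is the verification that the family $\varphi_p^{(n)}$ truly fits Definition~\ref{def:TempsDArret} and that the events $A_{p,q}^{(n)}$ are unions of cylinders for the induced partition, so that Proposition~\ref{prop:Lemme1113} is applicable; everything else is routine bookkeeping.
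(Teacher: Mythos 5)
Your proposal is correct and follows essentially the same route as the paper: the same decomposition of $\Lcal_{[\Sigma]}^n$ into operators $\Lcal_{\varphi}$ for the $n$-th return stopping times localized on the sets $A_{p,q}^{(n)}$, the same application of Proposition~\ref{prop:Lemme1113} (using $\inf \varphi \geq n$ and the refinement of $\alpha^*$) to get a Lasota-Yorke inequality uniform in $n$, and then Hennion's theorem for the essential spectral radius together with $\Lcal_{[\Sigma]}(\mathbf{1})=\mathbf{1}$ and power-boundedness for $\rho = 1$. The only cosmetic differences are the index conventions and that you spell out the compact embedding $\Bcal_{[\Sigma]} \hookrightarrow \Lbb^1$, which the paper leaves implicit in its citation of Hennion.
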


\begin{proof}
 
 The extension $([\Z^d], \widetilde{\mu}, \widetilde{T})$ is Markov, so by definition the jump function $F$ is constant on elements of the partition $\alpha$. Hence, for all $p \in \Sigma$ and $n \geq 1$, we have an almost surely finite stopping time
\begin{equation*}
 \varphi_{p, \Sigma, n} (x) 
 := \inf \left\{k \geq 1 : \Card \{ 0 < \ell \leq k : S_\ell F (x) \in \{q-p: \ q \in \Sigma\} \} = n \right\}.
\end{equation*}
 This stopping time is designed so that, for all $x \in A$,
 \begin{equation*}
  T_{[\Sigma]}^n (x, p) 
  = (T_{\varphi_{p, \Sigma, n}} (x), p + S_{\varphi_{p, \Sigma, n} (x)} F (x)).
 \end{equation*}

  For all $p$, $q \in \Sigma$ and $n \geq 1$, let
 \begin{equation*}
  A_{p, q, n} 
  := \{x \in A : \ (x, p) \in T_{[\Sigma]}^{-n} ([q])\},
 \end{equation*}
 which is $\sigma(\alpha_{\varphi_{p, \Sigma, n}})$-measurable. Then, for all $p \in \Sigma$ and $n \geq 1$, for all $f \in \Bcal_{[\Sigma]}$,
\begin{align*}
 \Lcal_{[\Sigma]}^n (f) (\cdot, p) 
 = \sum_{q \in \Sigma} \Lcal_{\varphi_{q, \Sigma, n}} (f (\cdot, q) \mathbf{1}_{A_{q, p, n}}).
\end{align*}
By Proposition~\ref{prop:Lemme1113}, there exists a constant $C$ depending only on the initial Gibbs-Markov map such that
\begin{align*}
 \norm{\Lcal_{[\Sigma]}^n (f) (\cdot, p)}{\Bcal} 
 & \leq \sum_{q \in \Sigma} \norm{\Lcal_{\varphi_{q, \Sigma, n}} (f (\cdot, q) \mathbf{1}_{A_{q, p, n}})}{\Bcal} \\
 & \leq \sum_{q \in \Sigma} C \left[ \Lambda^{-\inf_{A_{q, p, n}} \varphi_{q, \Sigma, n}} \int_{A_{q, p, n}} D_{\alpha^*} f (\cdot, q) \dd \mu + \int_{A_{q, p, n}} |f| (\cdot, q) \dd \mu \right] \\
 & \leq \sum_{q \in \Sigma} C \left[ \Lambda^{-n} \mu(A_{q, p, n}) \norm{f}{\Bcal_{[\Sigma]}} +\int_{A_{q, p, n}} |f| (\cdot, q) \dd \mu \right] \\
 & = C \left[ \Lambda^{-n} \norm{f}{\Bcal_{[\Sigma]}} + \sum_{q \in \Sigma} \int_{A_{q, p, n}} |f| (\cdot, q) \dd \mu \right].
\end{align*}
Taking the supremum over $p \in \Sigma$ yields
 \begin{align*}
 \norm{\Lcal_{[\Sigma]}^n (f)}{\Bcal_{[\Sigma]}} 
 & \leq C \left[ \Lambda^{-n} \norm{f}{\Bcal_{[\Sigma]}} + \sum_{p, q \in \Sigma} \int_{A_{q, p, n}} |f| (\cdot, q) \dd \mu \right] \\
 & \leq C \left[ \Lambda^{-n} \norm{f}{\Bcal_{[\Sigma]}} + |\Sigma| \int_{[\Sigma]} |f| (\cdot, q) \dd \mu_{[\Sigma]} \right].
\end{align*}

 The bound on the essential spectral radius $\rho_{\ess} (\Lcal_{[\Sigma]} \acts \Bcal_{[\Sigma]})$ comes from Hennion's theorem~\cite{Hennion:1993}. The spectral radius is at least $1$ since $\Lcal_{[\Sigma]} (\mathbf{1}) = \mathbf{1}$, and at most $1$ because of the Lasota-Yorke inequality and the fact that the spectrum of $\Lcal_{[\Sigma]}$ oustide $\overline{B}(0, \Lambda^{-1})$ is made of eigenvalues of $\Lcal_{[\Sigma]}$.
\end{proof}

If in addition $([\Sigma], \mu_{[\Sigma]}, T_{[\Sigma]})$ is mixing, then the operator $\Lcal_{[\Sigma]} \acts \Bcal_{[\Sigma],0}$ decays at exponential speed; otherwise, the induced system $([\Sigma], \mu_{[\Sigma]}, T_{[\Sigma]})$ has a period, which we need to take into account. The proofs of the following theorems are classical, albeit lengthy; we refer the reader, for instance, to~\cite[Corollaire~1.1.18]{Gouezel:2008e} and~\cite[Corollaire~1.1.19]{Gouezel:2008e}.

\begin{corollary}\quad
\label{cor:DecroissanceCorrelationsInduit0} 

 Let $([\Z^d], \widetilde{\mu}, \widetilde{T})$ be an ergodic and recurrent Markov $\Z^d$-extension of a Gibbs-Markov map with data $(A, \alpha, d, \mu, T)$. Assume furthermore that $([0], \mu, T_{[0]})$ is mixing.
 
 \smallskip
 
 Then there exist constants $C >0$ and $\rho \in (0,1)$ such that, for all $f \in \Bcal_0$ and $n \geq 0$,
 \begin{equation}
 \label{eq:DecroissanceCorrelationsInduit0}
  \norm{\Lcal_{[0]}^n f}{\Bcal} 
  \leq C \rho^n \norm{f}{\Bcal}.
 \end{equation}
\end{corollary}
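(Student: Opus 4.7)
The plan is to deduce this exponential decay estimate directly from the quasi-compactness result of Proposition~\ref{prop:ActionInduiteQuasiCompacte} (applied to the singleton $\Sigma = \{0\}$, for which $\Bcal_{[\Sigma]}$ is canonically identified with $\Bcal$) together with a standard peripheral-spectrum analysis enabled by the mixing hypothesis. The heart of the argument reduces to showing that, under Hypothesis~\ref{hyp:Mixing}, the spectrum of $\Lcal_{[0]}$ acting on $\Bcal$ meets the unit circle only at the simple eigenvalue $1$; the bound~\eqref{eq:DecroissanceCorrelationsInduit0} then follows from Gelfand's formula applied to the complementary invariant subspace $\Bcal_0$.

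First I would invoke Proposition~\ref{prop:ActionInduiteQuasiCompacte} with $\Sigma = \{0\}$: the operator $\Lcal_{[0]}$ acts on $\Bcal$ with spectral radius $1$ and essential spectral radius bounded by $\Lambda^{-1}<1$. Hence the peripheral spectrum $\Sp(\Lcal_{[0]}) \cap \Sbb^1$ consists of finitely many eigenvalues of finite multiplicity. Since $\Lcal_{[0]}(\mathbf{1}) = \mathbf{1}$, the value $1$ is always one of them; by ergodicity of $T_{[0]}$ (which follows from that of $\widetilde{T}$), the eigenvalue $1$ is simple, with eigenspace $\Vect(\mathbf{1})$, and the associated spectral projector is $f \mapsto \mu(f) \mathbf{1}$. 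In particular, $\Bcal_0 = \Ker(\mu)$ is $\Lcal_{[0]}$-invariant and complementary to $\Vect(\mathbf{1})$.

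Next I would show that mixing rules out all other peripheral eigenvalues. If $\lambda \in \Sbb^1 \setminus \{1\}$ were an eigenvalue with eigenfunction $h \in \Bcal \setminus \{0\}$, then $|\Lcal_{[0]} h| \leq \Lcal_{[0]}|h|$ combined with the equality case (applied iteratively, using that $\Lcal_{[0]} |h| = |h|$ modulo the usual peripheral-spectrum argument for Perron-Frobenius type operators) implies that $h/|h|$ is $T_{[0]}$-semi-invariant in the sense that $h \circ T_{[0]} = \overline{\lambda}\, h$ on $\{|h|>0\}$. Testing this against any $g \in \Lbb^2(A, \mu)$ would yield, via iterating and using mixing $\int g \circ T_{[0]}^n \cdot \overline{h}\, \dd \mu \to \mu(g) \mu(\overline{h})$, a contradiction with $\lambda^n \int g \cdot \overline{h}\, \dd \mu$ unless $h$ has zero mean and, by a density/selection argument, $h \equiv 0$. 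This is the standard derivation that mixing implies trivial peripheral spectrum for a quasi-compact Markov operator.

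Once this peripheral analysis is in place, the spectral radius of $\Lcal_{[0]}$ restricted to $\Bcal_0$ is some $r<1$. Picking any $\rho \in (r,1)$ and applying Gelfand's formula gives $\norm{\Lcal_{[0]}^n|_{\Bcal_0}}{\Bcal_0 \to \Bcal_0} \leq C \rho^n$ for some constant $C = C(\rho)$. Since $\Bcal_0$ is continuously embedded in $\Bcal$ and the inclusion has norm $1$, the claimed estimate follows. The main obstacle is the peripheral-spectrum step: the argument above is classical for Gibbs–Markov maps and is precisely how~\cite[Corollaire~1.1.18]{Gouezel:2008e} is proved, so in practice I would simply cite that reference; the only thing to verify is that the induced system $([0], \mu, T_{[0]})$ is itself conjugate (via the coding of Sub-subsection~\ref{subsubsec:Induction}) to a Gibbs–Markov map on which that reference applies, with $\Bcal$ being the relevant Lipschitz space — this identification was already noted in Sub-subsection~\ref{subsubsec:TransferOperators}.
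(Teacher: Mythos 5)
Your proposal is correct and follows essentially the same route as the paper: the paper simply notes the result is classical and refers to~\cite[Corollaire~1.1.18]{Gouezel:2008e}, which is exactly the argument you outline (quasi-compactness of $\Lcal_{[0]}$ on $\Bcal$ from Proposition~\ref{prop:ActionInduiteQuasiCompacte} with $\Sigma=\{0\}$, mixing excluding peripheral eigenvalues other than the simple eigenvalue $1$, then Gelfand's formula on $\Bcal_0$). Your peripheral-spectrum step can in fact be done more directly by duality, since $\int \Lcal_{[0]}^n h \cdot g \dd\mu = \int h \cdot g\circ T_{[0]}^n \dd\mu$ and mixing forces $\lambda^n \int h g \dd\mu$ to converge for all $g$, hence $h=0$ when $\lambda\in\Sbb^1\setminus\{1\}$, but this is only a simplification, not a gap.
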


\begin{corollary}\quad
\label{cor:DecroissanceCorrelationsInduit0Ergodique} 

 Let $([\Z^d], \widetilde{\mu}, \widetilde{T})$ be an ergodic and recurrent Markov $\Z^d$-extension of a Gibbs-Markov map with data $(A, \alpha, d, \mu, T)$. Let $M \geq 1$ be such that $\Sp (\Lcal_{[0]} \acts \Bcal) \cap \Sbb_1$ is the group of $M$th roots of the unit.
 
 \smallskip
 
 Then there exists a partition $A = \bigsqcup_{\ell \in \Z_{/ M \Z}} A_\ell$ such that each $A_\ell$ is $\sigma(\alpha^*)$-measurable and $T_{[0]} (A_\ell) \subset A_{\ell+1}$ for all $\ell$. Setting $\Bcal_\ell := \{f \in \Bcal: \ \Supp (f) \subset A_\ell\}$, 
 \begin{equation*}
  \Bcal = \bigoplus_{{\ell \in \Z_{/ M \Z}}} \Bcal_\ell,
 \end{equation*}
and each restriction $\Lcal_{[0]} : \Bcal_\ell \to \Bcal_{\ell+1}$ is continuous. In addition, $T_{[0]}^M : A_\ell \to A_\ell$ is mixing for all $\ell$.
 
 \smallskip
 
 Finally, there exist constants $C >0$ and $\rho \in (0,1)$ such that, for all $f \in \Bcal$ and $n \geq 0$,
 \begin{equation}
 \label{eq:DecroissanceCorrelationsInduit0Ergodique}
  \norm{\frac{1}{M} \sum_{\ell=0}^{M-1} \Lcal_{[0]}^{n+\ell} f - \int_{[0]} f \dd \mu}{\Bcal} 
  \leq C \rho^n \norm{f}{\Bcal}.
 \end{equation}
\end{corollary}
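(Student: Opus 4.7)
The plan is to use the quasi-compactness of $\Lcal_{[0]}$ on $\Bcal$ from Proposition~\ref{prop:ActionInduiteQuasiCompacte} to extract a cyclic spectral decomposition of its peripheral part, pull out an eigenfunction descending to a $\sigma(\alpha^*)$-measurable partition, and then derive the Cesàro estimate from a geometric cancellation. By Proposition~\ref{prop:ActionInduiteQuasiCompacte}, $\Lcal_{[0]}$ has spectral radius $1$ and essential spectral radius at most $\Lambda^{-1}<1$, so the peripheral spectrum is finite and made of eigenvalues of finite multiplicity; ergodicity of $T_{[0]}$ (inherited from $\widetilde{T}$) forces $1$ to be simple with eigenfunction $\mathbf{1}$.

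For each peripheral $\lambda$ and eigenfunction $h\in\Bcal$, I will normalize $|h|\equiv 1$ (the inequality $\Lcal_{[0]}|h|\geq|\Lcal_{[0]}h|=|h|$ has both sides of equal $\Lbb^1$-integral, so $|h|$ is $\Lcal_{[0]}$-invariant, hence constant by ergodicity), then use the equality case of the triangle inequality in $\Lcal_{[0]}h(x)=\sum_{y\in T_{[0]}^{-1}(x)}h(y)/|\Jac T_{[0]}(y)|$ to conclude $h\circ T_{[0]}=\bar\lambda h$ almost surely. Iterating this identity and combining it with the Gibbs-Markov expansivity and distortion control for the induced map $T_{[0]}$, one upgrades $h$ to being almost-everywhere constant on each atom of the image partition $\alpha^*_{[0]}$ of $T_{[0]}$. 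Since $T_{[0]}(a)=T^{\varphi_{[0]}|_a}(a)$ for $a\in\alpha_{[0]}$ is the $T^{\varphi_{[0]}|_a}$-image of a cylinder of length $\varphi_{[0]}|_a$, the Markov property identifies this image with an atom of $\alpha^*$; so $\alpha^*_{[0]}$ coincides with $\alpha^*$ modulo null sets, and $h$ is $\sigma(\alpha^*)$-measurable. The functional equation then shows that peripheral eigenvalues form a subgroup of $\Sbb_1$, by hypothesis $\{\omega^k\}_{k=0}^{M-1}$ with $\omega:=e^{2\pi i/M}$, each peripheral eigenspace being one-dimensional.

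Fix a unimodular eigenfunction $h$ associated with $\omega$ and set $A_\ell:=h^{-1}(\{\omega^\ell\})$. These $M$ sets partition $A$, are $\sigma(\alpha^*)$-measurable by the previous paragraph, and the identity $h\circ T_{[0]}=\bar\omega h$ gives $T_{[0]}(A_\ell)\subset A_{\ell+1}$. Multiplication by $\mathbf{1}_{A_\ell}$ preserves $\Bcal$, yielding the decomposition $\Bcal=\bigoplus_\ell \Bcal_\ell$; the inclusion $\Lcal_{[0]}(\Bcal_\ell)\subset\Bcal_{\ell+1}$ follows from support propagation along $T_{[0]}$. The restriction $\Lcal_{[0]}^M:\Bcal_\ell\to\Bcal_\ell$ has full peripheral spectrum $\{1\}$ with simple eigenfunction $\mathbf{1}_{A_\ell}$: any additional peripheral eigenvalue on $\Bcal_\ell$ would lift, via the $M$ blocks, to a peripheral eigenvalue of $\Lcal_{[0]}$ on $\Bcal$ outside $\{\omega^k\}_{k=0}^{M-1}$, contradicting the choice of $M$; mixing of $T_{[0]}^M$ on $A_\ell$ follows.

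Finally, writing the spectral decomposition $\Lcal_{[0]}=\sum_{k=0}^{M-1}\omega^k P_k+R$ with $P_k$ the rank-one spectral projection onto $\Ker(\Lcal_{[0]}-\omega^k)$ (so $P_0 f=\left(\int_A f\dd\mu\right)\mathbf{1}$), the $P_k$ pairwise orthogonal and commuting with $R$, and $\rho:=\rho(R|_{\Bcal})<1$, the Cesàro average collapses:
\begin{equation*}
\frac{1}{M}\sum_{\ell=0}^{M-1}\Lcal_{[0]}^{n+\ell}=\sum_{k=0}^{M-1}\omega^{kn}\Big(\frac{1}{M}\sum_{\ell=0}^{M-1}\omega^{k\ell}\Big)P_k+\frac{1}{M}\sum_{\ell=0}^{M-1}R^{n+\ell}=P_0+\frac{1}{M}\sum_{\ell=0}^{M-1}R^{n+\ell},
\end{equation*}
since the inner geometric sum vanishes for $k\neq 0$; the remainder is $O(\rho'^n)$ in $\Bcal\to\Bcal$ operator norm for any $\rho'\in(\rho,1)$, which delivers the stated bound. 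The principal obstacle is the regularity promotion in the second paragraph—proving that peripheral unimodular eigenfunctions of $\Lcal_{[0]}$ on $\Bcal$ are constant on $\alpha^*_{[0]}$-atoms—which requires combining the equality case of the triangle inequality with Gibbs-Markov expansivity and distortion in a standard but careful way; once this is in hand, the rest of the proof is spectral bookkeeping.
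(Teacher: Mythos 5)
The paper offers no in-house proof of this corollary: it is quoted as a classical statement with a pointer to \cite[Corollaires~1.1.18 and~1.1.19]{Gouezel:2008e}, so there is nothing internal to compare your route against. Your sketch is precisely that classical route (quasi-compactness from Proposition~\ref{prop:ActionInduiteQuasiCompacte} with $\Sigma=\{0\}$, unimodular peripheral eigenfunctions via the equality case of the triangle inequality, cyclic classes, block decomposition, Ces\`aro cancellation $\frac{1}{M}\sum_{\ell}\omega^{k\ell}=\delta_{k0}$), and the final spectral bookkeeping is correct; just note that writing $\Lcal_{[0]}=\sum_k\omega^k P_k+R$ with $\rho(R)<1$ also uses semisimplicity of the peripheral eigenvalues, which you should justify by power-boundedness of $\Lcal_{[0]}$ on $\Bcal$ (uniform Lasota--Yorke, Corollary~\ref{cor:LasotaYorkeTempsArret}).

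The one substantive step you leave open, constancy of a peripheral eigenfunction on image atoms, is indeed where the content lies, and it closes with the tools you name; distortion is not even needed. From $\Lcal_{[0]}h=\lambda h$, $|h|\equiv 1$, you get $h=\lambda^n\, h\circ T_{[0]}^n$ a.e. Given $u,v$ in one image set $T_{[0]}(a)$, $a\in\alpha_{\varphi_{[0]}}$, and any $n$, the Markov property of $T_{[0]}$ and its mod-$0$ surjectivity provide a length-$n$ cylinder of the induced map whose $T_{[0]}^n$-image contains $T_{[0]}(a)$; pulling $u,v$ back through the corresponding inverse branch gives $x,y$ lying in a single atom of $\alpha$ with $d(x,y)\leq\Lambda^{-n}d(u,v)$, whence $|h(u)-h(v)|=|h(x)-h(y)|\leq |h|_\Bcal\,\Lambda^{-n}\Diam(A)\to 0$ (with the usual a.e.\ caveats). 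Since every positive-measure atom of $\alpha^*$ is, for each $b\in\alpha$, either contained in or disjoint from $T(b)$, and the images $T_{[0]}(a)$ cover $A$ mod $0$, constancy on these images gives $\sigma(\alpha^*)$-measurability of the level sets of $h$. Three small repairs: (i) you must also argue that $h$ takes values in the $M$th roots of unity, which follows because $h^M$ is $T_{[0]}$-invariant, hence constant, and can be normalised to $1$; (ii) with $A_\ell:=h^{-1}(\omega^\ell)$ the relation $h\circ T_{[0]}=\bar\omega h$ actually gives $T_{[0]}(A_\ell)\subset A_{\ell-1}$, so relabel (take $A_\ell:=h^{-1}(\omega^{-\ell})$ or replace $h$ by $\bar h$); (iii) your claim that the image partition of $T_{[0]}$ coincides with $\alpha^*$ mod $0$ is neither needed nor true in general (it is generated only by those $T(b)$ with $b$ a last letter of a return word, hence is a priori coarser), but this is harmless since constancy on coarser atoms implies constancy on the finer $\alpha^*$-atoms, which is all the statement requires.
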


We may also adapt Proposition~\ref{prop:Lemme1113} to get that $\Lcal_{[\Sigma]}$ maps a much larger Banach space into $\Bcal_{[\Sigma]}$. As in Section~\ref{sec:FourierTransform}, let $\alpha_{p, \Sigma}$ be the partition associated with the stopping time 
\begin{equation*}
 \varphi_{p, \Sigma} (x) 
 := \inf \{n \geq 1 : \ S_n F (x) \in \{q-p : \ q \in \Sigma\} \}.
\end{equation*}
Set $\alpha_{[\Sigma]} := \{a \times \{p\} : \ a \in \alpha_{p, \Sigma}, \ p \in \Sigma\}$. We define a seminorm
\begin{equation*}
 |f|_{\Lip^1 ([\Sigma])} 
 := \sum_{a \times \{p\} \in \alpha_{[\Sigma]}} \mu (a) |f (\cdot, p)|_{\Lip (a)},
\end{equation*}
and a norm $\norm{f}{\Lip^1 ([\Sigma])} := |f|_{\Lip^1 ([\Sigma])} + \norm{f}{\Lbb^1 ([\Sigma], \mu_{[\Sigma]})}$. Then we get:

\begin{proposition}\quad
\label{prop:ContinuiteL1LInfty}
 
  Let $([\Z^d], \widetilde{\mu}, \widetilde{T})$ be a recurrent Markov $\Z^d$-extension of a Gibbs-Markov map $(A, \alpha, d, \mu, T)$. Let $\Sigma \subset \Z^d$ be non-empty and finite. Then $\Lcal_{[\Sigma]} : \ \Lip^1 ([\Sigma]) \to \Bcal_{[\Sigma]}$ is continuous.
\end{proposition}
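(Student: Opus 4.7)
The plan is to reduce the statement to Proposition~\ref{prop:Lemme1113} applied with $n=1$ to each of the hitting-time stopping times $\varphi_{p,\Sigma}$ ($p \in \Sigma$), and then to recombine the resulting inequalities in exactly the way that the $\Lip^1([\Sigma])$ norm is defined.

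First I would decompose $\Lcal_{[\Sigma]} f$ site by site. For $f \in \Bcal_{[\Sigma]}$, $p \in \Sigma$ and $q \in \Sigma$, set $g_p := f(\cdot,p)$ and $B_{p,q} := \{x \in A : \widetilde{T}^{\varphi_{p,\Sigma}(x)}(x,p) \in [q]\}$. Since $F$ is Markov, $B_{p,q}$ is a union of elements of the partition $\alpha_{p,\Sigma}$, and on $B_{p,q}$ the induced map sends $(x,p)$ to $(T_{\varphi_{p,\Sigma}}(x),q)$. Consequently
\begin{equation*}
 \Lcal_{[\Sigma]}(f)(\cdot,q)
 = \sum_{p \in \Sigma} \Lcal_{\varphi_{p,\Sigma}}\bigl( g_p \mathbf{1}_{B_{p,q}} \bigr).
\end{equation*}

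Next I would apply Proposition~\ref{prop:Lemme1113} cylinder by cylinder. For each $\underline{a} \in \alpha_{p,\Sigma}$ contained in $B_{p,q}$, the function $D_{\alpha_{p,\Sigma}} g_p$ is constant on $\underline{a}$ with value $|g_p|_{\Lip(\underline{a})} = |f(\cdot,p)|_{\Lip(\underline{a})}$, so
\begin{equation*}
 \norm{\Lcal_{\varphi_{p,\Sigma}}(g_p \mathbf{1}_{\underline{a}})}{\Bcal}
 \leq C \Bigl[ \Lambda^{-1} \mu(\underline{a}) |f(\cdot,p)|_{\Lip(\underline{a})} + \int_{\underline{a}} |f(\cdot,p)| \dd \mu \Bigr].
\end{equation*}
Summing these estimates over the countable family of cylinders $\underline{a} \subset B_{p,q}$ (and using the triangle inequality, since $\mathbf{1}_{B_{p,q}} = \sum_{\underline{a} \subset B_{p,q}} \mathbf{1}_{\underline{a}}$) yields
\begin{equation*}
 \norm{\Lcal_{\varphi_{p,\Sigma}}(g_p \mathbf{1}_{B_{p,q}})}{\Bcal}
 \leq C \Bigl[ \sum_{\underline{a} \in \alpha_{p,\Sigma}} \mu(\underline{a}) |f(\cdot,p)|_{\Lip(\underline{a})} + \int_A |f(\cdot,p)| \dd \mu \Bigr],
\end{equation*}
where the right-hand side no longer depends on $q$.

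Finally I would assemble the pieces. Summing over $p \in \Sigma$ and recognizing the two terms on the right as exactly the contributions of the $p$th site to $|f|_{\Lip^1([\Sigma])}$ and $\norm{f}{\Lbb^1([\Sigma],\mu_{[\Sigma]})}$ respectively, one obtains
\begin{equation*}
 \norm{\Lcal_{[\Sigma]}(f)(\cdot,q)}{\Bcal}
 \leq C \bigl( |f|_{\Lip^1([\Sigma])} + \norm{f}{\Lbb^1([\Sigma],\mu_{[\Sigma]})} \bigr)
 = C \norm{f}{\Lip^1([\Sigma])}.
\end{equation*}
Taking the maximum over $q \in \Sigma$ gives the desired continuity bound $\norm{\Lcal_{[\Sigma]}(f)}{\Bcal_{[\Sigma]}} \leq C \norm{f}{\Lip^1([\Sigma])}$ with the constant $C$ depending only on the initial Gibbs-Markov data.

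No step looks genuinely difficult: the only point deserving care is to verify that the $q$-independence of the final estimate really comes from enlarging $\sum_{\underline{a} \subset B_{p,q}}$ to $\sum_{\underline{a} \in \alpha_{p,\Sigma}}$, i.e.\ that the $\Lip^1([\Sigma])$ norm, which weights local Lipschitz constants by $\mu(\underline{a})$ and sums over \emph{all} cylinders in $\alpha_{[\Sigma]}$, is tailor-made to absorb the partition of $A$ into the sets $(B_{p,q})_{q \in \Sigma}$ without losing the factor giving continuity into the uniform space $\Bcal_{[\Sigma]}$.
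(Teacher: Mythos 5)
Your proof is correct and follows essentially the same route as the paper: decompose $\Lcal_{[\Sigma]}f$ at each target site over the source sites, apply Proposition~\ref{prop:Lemme1113} with $n=1$ to the hitting-time stopping times cylinder by cylinder, and recognize the resulting sums as the $\Lip^1([\Sigma])$ norm (the paper merely writes the estimate with the roles of $p$ and $q$ interchanged, using the sets $A_{q,p,1}$ from Proposition~\ref{prop:ActionInduiteQuasiCompacte} in place of your $B_{p,q}$).
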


\begin{proof}
 
 We use the same notation as in the proof of Proposition~\ref{prop:ActionInduiteQuasiCompacte} with $n = 1$. For all $p$, $q \in \Sigma$, 
 for all $a \times \{p\} \in \alpha_{[\Sigma]}$, for all $f \in \Lip^1 ([\Sigma])$, by Proposition~\ref{prop:Lemme1113},
\begin{align*}
 \norm{\Lcal_{[\Sigma]} (f) (\cdot, p)}{\Bcal} 
 & \leq \sum_{q \in \Sigma} \sum_{\substack{a \times \{q\} \in \alpha_{[\Sigma]} \\ a \subset A_{q, p, 1}}} \norm{\Lcal_{\varphi_{q, \Sigma, 1}} (f (\cdot, q) \mathbf{1}_a )}{\Bcal} \\
 & \leq \sum_{q \in \Sigma} \sum_{\substack{a \times \{q\} \in \alpha_{[\Sigma]} \\ a \subset A_{q, p, 1}}} C \left[ \Lambda^{-1} \mu(a) |f|_{\Lip(a)}  + \int_a |f| (\cdot, q) \dd \mu \right] \\
 & = C \Lambda^{-1} |f|_{\Lip^1 ([\Sigma])} + C |\Sigma| \norm{f}{\Lbb^1 ([\Sigma], \mu_{[\Sigma]})} \\
 & \leq C |\Sigma| \norm{f}{\Lip^1 ([\Sigma])}. \qedhere
\end{align*}
\end{proof}

\end{appendix}

\end{document}